\tikzset{shift entire picture/.style n args={2}{execute at end picture={
\pgfmathtruncatemacro{\tmpx}{sign(#1)}
\pgfmathtruncatemacro{\tmpy}{sign(#2)}
\ifnum\tmpx=1
  \ifnum\tmpy=1
   \path[use as bounding box] ([xshift=-#1,yshift=-#2]current bounding box.south west) rectangle 
(current bounding box.north east);
  \else
   \path[use as bounding box] ([xshift=-#1]current bounding box.south west) rectangle 
([yshift=-#2]current bounding box.north east);
  \fi
\else  
  \ifnum\tmpy=1
   \path[use as bounding box] ([yshift=-#2]current bounding box.south west) rectangle 
([xshift=-#1]current bounding box.north east);
  \else
   \path[use as bounding box] (current bounding box.south west) rectangle 
([xshift=-#1,yshift=-#2]current bounding box.north east); 
  \fi
\fi}}}
\definecolor{lightgreen}{rgb}{0.2,0.6,0.2}
\definecolor{lightblue}{rgb}{0.15,0.15,0.85}
\definecolor{darkred}{rgb}{0.85,0.15,0.15}
\newtheorem{remark}{Remark}
\newcommand{\bn}{\boldsymbol{n}}
\newcommand{\bu}{\boldsymbol{u}}
\newcommand{\bv}{\boldsymbol{v}}
\newcommand{\bw}{\boldsymbol{w}}
\newcommand{\fb}{\boldsymbol{f}}
\newcommand{\bt}{\boldsymbol{t}}
\newcommand{\bx}{\boldsymbol{x}}
\newcommand{\by}{\boldsymbol{y}}
\newcommand\beps{\boldsymbol{\varepsilon}}
\newcommand\bsigma{\boldsymbol{\sigma}}
\newcommand\bzeta{\boldsymbol{\zeta}}
\newcommand\bzero{\boldsymbol{0}}
\newcommand\bxi{\boldsymbol{\xi}}
\newcommand\btau{\boldsymbol{\tau}}
\newcommand\bnabla{\boldsymbol{\nabla}}
\newcommand\bPi{\boldsymbol{\Pi}}
\newcommand{\bH}{\mathbf{H}}
\newcommand{\bL}{\mathbf{L}}
\newcommand{\bV}{\mathbf{V}}
\newcommand{\bQ}{\mathbf{Q}}
\newcommand{\bbC}{\mathbb{C}}
\newcommand{\bbL}{\mathbb{L}}
\newcommand{\bbH}{\mathbb{H}}
\newcommand{\bbM}{\mathbb{M}}
\newcommand{\bbI}{\mathbb{I}}
\newcommand{\bbR}{\mathbb{R}}
\newcommand{\cC}{\mathcal{C}}
\newcommand{\cS}{\mathcal{S}}
\newcommand{\cA}{\mathcal{A}}
\newcommand\bdiv{\mathop{\mathbf{div}}\nolimits}
\newcommand\vdiv{\mathop{\mathrm{div}}\nolimits}
\newcommand\vrot{\mathop{\mathrm{rot}}\nolimits}
\newcommand\tr{\mathop{\mathrm{tr}}\nolimits}
\DeclarePairedDelimiter\norm{\lVert}{\rVert}
\numberwithin{equation}{section}
\numberwithin{remark}{section}
\numberwithin{assumption}{section}
\numberwithin{figure}{section}
\numberwithin{table}{section}
\newcommand{\cred}{}%[1]{{\leavevmode\color{darkred}{#1}}}
\newcommand{\cblue}{}%[1]{{\leavevmode\color{lightblue}{#1}}}
\newcommand{\cgreen}{}%[1]{{\leavevmode\color{lightgreen}{#1}}}
\title{Robust virtual element methods for coupled stress-assisted diffusion problems\thanks{\textbf{Updated:} \today.\funding{This work has been partially supported by  the Australian Research Council through the \textsc{Future Fellowship} grant FT220100496 and \textsc{Discovery Project} grant DP22010316, and by the Ministry of Science and Higher Education of the Russian Federation within the framework of state support for the creation and development of World-Class Research Centers ``Digital biodesign and personalized healthcare'' No. 075-15-2022-304.}}}
\author{Rekha Khot\thanks{School of Mathematics, Monash University, 9 Rainforest Walk, Melbourne, VIC 3800, Australia. Present address: Inria 75012, Paris, France (\email{rekha.khot@inria.fr}).}
\and
Andr\'es E. Rubiano\thanks{School of Mathematics, Monash University, 9 Rainforest Walk, Melbourne, VIC 3800, Australia (\email{Andres.RubianoMartinez@monash.edu}).}
\and  
Ricardo Ruiz-Baier\thanks{School of Mathematics, Monash University, 9 Rainforest Walk, Melbourne, VIC 3800, Australia; and
World-Class Research Center ``Digital biodesign and personalized healthcare'', Sechenov First Moscow State Medical University, Moscow, Russia; and 
Universidad Adventista de Chile, Casilla 7-D, Chill\'an, Chile (\email{Ricardo.RuizBaier@monash.edu}).}
}
\date{\today}
\begin{document}

\maketitle
\begin{abstract}
    This paper aims first to perform robust continuous analysis of a mixed nonlinear formulation for stress-assisted diffusion of a solute that interacts with an elastic material, and second to propose and analyse a virtual element formulation of the model problem. The two-way coupling mechanisms between the  Herrmann formulation for linear elasticity and the reaction-diffusion equation (written in mixed form) consist of diffusion-induced active stress and stress-dependent diffusion. The two sub-problems are analysed using the extended  Babuška--Brezzi--Braess theory for perturbed saddle-point problems. The well-posedness of the nonlinearly coupled system is established using a Banach fixed-point strategy under the smallness  assumption on data. The virtual element formulations for the uncoupled sub-problems are proven uniquely solvable by a fixed-point argument in conjunction with appropriate projection operators. We derive the  a priori error estimates, and test the accuracy and performance of the proposed method through computational simulations.
\end{abstract}
\begin{keywords}Virtual element methods, stress-assisted diffusion, diffusion-induced stress, perturbed saddle-point problems, fixed-point operators.\end{keywords}
\begin{AMS}
65N30, 65N12, 65N15, 74F25.\end{AMS}

%%%%%%%%%%%%%%%%%%%%%%%%%%%%%%%%%%%%%%%%%%%%%%%%%
\section{Introduction}
The process of diffusion within solid matter can result in the creation of mechanical stresses within the solid material, commonly referred to as chemical or diffusion-induced stresses. This phenomenon highlights a reciprocal relationship between chemical and mechanical driving forces, both of which play a key role in either facilitating or hindering the diffusion process. Stress-assisted diffusion for common materials has been introduced as a general formalism in \cite{wilson1982theory,unger1983theory}. Important examples include Lithium-ion battery cells, silicon rubber and hydrogen diffusion, polymer-based coatings, semiconductor fabrication, oxidation of silicon nanostructures, and enhancing of conductivity properties in soft living tissue. In these processes, heterogeneity (and possibly also anisotropy) of stress localisation affects the patterns of diffusion. Our contribution addresses one of the simplest models for such an interaction, incorporating stress-dependency directly in the diffusion coefficient for a given solute, and assuming that in the absence of stress one recovers Fickian diffusion (see, e.g., \cite{cherubini17, zohdi99}). We consider the regime of linear elastostatics and assume that the total Cauchy stress  is also due to a (possibly nonlinear) diffusion-induced isotropic stress. On the other hand, the diffusion coefficient in the reaction-diffusion equation governing the distribution of a solute is taken as a nonlinear function of the Cauchy stress. The well-posedness of primal formulations of the coupled problem has been studied in, e.g., \cite{lewicka2016local,malaeke2023mathematical}.

Some of the materials mentioned above are nearly incompressible, \cred{which is why we opt for a mixed formulation of the elastostatics in terms of displacement and Herrmann pressure \cite{boffi13,herrmann65}. This} allows us to have stability robustly with respect to the Lam\'e parameter $\lambda$ that goes to infinity when the Poisson \cblue{ratio} approaches $\frac12$. Robustness with respect to the Lam\'e parameter $\mu$ can be obtained from appropriate scaling of the inf-sup condition for the divergence operator, as done in, e.g., \cite{khan2019robust,olshanskii06}. Robustness with respect to the Lam\'e parameters \cred{is also required for the reaction-diffusion sub-problem, which is a priori not trivial since the diffusion coefficient depends on the Cauchy stress and in turn on the Lam\'e coefficients}. Similarly to the diffusion-assisted elasticity, the stress-assisted reaction-diffusion problem is written as a perturbed saddle-point problem, and robustness is sought also with respect to the uniform bounds of the stress-assisted diffusion and of the reaction parameter. 
\cred{To achieve such robustness properties we work with parameter-weighted norms and use the perturbed saddle-point theory from \cite{boon21,braess96penalty}. The continuous analysis is then adequately adapted to the discrete case, here using virtual element methods (VEMs). Note that using a primal formulation for the elasticity equations can lead to volumetric locking for some parameter regimes (and this is observed even using VE discretisations of low order \cite[Section 3]{beirao13elast}).}

There has been quite a few works on mixed FEMs for the stress-assisted diffusion problems, see e.g., \cite{gatica18,gatica19,gatica22,gomez23}. The standard finite element methods (FEM) are  well-adapted for many realistic models, but from the past decade, polygonal methods are fast emerging due to their capabilities for mesh flexibility and a common framework for higher-degrees and for higher-dimensions. The VEM  first introduced in \cite{beirao13} is one of the popular polygonal methods, which can be viewed as the generalisation of FEM. The VEM setting takes into consideration the typical terminologies in FEM like conforming, nonconforming, or mixed methods, and that allows to modify the well-developed tools from FEM in somewhat similar manner. The local VE spaces  are defined as a set of solutions to  problem-dependent partial differential equations combined with boundary conditions based on the underline nature of the method we are interested in (e.g., conforming or nonconforming).  Of course a leap from triangular/rectangular elements  to fairly general polygonal-shaped elements forces to include locally the non-polynomial shape functions in addition to polynomials. But VEM stands different from the other methods in a sense that it does not require to compute explicitly the possibly complex shape functions  and the complete analysis can be performed through degrees of freedom (DoFs) of VE functions and  their appropriate projections from the local discrete space to the polynomial subspace. A vast literature is available on VEM and we name just a few here \cite{brezzi_2014, daveiga15-transport, daveiga15-stokes, daveiga2022stability} which are relevant to the model problem in this paper. Applications of  VEM in  coupled multiphysics are still emerging, see \cite{antonietti2023virtual,gatica2021mixed} for the Navier--Stokes \cred{equations coupled with the heat equation, and \cite{verma2022virtual} for advection-diffusion systems coupled with Biot poroelasticity}. To  the  best of authors'  knowledge, the present paper is the first one addressing coupled stress-assisted diffusion problems. We restrict VE spaces and analysis to 2D for simplicity, but extension to 3D is possible. For example, we refer to \cite{beirao2020stokes} and \cite{dassi2022bend} for Stokes and mixed Darcy problems in three dimensions. We also mention that \cred{we have written an analysis that uses an adaptable common framework for general polynomial degree. Another evident advantage of VEM for this type of problems is the divergence-conforming character of the discretisation for the elasticity equations (which is non-trivial to achieve with, e.g., FEMs or with  other conforming VEM which might require high polynomial degree to obtain this important property).}
\cgreen{We stress that the proofs of well-posedness of the continuous and discrete problems are based on the Banach fixed-point strategy and therefore are associated with a contraction mapping property that holds for sufficiently small data. Should this not be satisfied, the literature contains alternative treatments including augmented Lagrangian techniques (see, e.g., \cite{gatica2021mixed,lovadina2022some} in the context of VEM) and smoothing-type acceleration schemes at the algebraic level (see, e.g., \cite{lott2012accelerated,ramiere2015iterative}). We do not address these issues here, but simply say that the numerical evidence in the paper indicates that the proposed methods are stable and convergent even beyond the parameter regime of small data.}

 \paragraph{Plan of the paper} The contents of this paper have been organised in the following manner. The remainder of this introductory section contains preliminary notational conventions and definitions of useful functional spaces. Section~\ref{sec:model} presents the precise definition of the coupled stress-assisted diffusion model we will address, along with the derivation of its weak formulation in the form of two coupled perturbed saddle-point problems. The unique solvability of the model problem is studied in Section~\ref{sec:wellp}, using the Banach fixed-point approach. Section~\ref{sec:vem} is devoted to designing a family of VEMs for the system under consideration, defining virtual spaces, and recalling appropriate projection operators. In Subsection~\ref{sec:wellp-h} we use an abstract result for discrete perturbed saddle-point problems and again the fixed-point theory, to show that the discrete problem is well-posed. The a priori error analysis for the VE method is carried out in Section~\ref{sec:error-analysis}. Finally, the convergence rates and robustness of the proposed formulation are tested numerically in Section~\ref{sec:results}. \cred{To illustrate mesh flexibility, we also include numerical experiments with different types of polygonal meshes (e.g., kangaroo-shaped geometries).}

\paragraph{Recurrent notation and Sobolev spaces}
Let $D$ be a polygonal Lipschitz bounded domain of $\bbR^2$ %($d=2$) 
with boundary $\partial D$. In this paper {we apply} all differential operators row-wise.  Hence, given a tensor function $\bsigma:D\to \bbR^{2\times 2}$ and a vector field $\bu:D\to \bbR^2$, we set the tensor divergence $\bdiv \bsigma:D \to \bbR^2$, the  vector gradient $\bnabla \bu:D \to \bbR^{2\times 2}$, and the symmetric gradient $\beps(\bu) : D \to \bbR^{2\times 2}$ as
$
(\bdiv \bsigma)_i := \sum_j   \partial_j \sigma_{ij}$, $ (\bnabla \bu)_{ij} := \partial_j u_i$,
 {and}  $\beps(\bu) := \frac{1}{2}\left[\bnabla\bu+(\bnabla\bu)^{\tt t}\right]$. 
 The component-wise inner product of two matrices $\bsigma, \,\btau \in\bbR^{2\times 2}$ is defined by $\bsigma:\btau:= \sum_{i,j}\sigma_{ij}\tau_{ij}$. 
 For $s \geq 0$, we denote the usual Hilbertian Sobolev space of scalar functions with domain $D$ by $H^s(D)$, and denote their vector and tensor counterparts as $\bH^s(D)$ and $\bbH^s(D)$, respectively. 
 The norm of $H^s(D)$ is denoted $\norm{\cdot}_{s,D}$ and the corresponding semi-norm $|\cdot|_{s,D}$. We also use the convention  $H^0(D):=L^2(D)$ and the notation $L^2_0(D)$ when the zero mean value condition is imposed, and $(\cdot, \cdot)_D$ to denote the inner product in $L^2(D)$ (similarly for the vector and tensor counterparts). 
%\begin{equation*}%\label{L2}
%	(\bu, \bv)_D:=\int_D \bu\cdot\bv,\ \forall \bu,\bv\in \bL^2(D),\quad  (\bsigma, \btau)_D:=\int_D \bsigma:\btau,\ \forall \bsigma, \btau\in \bbL^2(D).
%\end{equation*}
The space of vectors in $\bL^2(D)$ with divergence in $L^2(D)$ is denoted $\bH(\vdiv, D)$ and it is a Hilbert space equipped with the corresponding graph norm $\norm{\bzeta}^2_{\vdiv, D}:=\norm{\bzeta}_{0,D}^2+\norm{\vdiv\bzeta}^2_{0, D}$. The same way around with rotational in $L^2(D)$ which is a Hilbert space denoted by $\bH(\vrot, D)$ with the natural norm $\norm{\bzeta}^2_{\vrot,D}:=\norm{\bzeta}_{0,D}^2+\norm{\vrot\bzeta}^2_{0,D}$. Let $\bn$ be the outward unit normal vector to $\partial D$. The Green Formula
%\[(\bzeta, \nabla r)_D + (\vdiv \bzeta, r)_D = \int_{\partial D} \bzeta\cdot\bn \,r \qquad  \forall r \in H^1(D),\] 
can be used to extend the normal trace operator $ \bbC^\infty(\overline D)\ni \bzeta \to (\bzeta|_{\partial D})\cdot \bn$ to a linear continuous mapping $(\cdot|_{\partial D})\cdot \bn:\, \bH(\vdiv, D) \to H^{-\frac{1}{2}}(\partial D)$, where $H^{-\frac{1}{2}}(\partial D)$ is the dual of $H^{\frac{1}{2}}(\partial D)$ (see \cite[Theorem 1.7]{gatica14}). Moreover, the well-known trace inequality
 $\left|\langle (\bzeta|_{\partial D})\cdot \bn, r^* \rangle_{\partial D} \right| \leq {\norm{\bzeta}}_{\vdiv,D}{\norm{r^*}}_{\frac{1}{2},\partial D}$ holds,
where $\langle\cdot,\cdot\rangle_{\partial D}$ denotes the duality pairing between $H^{\frac{1}{2}}(\partial D)$ and $H^{-\frac{1}{2}}(\partial D)$ with respect to the inner product in $L^2(\partial D)$ and the norm in the traces space $H^{\frac{1}{2}}(\partial D)$ is defined as $\norm{r^*}_{\frac{1}{2},\partial D}=\inf_{r|_{\partial D}=r^*}{\norm{r}}_{1,\Omega}$. An analogous argument extends the notion of trace operators to subsets of $\partial D$. 

We shall use the letter $C$ to denote a generic positive constant independent of the mesh size  $h$ and physical constants, which might stand for different values at its different occurrences. Moreover, given any positive expressions $X$ and $Y$, the notation $X \,\lesssim\, Y$  means that \cgreen{a positive constant $C < \infty$ exists such that $X \leq C\, Y$}. %(and similarly for $X\gtrsim Y$).

%%%%%%%%%%%%%%%%%%%%%%%%%%%%%%%%%%%%%%%%%%%%%%%%%
\section{Governing equations}\label{sec:model}
This section introduces the model problem in detail with the required assumptions on the involved coefficients and the corresponding weak formulation at the end.
\subsection{Model statement}
Let us consider a deformable body occupying  the domain $\Omega$  and satisfying the balance of linear momentum in the stationary regime
\begin{equation}\label{eq:momentum}
-\bdiv\bsigma = \fb \qquad \text{in $\Omega$},
\end{equation}
\cgreen{where $\bsigma$ is the Cauchy stress tensor defined in \eqref{eq:constitutive} below,} and $\fb$ is a vector of external body loads.  The coupling between solid deformation and a solute with concentration $\varphi$ is considered using an active stress approach. Combined with Hooke's law, this  gives the specification 
\begin{equation}\label{eq:constitutive}
 \cgreen{   \bsigma := \cC \beps(\bu) - \ell(\varphi)\bbI = 2\mu \beps(\bu) + \lambda \vdiv\bu\, \bbI- \ell(\varphi)\bbI \qquad \text{in $\Omega$},}
\end{equation}
where $\cC$ is the fourth-order elasticity tensor (symmetric and positive definite), $\mu,\lambda$ are the Lam\'e parameters associated with the material properties of the solid substrate, $\ell$ is a nonlinear function of the solute concentration that modulates the intensity and distribution of (isotropic) active stress (also known as diffusion-induced stress), $\bu$ is the displacement vector, $\beps(\bu)$ is the tensor of infinitesimal strains (symmetrised gradient of displacement), and $\bbI$ denotes the identity tensor in $\bbR^{2\times 2}$. 

Equations \eqref{eq:momentum}-\eqref{eq:constitutive} indicate that the solid deformation will be a response of both external loads and internal stress generation due to the coupling with the species $\varphi$. On the other hand, the solute steady transport in the solid is governed by the reaction-diffusion equation 
\[
    \theta \varphi %+ \alpha \bu \cdot \nabla \varphi 
    - \vdiv(\bbM(\bsigma)\nabla\varphi) = g\qquad \text{in $\Omega$},
\]
where $g$ is a given net volumetric source of solute, $\theta$ is a positive model parameter, and $\bbM$ is the stress-assisted diffusion coefficient (a matrix-valued function of total stress, and assumed uniformly bounded away from zero).  This term implies a two-way coupling mechanism between deformation and transport.
We also consider the auxiliary variable of diffusive flux 
\begin{equation}\label{eq:zeta}
\bzeta = \bbM(\bsigma) \nabla \varphi \qquad \text{in $\Omega$}, \end{equation}
and therefore the reaction-diffusion equation reads
\begin{equation}\label{eq:transport}
\theta \varphi %+ \frac{\alpha}{M(\bsigma)}\bu \cdot \bzeta 
- \vdiv\bzeta = g \qquad \text{in $\Omega$}.\end{equation}

In \cite{gatica18,gatica19} the contribution of $\ell$ into the active stress is transferred (thanks to \eqref{eq:momentum}) to the loading term as $\fb= \fb(\varphi)$. Here we maintain it as part of the volumetric stress through the Herrmann-type pressure (see \cite{herrmann65} for the classical hydrostatic pressure formulation) which represents the \emph{total volumetric stress} (see  e.g., \cite{kuroki1982boundary}, but note that here we are including both elastic and active components)   
\begin{equation}\label{eq:p}
    \tilde{p} := - \lambda \vdiv\bu + \ell(\varphi) \qquad \text{in $\Omega$},
\end{equation}
giving that the Cauchy stress reads $\bsigma = 2\mu \beps(\bu) - \tilde{p}\bbI$. This approach leads to a system  similar to the total pressure formulation for linear poroelasticity  in parameter-robust and conservative form (see, e.g., \cite{boon21,kumar20}). 

We adopt mixed loading boundary conditions for the coupled problem: the structure is clamped $(\bu = \bzero)$ and a given concentration $\varphi = \varphi_D$ are imposed on $\Gamma_D$,  where the boundary subset $\Gamma_D\subset  \partial \Omega$ is of positive surface measure; and traction and zero solute flux are prescribed ($\bsigma\bn = \bt$ and $\bzeta\cdot\bn = 0$)  on  $\Gamma_N:= \partial\Omega \setminus \Gamma_D$. 

\subsection{Weak formulation}
In view of the boundary conditions, we define the  Hilbert spaces 
\begin{gather*}
\bH^1_D(\Omega):=\{\bv \in \bH^1(\Omega): \bv = \bzero\quad \text{on }\Gamma_D\}, \quad  
\bH_N(\vdiv,\Omega):=\{\bxi \in \bH(\vdiv,\Omega): \bxi\cdot\bn = 0 \quad \text{on }\Gamma_N\},   
\end{gather*}
with the boundary assignment understood in the sense of traces, and consider the following weak formulation for the system composed by \eqref{eq:momentum}, \eqref{eq:p}, \eqref{eq:zeta}, \eqref{eq:transport}. For given $\fb\in \bL^2(\Omega)$, $g\in L^2(\Omega)$, and $\varphi_D\in H^{\frac{1}{2}}(\Gamma_D)$, find 
$(\bu,\tilde{p},\bzeta,\varphi) \in \bH_D^1(\Omega)\times L^2(\Omega) \times \bH_N(\vdiv,\Omega) \times L^2(\Omega)$ such that 
\begin{subequations}\label{eq:weak}
\begin{align}
 2\mu \int_\Omega \beps(\bu):\beps(\bv) - \int_\Omega \tilde{p}\vdiv\bv & = \int_\Omega \fb\cdot\bv \qquad \forall \bv \in   \bH_D^1(\Omega), \\
- \int_\Omega \tilde{q}\vdiv\bu - \frac{1}{\lambda} \int_\Omega \tilde{p}\tilde{q}\cgreen{-\frac{1}{\lambda}\int_\Omega  \ell(\varphi)\tilde{q} } & = \cgreen{0} \qquad \forall \tilde{q}\in L^2(\Omega),\\
\int_\Omega \bbM(\bsigma)^{-1} \bzeta \cdot \bxi + \int_\Omega \varphi \vdiv \bxi & = \langle \varphi_D, \bxi\cdot\bn\rangle_{\Gamma_D} \qquad \forall \bxi\in \bH_N(\vdiv,\Omega),\\ 
\int_\Omega \psi \vdiv\bzeta - \theta \int_\Omega \varphi\psi  
%\int_\Omega \frac{\alpha}{M(\bsigma)}\bu \cdot \bzeta\, \psi   
&= - \int_\Omega g\psi \qquad \forall \psi \in L^2(\Omega). 
\end{align}
\end{subequations}
This formulation (mixed for the nonlinear reaction-diffusion part and mixed for the active linear elasticity part) results simply multiplying by test functions and integrating by parts the terms containing the shear and volumetric parts of stress, as well as the diffusive flux. 

\subsection{Two uncoupled perturbed saddle-point problems}
Before we investigate the unique solvability of \eqref{eq:weak}, we will regard the nonlinear coupled problem as two separate perturbed saddle-point problems. Commonly used forms for the stress-assisted diffusion coefficient include (see, e.g., \cite{cherubini17,gatica18,grigoreva19}) 
\[\bbM(\bsigma) = m_0\bbI + m_1 \bsigma + m_2 \bsigma^2, \quad 
\bbM(\bsigma) = m_0\exp(-m_1 \tr\bsigma)\bbI, \quad \bbM(\bsigma) = m_0 \exp(-m_1 \bsigma^{m_2}),  \]
where $m_0,m_1,m_2 \in\mathbb{R}$ are model parameters, with $m_0>0$. On the other hand, a typical form of the active stress include the Hill-type function (i.e., with a modulation saturating at high concentration values see, e.g., \cite{martins1998numerical,murray2003mathematical,murray1984generation}) 
 or a scaling of the given concentration (see \cite{Taralov2015}), defined as
\[\ell(\vartheta)=K_0 + \frac{\vartheta^n}{K_1+\vartheta^n}, \quad \ell(\vartheta)=K_0\vartheta,\]
where $K_0,K_1$ and $n$ (Hill coefficient) are model parameters. From now on, the stress-assisted diffusion will be written explicitly as a function of the infinitesimal strain and of the total volumetric stress $\bbM(\beps(\bw),\tilde{r})$. It is assumed that $\bbM(\cdot,\cdot)$ is symmetric, positive semi-definite and uniformly bounded in $\mathbb{L}^\infty(\Omega)$, likewise for $\bbM^{-1}(\cdot,\cdot)$. More explicitly, there exists a constant $M$ such that $0<\frac{1}{M} \leq M$ and 
\begin{subequations}\label{M-norm}
\begin{align}
\bbM^{-1}(\beps(\bw),\tilde{r}) \in \mathbb{L}^\infty(\Omega) &\qquad \forall \bw\in \bH^1(\Omega),\tilde{r}\in L^2(\Omega), \label{eq:M-Linfty}\\
\frac{1}{M} \bx\cdot \bx \leq \bx\cdot[\bbM^{-1}(\beps(\bw),\tilde{r})\bx] &\qquad \forall \bx \in \mathbb{R}^2,\\
\by\cdot[\bbM^{-1}(\beps(\bw),\tilde{r})\bx]  \leq M \bx\cdot\by &\qquad \forall \bx,\by \in \mathbb{R}^2.
\end{align}

\end{subequations}
For the active stress function $\ell(\cdot)$, we assume that $\ell: L^2(\Omega)\to \cgreen{L^2(\Omega)}$ and satisfies 
\begin{equation}\label{eq:ell-bound}
\norm{\ell(\vartheta)}_{0,\Omega} \lesssim \norm{\vartheta}_{0,\Omega} \qquad \forall \vartheta \in L^2(\Omega). 
\end{equation}
Moreover, we assume that $\bbM^{-1}(\cdot,\cdot)$ and $\ell(\cdot)$ are Lipschitz continuous, i.e., for all $\btau_1,\btau_2\in \bbL^2(\Omega)$ and $\tilde{r}_1,\tilde{r}_2,\vartheta_1,\vartheta_2\in L^2(\Omega)$ there exist positive constants $L_{\bbM}$ and $L_{\ell}$ that satisfy the following bounds
\begin{subequations}
    \begin{align}
        \norm{\bbM^{-1}(\btau_1,\tilde{r}_1)-\bbM^{-1}(\btau_2,\tilde{r}_2)}_{\infty,\Omega}&\leq L_{\bbM}\norm{(\btau_1,\tilde{r}_1)-(\btau_2,\tilde{r}_2)}_{\bbL^2(\Omega) \times L^2(\Omega)},\label{eq:lipschitz-M-bound}\\
        \norm{\ell(\vartheta_1)-\ell(\vartheta_2)}_{0,\Omega}&\leq L_{\ell}\norm{\vartheta_1-\vartheta_2}_{0,\Omega}.\label{eq:lipschitz-l-bound}
    \end{align}
\end{subequations}

For a fixed vector-valued function $\bw \in \bH^1(\Omega)$ and a fixed scalar function $\tilde{r}\in L^2(\Omega)$, we define the following bilinear form $a_2^{\bw,\tilde{r}}:\bH(\vdiv,\Omega)\times \bH(\vdiv,\Omega)\to \bbR$ as 
\[a_2^{\bw,\tilde{r}}(\bzeta,\bxi): =  \int_\Omega \bbM^{-1}(\beps(\bw),\tilde{r}) \bzeta \cdot \bxi \qquad  \forall \bzeta,\bxi\in \bH(\vdiv,\Omega).\]
In addition, for a fixed scalar function $\vartheta\in L^2(\Omega)$, we define the linear functional $G_1^\vartheta: L^2(\Omega)\to \bbR$ as 
\[ G_1^\vartheta(\tilde{q}): = -\frac{1}{\lambda}\int_\Omega  \ell(\vartheta)\tilde{q} \qquad \forall \tilde{q} \in L^2(\Omega).\]
Then set the bilinear forms $a_1: \bH^1(\Omega)\times \bH^1(\Omega)\to \bbR$, $b_1:\bH^1(\Omega)\times L^2(\Omega)\to \bbR$, 
$c_1,c_2:L^2(\Omega)\times L^2(\Omega)\to \bbR$, $b_2:\bH(\vdiv,\Omega)\times L^2(\Omega)\to \bbR$, and linear functionals $F_1 : \bH^1(\Omega)\to \bbR$, $F_2:\bH(\vdiv,\Omega)\to \bbR$, $G_2:L^2(\Omega)\to\bbR$ as follows  
\begin{gather*}
    a_1(\bu,\bv) : = 2\mu \int_\Omega \beps(\bu):\beps(\bv), \quad 
    b_1(\bv,\tilde{q}) :=  - \int_\Omega \tilde{q}\vdiv\bv,\quad
    c_1(\tilde{p},\tilde{q}) : = \int_\Omega \tilde{p}\tilde{q},\quad F_1(\bv):= \int_\Omega \fb\cdot\bv,\\
    b_2(\bxi,\psi):= \int_\Omega \psi \vdiv \bxi,\quad 
    c_2(\varphi,\psi):= \int_\Omega \varphi\psi,\quad F_2(\bxi): = \langle \varphi_D, \bxi\cdot\bn\rangle_{\Gamma_D}, \quad G_2(\psi):= - \int_\Omega g\psi,
\end{gather*}
for all $\bu,\bv \in \bH^1(\Omega)$, $\tilde{p},\tilde{q}, \varphi, \psi \in L^2(\Omega)$, $\bzeta,\bxi\in \bH(\vdiv,\Omega)$, $\varphi_D\in H^{\frac{1}{2}}(\Gamma_D)$.

With these building blocks we define the following system of linear elasticity: for a given $\vartheta \in L^2(\Omega)$ and $\fb \in \bL^2(\Omega)$, find $(\bu,\tilde{p})\in \bH_D^1(\Omega)\times L^2(\Omega)$ such that 
\begin{subequations}\label{eq:weak-elast}
    \begin{align}
  a_1(\bu,\bv) + b_1(\bv,\tilde{p})  &= F_1(\bv) \qquad \forall \bv \in \bH_D^1(\Omega),\\
  b_1(\bu,\tilde{q}) - \frac{1}{\lambda}c_1(\tilde{p},\tilde{q})  &= G_1^\vartheta(\tilde{q}) \qquad \forall \tilde{q} \in L^2(\Omega).
    \end{align}
\end{subequations}
We  also consider the following %uncoupled 
reaction-diffusion equation in weak form: for a given $\bw\in \bH_D^1(\Omega)$, $\tilde{r}\in L^2(\Omega)$, $\varphi_D\in H^{\frac{1}{2}}(\Gamma_D)$ and $g\in L^2(\Omega)$, find $(\bzeta, \varphi)\in \bH_N(\vdiv,\Omega)\times L^2(\Omega)$ such that 
\begin{subequations}\label{eq:weak-transport}
    \begin{align}
  a_2^{\bw,\tilde{r}}(\bzeta,\bxi) + b_2(\bxi,\varphi)  &= F_2(\bxi) \qquad \forall \bxi \in \bH_N(\vdiv,\Omega),\\
  b_2(\bzeta,\psi) - \theta c_2(\varphi,\psi)  &= G_2(\psi) \qquad \forall \psi \in L^2(\Omega).
    \end{align}
\end{subequations}

%%%%%%%%%%%%%%%%%%%%%%%%%%%%%%%%%%%%%%%%%%%%%%%%%
\section{Well-posedness of the continuous problem}\label{sec:wellp}
The unique solvability of the  elasticity and reaction-diffusion equations will be established using the theory of saddle-point problems with penalty \cite[Lemma 3]{braess96penalty}. First we state an extended version of the Babu\v{s}ka--Brezzi theory from \cite[Theorem 2.1]{boon21}.

\begin{theorem} \label{th:unique-solvability}
Let $V,Q_b$ be Hilbert spaces endowed with the (possibly parameter-dependent) norms $\norm{\cdot}_{V}$ and $\norm{\cdot}_{Q_b}$, let $Q$ be a dense (with respect to the norm $\norm{\cdot}_{Q_b}$) linear subspace of $Q_b$ and three bilinear forms $a(\cdot,\cdot)$ on $V\times V$ (assumed continuous, symmetric and positive semi-definite), $b(\cdot,\cdot)$ on $V\times Q_b$ (continuous), and $c(\cdot,\cdot)$ on $Q\times Q$ (symmetric and positive semi-definite); which define the linear operators $A: V \rightarrow V'$, $B : V \rightarrow Q'_b$ and $C:Q \rightarrow Q'$, respectively. Suppose further that %the following conditions hold 
\begin{subequations}\label{brezzi-conditions}
\begin{align}\label{brezzi-condition-1}
\norm{\hat{v}}_V^2 \lesssim a(\hat{v},\hat{v}) \qquad \forall \hat{v} \in \mathrm{Ker}(B),\\
\label{brezzi-condition-2}
\norm{q}_{Q_b} \lesssim \sup_{v\in V} \frac{b(v,q)}{{\norm{v}}_V} \qquad \forall q \in Q_b.
\end{align}
\end{subequations}
Assume that $Q$ is complete with respect to the norm $\norm{\cdot}_{Q}^2:=\norm{\cdot}_{Q_b}^2+t^2|\cdot|_c^2$, where $|\cdot|_c^2 := c(\cdot,\cdot)$ is a semi-norm in $Q$. Let $t\in[0,1]$ and set the parameter-dependent energy norm as
$$\norm{(v,q)}_{V\times Q}^2:=\norm{v}_{V}^2+\norm{q}_{Q}^2=\norm{v}_{V}^2+\norm{q}_{Q_b}^2+t^2|q|_c^2.$$
Assume also that the following inf-sup condition holds
\begin{align}\label{braess-condition}
\norm{u}_{V} \lesssim \sup_{(v,q)\in V\times Q} \cblue{\frac{a(u,v)+b(u,q)}{\norm{(v,q)}_{V\times Q}}} \qquad \forall u \in V.
\end{align}
Then, for every $F\in V'$ and $G\in Q'$, there exists a unique $(u,p)\in V\times Q$ satisfying
\begin{subequations}
\begin{align*}
a(u,v)+ b(v,p) = F(v) &\qquad \forall v\in V,\\
b(u,q) - t^2c(p,q) = G(q) &\qquad \forall q\in Q.
\end{align*}
\end{subequations}
Furthermore, the following continuous dependence on data holds 
\begin{align} \label{dependence_data}
    \norm{(u,p)}_{V\times Q} &\lesssim \norm{F}_{V'}+\norm{G}_{Q'}.
\end{align}
\end{theorem}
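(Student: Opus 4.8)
The plan is to follow the classical Babu\v ska--Brezzi--Braess strategy for perturbed saddle-point problems, adapted to the parameter-dependent norms. Since the bilinear form $a$ is only positive \emph{semi}-definite on all of $V$ and coercive only on $\mathrm{Ker}(B)$, the well-posedness cannot come from $a$ alone; the extra ingredient is the \emph{combined} inf-sup condition \eqref{braess-condition}, which effectively plays the role of an inf-sup condition for the full $2\times 2$ block operator restricted to the first argument. First I would introduce the bounded linear operator $\mathcal L:V\times Q\to (V\times Q)'$ associated with the bilinear form
$\mathcal B((u,p),(v,q)) := a(u,v)+b(v,p)+b(u,q)-t^2 c(p,q)$,
and observe that $\mathcal B$ is continuous on $(V\times Q)\times(V\times Q)$ with respect to the energy norm $\norm{(\cdot,\cdot)}_{V\times Q}$ --- here continuity of $c$ in the $\norm{\cdot}_Q$-norm is automatic since $t^2|q|_c^2\le\norm{q}_Q^2$, and $b$ is continuous into $Q_b'\supset$ (dually) the $\norm{\cdot}_{Q_b}$-part of the norm. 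The symmetry (up to the sign of the $c$-block) means it suffices to prove a single inf-sup / Banach--Ne\v cas--Babu\v ska condition
$\norm{(u,p)}_{V\times Q}\lesssim \sup_{(v,q)}\mathcal B((u,p),(v,q))/\norm{(v,q)}_{V\times Q}$
together with the (implied, by near-symmetry) nondegeneracy in the second slot; then existence, uniqueness and \eqref{dependence_data} all follow at once from the Banach--Ne\v cas--Babu\v ska theorem.

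The core is therefore the global inf-sup estimate, and I would establish it in two steps. Fix $(u,p)\in V\times Q$. \textbf{Step 1 (control of $\norm{u}_V$ and the $|p|_c$ part).} Test with $(v,q)=(u,p)$: this gives $\mathcal B((u,p),(u,p))=a(u,u)+2b(u,p)-t^2|p|_c^2$, which is not yet sign-definite. Instead I would use \eqref{braess-condition} directly: it yields a pair $(v_0,q_0)$ with $\norm{(v_0,q_0)}_{V\times Q}\le 1$ and $\norm{u}_V\lesssim a(u,v_0)+b(u,q_0)$. Since $a(u,v_0)+b(v_0,p)+b(u,q_0)-t^2c(p,q_0)=\mathcal B((u,p),(v_0,q_0))$, we get $a(u,v_0)+b(u,q_0)=\mathcal B((u,p),(v_0,q_0))-b(v_0,p)+t^2 c(p,q_0)$; bounding $b(v_0,p)$ by continuity of $b$ and $t^2|c(p,q_0)|\le t^2|p|_c\,|q_0|_c\le t|p|_c\cdot(t|q_0|_c)\le \norm{p}_Q$, this shows $\norm{u}_V\lesssim \norm{\mathcal L(u,p)}+\norm{p}_{Q_b}+t|p|_c$, i.e.\ $\norm u_V$ is controlled by the operator norm plus the full $\norm{p}_Q$. \textbf{Step 2 (control of $\norm{p}_{Q_b}$).} Use the inf-sup condition \eqref{brezzi-condition-2}: there is $v_1\in V$, $\norm{v_1}_V\lesssim 1$, with $\norm{p}_{Q_b}\lesssim b(v_1,p)$. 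Testing $\mathcal B$ with $(v_1,0)$ gives $b(v_1,p)=\mathcal B((u,p),(v_1,0))-a(u,v_1)\le \norm{\mathcal L(u,p)}+\norm{a}\,\norm{u}_V$. Hence $\norm{p}_{Q_b}\lesssim \norm{\mathcal L(u,p)}+\norm u_V$. \textbf{Step 3 (control of $t|p|_c$ and closing the loop).} Testing with $(0,p)$ gives $b(u,p)-t^2|p|_c^2=\mathcal B((u,p),(0,p))$, so $t^2|p|_c^2 = b(u,p)-\mathcal B((u,p),(0,p))\lesssim \norm u_V\norm{p}_{Q_b}+\norm{\mathcal L(u,p)}\,t|p|_c/\!\big(t|p|_c\big)\cdots$; more carefully, $t^2|p|_c^2\le \norm u_V\norm p_{Q_b} + \norm{\mathcal L(u,p)}\norm p_{Q}$, and then Young's inequality absorbs the $t^2|p|_c^2$ hidden on the right only if it appears with a small coefficient --- which it does not here, so instead I would combine Steps 1--3 linearly: from Step 1, $\norm u_V\lesssim \mathcal E + \norm p_{Q_b}+t|p|_c$ where $\mathcal E:=\norm{\mathcal L(u,p)}$; from Step 2, $\norm p_{Q_b}\lesssim \mathcal E+\norm u_V$; substituting Step 2 into Step 1 and using $t|p|_c\le\norm p_Q$ gives, after absorbing $\norm u_V$ from both sides with a suitable small-constant bookkeeping, $\norm u_V + \norm p_{Q_b}\lesssim \mathcal E + t|p|_c$; finally plug this into Step 3 to bound $t^2|p|_c^2\lesssim (\mathcal E+t|p|_c)t|p|_c+\mathcal E\,(\mathcal E+t|p|_c)$ and apply Young's inequality on the genuine small term $\varepsilon (t|p|_c)^2$ to conclude $t|p|_c\lesssim\mathcal E$, hence $\norm{(u,p)}_{V\times Q}\lesssim\mathcal E=\norm{\mathcal L(u,p)}_{(V\times Q)'}$.

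With the global inf-sup bound in hand, the nondegeneracy of $\mathcal B$ in the second argument follows by the near-symmetry of the form: for fixed $(v,q)\ne 0$ one repeats the same argument with the roles of the slots swapped (the only change being the sign of the $c$-block, which is harmless since $c$ enters quadratically in all the estimates), so $\sup_{(u,p)}\mathcal B((u,p),(v,q))>0$. The Banach--Ne\v cas--Babu\v ska theorem (equivalently, Lemma 3 of \cite{braess96penalty} in the perturbed saddle-point formulation, or Theorem 2.1 of \cite{boon21}) then gives existence of a unique $(u,p)\in V\times Q$ with $\mathcal L(u,p)=(F,G)$, and the stability estimate $\norm{(u,p)}_{V\times Q}\lesssim\norm{\mathcal L(u,p)}_{(V\times Q)'}=\norm{(F,G)}_{(V\times Q)'}\le\norm F_{V'}+\norm G_{Q'}$, which is exactly \eqref{dependence_data}. (The density of $Q$ in $Q_b$ is what guarantees that $C:Q\to Q'$ and the functional $G\in Q'$ interact correctly with the $\norm{\cdot}_{Q_b}$-part of the norm, and that the limiting case $t=0$ --- where the $c$-block disappears and one recovers the classical Brezzi conditions \eqref{brezzi-condition-1}--\eqref{brezzi-condition-2} --- is covered uniformly.)

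I expect the main obstacle to be the bookkeeping in Steps 1--3: because $a$ is not coercive on all of $V$, one cannot obtain $\norm u_V$ cheaply, and the three partial estimates are mutually coupled (each bounds one norm-component in terms of the others plus $\mathcal E$), so one must chain them in the right order and track the absorption constants carefully to make sure the $\norm u_V$, $\norm p_{Q_b}$ and $t|p|_c$ terms on the right-hand sides can all be absorbed into the left --- with the single genuinely quadratic-in-$t|p|_c$ relation (Step 3) being the one that finally pins down $t|p|_c$ via Young's inequality. The uniformity of all hidden constants with respect to $t\in[0,1]$ (and hence, in the application, with respect to $\lambda$ and $\theta$) must be checked throughout; this is precisely why the parameter-weighted norm $\norm{\cdot}_Q$ is built to include the $t^2|\cdot|_c^2$ term. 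Since this theorem is quoted verbatim from \cite{boon21}, in the paper itself I would simply cite it, but the above is the self-contained argument one would give.
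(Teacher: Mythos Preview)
The paper does not prove this theorem at all: it is stated as a direct quotation of \cite[Theorem~2.1]{boon21} (with \cite[Lemma~3]{braess96penalty} in the background), and is used as a black box in the subsequent lemmas. You correctly anticipate this at the end of your proposal.

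Regarding the self-contained argument you sketch, there is a genuine gap. Your Steps~1 and~2 give
\[
\norm{u}_V \le C_0\bigl(\mathcal E + \norm{p}_{Q_b} + t|p|_c\bigr),
\qquad
\norm{p}_{Q_b} \le C_1\bigl(\mathcal E + \norm{u}_V\bigr),
\]
and substituting the second into the first yields $\norm{u}_V \le C_0C_1\,\norm{u}_V + (\text{rest})$. There is no free small parameter here --- the constants $C_0,C_1$ are fixed by the data of the problem --- so ``small-constant bookkeeping'' cannot absorb $C_0C_1\norm{u}_V$ into the left-hand side. The chain is genuinely circular as written.

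The standard cure (and what underlies the proof in \cite{boon21}) is twofold. First, include the \emph{diagonal} test $(v,q)=(u,-p)$, which gives the clean identity
\[
\mathcal B\bigl((u,p),(u,-p)\bigr)=a(u,u)+t^2|p|_c^2 \;\le\; \mathcal E\,\norm{(u,p)}_{V\times Q}=:\mathcal E\,X.
\]
Second, in your Step~2 bound $|a(u,v_1)|$ not by $\norm{a}\,\norm{u}_V$ but by the Cauchy--Schwarz inequality for the positive semi-definite form $a$, namely $|a(u,v_1)|\le a(u,u)^{1/2}a(v_1,v_1)^{1/2}\lesssim a(u,u)^{1/2}$. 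Then $\norm{p}_{Q_b}\lesssim \mathcal E + a(u,u)^{1/2}\lesssim \mathcal E + (\mathcal E X)^{1/2}$, and similarly $\norm{u}_V$ and $t|p|_c$ are bounded by $\mathcal E+(\mathcal E X)^{1/2}$. Squaring and summing yields $X^2\lesssim \mathcal E^2+\mathcal E X$, and Young's inequality now legitimately closes to $X\lesssim\mathcal E$. Equivalently, one can package this as a single test function $(v,q)=(u+\delta\norm{p}_{Q_b}v_1,\,-p)$ with $\delta>0$ small, which is how the argument is usually presented.
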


\begin{remark}
The form $c(\cdot,\cdot)$ does not require to be bounded in the norm $\norm{\cdot}_{Q_b}$. 
%(which can happen when the norms $\norm{\cdot}_{V}$ and $\norm{\cdot}_Q$ are related to model parameters). 
The  space $Q_b$ allows us to absorb the dependency of the model parameters directly in the norms, yielding  a robust continuous dependence on data. That is, the hidden constant $C$ associated with \eqref{dependence_data} only depends on the constants associated with the Brezzi--Braess conditions \eqref{brezzi-conditions}-\eqref{braess-condition}, and the continuity constants.
%associated with the bilinear forms.
\end{remark}

\subsection{Unique solvability of the decoupled sub-problems}\label{sec:decoupled}
In this section, we verify that the assumptions of Theorem \ref{th:unique-solvability} are satisfied for \eqref{eq:weak-elast} and \eqref{eq:weak-transport}. \cred{Some of the arguments employed in these proofs are standard and we omit them. For the sake of completeness, their explicit derivations can be found in reference \cite{krr_preprint}.}

Let us adopt the following notation for the functional spaces for displacement and total volumetric stress $\bV_1 := \bH_D^1(\Omega)$ and $Q_1 = Q_{b_1} := L^2(\Omega)$,  equipped with the following scaled norms and seminorms 
\begin{align*}
    \norm{\bu}_{\bV_1}^2 := 2\mu\norm{\beps(\bu)}_{0,\Omega}^2, \quad 
    \norm{\tilde{p}}_{Q_1}^2 := \norm{\tilde{p}}_{Q_{b_1}}^2 + \frac{1}{\lambda}|\tilde{p}|_{c_1}^2, \quad
    \norm{\tilde{p}}_{Q_{b_1}}^2 := \frac{1}{2\mu}\norm{\tilde{p}}_{0,\Omega}^2, \quad 
    |\tilde{p}|_{c_1}^2 := \norm{\tilde{p}}_{0,\Omega}^2.
\end{align*}
On the other hand, let us denote the functional spaces for diffusive flux and concentration as $\bV_2 = \bH_N(\vdiv,\Omega)$ and $Q_2 = Q_{b_2} = L^2(\Omega)$, furnished with the following norms and seminorms 
\begin{align*}
    &\norm{\bzeta}_{\bV_2}^2 := \norm{\bzeta}_{\bbM}^2 + M\norm{\vdiv \bzeta}_{0,\Omega}^2, \quad 
    \norm{\varphi}_{Q_2}^2 :=  \norm{\varphi}_{Q_{b_2}}^2 + \theta |\varphi|_{c_2}^2, \\
    &\norm{\bzeta}_{\bbM}^2 := a_2^{\bw,\tilde{r}}(\bzeta,\bzeta),
    \quad
    \norm{\varphi}_{Q_{b_2}}^2 := \frac{1}{M}\norm{\varphi}_{0,\Omega}^2, \quad
    |\varphi|_{c_2}^2 := \norm{\varphi}_{0,\Omega}^2.
\end{align*}
%THIS ALREADY MENTIONED ABOVE
%The role of these weighted norms is to absorb the dependency of the solutions concerning the physical parameters. From now on, suitable conditions are imposed on $\mu,\lambda,M$ and $\theta$ which will make sense in the subsequent results.
\begin{lemma}\label{lem:elast}
For a fixed $\vartheta\in Q_2$, assume that $1\leq \lambda$ and $0<\mu$. Then, there exists a unique solution $(\bu,\tilde{p})\in \bV_1\times Q_1$ to \eqref{eq:weak-elast}. \cgreen{Furthermore, 
there exists a constant $C_1>0$ independent of the physical parameters, such that 
\begin{align}\label{dependence-data-elast}
    \norm{(\bu,\tilde{p})}_{\bV_1\times Q_1} \leq C_1  \left(\norm{F_1}_{\bV'_1} + \norm{G_1^{\vartheta}}_{Q'_1}\right).
\end{align}
}
\end{lemma}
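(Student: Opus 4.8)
The plan is to verify the four hypotheses of Theorem~\ref{th:unique-solvability} for the elasticity sub-problem \eqref{eq:weak-elast} with the identifications $a=a_1$, $b=b_1$, $c=c_1$, $V=\bV_1$, $Q=Q_{b}=Q_1$ (here $Q$ and $Q_b$ coincide as vector spaces and only the norm differs), $t^2=1/\lambda$, $F=F_1$ and $G=G_1^\vartheta$. Since $t^2=1/\lambda\in(0,1]$ under the assumption $1\le\lambda$, the parameter range required by the abstract theorem is met. Continuity and symmetry of $a_1$ and $c_1$ are immediate, and $c_1$ is positive semi-definite (in fact definite); continuity of $b_1$ with a constant independent of the physical parameters follows once we note that, in the scaled norms, $|b_1(\bv,\tilde q)|=|\int_\Omega\tilde q\,\vdiv\bv|\le\norm{\vdiv\bv}_{0,\Omega}\norm{\tilde q}_{0,\Omega}\le\sqrt{2}\,\norm{\beps(\bv)}_{0,\Omega}\norm{\tilde q}_{0,\Omega}=\norm{\bv}_{\bV_1}\norm{\tilde q}_{Q_{b_1}}$, using $\norm{\vdiv\bv}_{0,\Omega}\le\sqrt 2\,\norm{\beps(\bv)}_{0,\Omega}$.

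Next I would check the three structural conditions. For \eqref{brezzi-condition-1}, coercivity of $a_1$ on $\mathrm{Ker}(B)$: here $a_1(\bv,\bv)=2\mu\norm{\beps(\bv)}_{0,\Omega}^2=\norm{\bv}_{\bV_1}^2$ identically on all of $\bV_1$, so the condition holds with constant $1$ — this is automatic thanks to the scaled displacement norm (Korn's inequality is implicitly used only to know that $\norm{\cdot}_{\bV_1}$ is a norm on $\bH^1_D(\Omega)$, which holds because $\Gamma_D$ has positive measure). For the inf-sup condition \eqref{brezzi-condition-2}, I need $\norm{\tilde q}_{Q_{b_1}}=\frac{1}{\sqrt{2\mu}}\norm{\tilde q}_{0,\Omega}\lesssim\sup_{\bv}\frac{-\int_\Omega\tilde q\,\vdiv\bv}{\norm{\bv}_{\bV_1}}$; this follows from the classical surjectivity of the divergence $\vdiv:\bH^1_D(\Omega)\to L^2(\Omega)$ (a right inverse exists with $\norm{\bv}_{1,\Omega}\le\beta^{-1}\norm{\tilde q}_{0,\Omega}$), and the factor $\sqrt{2\mu}$ cancels between numerator and denominator so the constant is $\mu$-independent — this is exactly the scaling trick referenced around \cite{khan2019robust,olshanskii06}. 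Finally, the combined Brezzi--Braess condition \eqref{braess-condition} for $a_1$ on $\bV_1$: since $a_1$ is already coercive on the whole space (not just the kernel), $\sup_{(\bv,\tilde q)}\frac{a_1(\bu,\bv)+b_1(\bu,\tilde q)}{\norm{(\bv,\tilde q)}_{\bV_1\times Q_1}}\ge\frac{a_1(\bu,\bu)}{\norm{(\bu,0)}_{\bV_1\times Q_1}}=\frac{\norm{\bu}_{\bV_1}^2}{\norm{\bu}_{\bV_1}}=\norm{\bu}_{\bV_1}$, giving \eqref{braess-condition} with constant $1$. Hence Theorem~\ref{th:unique-solvability} applies and yields a unique $(\bu,\tilde p)\in\bV_1\times Q_1$ together with the bound $\norm{(\bu,\tilde p)}_{\bV_1\times Q_1}\lesssim\norm{F_1}_{\bV_1'}+\norm{G_1^\vartheta}_{Q_1'}$, which is precisely \eqref{dependence-data-elast} with a constant $C_1$ depending only on the Brezzi--Braess constants (here all equal to $1$ except the divergence inf-sup constant $\beta$), i.e.\ independent of $\mu$ and $\lambda$.

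I do not expect any genuine obstacle here; the only points requiring a little care are bookkeeping ones. First, one must be careful that $Q_1$ is complete with respect to $\norm{\cdot}_{Q_1}^2=\norm{\cdot}_{Q_{b_1}}^2+\frac1\lambda|\cdot|_{c_1}^2$ — but since both $\norm{\cdot}_{Q_{b_1}}$ and $|\cdot|_{c_1}$ are just rescalings of $\norm{\cdot}_{0,\Omega}$, the norm $\norm{\cdot}_{Q_1}$ is equivalent to $\norm{\cdot}_{0,\Omega}$ (with $\lambda$-dependent constants, which is fine for completeness) and $L^2(\Omega)$ is complete. Second, one must confirm that all the "$\lesssim$" constants are genuinely parameter-independent: the divergence inf-sup constant $\beta$ depends only on $\Omega$ and $\Gamma_D$; the $\sqrt2$ from $\norm{\vdiv\bv}_{0,\Omega}\le\sqrt2\norm{\beps(\bv)}_{0,\Omega}$ is universal; and the scalings in the norms are designed precisely so that $\mu$ and $\lambda$ cancel. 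The dense subspace $Q$ of $Q_b$ required abstractly can simply be taken as $Q=Q_b=L^2(\Omega)$, trivially dense in itself, so that technical hypothesis is vacuous. I would therefore write the proof as: (i) recall the abstract theorem and the identifications; (ii) verify continuity/symmetry/semi-definiteness; (iii) verify \eqref{brezzi-condition-1} (trivial), \eqref{brezzi-condition-2} (divergence inf-sup + scaling), and \eqref{braess-condition} (trivial from kernel coercivity); (iv) invoke the theorem and read off \eqref{dependence-data-elast}.
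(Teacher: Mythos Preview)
Your proposal is correct and follows essentially the same route as the paper: both verify the hypotheses of Theorem~\ref{th:unique-solvability} with the identifications you list, establish coercivity of $a_1$ on all of $\bV_1$ (hence trivially on $\mathrm{Ker}(B)$), obtain the inf-sup \eqref{brezzi-condition-2} from divergence surjectivity rescaled by $1/\sqrt{2\mu}$, and derive the Braess condition \eqref{braess-condition} by choosing $(\bv,\tilde q)=(\bu,0)$. The only cosmetic difference is that the paper also writes out the continuity of $F_1$ and $G_1^\vartheta$ explicitly (using Korn and \eqref{eq:ell-bound}), which you implicitly assume; you may want to add a line for $G_1^\vartheta$ since its boundedness relies on the structural assumption \eqref{eq:ell-bound} on $\ell$.
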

\begin{proof}
 Clearly, $Q_1$ is a dense linear subspace of $Q_{b_1}$ with the respective norms. %Notice that 
\cred{Thanks to \cgreen{K\"orn's} inequality %$\norm{\nabla \bu}_{0,\Omega}^2\lesssim \norm{\beps(\bu)}_{0,\Omega}^2$ and the bound $\norm{\beps(\bu)}_{0,\Omega}^2 \lesssim \norm{\nabla \bu}_{0,\Omega}^2$ provide that
 we readily have} that $\norm{\beps(\bv)}_{0,\Omega}$ forms  an equivalent norm to the standard $\bH^1_D$-norm in $\bV_1$. Next, we invoke the Cauchy--Schwarz inequality to conclude the \cgreen{boundedness of  $a_1(\cdot,\cdot)$ on $\bV_1$}.
 %: SHORTENING
%\begin{align*}
%    \left|a_1(\bu,\bv)\right| &= \left|2\mu \int_\Omega \beps(\bu) \colon \beps(\bv)\right| \leq 2\mu \norm{\beps(\bu)}_{0,\Omega} \norm{\beps(\bv)}_{0,\Omega} = \norm{\bu}_{\bV_1} \norm{\bv}_{\bV_1} \quad  \forall \bu,\bv \in \bV_1.
%\end{align*}
In addition,  by definition of $\norm{\cdot}_{\bV_1}$, we have that $a_1(\cdot,\cdot)$ is symmetric, positive semi-definite and coercive in $\bV_1$ (giving, in particular,  condition \eqref{brezzi-condition-1}). The Cauchy--Schwarz inequality, the triangle inequality  $\norm{\vdiv \bv}_{0,\Omega}\leq \norm{\nabla\bv}_{0,\Omega}$, and  \cgreen{K\"orn's} inequality implies the \cred{boundedness of $b_1(\cdot,\cdot)$ on $\bV_1\times Q_{b_1}$.}
%as follows
%\begin{align*}
%|b_1(\bv,\tilde{q})|=\left| - \int_\Omega \tilde{q}\vdiv\bv \right| \leq \norm{\tilde{q}}_{0,\Omega}\norm{\vdiv\bv}_{0,\Omega} \lesssim \norm{\tilde{q}}_{0,\Omega}\norm{\beps(\bv)}_{0,\Omega} = \norm{\tilde{q}}_{Q_{b_1}}\norm{\bv}_{\bV_1} \quad \forall \tilde{q}\in Q_{b_1}, \forall \bv\in \bV_1.
%\end{align*}   
Thanks to the structure of the problem and of the scaled norms, in the context of Theorem~\ref{th:unique-solvability} we can identify $t=\frac{1}{\sqrt{\lambda}}\in (0,1]$, since $1\leq \lambda$. Moreover, $c_1(\cdot,\cdot)=(\cdot,\cdot)_{L^2(\Omega)}$  is symmetric and positive semi-definite. On the other hand, \cgreen{K\"orn's} and  Cauchy--Schwarz inequalities imply that  
\begin{align*}
    \left|F_1(\bv)\right| = \left|\int_{\Omega} \fb \cdot \bv\right| \leq \norm{\fb}_{0,\Omega}\norm{\bv}_{0,\Omega} \lesssim  \norm{\fb}_{0,\Omega}\norm{\beps(\bv)}_{0,\Omega} = \norm{\fb}_{Q_{b_1}}\norm{\bv}_{\bV_1}  \quad \forall \bv \in \bV_1.
\end{align*} 
Similarly, for a given $\vartheta\in Q_2$, the property \eqref{eq:ell-bound} and Cauchy--Schwarz inequality leads to  
\begin{align*}
    \left|G_1^\vartheta(\tilde{q})\right| = \left|-\frac{1}{\lambda} \int_{\Omega} \ell(\vartheta) \tilde{q}\right| \leq \frac{1}{\lambda}\norm{\ell(\vartheta)}_{0,\Omega}\norm{\tilde{q}}_{0,\Omega}\lesssim \norm{\vartheta}_{Q_1}\norm{\tilde{q}}_{Q_1} \quad \forall \tilde{q} \in Q_1,
\end{align*}
and this yields that $G_1^\vartheta(\cdot)$ is continuous in $Q_1$.
\cred{$F_1(\cdot)$ and $G_1^\vartheta(\cdot)$ are continuous in $\bV_1$ and $Q_1$, respectively.} 
For condition \eqref{brezzi-condition-2}, note that  the surjectivity of the divergence operator $\vdiv: \bH_D^1(\Omega) \rightarrow L^2(\Omega)$ (see \cite[Lemma 53.9]{ern22}) gives the following inf-sup condition
$$\norm{\tilde{q}}_{0,\Omega} \lesssim \sup_{\bv\in \bH^1_D(\Omega)} \frac{b_1(\bv,\tilde{q})}{\norm{\nabla \bv}_{0,\Omega}} \qquad \forall \tilde{q} \in L^2(\Omega).$$ 
%Multiplying the above inequality 
Scaling this bound with 
 $\frac{1}{\sqrt{2\mu}}$, and using the equivalence between $\norm{\beps(\bv)}_{0,\Omega}$ and $\norm{\nabla \bv}_{0,\Omega}$ we obtain 
$$\norm{\tilde{q}}_{Q_{b_1}} \lesssim \sup_{\bv\in \bV_1} \frac{b_1(\bv,\tilde{q})}{\norm{\bv}_{\bV_1}} \qquad \forall \tilde{q} \in Q_{b_1}.$$ 
Finally, the inf-sup condition \eqref{braess-condition} is obtained as follows, for $\bu \in \bV_1$, let $\bv = \bu$ and $\tilde{q}=0$.  We have
\begin{align*}
    a_1(\bu,\bu)+b_1(\bu,0) = a_1(\bu,\bu) = \norm{\bu}_{\bV_1}^2 \quad \text{ and }  \quad \norm{(\bu,0)}^2_{\bV_1\times Q_1} = \norm{\bu}_{\bV_1}^2.
\end{align*}
Therefore,
$$\norm{\bu}_{\bV_1} \lesssim \sup_{(\bv,\tilde{q})\in \bV_1\times Q_1} \cblue{\frac{a_1(\bu,\bv)+b_1
(\bu,\tilde{q})}{\norm{(\bv,\tilde{q})}_{\bV_1\times Q_1}}} \qquad \forall \bu \in \bV_1,$$
verifying  the conditions of Theorem~\ref{th:unique-solvability}. Then the unique solution \cgreen{satisfies \eqref{dependence-data-elast}, and the proof is complete.}
\end{proof}
\begin{lemma}\label{lem:trans}
For a fixed pair $(\bw,\tilde{r})\in \bV_1\times Q_1$, suppose that %$0 \leq 
\cgreen{suppose that $\theta \leq \frac{1}{M}$ %$0 \leq 
in addition to the assumptions of Section~\ref{sec:model}}.
%$ \leq 1$. 
Then, there exists a unique pair $(\bxi,\varphi)\in \bV_2\times Q_2$ solution to \eqref{eq:weak-transport}. \cgreen{Further,  the following bound is satisfied 
\begin{align}\label{dependence-data-diff}
    \norm{(\bzeta,\varphi)}_{\bV_2\times Q_2} \leq C_2 \left( \norm{F_2}_{\bV'_2} + \norm{G_2}_{Q'_2}\right),
\end{align}
where the constant $C_2>0$ does not depend on the physical parameters.}
\end{lemma}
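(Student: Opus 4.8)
The strategy is to verify the hypotheses of Theorem~\ref{th:unique-solvability} for the reaction-diffusion saddle-point problem \eqref{eq:weak-transport}, with the identifications $V = \bV_2 = \bH_N(\vdiv,\Omega)$, $Q_b = Q_{b_2} = L^2(\Omega)$ (which is dense in itself), $Q = Q_2$, and bilinear forms $a = a_2^{\bw,\tilde r}$, $b = b_2$, $c = c_2$. The penalty parameter is identified as $t = \sqrt{\theta} \in (0,1]$, which is legitimate precisely because of the standing assumption $\theta \leq \frac1M \leq 1$ (note $\frac1M \le M$ together with $M \ge 1$, so $\frac1M\le 1$). First I would record that $a_2^{\bw,\tilde r}$ is symmetric and continuous on $\bV_2$: symmetry and positive semi-definiteness follow from \eqref{M-norm}, and the upper bound $a_2^{\bw,\tilde r}(\bzeta,\bxi) \le M\norm{\bzeta}_{0,\Omega}\norm{\bxi}_{0,\Omega} \le \norm{\bzeta}_{\bV_2}\norm{\bxi}_{\bV_2}$ uses \eqref{M-norm} and the definition of $\norm{\cdot}_{\bV_2}$. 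Similarly $b_2(\bxi,\psi) = \int_\Omega \psi\,\vdiv\bxi$ is continuous since $|b_2(\bxi,\psi)| \le \norm{\vdiv\bxi}_{0,\Omega}\norm{\psi}_{0,\Omega} \le \norm{\bxi}_{\bV_2}\norm{\psi}_{Q_{b_2}}$ by the $M$-weighting built into both norms, and $c_2 = (\cdot,\cdot)_{L^2(\Omega)}$ is symmetric and positive semi-definite.

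Next I would verify the three structural conditions. For the kernel coercivity \eqref{brezzi-condition-1}: $\mathrm{Ker}(B) = \{\bxi \in \bV_2 : \vdiv\bxi = 0\}$, so on this subspace $\norm{\bxi}_{\bV_2}^2 = a_2^{\bw,\tilde r}(\bxi,\bxi)$ identically, giving the estimate with constant $1$. For the inf-sup condition \eqref{brezzi-condition-2} on $b_2$: the relevant fact is the surjectivity of $\vdiv : \bH_N(\vdiv,\Omega) \to L^2(\Omega)$ (with the zero-flux condition on $\Gamma_N$); given $\psi \in L^2(\Omega)$ one constructs $\bxi$ with $\vdiv\bxi = \psi$ and $\norm{\bxi}_{\vdiv,\Omega} \lesssim \norm{\psi}_{0,\Omega}$, then rescales using the $M$-weighted norms to obtain $\norm{\psi}_{Q_{b_2}} \lesssim \sup_{\bxi \in \bV_2} b_2(\bxi,\psi)/\norm{\bxi}_{\bV_2}$. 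For the Brezzi--Braess condition \eqref{braess-condition} on $a_2^{\bw,\tilde r}$: given $\bzeta \in \bV_2$, I would split the test pair as $\bxi = \bzeta$ contributing $a_2^{\bw,\tilde r}(\bzeta,\bzeta) = \norm{\bzeta}_{\bbM}^2$, and then handle the $M\norm{\vdiv\bzeta}_{0,\Omega}^2$ part: pick $\psi = \vdiv\bzeta$ so that $b_2(\bzeta,\psi) = \norm{\vdiv\bzeta}_{0,\Omega}^2$, scale appropriately, and combine to recover the full $\norm{\bzeta}_{\bV_2}$ up to a constant, while controlling $\norm{(\bxi,\psi)}_{\bV_2\times Q_2}$ from above by $\norm{\bzeta}_{\bV_2}$ (here the boundedness of $a_2^{\bw,\tilde r}$ in terms of $\norm{\vdiv\bzeta}_{0,\Omega}$ requires a touch of care — one uses $\norm{\vdiv\bzeta}_{\bbM} \le \sqrt{M}\norm{\vdiv\bzeta}_{0,\Omega}$). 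Finally, the right-hand sides: $F_2(\bxi) = \langle\varphi_D,\bxi\cdot\bn\rangle_{\Gamma_D}$ is continuous on $\bV_2$ by the trace inequality combined with the $M$-weighted norm equivalence, and $G_2(\psi) = -\int_\Omega g\psi$ is continuous on $Q_2$ since $|G_2(\psi)| \le \norm{g}_{0,\Omega}\norm{\psi}_{0,\Omega} \lesssim \norm{g}_{0,\Omega}\norm{\psi}_{Q_{b_2}} \le \norm{g}_{0,\Omega}\norm{\psi}_{Q_2}$ using $\frac1M \le M$.

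With all hypotheses of Theorem~\ref{th:unique-solvability} verified, existence and uniqueness of $(\bzeta,\varphi) \in \bV_2 \times Q_2$ follows, and the continuous-dependence estimate \eqref{dependence-data-diff} is exactly \eqref{dependence_data} with $C_2$ depending only on the (parameter-independent) constants from the Brezzi--Braess conditions and the continuity constants — which is the whole point of working with the $M$- and $\theta$-weighted norms. The main obstacle I anticipate is the Brezzi--Braess inf-sup condition \eqref{braess-condition}: one must be careful that the chosen test pair $(\bxi,\psi)$ simultaneously gives a lower bound on the numerator comparable to $\norm{\bzeta}_{\bV_2}^2$ and an upper bound on the denominator $\norm{(\bxi,\psi)}_{\bV_2\times Q_2} \lesssim \norm{\bzeta}_{\bV_2}$, and that the constants introduced along the way (particularly from bounding $a_2^{\bw,\tilde r}$ in the $\vdiv$-part and from the factor $\theta \le 1$ in the $Q_2$-norm) do not secretly depend on $M$ or $\theta$ in a way that breaks robustness. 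The dependence of $\bbM^{-1}$ on the frozen data $(\bw,\tilde r)$ plays no role here since \eqref{M-norm} provides uniform bounds over all such arguments.
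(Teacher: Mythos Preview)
Your proposal is correct and matches the paper's approach essentially line by line. One sharpening for the Braess step: the paper takes $(\bxi,\psi)=(\bzeta,\,M\vdiv\bzeta)$ so that the numerator equals $\norm{\bzeta}_{\bV_2}^2$ exactly, and it is the hypothesis $\theta\le\tfrac1M$ (not merely $\theta\le1$) that controls the $Q_2$-part of the denominator via $\theta\,|M\vdiv\bzeta|_{c_2}^2=\theta M^2\norm{\vdiv\bzeta}_{0,\Omega}^2\le M\norm{\vdiv\bzeta}_{0,\Omega}^2$; your parenthetical ``$\norm{\vdiv\bzeta}_{\bbM}$'' is a type slip ($\vdiv\bzeta$ is scalar) and is not needed.
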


\begin{proof}
It is straightforward to see that $Q_{b_2}$ is a dense linear subspace of $Q_2$ with the corresponding norms. From the properties of $\bbM$ in \eqref{M-norm}, for given $(\bw,\tilde{r})\in \bV_1\times Q_1$, the norm $\norm{\cdot}_{\bV_2}$ is a equivalent norm in $\bH_N(\vdiv,\Omega)$. \cred{Indeed, the assumptions $\frac{1}{M}\leq M$ and \eqref{M-norm} lead to
\begin{align*}
        \frac{1}{M}\norm{\bxi}_{\vdiv,\Omega} = \frac{1}{M}\norm{\bxi}_{0,\Omega} + \frac{1}{M}\norm{\vdiv \bxi}_{0,\Omega} \leq \norm{\bxi}_{\mathbb{M}} + M \norm{\vdiv \bxi}_{0,\Omega} = \norm{\bxi}_{\bV_2},\text{ and } \norm{\bxi}_{\bV_2}\leq M\norm{\bxi}_{\vdiv,\Omega.}
    \end{align*}}
Therefore, the bilinear form $a_2^{\bw,\tilde{r}}(\cdot,\cdot)$ is positive semi-definite, continuous over $\bV_2$, and coercive in the nullspace of the linear operator $B_2$ %defined by the bilinear form $b_2$, defined as 
\[\mathrm{Ker}(B_2) =\{\bxi \in \bV_2: \int_\Omega \varphi \vdiv \bxi  = 0\text{, } \forall \varphi\in Q_2\}=\{\bxi \in \bV_2: \vdiv \bxi = 0\}. \]
%Indeed, from Cauchy--Schwarz inequality and the definition of the $\bV_2$-norm, we obtain 
%\begin{align*}
%    \left|a_2^{\bw,\tilde{r}}(\bzeta,\bxi)\right| &= \left| \int_\Omega \bbM^{-1}(\beps(\bw),\tilde{r}) \bzeta \cdot \bxi \right| \leq \norm{\bzeta}_{\bbM} \norm{\bxi}_{\bbM} \leq  \norm{\bzeta}_{\bV_2} \norm{\bxi}_{\bV_2} \quad \forall \bxi,\bzeta \in \bV_2,
%\end{align*}
%and from the characterisation of the Kernel, condition \eqref{brezzi-condition-1} is fulfilled as follows 
%\begin{align*}
%    \left|a_2^{\bw,\tilde{r}}(\hat{\bzeta},\hat{\bzeta})\right| &= \left| \int_\Omega \bbM^{-1}(\beps(\bw),\tilde{r}) \hat{\bzeta}\cdot \hat{\bzeta} \right| = \norm{\hat{\bzeta}}^2_{\bV_2} \quad \forall \hat{\bzeta} \in \mathrm{Ker}(B_2).
%\end{align*}
In addition, 
%the continuity of $b_2(\cdot,\cdot)$ on $\bV_2\times Q_{b_2}$ follows 
from Cauchy--Schwarz inequality and the definition of the norms \cgreen{we arrive at the continuity of $b_2(\cdot,\cdot)$ on $\bV_2\times Q_{b_2}$.} %. Indeed,
%\begin{align*}
%|b_2(\bzeta,\psi)|=\left| - \int_\Omega \psi\vdiv\bzeta \right|\leq \norm{\psi}_{0,\Omega}\norm{\vdiv\bzeta}_{0,\Omega}\leq \norm{\psi}_{Q_{b_2}}\norm{\bzeta}_{\bV_2} \quad \forall \psi\in Q_{b_2},\forall \bzeta\in \bV_2.
%\end{align*}
Furthermore, the bilinear operator $c_2(\cdot,\cdot)$ coincides with $(\cdot,\cdot)_{L^2(\Omega)}$ which is symmetric and positive semi-definite and $t = \sqrt{\theta}\in [0,1]$. Next, the trace inequality together with the equivalence between $\norm{\bxi}_{\vdiv,\Omega}$ and $\norm{\bxi}_{\bV_2}$ yields the continuity of $F_2(\cdot)$ \cred{on $\bV_2$}. 
%as follows
%\begin{align*}
%    \left|F_2(\bxi)\right| &= \left|\left\langle \varphi_D , \bxi \cdot \bn \right\rangle_{\Gamma_D}\right| \leq \norm{\varphi_D}_{\frac{1}{2},\Gamma_D} \norm{\bxi}_{\vdiv,\Omega}\ \leq \sqrt{M} \norm{\varphi_D}_{\frac{1}{2},\Gamma_D} \norm{\bxi}_{\bV_2}  \quad \forall \bxi \in \bV_2.
%\end{align*}
Likewise, $G_2(\cdot)$ \cred{is continuous on $Q_2$.} %Indeed,
%\begin{align*}
%    \left|G_2(\psi)\right| &= \left|- \int_{\Omega} g \psi \right| \leq {\norm{g}}_{0,\Omega} {\norm{\psi}}_{0,\Omega} = \sqrt{M} {\norm{g}}_{0,\Omega} {\norm{\psi}}_{Q_2} \quad \forall \psi \in Q_2.
%\end{align*}
To prove condition \eqref{brezzi-condition-2}, note that the divergence operator $\vdiv: \bH_N(\vdiv,\Omega) \rightarrow L^2(\Omega)$ is surjective (see \cblue{\cite[Lemma 51.2]{ern22})}. Consequently, we obtain an inf-sup condition in non-weighted norms, which 
%$$\norm{\varphi}_{0,\Omega} \lesssim \sup_{\bzeta\in \bH_N(\vdiv,\Omega)} \frac{b_2(\bzeta,\varphi)}{\norm{\bzeta}_{\vdiv,\Omega}} \qquad \forall \varphi \in L^2(\Omega).$$ 
is then multiplied %the above inequality
by $\frac{1}{\sqrt{M}}$, giving  
$$\norm{\varphi}_{Q_{b_2}} \lesssim \sup_{\bzeta\in \bV_2} \frac{b_2(\bzeta,\varphi)}{\norm{\bzeta}_{\bV_2}} \qquad \forall \varphi \in Q_{b_2}.$$ 
Lastly, we verify the Braess condition \eqref{braess-condition} as follows: for $\bzeta \in \bV_2$, let $\bxi = \bzeta$ and $\varphi=M\vdiv\bzeta$, then 
\begin{align*}
    &a_2^{\bw,\tilde{r}}(\bzeta,\bzeta)+b_2(\bzeta,M\vdiv\bzeta) = a_2^{\bw,\tilde{r}}(\bzeta,\bzeta) + M b_2(\bzeta,\vdiv\bzeta) = \norm{\bzeta}_{\bV_2}^2,\\
    \norm{(\bzeta,M\vdiv\bzeta)}^2_{\bV_2\times Q_2} &= \norm{\bzeta}_{\bV_2}^2+\norm{M\vdiv\zeta}_{Q_{b_2}}^2+\theta|M\vdiv\bzeta|_{c_2}^2\lesssim \norm{\bzeta}_{\bV_2}^2 + \frac{1}{M}\norm{M\vdiv\bzeta}_{0,\Omega}^2 \lesssim \norm{\bzeta}_{\bV_2}^2.
\end{align*}
Therefore,
$$\norm{\bzeta}_{\bV_2} \lesssim \sup_{(\bxi,\varphi)\in \bV_2\times Q_2} \cblue{\frac{a_2^{\bw,\tilde{r}}(\bzeta,\bxi)+b_2(\bzeta,\varphi)}{\norm{(\bxi,\varphi)}_{\bV_2\times Q_2}}} \qquad \forall \bzeta \in \bV_2.$$
This verifies the conditions in  Theorem~\ref{th:unique-solvability} and therefore we also have \cgreen{that the asserted continuous dependence on data \eqref{dependence-data-diff} holds}. 
\end{proof}
\begin{remark}
The proofs of Lemmas~\ref{lem:elast} and \ref{lem:trans} provide further details on parameter requirements. The conditions $0<\mu$ and $1\leq \lambda$ are needed to properly define weighted norms and the condition over \cblue{$t$} in Theorem~\ref{th:unique-solvability} is carried over to \cblue{$\frac{1}{\sqrt{\lambda}}$}. On the other hand, the bound %$0\leq 
\cgreen{$\theta \leq \frac{1}{M} \leq1$ is the only additional assumption and it is} fundamental in the proof of  \eqref{braess-condition}. The weighted norms are well defined, and \cblue{$\sqrt{\theta}$} plays the role of \cblue{$t$} in Theorem~\ref{th:unique-solvability}. We could relax the dependence of $\theta$  on $M$, but this would require a substantially more involved metric for $\varphi$ \cblue{(see \cite{boon21})}.
\end{remark}

\subsection{Fixed-point strategy} \label{sec:fix-point}
Let us now define the following solution operators 
\[\cS_1: Q_2 \to \bV_1\times Q_1, \quad 
\vartheta \mapsto \cS_1(\vartheta)= (\cS_{11}(\vartheta),\cS_{12}(\vartheta)) := (\bu,\tilde{p}),
\]
where $(\bu,\tilde{p})$ is the unique solution to \eqref{eq:weak-elast}, confirmed in Section~\ref{sec:decoupled}; and 
\[\cS_2: \bV_1\times Q_1 \to \bV_2\times Q_2, \quad 
(\bw,\tilde{r}) \mapsto \cS_2(\bw,\tilde{r}) = (\cS_{21}(\bw,\tilde{r}),\cS_{22}(\bw,\tilde{r})) := (\bzeta,\varphi),
\]
where $(\bzeta,\varphi)$ is the unique solution to \eqref{eq:weak-transport}, also confirmed in Section~\ref{sec:decoupled}. 

The nonlinear problem \eqref{eq:weak} is thus equivalent to the following fixed-point equation:
\[\text{Find $\varphi\in Q_2$ such that $\cA(\varphi) = \varphi$},\]
where $\cA: Q_2\to Q_2$ is defined as 
$\varphi \mapsto \cA(\varphi) := (\cS_{22}\circ \cS_{1})(\varphi)$. 
%In order to prove the unique solvability of the coupled problem, we shall 
Next, we show the Lipschitz continuity of $\cA$, i.e., the Lipschitz continuity of the solution operators $\cS_1$ and $\cS_2$.

\begin{lemma}\label{lipschitz-elast}
    The operator $\cS_1$ is Lipschitz continuous with Lipschitz constant $L_{\cS_1} = \sqrt{M} C_1 L_\ell$.
\end{lemma}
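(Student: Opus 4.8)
The plan is to estimate the difference $\cS_1(\vartheta_1) - \cS_1(\vartheta_2)$ for two data $\vartheta_1,\vartheta_2 \in Q_2$ by exploiting the linearity of problem \eqref{eq:weak-elast} in its right-hand side. Writing $(\bu_i,\tilde{p}_i) := \cS_1(\vartheta_i)$, the pair $(\bu_1-\bu_2,\tilde{p}_1-\tilde{p}_2)$ solves the same saddle-point system \eqref{eq:weak-elast} but with $F_1 \equiv 0$ and the functional $G_1^{\vartheta_1} - G_1^{\vartheta_2}$ on the right-hand side of the second equation; note the bilinear forms $a_1,b_1,c_1$ do not depend on $\vartheta$, so this subtraction is clean. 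Then I would invoke the continuous dependence on data \eqref{dependence-data-elast} from Lemma~\ref{lem:elast} to get
\[
\norm{(\bu_1-\bu_2,\tilde{p}_1-\tilde{p}_2)}_{\bV_1\times Q_1} \leq C_1 \norm{G_1^{\vartheta_1}-G_1^{\vartheta_2}}_{Q'_1}.
\]

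The remaining step is to bound $\norm{G_1^{\vartheta_1}-G_1^{\vartheta_2}}_{Q'_1}$ in terms of $\norm{\vartheta_1-\vartheta_2}_{Q_2}$. By definition $(G_1^{\vartheta_1}-G_1^{\vartheta_2})(\tilde{q}) = -\tfrac{1}{\lambda}\int_\Omega (\ell(\vartheta_1)-\ell(\vartheta_2))\tilde{q}$, so Cauchy--Schwarz gives $|(G_1^{\vartheta_1}-G_1^{\vartheta_2})(\tilde{q})| \leq \tfrac{1}{\lambda}\norm{\ell(\vartheta_1)-\ell(\vartheta_2)}_{0,\Omega}\norm{\tilde{q}}_{0,\Omega}$, and then the Lipschitz bound \eqref{eq:lipschitz-l-bound} yields $\norm{\ell(\vartheta_1)-\ell(\vartheta_2)}_{0,\Omega} \leq L_\ell \norm{\vartheta_1-\vartheta_2}_{0,\Omega}$. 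Since $\norm{\tilde{q}}_{Q_1}^2 = \tfrac{1}{2\mu}\norm{\tilde{q}}_{0,\Omega}^2 + \tfrac{1}{\lambda}\norm{\tilde{q}}_{0,\Omega}^2 \geq \tfrac{1}{\lambda}\norm{\tilde{q}}_{0,\Omega}^2$ (using $1\le\lambda$ so that $\tfrac1\lambda\le1$ and comparing with the plain $L^2$ norm, or more directly $\norm{\tilde q}_{0,\Omega}\le \sqrt{M}\norm{\tilde q}_{Q_{b_2}}$-type scalings), I need to track the parameter weights carefully: using $\norm{\vartheta_1-\vartheta_2}_{0,\Omega} \le \sqrt{M}\norm{\vartheta_1-\vartheta_2}_{Q_{b_2}} \le \sqrt{M}\norm{\vartheta_1-\vartheta_2}_{Q_2}$ and bounding $\tfrac{1}{\lambda}\norm{\tilde q}_{0,\Omega} \le \norm{\tilde q}_{Q_1}$, one obtains $\norm{G_1^{\vartheta_1}-G_1^{\vartheta_2}}_{Q'_1} \le \sqrt{M}\,L_\ell \norm{\vartheta_1-\vartheta_2}_{Q_2}$.

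Combining the two displays gives $\norm{\cS_1(\vartheta_1)-\cS_1(\vartheta_2)}_{\bV_1\times Q_1} \le \sqrt{M}\,C_1\,L_\ell \norm{\vartheta_1-\vartheta_2}_{Q_2}$, which is exactly the claimed Lipschitz constant $L_{\cS_1} = \sqrt{M}\,C_1 L_\ell$. The main obstacle I anticipate is not conceptual but bookkeeping: getting the parameter-weighted norms to line up so that the constant genuinely has the stated form and is independent of $\lambda,\mu$ — in particular making sure the factor $\tfrac{1}{\lambda}$ coming from $G_1^\vartheta$ is correctly absorbed into $\norm{\tilde q}_{Q_1}$ rather than producing a spurious $\mu$- or $\lambda$-dependence, and that the $\sqrt{M}$ factor arises precisely from converting the $L^2$ norm of $\vartheta_1-\vartheta_2$ into the $Q_2$-norm via $\norm{\cdot}_{Q_{b_2}}^2 = \tfrac{1}{M}\norm{\cdot}_{0,\Omega}^2$.
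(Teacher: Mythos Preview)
Your proposal is correct and follows essentially the same route as the paper: subtract the two instances of \eqref{eq:weak-elast} to obtain an auxiliary problem for $(\bu_1-\bu_2,\tilde p_1-\tilde p_2)$ with zero $F_1$ and right-hand side $G_1^{\vartheta_1}-G_1^{\vartheta_2}$, apply the continuous dependence bound \eqref{dependence-data-elast}, then use Cauchy--Schwarz, the Lipschitz bound \eqref{eq:lipschitz-l-bound}, and the norm conversion $\norm{\vartheta}_{0,\Omega}\le\sqrt{M}\norm{\vartheta}_{Q_2}$. The paper's write-up is slightly terser---it absorbs the $\tfrac{1}{\lambda}$ factor into $\norm{\tilde q}_{Q_1}$ in one line without the exploratory hedging in your middle paragraph---but the logic and the resulting constant are identical.
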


\begin{proof}
    Let $\vartheta_1,\vartheta_2 \in Q_2$ and $(\bu_1,\tilde{p}_1),(\bu_2,\tilde{p}_2)$ be the unique solutions to the problem \eqref{eq:weak-elast}. Define the following auxiliary problem: find $(\bu,\tilde{p}) \in \bV_1\times Q_1$ such that
    \begin{align*}
      a_1(\bu,\bv) + b_1(\bv,\tilde{p}) &= 0 \qquad \forall \bv \in \bV_1,\\
      b_1(\bu,\tilde{q}) - \frac{1}{\lambda}c_1(\tilde{p},\tilde{q}) &= (G_{1}^{\vartheta_1}-G_{1}^{\vartheta_2})(\tilde{q}) \qquad \forall \tilde{q} \in Q_1.
    \end{align*}
    Note that  $G_{1}^{\vartheta_1}-G_{1}^{\vartheta_2}%(\tilde{q})
    \in Q_1'$.
    % and by Lemma~\ref{lem:elast}. 
       Then, the unique solution is given by $(\bu,\tilde{p})=(\bu_1-\bu_2,\tilde{p}_1-\tilde{p}_2)$. Therefore, we apply the continuous dependence on data \eqref{dependence-data-elast} together with \eqref{eq:lipschitz-l-bound} to obtain
    \begin{align*}
    \norm{\cS_1(\vartheta_1)-\cS_1(\vartheta_2)}_{\bV_1\times Q_1} &= \norm{(\bu_1-\bu_2,\tilde{p}_1-\tilde{p}_2)}_{\bV_1\times Q_1} \\
    &\leq C_1 \sup_{\tilde{q}\in Q_1} \frac{|(G_{1}^{\vartheta_1}-G_{1}^{\vartheta_2})(\tilde{q})|}{\norm{\tilde{q}}_{Q_1}}\leq C_1 \norm{\ell(\vartheta_1)-\ell(\vartheta_2)}_{0,\Omega}\\
    &\leq C_1 L_{\ell} \norm{\vartheta_1-\vartheta_2}_{0,\Omega} \leq \sqrt{M} C_1 L_{\ell} \norm{\vartheta_1-\vartheta_2}_{Q_2},
    \end{align*}
where the norm equivalence has been used in the last step. 
\end{proof}

\begin{lemma}\label{lipschitz-transport}
    The operator $\cS_2$ is Lipschitz continuous with Lipschitz constant
    \begin{equation*}
        L_{\cS_2} = \max\left\{\frac{1}{\sqrt{2\mu}},\sqrt{2\mu} \right\}\sqrt{M^{3}}C_2^2L_{\bbM}\left(\norm{\varphi_D}_{\frac{1}{2},\Gamma_D} + \norm{g}_{0,\Omega}\right).
    \end{equation*} 
\end{lemma}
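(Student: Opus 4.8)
I propose the following plan; it parallels the proof of Lemma~\ref{lipschitz-elast}, exploiting that in \eqref{eq:weak-transport} only the bilinear form $a_2^{\bw,\tilde{r}}$, and not the data $F_2,G_2$, depends on the frozen pair. Fix $(\bw_1,\tilde{r}_1),(\bw_2,\tilde{r}_2)\in\bV_1\times Q_1$ and let $(\bzeta_i,\varphi_i)=\cS_2(\bw_i,\tilde{r}_i)$, $i=1,2$, be the associated unique solutions of \eqref{eq:weak-transport}. Subtracting the two instances and adding and subtracting $a_2^{\bw_1,\tilde{r}_1}(\bzeta_2,\cdot)$, one sees that $(\bzeta_1-\bzeta_2,\varphi_1-\varphi_2)$ solves a problem of the type \eqref{eq:weak-transport} with the coefficient frozen at $(\bw_1,\tilde{r}_1)$, vanishing second functional, and first functional
\begin{align*}
\widehat{F}_2(\bxi):=\bigl[a_2^{\bw_2,\tilde{r}_2}-a_2^{\bw_1,\tilde{r}_1}\bigr](\bzeta_2,\bxi)=\int_\Omega\bigl[\bbM^{-1}(\beps(\bw_2),\tilde{r}_2)-\bbM^{-1}(\beps(\bw_1),\tilde{r}_1)\bigr]\bzeta_2\cdot\bxi.
\end{align*}
Since $a_2^{\bw_i,\tilde{r}_i}$ is continuous and $\bzeta_2$ is fixed, $\widehat{F}_2\in\bV_2'$, and the hypotheses of Lemma~\ref{lem:trans} hold for the coefficient frozen at $(\bw_1,\tilde{r}_1)$ with constants independent of that choice; hence the continuous dependence estimate \eqref{dependence-data-diff} gives $\norm{(\bzeta_1-\bzeta_2,\varphi_1-\varphi_2)}_{\bV_2\times Q_2}\le C_2\,\norm{\widehat{F}_2}_{\bV_2'}$.

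The next step is to bound $\norm{\widehat{F}_2}_{\bV_2'}$. By Cauchy--Schwarz in $L^2(\Omega)$, H\"older's inequality, and the Lipschitz estimate \eqref{eq:lipschitz-M-bound},
\begin{align*}
\abs{\widehat{F}_2(\bxi)}&\le\norm{\bbM^{-1}(\beps(\bw_2),\tilde{r}_2)-\bbM^{-1}(\beps(\bw_1),\tilde{r}_1)}_{\infty,\Omega}\,\norm{\bzeta_2}_{0,\Omega}\,\norm{\bxi}_{0,\Omega}\\
&\le L_{\bbM}\,\norm{(\beps(\bw_1-\bw_2),\tilde{r}_1-\tilde{r}_2)}_{\bbL^2(\Omega)\times L^2(\Omega)}\,\norm{\bzeta_2}_{0,\Omega}\,\norm{\bxi}_{0,\Omega}.
\end{align*}
Three elementary norm conversions then close the estimate. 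From \eqref{M-norm}, $\norm{\bxi}_{0,\Omega}\le\sqrt{M}\,\norm{\bxi}_{\bV_2}$, so the supremum over $\bxi$ produces a factor $\sqrt{M}$. Likewise $\norm{\bzeta_2}_{0,\Omega}\le\sqrt{M}\,\norm{\bzeta_2}_{\bV_2}$; applying \eqref{dependence-data-diff} to $(\bzeta_2,\varphi_2)$ together with $\norm{F_2}_{\bV_2'}+\norm{G_2}_{Q_2'}\le\sqrt{M}\,(\norm{\varphi_D}_{\frac12,\Gamma_D}+\norm{g}_{0,\Omega})$ — the latter from the trace inequality $\abs{F_2(\bxi)}\le\norm{\bxi}_{\vdiv,\Omega}\norm{\varphi_D}_{\frac12,\Gamma_D}$, the bound $\norm{\bxi}_{\vdiv,\Omega}\le\sqrt{M}\,\norm{\bxi}_{\bV_2}$, and the scaling of $\norm{\cdot}_{Q_{b_2}}$ — yields $\norm{\bzeta_2}_{0,\Omega}\le M\,C_2\,(\norm{\varphi_D}_{\frac12,\Gamma_D}+\norm{g}_{0,\Omega})$. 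Finally, from $\norm{\bw}_{\bV_1}^2=2\mu\norm{\beps(\bw)}_{0,\Omega}^2$ and $\norm{\tilde{r}}_{Q_1}^2\ge\tfrac1{2\mu}\norm{\tilde{r}}_{0,\Omega}^2$ one obtains $\norm{(\beps(\bw_1-\bw_2),\tilde{r}_1-\tilde{r}_2)}_{\bbL^2(\Omega)\times L^2(\Omega)}\le\max\{\tfrac1{\sqrt{2\mu}},\sqrt{2\mu}\}\,\norm{(\bw_1-\bw_2,\tilde{r}_1-\tilde{r}_2)}_{\bV_1\times Q_1}$. Collecting the factors $C_2$, $\sqrt{M}$, $M\,C_2(\norm{\varphi_D}_{\frac12,\Gamma_D}+\norm{g}_{0,\Omega})$, $L_{\bbM}$, and $\max\{\tfrac1{\sqrt{2\mu}},\sqrt{2\mu}\}$ gives precisely the asserted constant $L_{\cS_2}$.

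The computation is essentially bookkeeping of the weights $M$ and $\mu$; the one genuinely structural observation is that the perturbation functional $\widehat{F}_2$ carries the flux $\bzeta_2$, so one must invoke the a priori bound of Lemma~\ref{lem:trans} for $\bzeta_2$ itself, not merely for the difference of solutions — this is the main obstacle, in that it forces the quadratic dependence $C_2^2$ and the power $\sqrt{M^3}$ of the diffusion bound. A minor technical subtlety worth spelling out is that the norm $\norm{\cdot}_{\bV_2}$ depends on the frozen pair through $\norm{\cdot}_{\bbM}$; by \eqref{M-norm} all such norms are uniformly equivalent to $\norm{\cdot}_{\vdiv,\Omega}$ with constants depending only on $M$, so freezing the coefficient at $(\bw_1,\tilde{r}_1)$ when applying \eqref{dependence-data-diff} to the difference, while using $(\bw_2,\tilde{r}_2)$ for the a priori bound on $\bzeta_2$, is harmless.
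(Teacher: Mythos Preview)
Your proposal is correct and follows essentially the same route as the paper: you form the auxiliary perturbed saddle-point problem with coefficient frozen at $(\bw_1,\tilde r_1)$ and right-hand side $\widehat F_2$, apply \eqref{dependence-data-diff} to the difference, bound $\widehat F_2$ via H\"older and \eqref{eq:lipschitz-M-bound}, and then convert the $L^2$-norms of $\bxi$, $\bzeta_2$, and $(\beps(\bw_1-\bw_2),\tilde r_1-\tilde r_2)$ into the weighted norms exactly as the paper does. Your constant bookkeeping is in fact more explicit than the paper's, and your closing remarks about the origin of $C_2^2$ and the harmless dependence of $\norm{\cdot}_{\bV_2}$ on the frozen pair are accurate and useful.
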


\begin{proof}
    Analogously, as in Lemma~\ref{lipschitz-elast}, let $(\bw_1,\tilde{r}_1),(\bw_2,\tilde{r}_2) \in \bV_1\times Q_1$ be such that $(\bzeta_1,\varphi_1),(\bzeta_2,\varphi_2)$ are the unique solutions of their respective problem \eqref{eq:weak-transport} and consider the following auxiliary problem: find $(\bzeta,\varphi)\in \bV_2\times Q_2$ such that
    \begin{align*}
      a_2^{\bw_1,\tilde{r}_1}(\bzeta,\bxi) + b_2(\bxi,\varphi)  &= F^*(\bxi) \qquad \forall \bxi \in \bV_2,\\
      b_2(\bzeta,\psi) - \theta c_2(\varphi,\psi)  &= 0 \qquad \forall \psi \in Q_2,
    \end{align*}
    where
    $F^*(\bxi) = \int_{\Omega} (\bbM^{-1}(\beps(\bw_1),\tilde{r}_1)-\bbM^{-1}(\beps(\bw_2),\tilde{r}_2))\bzeta_2 \cdot \bxi.$
    The problem above results from subtracting the problems associated with $(\bw_1,\tilde{r}_1)$ and $(\bw_2,\tilde{r}_2)$ and rewriting  $a_2^{\bw_1,\tilde{r}_1}(\bzeta_2,\bxi)$ in the first equation. Note that $F^*(\bxi)\in \bV_2'$. Therefore, thanks to Lemma~\ref{lem:trans}, the unique solution is given by $(\bzeta,\varphi)=(\bzeta_1-\bzeta_2,\varphi_1-\varphi_2)$. Using \eqref{dependence-data-diff}, H\"older's inequality and \eqref{eq:lipschitz-M-bound} we get
    \begin{align*}
    &\norm{\cS_2(\bw_1,\tilde{r}_1)-\cS_2(\bw_2,\tilde{r}_2)}_{\bV_2\times Q_2} =\norm{(\bzeta_1-\bzeta_2,\varphi_1-\varphi_2)}_{\bV_2\times Q_2}  \leq C_2 \sup_{\bxi\in \bV_2} \frac{|F^*(\bxi)|}{\norm{\bxi}_{\bV_2}}\\
   % &\leq  C_2 \frac{\norm{\bbM^{-1}(\beps(\bw_1),\tilde{r}_1)-\bbM^{-1}(\beps(\bw_2),\tilde{r}_2)}_{\infty,\Omega}\norm{\bzeta_2}_{0,\Omega}\norm{\bxi}_{0,\Omega}}{\norm{\bxi}_{\bV_2}}\\
    &\quad \leq \sqrt{M} C_2 \norm{\bbM^{-1}(\beps(\bw_1),\tilde{r}_1)-\bbM^{-1}(\beps(\bw_2),\tilde{r}_2)}_{\infty,\Omega}\norm{\bzeta_2}_{0,\Omega}\\
    &\quad \leq M C_2 L_\bbM \norm{(\beps(\bw_1),\tilde{r}_1)-(\beps(\bw_2),\tilde{r}_2)}_{\bbL^2(\Omega)\times L^2(\Omega)} ( \norm{\bzeta_2}_{\bV_2} + \norm{\varphi_2}_{Q_2})\\
   % &\leq M^3 C_2^2 L_\bbM \left(\norm{\varphi_D}_{\frac{1}{2},\Gamma_D} + \norm{g}_{0,\Omega}\right) \norm{(\beps(\bw_1),\tilde{r}_1)-(\beps(\bw_2),\tilde{r}_2)}_{\bbL^2(\Omega)\times L^2(\Omega)}\\
    &\quad \leq \max\left\{\frac{1}{\sqrt{2\mu}},\sqrt{2\mu} \right\}\sqrt{M^{3}} C_2^2L_{\bbM}\left(\norm{\varphi_D}_{\frac{1}{2},\Gamma_D} + \norm{g}_{0,\Omega}\right)\norm{(\bw_1,\tilde{r}_1)-(\bw_2,\tilde{r}_2)}_{\bV_1\times Q_1}.
    \end{align*}  
%where we have also used the norm equivalences. 
\end{proof}

\begin{lemma}\label{lipschitz:cA}
    The operator $\cA$ is Lipschitz continuous with Lipschitz constant $L_{\cA}=L_{\cS_2}L_{\cS_1}$.
\end{lemma}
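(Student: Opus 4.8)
The plan is to exploit that $\cA=\cS_{22}\circ\cS_1$ is a composition, so its Lipschitz constant is the product of the Lipschitz constants of the two factors — this is why the claimed constant is $L_{\cA}=L_{\cS_2}L_{\cS_1}$. Concretely, I would fix $\varphi_1,\varphi_2\in Q_2$ and write $(\bu_i,\tilde p_i):=\cS_1(\varphi_i)$ for $i=1,2$, so that $\cA(\varphi_i)=\cS_{22}(\bu_i,\tilde p_i)$.

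The first step is to observe that the $Q_2$-component of $\cS_2$ is controlled by the full product norm: since $\norm{\cS_{22}(\bw,\tilde r)-\cS_{22}(\bw',\tilde r')}_{Q_2}\le \norm{\cS_2(\bw,\tilde r)-\cS_2(\bw',\tilde r')}_{\bV_2\times Q_2}$ (the canonical projection onto the second factor is $1$-Lipschitz with respect to $\norm{(\cdot,\cdot)}_{\bV_2\times Q_2}^2=\norm{\cdot}_{\bV_2}^2+\norm{\cdot}_{Q_2}^2$), Lemma~\ref{lipschitz-transport} gives $\norm{\cS_{22}(\bw,\tilde r)-\cS_{22}(\bw',\tilde r')}_{Q_2}\le L_{\cS_2}\norm{(\bw,\tilde r)-(\bw',\tilde r')}_{\bV_1\times Q_1}$ for all $(\bw,\tilde r),(\bw',\tilde r')\in\bV_1\times Q_1$. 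The second step is simply to chain this with Lemma~\ref{lipschitz-elast}:
\begin{align*}
\norm{\cA(\varphi_1)-\cA(\varphi_2)}_{Q_2}
&=\norm{\cS_{22}(\bu_1,\tilde p_1)-\cS_{22}(\bu_2,\tilde p_2)}_{Q_2}
\le L_{\cS_2}\norm{(\bu_1,\tilde p_1)-(\bu_2,\tilde p_2)}_{\bV_1\times Q_1}\\
&=L_{\cS_2}\norm{\cS_1(\varphi_1)-\cS_1(\varphi_2)}_{\bV_1\times Q_1}
\le L_{\cS_2}L_{\cS_1}\norm{\varphi_1-\varphi_2}_{Q_2},
\end{align*}
which is exactly the asserted bound with $L_{\cA}=L_{\cS_2}L_{\cS_1}$.

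There is essentially no hard part here: the statement is a direct consequence of composing the two Lipschitz estimates already established in Lemmas~\ref{lipschitz-elast} and~\ref{lipschitz-transport}. The only point worth spelling out — and the sole place where a reader might want a line of justification — is that restricting $\cS_2$ to its concentration component $\cS_{22}$ does not worsen the Lipschitz constant, because the target norm on $\bV_2\times Q_2$ is the Euclidean combination of the component norms and hence dominates $\norm{\cdot}_{Q_2}$. Once that is noted, the proof is three lines. (If desired, one can also record the explicit value of $L_{\cA}$ by substituting the expressions for $L_{\cS_1}$ and $L_{\cS_2}$ from the preceding lemmas, which makes transparent that $L_{\cA}<1$ under a smallness assumption on the data $\norm{\varphi_D}_{\frac12,\Gamma_D}+\norm{g}_{0,\Omega}$, setting up the Banach fixed-point argument in the next step.)
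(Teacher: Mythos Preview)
Your proof is correct and follows essentially the same approach as the paper: compose the Lipschitz estimates of Lemmas~\ref{lipschitz-elast} and~\ref{lipschitz-transport}, noting that passing to the component $\cS_{22}$ cannot increase the constant. The paper's version is even terser (it omits the remark about the projection being $1$-Lipschitz), but the argument is identical.
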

\begin{proof}
We can combine the results from Lemmas~\ref{lipschitz-elast} and \ref{lipschitz-transport} to assert that
    \begin{align*}
        \norm{\cA(\varphi_1)-\cA(\varphi_2)}_{Q_2} &= \norm{\cS_{22}(\cS_1(\varphi_1))-\cS_{22}(\cS_1(\varphi_2))}_{Q_2} \\
        %&\leq \norm{\cS_{2}(\cS_1(\varphi_1))-\cS_{2}(\cS_1(\varphi_2))}_{\bV_2\times Q_2}\\
        &\leq L_{\cS_2} \norm{\cS_1(\varphi_1)-\cS_1(\varphi_2)}_{\bV_1\times Q_1}\leq L_{\cS_2} L_{\cS_1} \norm{\varphi_1-\varphi_2}_{Q_2},
    \end{align*} for any $\varphi_1,\varphi_2 \in Q_2$, 
    where we have also used the definition of the solution operators. 
\end{proof}
Finally, let us define the following set 
\begin{equation}\label{w}
\cblue{W} =\left\{ \cblue{w} \in Q_2 \colon \norm{w}_{Q_2} \leq \sqrt{M} C_2 \left(\norm{\varphi_D}_{\frac{1}{2},\Gamma_D} + \norm{g}_{0,\Omega}\right) \right\},    
\end{equation}
and note that the operator $\cA$ satisfies that \cgreen{$\cA(Q_2)\subseteq W$}. Indeed, from the continuous dependence on data \eqref{dependence-data-diff}, it is readily seen that 
$${\norm{\cgreen{\mathcal{A}}(\varphi)}}_{Q_2} = \norm{S_{22}(S_1(\varphi))}_{Q_2} \leq \norm{S_2(S_1(\varphi))}_{\bV_2\times Q_2} \leq  \sqrt{M} C_2 \left(\norm{\varphi_D}_{\frac{1}{2},\Gamma_D} + \norm{g}_{0,\Omega}\right),$$
\cgreen{which implies that $\cA(W)\subseteq W$}. The following theorem provides the well-posedness of the coupled problem.
\begin{theorem}\label{coupled-well-poss}
    \cblue{Under the assumptions of Lemmas~\ref{lem:elast}-\ref{lem:trans}, let $W$} be as in \eqref{w} and  \cgreen{the small data be given such that  $$ C_1 L_\ell \max\left\{\frac{1}{\sqrt{2\mu}},\sqrt{2\mu} \right\}M^{2}C_2^2 L_{\bbM}\left(\norm{\varphi_D}_{\frac{1}{2},\Gamma_D} + \norm{g}_{0,\Omega}\right) < 1.$$}
    Then the operator $\cA$ has a unique fixed point \cblue{$\varphi\in W$}. Equivalently, the coupled problem \eqref{eq:weak} has an unique solution $(\bu,\tilde{p},\bzeta,\varphi) \in \bV_1\times Q_1 \times \bV_2 \times Q_2$ and the following continuous dependence on data holds
    \begin{subequations}\label{eq:cont-dep-coupled}
    \begin{align}
        \norm{(\bu,\tilde{p})}_{\bV_1\times Q_1} &\leq C_1 \left( \norm{F_1}_{\bV'_1} + \norm{G^\varphi_1}_{Q'_1} \right),\\
        \norm{(\bzeta,\varphi)}_{\bV_2\times Q_2} &\leq C_2 \left( \norm{F_2}_{\bV'_2} +\norm{G_2}_{Q'_2} \right),
    \end{align}
    \end{subequations}
    where the corresponding constants $C_1$ and $C_2$ do not depend on the physical parameters. 
\end{theorem}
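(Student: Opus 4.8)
The plan is to apply the Banach fixed-point theorem to the operator $\cA:Q_2\to Q_2$, restricted to the closed ball $W$ from \eqref{w}. First I would observe that $Q_2=L^2(\Omega)$ with the norm $\norm{\cdot}_{Q_2}$ is a Banach space, and $W$ is a nonempty, bounded, closed subset of it, hence itself a complete metric space. It has already been verified just before the statement that $\cA(Q_2)\subseteq W$, and in particular $\cA(W)\subseteq W$, so $\cA$ is a well-defined self-map of $W$; this is exactly why we run the iteration on $W$ rather than merely invoke Lipschitz continuity on all of $Q_2$.

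The second step is to check that $\cA|_W$ is a contraction. By Lemma~\ref{lipschitz:cA}, $\cA$ is Lipschitz on $Q_2$ with constant $L_{\cA}=L_{\cS_2}L_{\cS_1}$; substituting the explicit constants from Lemmas~\ref{lipschitz-elast} and \ref{lipschitz-transport} and using $\sqrt{M}\cdot\sqrt{M^{3}}=M^{2}$,
\begin{align*}
L_{\cA}=L_{\cS_2}L_{\cS_1}
&=\left(\max\left\{\tfrac{1}{\sqrt{2\mu}},\sqrt{2\mu}\right\}\sqrt{M^{3}}\,C_2^2 L_{\bbM}\bigl(\norm{\varphi_D}_{\frac12,\Gamma_D}+\norm{g}_{0,\Omega}\bigr)\right)\left(\sqrt{M}\,C_1 L_\ell\right)\\
&=C_1 L_\ell \max\left\{\tfrac{1}{\sqrt{2\mu}},\sqrt{2\mu}\right\}M^{2}C_2^2 L_{\bbM}\bigl(\norm{\varphi_D}_{\frac12,\Gamma_D}+\norm{g}_{0,\Omega}\bigr),
\end{align*}
which is precisely the quantity assumed to be strictly smaller than $1$. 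Hence $L_{\cA}<1$ and $\cA|_W$ is a contraction on the complete metric space $W$; the Banach fixed-point theorem provides a unique $\varphi\in W$ with $\cA(\varphi)=\varphi$. This $\varphi$ is moreover the unique fixed point of $\cA$ in the whole of $Q_2$: any $\tilde\varphi\in Q_2$ with $\cA(\tilde\varphi)=\tilde\varphi$ satisfies $\tilde\varphi\in\cA(Q_2)\subseteq W$, so uniqueness in $W$ forces $\tilde\varphi=\varphi$.

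For the asserted equivalence with \eqref{eq:weak}, I would set $(\bu,\tilde p):=\cS_1(\varphi)$ and $\bzeta:=\cS_{21}(\bu,\tilde p)$; by definition of the solution operators, $(\bu,\tilde p)$ solves \eqref{eq:weak-elast} with $\vartheta=\varphi$ and $(\bzeta,\cS_{22}(\bu,\tilde p))$ solves \eqref{eq:weak-transport} with $(\bw,\tilde r)=(\bu,\tilde p)$, while $\cS_{22}(\bu,\tilde p)=\cS_{22}(\cS_1(\varphi))=\cA(\varphi)=\varphi$. Reading these four relations together is exactly system \eqref{eq:weak}, so $(\bu,\tilde p,\bzeta,\varphi)\in\bV_1\times Q_1\times\bV_2\times Q_2$ is a solution; conversely, any solution of \eqref{eq:weak} yields, via the uniqueness in Lemmas~\ref{lem:elast}--\ref{lem:trans}, the identifications $(\bu,\tilde p)=\cS_1(\varphi)$ and $(\bzeta,\varphi)=\cS_2(\bu,\tilde p)$, so its last component is a fixed point of $\cA$ and hence coincides with the unique $\varphi$ found above, giving uniqueness of the solution of \eqref{eq:weak}. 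Finally, the continuous dependence bounds \eqref{eq:cont-dep-coupled} are just \eqref{dependence-data-elast} and \eqref{dependence-data-diff} evaluated at $\vartheta=\varphi$ and $(\bw,\tilde r)=(\bu,\tilde p)$, whose constants $C_1,C_2$ are independent of the physical parameters as recorded there.

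The only genuinely delicate point is the bookkeeping, rather than any new estimate: one must be careful to run the contraction on the closed ball $W$ (a complete metric space) and exploit $\cA(W)\subseteq W$, and one must verify that the product $L_{\cS_1}L_{\cS_2}$ collapses exactly to the stated small-data threshold. The latter relies on the fact, established inside the proof of Lemma~\ref{lipschitz-transport}, that the factor $\norm{\bzeta_2}_{\bV_2}+\norm{\varphi_2}_{Q_2}$ appearing there is controlled by the data through \eqref{dependence-data-diff}, uniformly along the fixed-point iteration.
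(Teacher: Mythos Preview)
Your proof is correct and follows essentially the same approach as the paper, which simply invokes Lemma~\ref{lipschitz:cA} together with the Banach fixed-point theorem for unique solvability and cites Lemmas~\ref{lem:elast}--\ref{lem:trans} for the stability bounds. You have merely spelled out in more detail the bookkeeping that the paper leaves implicit (the explicit identification of $L_{\cA}$ with the small-data threshold, the self-map property on $W$, the extension of uniqueness from $W$ to all of $Q_2$, and the equivalence with \eqref{eq:weak}).
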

\begin{proof}
\cgreen{The unique solvability} follows directly from Lemma~\ref{lipschitz:cA} together with the Banach fixed-point theorem, \cgreen{whereas the stability bounds \eqref{eq:cont-dep-coupled} are a direct consequence of the a priori estimates from Lemmas~\ref{lem:elast}-\ref{lem:trans}.}
\end{proof}

\begin{remark}\cgreen{
Note that in the regime of large Lam\'e second parameter ($1\ll \mu$), the following bound holds for the given data
    \begin{align*}
        \norm{\varphi_D}_{\frac{1}{2},\Gamma_D} + \norm{g}_{0,\Omega} < \frac{1}{M^2 \sqrt{2\mu}} \frac{1}{C_2^2 C_1 L_\mathcal{M} L_\ell}.
    \end{align*}}

\end{remark}

%%%%%%%%%%%%%%%%%%%%%%%%%%%%%%%%%%%%%%%%%%%%%%%%%
\section{Virtual element  discretisation}\label{sec:vem}
The aim of this section is to introduce and analyse a VEM for each decoupled problem \eqref{eq:weak-elast} and \eqref{eq:weak-transport} based on \cite{beirao13,daveiga15-transport,daveiga15-stokes}. Next, similarly as for the continuous problem, we employ a discrete fixed-point argument to prove the well-posedness of the coupled discrete problem. 

\paragraph{Mesh assumptions and recurrent notation} Let $\mathcal{T}_h$ be a decomposition of $\Omega$ into polygonal elements $E$ with  diameter $h_E$ and let $\mathcal{E}_h$ be the set of edges $e$ of $\mathcal{T}_h$ with length $h_e$. We assume that for every  $E$ there exists $\rho_E>0$ such that $E$ is star-shaped with respect to every point of a disk with radius $\rho_E h_E$ and in addition $h_e\geq \rho_E h_E$ for every edge $e$ of $E$. When considering a sequence  $\{\mathcal{T}_h\}_h$ we assume that $\rho_E\geq \rho>0$ for some $\rho$ independent of $E$, where $h$ is the maximum diameter over $\mathcal{T}_h$.

Given  an integer $k\geq 0$ the space of polynomials of \cgreen{degree smaller or equal to $k$} %grade $\leq k$ 
on $E$ is denoted by $\mathcal{P}_k(E)$ and the space of the gradients of polynomials of grade $\leq k+1$ on $E$ is denoted as $\mathcal{G}_k(E) := \nabla \mathcal{P}_{k+1}(E)$ with standard notation $\mathcal{P}_{-1}(E)=\{0\}$ for $k=-1$. The space $\mathcal{G}_k^\oplus(E)$ denotes the complement of the space $\mathcal{G}_k(E)$ in the vector polynomial space $(\mathcal{P}_k(E))^2$,  that is, $(\mathcal{P}_k(E))^2 = \mathcal{G}_k(E) \oplus  \mathcal{G}_k^\oplus(E)$. \cred{In particular, following \cite{veiga19}, we set $\mathcal{G}_k^\oplus(E):= \bx^\perp \mathcal{P}_{k-1}(E)$ where $\bx^\perp := (y,-x)^{\tt t}$.} Next, we define the continuous space of polynomials along the boundary $\partial E$ of $E$ as
$$\mathcal{B}_k(\partial E) := \left\{ v\in C^0(\partial E) \colon v|_{e} \in \mathcal{P}_k(e), \quad \forall e\subset \partial E \right\}.$$
%In addition, let be $\bx_e$ the midpoint of $e$ and denote by $\mathcal{M}_k^e$ the set of $k+1$ scaled monomials
%\[\mathcal{M}_k^e:=\left\{ 1, \frac{\bx-\bx_e}{h_e},\left( \frac{\bx-\bx_e}{h_e} \right)^2,\dots , \left( \frac{\bx-\bx_e}{h_e} \right)^k \right\},\]
%which is a base for $\mathcal{P}_k(e)$.  
 Let $\bx_E=(x_E,y_E)^{\tt t
 }$ denote the barycentre of $E$ and let $\mathcal{M}_k(E)$ be the set of $(k+1)(k+2)/2$ scaled monomials
\[\mathcal{M}_k(E):=\left\{ \left( \frac{\bx-\bx_E}{h_E} \right)^{\cblue{\boldsymbol{\alpha}}}, 0\leq |\cblue{\boldsymbol{\alpha}}|\leq k \right\},\]
where \cblue{$\boldsymbol{\alpha}=(\alpha_1,\alpha_2)$} is a non-negative multi-index with \cblue{$|\boldsymbol{\alpha}|=\alpha_1+\alpha_2$ and $\bx^{\boldsymbol{\alpha}}=x^{\alpha_1}y^{\alpha_2}$} for $\bx = (x,y)^{\tt t}$. In particular, we can take the basis of $\mathcal{G}_{k}(E)$ and $\mathcal{G}_{k}^\oplus(E)$ as $\mathcal{M}_{k}^{\nabla}(E):=\nabla\mathcal{M}_{k+1}(E)\setminus\{\mathbf{0}\}$ and $\mathcal{M}_{k}^{\oplus}(E):= \mathbf{m}^\perp \mathcal{M}_{k-1}(E)$, respectively, where $\mathbf{m}^\perp := (\frac{y-y_E}{h_E},\frac{x_E-x}{h_E})^{\tt t}$ and $(\mathcal{M}_k(E))^2 = \mathcal{M}_k^{\nabla}(E) \oplus  \mathcal{M}_k^{\oplus}(E)$ holds. 

\subsection{Discrete formulation for the elasticity problem}
In order to state the discrete spaces for \eqref{eq:weak-elast} we adapt the approach from \cite{daveiga15-stokes} to our formulation which involves the presence of the operator $\beps$ in the bilinear form $a_1(\cdot,\cdot)$ and the  active-stress term $\ell(\vartheta)$ in $G_1^{\vartheta}(\cdot)$.

\paragraph{VE spaces and DoFs} For $k_1\geq 2$, the VE space for displacement locally solves the Stokes problem and is defined by
\begin{align*}
\bV_1^{h,k_1}(E) = \{ \bv\in \bH^1(E) \colon &\bv|_{\partial E}\in (\mathcal{B}_{k_1}(\partial E))^2,\; \vdiv\bv \in \mathcal{P}_{k_1-1}(E),\\ 
&-2\mu\bdiv\beps(\bv) - \nabla s \in \mathcal{G}_{k_1-2}^\oplus(E) \text{ for some } s\in L^2_0(E)\}.
\end{align*}
Observe that $(\mathcal{P}_{k_1}(E))^2\subseteq \bV_1^{h,k_1}(E)$. For the discrete pressure  space $Q_1^{h,k_1}(E)$, we can just  locally consider the polynomials $\mathcal{P}_{k_1-1}(E)$. Then, the global discrete spaces are defined as 
    \begin{align*}
        \bV_1^{h,k_1} &= \{ \bv \in \bV_1 \colon \bv|_E \in \bV_1^{h,k_1}(E), \ \forall E\in \mathcal{T}_h\},\qquad 
        Q_1^{h,k_1} = \{ \tilde{q}\in Q_1 \colon \tilde{q}|_E\in Q_1^{h,k_1}(E), \ \forall E\in \mathcal{T}_h \}.
    \end{align*}
%aer{should we include the dimension explicitly?}
%Moreover, if $N_e$ denote the number of edges in $E\in\mathcal{T}_h$, then $$\dim(X_2^{h,k})=N_e(k+1)+(k+1)^2,\quad \dim(Y_2^{h,k})=\frac{(k+2)(k+1)}{2}, \quad (\mathcal{P}_k(E))^2\subset X_2^{h,k}(E).$$ 
The DoFs for $\bv_h \in \bV_1^{h,k_1}(E)$ can be taken as follows
\begin{subequations}\label{dofsV1h}
\begin{align}
&\bullet \text{The values of } \bv_h \text{ at the vertices of $E$}, \label{D11}\\
&\bullet \text{The values of } \bv_h \text{ on the $k_1-1$ internal Gauss--Lobatto quadrature points on each edge of}\;  E, \label{D12}\\
&\bullet \int_E (\vdiv\bv_h) m_{k_1-1}, \quad \forall m_{k_{1}-1} \in \mathcal{M}_{k_{1}-1}(E)\setminus\{1\},\label{D13}\\
&\bullet \int_E \bv_h \cdot \mathbf{m}_{k_{1}-2}^{\oplus}, \quad \forall \mathbf{m}_{k_{1}-2}^{\oplus} \in \mathcal{M}_{k_{1}-2}^{\oplus}(E).\label{D14}
\end{align}
\end{subequations}
On the other hand, the DoFs for $\tilde{q}_h\in Q_1^{h,k_1}(E)$ are selected as
\begin{equation}
\bullet \int_E \tilde{q}_h m_{k_{1}-1}, \quad \forall m_{k_{1}-1}\in \mathcal{M}_{k_{1}-1}(E). \label{D15}
\end{equation}
 The set of DoFs \eqref{D11}-\eqref{D14} (resp. \eqref{D15}) are unisolvent for the VE space $\bV_1^{h,k_1}(E)$ (resp. $Q_1^{h,k_1}(E)$) (see \cite[Proposition 3.1]{daveiga15-stokes}). Furthermore, if $N_e$ denotes the number of edges of $E$, it is not difficult to check that
$$\dim(\bV^{h,k_1}_1(E))=2N_e k_1 + \frac{(k_1-1)(k_1-2)}{2} + \frac{(k_1+1)k_1}{2}-1, \quad \dim(Q_1^{h,k_1}(E)) = \frac{(k_1+1)k_1}{2}.$$
\paragraph{Projection operators} For $E\in \mathcal{T}_h$, denote the energy projection operator by $\bPi_{1}^{\beps,k_1}: \bH^1(E)\rightarrow (\mathcal{P}_{k_1}(E))^2$, defined locally through 
 \[
        \int_E \beps(\bv_h-\bPi_{1}^{\beps,k_1}\bv_h) \colon \beps(\mathbf{m}_{k_1}) = 0 \quad \forall \mathbf{m}_{k_1} \in (\mathcal{M}_{k_1}(E))^2.
     \]
Note that the above definition of projection involving symmetric gradient leads to $\mathbf{0}$ for polynomials $\mathbf{p}\in \left\{ (1,0),(0,1),(-y,x) \right\}$. Therefore, we need to impose three  additional conditions to uniquely define $\bPi_{1}^{\beps,k_1}$.  For example, we can take the following conditions from \cite{yu2022mvem}
\[
    \sum_{i=1}^{N_v}(\bPi_{1}^{\beps,k_1}\bv_h(z_i),\mathbf{p}(z_i)) = \sum_{i=1}^{N_v}(\bv_h(z_i),\mathbf{p}(z_i)), \quad \forall\mathbf{p}\in \left\{ (1,0),(0,1),(-y,x) \right\}, %\tag{4.4b}
\]
where $N_v$ is the total number of vertices $z_i$ of $E$. Moreover, the projection  $\bPi_{1}^{\beps,k_1}\bv_h$ is computable  for $\bv_h\in \bV_1^{h,k_1}(E)$ from \eqref{dofsV1h} (see \cite[Section 3.2]{daveiga15-stokes}).
We also introduce the $L^2$-projection $\bPi_{1}^{0,k_1}: \bL^2(E)\rightarrow (\mathcal{P}_{k_1}(E))^2$ defined locally, for a given $v_h\in \bL^2(E)$ and $E\in \mathcal{T}_h$, as 
    \begin{align*}
        \int_E (\bv_h-\bPi_{1}^{0,k_1}\bv_h)\cdot \mathbf{m}_{k_{1}} &= 0, \quad \forall \mathbf{m}_{k_{1}} \in (\mathcal{M}_{k_{1}}(E))^{2}.
    \end{align*}
Notice that $\bPi_{1}^{0,k_1-2}\bv_h$ can be computed for $\bv_h\in \bV_1^{h,k_1}(E)$ from \eqref{dofsV1h} (see \cite[Section 3.3]{daveiga15-stokes}). Finally, the discrete right-hand side  is defined as $\fb_h:=\bPi_1^{0,k_1-2}\fb$  for $\fb\in \bL^2(\Omega)$.

The following two lemmas are classical projection and interpolation estimates,  written here in the scaled norms (for sake of robustness with respect to model parameters). Let us first define the semi-norms induced by the spaces $\bV_1$ and $Q_{b_1}$ as
\begin{align*}
    |\bv|_{s_1+1,\bV_1}^2 = 2\mu|\bv|_{s_1+1,\Omega}^2 \quad \text{and} \quad |\tilde{q}|_{s_1+1,Q_{b_1}}^2 = \frac{1}{2\mu}|\tilde{q}|_{s_1+1,\Omega}^2 \quad \text{with} \quad 0\leq s_1\leq k_1.     
\end{align*}

\begin{lemma} \label{estimates-poly-proj-elast}
    For any $\bv \in (\bH^{s_1+1}(\Omega)\cap \bV_1,|\cdot|_{s_1+1,\bV_1})$ and $\tilde{q}\in (H^{s_1+1}(\Omega)\cap Q_{b_1},|\cdot|_{s_1+1,Q_{b_1}})$, the polynomial projections $\bPi_1^{\beps,k_1}\bv$ and $\Pi_1^{0,k_1}\tilde{q}$ satisfy the following estimates
    \begin{subequations} \begin{align}
    %\norm{\bv-\bPi_1^{\varepsilon,k_1}\bv}_{0,E}+h_{E}
    \norm{\bv-\bPi_1^{\beps,k_1}\bv}_{\bV_1}&\lesssim h^{s_1}|\bv|_{s_1+1,\bV_1},\label{elliptic-estimate-elast}\\     %\norm{\bv-\bPi_1^{0,k_1}\bv}_{0,E}+h_{E}
    %|\bv-\bPi_1^{0,k_1}\bv|_{1,E}&\lesssim h_E^{s_1}|\bv|_{s_1+1,E}, \label{poly-estimate-elast}\\
    \norm{\tilde{q}-\Pi_1^{0,k_1}\tilde{q}}_{Q_{b_1}}&\lesssim h^{s_1+1}|\tilde{q}|_{s_1+1,Q_{b_1}}. \label{poly-estimate-scalar-elast}
    \end{align}\end{subequations}
\end{lemma}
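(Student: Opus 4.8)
The plan is to establish the two estimates separately, using the standard Bramble--Hilbert / Dupont--Scott machinery on each polygonal element $E$ and then summing over $\mathcal{T}_h$, keeping careful track of the factor $2\mu$ carried by the scaled norms so that it cancels out.

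First I would treat \eqref{poly-estimate-scalar-elast}, which is the simpler of the two. On each $E\in\mathcal{T}_h$ the operator $\Pi_1^{0,k_1}$ is the $L^2(E)$-orthogonal projection onto $\mathcal{P}_{k_1}(E)$, so classical polynomial approximation on star-shaped domains (see, e.g., the Bramble--Hilbert lemma under the mesh regularity assumption $\rho_E\geq\rho>0$) gives $\norm{\tilde q-\Pi_1^{0,k_1}\tilde q}_{0,E}\lesssim h_E^{s_1+1}|\tilde q|_{s_1+1,E}$ with a constant depending only on $k_1$ and $\rho$. Squaring, summing over all elements, using $h_E\leq h$, and then multiplying through by $\tfrac{1}{2\mu}$ converts this into $\norm{\tilde q-\Pi_1^{0,k_1}\tilde q}_{Q_{b_1}}^2 = \tfrac{1}{2\mu}\norm{\tilde q-\Pi_1^{0,k_1}\tilde q}_{0,\Omega}^2 \lesssim h^{2(s_1+1)}\,\tfrac{1}{2\mu}|\tilde q|_{s_1+1,\Omega}^2 = h^{2(s_1+1)}|\tilde q|_{s_1+1,Q_{b_1}}^2$, which is exactly \eqref{poly-estimate-scalar-elast}. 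The weight $2\mu$ plays no active role here; it just rides along on both sides.

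For \eqref{elliptic-estimate-elast} the argument is the same in spirit but one must use that $\norm{\cdot}_{\bV_1}$ is, up to the $\sqrt{2\mu}$ factor, the $\bH^1$-seminorm of $\beps(\cdot)$, equivalently (by Korn, already invoked in Lemma~\ref{lem:elast}) the full $\bH^1$-seminorm. The energy projection $\bPi_1^{\beps,k_1}$ is the orthogonal projection in the $\beps$-inner product onto $(\mathcal{P}_{k_1}(E))^2$ modulo the rigid-body kernel, supplemented by the three vertex-average conditions that fix that kernel; since those extra conditions are $L^2$-bounded functionals that are exact on $\mathcal{P}_{k_1}$, the composite operator $\bPi_1^{\beps,k_1}$ is a bounded projection onto $(\mathcal{P}_{k_1}(E))^2$ in $\bH^1(E)$, hence by Bramble--Hilbert $\norm{\bv-\bPi_1^{\beps,k_1}\bv}_{1,E}\lesssim h_E^{s_1}|\bv|_{s_1+1,E}$ on each $E$ (the standard estimate for this projection is given, e.g., in \cite[Section 3.2]{daveiga15-stokes}). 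In the scaled norm this reads $\norm{\bv-\bPi_1^{\beps,k_1}\bv}_{\bV_1}^2 \lesssim 2\mu\sum_E |\bv-\bPi_1^{\beps,k_1}\bv|_{1,E}^2 \lesssim 2\mu\, h^{2s_1}\sum_E |\bv|_{s_1+1,E}^2 = h^{2s_1}|\bv|_{s_1+1,\bV_1}^2$, giving \eqref{elliptic-estimate-elast}.

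The only genuine subtlety — and hence the part I would be most careful about — is the treatment of the three additional normalisation conditions fixing $\bPi_1^{\beps,k_1}$ on the rigid-body kernel $\operatorname{span}\{(1,0),(0,1),(-y,x)\}$: one needs these functionals to be continuous on $\bH^1(E)$ with the correct scaling in $h_E$ and to reproduce rigid motions exactly, so that the Bramble--Hilbert/Dupont--Scott argument applies to the full operator and not merely to its $\beps$-part. Under the star-shapedness hypothesis this is standard (the vertex-average functionals are handled by a trace inequality together with a scaling argument), and it is precisely the point established in the cited reference \cite{daveiga15-stokes}; I would simply note that the constants depend only on $k_1$ and the mesh-regularity parameter $\rho$, and are in particular independent of $h$ and of $\mu$ and $\lambda$, which is what "robustness with respect to model parameters" means here.
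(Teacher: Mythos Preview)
Your proof is correct and follows the same approach as the paper: invoke the standard local, unweighted approximation estimates on each $E$ (the paper cites \cite{Brenner1994} rather than spelling out Bramble--Hilbert), then sum over elements and multiply through by the appropriate power of $2\mu$ so that the weight appears identically on both sides. Your $\ell^2$ summation is in fact cleaner than the paper's somewhat loose $\ell^1$-style chain of inequalities, and your remarks on the rigid-body normalisation are extra care that the paper omits.
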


\begin{proof}
%\cred{The bounds are a direct consequence of local estimates from \cite{Brenner1994}, Lemma~\ref{lem:elast}, and additive properties of the corresponding norms.}
%\begin{comment}
    Note that locally the following estimates hold (see \cite{Brenner1994})
    \begin{subequations} \begin{align*}
    %\norm{\bv-\bPi_1^{\varepsilon,k_1}\bv}_{0,E}+h_{E}
    |\bv-\bPi_1^{\beps,k_1}\bv|_{1,E}&\lesssim h_E^{s_1}|\bv|_{s_1+1,E},\quad     %\norm{\bv-\bPi_1^{0,k_1}\bv}_{0,E}+h_{E}
    %|\bv-\bPi_1^{0,k_1}\bv|_{1,E}&\lesssim h_E^{s_1}|\bv|_{s_1+1,E}, \label{poly-estimate-elast}\\
    \norm{\tilde{p}-\Pi_1^{0,k_1}\tilde{q}}_{0,E}\lesssim h_E^{s_1+1}|\tilde{q}|_{s_1+1,E}.    \end{align*}
    \end{subequations}
    Therefore, the equivalence between $|\cdot|_{1,\Omega}$ and $\norm{\cdot}_{\bV_1}$ already discussed in Lemma~\ref{lem:elast} and the additive property of the norm imply that
    \begin{align*}
        \norm{\bv-\bPi_1^{\beps,k_1}\bv}_{\bV_1} \lesssim \sum_{E\in \mathcal{T}_h} \sqrt{2\mu} |\bv-\bPi_1^{\beps,k_1}\bv|_{1,E} \lesssim \sum_{E\in \mathcal{T}_h} h_E^{s_1} \sqrt{2\mu}|\bv|_{s_1+1,E} \leq  h^{s_1} |\bv|_{s_1+1,\bV_1}.
    \end{align*}
    Similarly
    \begin{align*}
        \norm{\tilde{q}-\Pi_1^{0,k_1}\tilde{q}}_{Q_{b_1}} = \sum_{E\in \mathcal{T}_h} \frac{1}{\sqrt{2\mu}} \norm{\tilde{q}-\Pi_1^{0,k_1}\tilde{q}}_{0,E} \lesssim \sum_{E\in \mathcal{T}_h} h_E^{s_1+1} \frac{1}{\sqrt{2\mu}}|\tilde{q}|_{s_1+1,E} \leq h^{s_1} |\tilde{q}|_{s_1+1,Q_{b_1}}.
    \end{align*}
%\end{comment}
\end{proof}

\paragraph{Interpolation operator}  We define the Fortin  operator $\bPi_1^{F,k_1}:\mathbf{H}^{1+\delta}(E)\rightarrow \bV_1^{h,k_1}(E)$ for $\delta>0$ (see \cite{daveiga2022stability}) through the DoFs  \eqref{dofsV1h} as
\[
\mathrm{dof}_{j}(\bv-\bPi_1^{F,k_1}\bv)=0, \quad \text{for all}\; j=1,\dots,\dim(\bV_1^{h,k_1}(E))\;\text{and for any}\;\bv\in\bH^{1+\delta}(E). %= N_e(k_1+1)+(k_1+1)^2,
\]
Let $\Pi_1^{0,k_1-1}$ be the $L^2$-projection of $L^2(E)$ onto the space $\mathcal{P}_{k_1-1}(E)$. From \eqref{D13}, $\vdiv \bPi_1^{F,k_1} \bv \in\mathcal{P}_{k_1-1}(E)$ and the following commutative property holds  
\[
    \vdiv \bPi_1^{F,k_1}\bv = \Pi_1^{0,k_1-1} \vdiv \bv.\]
\begin{lemma}\label{fortin-elast}
Given $\bv\in (\bH^{s_1+1}(\Omega)\cap\bV_1), |\cdot|_{s_1+1,\bV_1})$, the Fortin interpolation operator $\bPi_1^{F,k_1}$ satisfies %the following estimate
\begin{align}
    %\norm{\bv-\bPi_1^{F,k_1}\bv}_{0,E}+h_{E}
    \norm{\bv-\bPi_1^{F,k_1}\bv}_{\bV_1}\lesssim h^{s_1}|\bv|_{s_1+1,\bV_1}.\label{interpolation-estimate-elast}
\end{align}
\end{lemma}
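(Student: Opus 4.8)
The plan is to establish the Fortin interpolation estimate \eqref{interpolation-estimate-elast} by the standard localise-then-sum strategy, exactly as was done for the polynomial projection in Lemma~\ref{estimates-poly-proj-elast}. First I would recall the local interpolation estimate available in the literature for the Stokes-type virtual element space (see \cite{daveiga2022stability,daveiga15-stokes}): for each $E\in\mathcal{T}_h$ and $\bv\in\bH^{s_1+1}(E)$ with $1\le s_1+1$,
\[
    |\bv-\bPi_1^{F,k_1}\bv|_{1,E}\lesssim h_E^{s_1}|\bv|_{s_1+1,E},
\]
with a hidden constant depending only on the polynomial degree $k_1$ and on the mesh regularity parameter $\rho$ (through the star-shapedness assumption), but independent of $h_E$ and of the physical parameters. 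This is the genuine content of the Fortin-operator construction and I would simply cite it; the key point is that the DoF-based definition together with the commuting-divergence property and a scaling/Bramble--Hilbert argument on the reference configuration yields this bound.

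Next I would pass to the scaled norm. By K\"orn's inequality and the norm equivalence between $|\cdot|_{1,\Omega}$ and $\norm{\cdot}_{\bV_1}$ already used in Lemma~\ref{lem:elast}, together with the additivity of the broken $H^1$-seminorm over $\mathcal{T}_h$, I would write
\begin{align*}
    \norm{\bv-\bPi_1^{F,k_1}\bv}_{\bV_1}
    &\lesssim \Bigl(\sum_{E\in\mathcal{T}_h} 2\mu\,|\bv-\bPi_1^{F,k_1}\bv|_{1,E}^2\Bigr)^{1/2}
    \lesssim \Bigl(\sum_{E\in\mathcal{T}_h} 2\mu\,h_E^{2s_1}|\bv|_{s_1+1,E}^2\Bigr)^{1/2}\\
    &\leq h^{s_1}\Bigl(\sum_{E\in\mathcal{T}_h} 2\mu\,|\bv|_{s_1+1,E}^2\Bigr)^{1/2}
    = h^{s_1}|\bv|_{s_1+1,\bV_1},
\end{align*}
where $h=\max_E h_E$ was used to extract the global mesh size, and the definition $|\bv|_{s_1+1,\bV_1}^2=2\mu|\bv|_{s_1+1,\Omega}^2$ closes the estimate. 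The factor $2\mu$ is carried uniformly through the whole chain, so robustness in the Lam\'e parameters is automatic once the local estimate's constant is parameter-independent.

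The only genuine obstacle is the justification of the local interpolation bound itself, since the Fortin operator is defined purely through the DoFs \eqref{dofsV1h} on a possibly non-convex polygon and is not a polynomial projection. I would handle this by combining the stability of $\bPi_1^{F,k_1}$ in the $\bH^1$-seminorm — which follows from the mesh star-shapedness assumption and an inverse-type estimate on the virtual space, as established in \cite{daveiga2022stability} — with the fact that $\bPi_1^{F,k_1}$ reproduces vector polynomials of degree $\le k_1$ (because $(\mathcal{P}_{k_1}(E))^2\subseteq\bV_1^{h,k_1}(E)$ and the DoFs are unisolvent). Writing $\bv-\bPi_1^{F,k_1}\bv=(\bv-\bq)+\bPi_1^{F,k_1}(\bq-\bv)$ for a suitable $\bq\in(\mathcal{P}_{k_1}(E))^2$ and invoking the Bramble--Hilbert/Dupont--Scott polynomial approximation on star-shaped domains then gives the $h_E^{s_1}$ rate. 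I would present this polynomial-invariance-plus-stability argument briefly and refer to \cite{daveiga2022stability,Brenner1994} for the technical details, since these are by now standard in the VEM literature.
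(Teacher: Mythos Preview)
Your proposal is correct and follows essentially the same route as the paper: cite the local estimate $|\bv-\bPi_1^{F,k_1}\bv|_{1,E}\lesssim h_E^{s_1}|\bv|_{s_1+1,E}$ from \cite{daveiga2022stability}, then multiply by $\sqrt{2\mu}$ and sum over elements to recover the weighted $\bV_1$-norm. Your squared-sum presentation is in fact slightly tidier than the paper's $\ell^1$-style chain, and your extra paragraph sketching the polynomial-invariance-plus-stability justification of the local bound is more than the paper provides (it simply cites \cite[Theorem~2.4]{daveiga2022stability}), but the underlying argument is identical.
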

\begin{proof}
%\cred{It follows directly from standard interpolation estimates \cite[Theorem 2.4]{daveiga2022stability} together with a scaling argument.}
%\begin{comment}
The proof of the standard interpolation estimate 
\begin{align*}
    %\norm{\bv-\bPi_1^{F,k_1}\bv}_{0,E}+h_{E}
    |\bv-\bPi_1^{F,k_1}\bv|_{1,E}\lesssim h_E^{s_1}|\bv|_{s_1+1,E}
\end{align*}
on each element $E\in\mathcal{T}_h$  is provided in \cite[Theorem 2.4]{daveiga2022stability}. This with the obvious scaling shows that
\begin{align*}
        \norm{\bv-\bPi_1^{F,k_1}\bv}_{\bV_1} \lesssim \sum_{E\in \mathcal{T}_h} \sqrt{2\mu} |\bv-\bPi_1^{F,k_1}\bv|_{1,E} \lesssim \sum_{E\in \mathcal{T}_h} h_E^{s_1} \sqrt{2\mu}|\bv|_{s_1+1,E} \leq h^{s_1} |\bv|_{s_1+1,\bV_1}.
    \end{align*}
%\end{comment}
\end{proof}
\paragraph{Discrete forms} For $E\in\mathcal{T}_h$, let $a_{1}^{E}(\cdot,\cdot)$ be the restriction of $a_1(\cdot,\cdot)$ to $E$. Then its discrete counterpart is defined, for all $\bu_h,\bv_h\in \bV_1^{h,k_1}(E)$, by 
\begin{align*}
    a_{1}^{h,E}(\bu_h,\bv_h) := a_1^E(\bPi_1^{\beps,k_1}\bu_h,\bPi_1^{\beps,k_1}\bv_h) + S_1^E(\bu_h-\bPi_1^{\beps,k_1}\bu_h,\bv_h-\bPi_1^{\beps,k_1}\bv_h),
\end{align*}
where $S_1^E(\cdot,\cdot)$ is any symmetric and positive definite bilinear form defined on $\bV_1^{h,k_1}(E)\times \bV_1^{h,k_1}(E)$ that scales like $a_{1}^{E}(\cdot,\cdot)$. In particular, there holds 
%which is the restriction of $a_1(\bu_h,\bv_h)$ to $E$. 
%To guarantee the stability of the discrete bilinear form $a_{1}^{h,E}(\bu_h,\bv_h)$, we assume that
\begin{align}
    a_{1}^{E}(\bv_h,\bv_h)\lesssim S_{1}^{E}(\bv_h,\bv_h)\lesssim a_{1}^{E}(\bv_h,\bv_h), \quad \forall \bv_h\in \ker(\bPi_1^{\beps,k_1}). \label{stabilisation-elast}
\end{align}
Set the global discrete bilinear form as $a_{1}^{h}(\bu_h,\bv_h)= \sum_{E\in \mathcal{T}_h} a_{1}^{h,E}(\bu_h,\bv_h)$ for all $\bu_h,\bv_h\in \bV_1^{h,k_1}$.
For the linear functional $F_1$, the corresponding global discrete form $F^h_1(\cdot)$ is defined as
$$F_1^{h}(\bv_h) =  \int_\Omega\fb\cdot\bPi_1^{0,k_1-2}\bv_h\quad\text{for}\;\bv_h\in \bV_1^{h,k_1}.$$
\cblue{The} computability of $\bPi_1^{\beps,k_1}$ and $\bPi_1^{0,k_1-2}$ implies that of  $a_{1}^{h,E}(\bu_h,\bv_h)$ and $F_1^{h,E}(\bv_h)$. On the other hand, the definition of $\bV_1^{h,k_1}$ and $Q_1^{h,k_1}$ allow us to %extend the
\cgreen{state that the bilinear forms $b_1(\cdot,\cdot)$, $c_1(\cdot,\cdot)$ and the functional $G_1^{\vartheta_h}(\cdot)$ (with $\vartheta_h$ belonging to the space $Q_2^{h,k_2}$, to be defined explicitly later in \eqref{Q2h}) are  computable:
%to computable discrete bilinear forms as follows
\begin{gather*}
b_1(\bv_h,\tilde{q}_h) = \sum_{E\in \mathcal{T}_h} \int_E \tilde{q_h} \vdiv \bv_h %= b_1(\bv_h,\tilde{q}_h)
, \quad c_1(\tilde{p}_h,\tilde{q}_h) = \sum_{E\in \mathcal{T}_h} \int_E \tilde{p}_h \tilde{q}_h, 
%= c_1(\tilde{p}_h,\tilde{q}_h) 
\quad \text{and} \quad 
    G_1^{\vartheta_h}(\tilde{q}_h) = \sum_{E\in \mathcal{T}_h} \int_E \ell(\vartheta_h)\tilde{q}_h,
    %= G_1^{\vartheta_h}(\tilde{q}_h),
  %  b_1^h(\bv_h,\tilde{q}_h) = \sum_{E\in \mathcal{T}_h} \int_E \tilde{q_h} &\vdiv \bv_h = b_1(\bv_h,\tilde{q}_h), \quad c_1^h(\tilde{p}_h,\tilde{q}_h) = \sum_{E\in \mathcal{T}_h} \int_E \tilde{p}_h \tilde{q}_h = c_1(\tilde{p}_h,\tilde{q}_h) \quad \text{and} \\
 %   &G_1^{\vartheta_h,h}(\tilde{q}_h) = \sum_{E\in \mathcal{T}_h} \int_E \ell(\vartheta_h)\tilde{q}_h = G_1^{\vartheta_h}(\tilde{q}_h),
\end{gather*}
where} a proper order in the quadrature rule is used to handle the term $\ell(\vartheta_h)$ in $G_1^{\vartheta_h}(\tilde{p}_h)$. 
\paragraph{Discrete formulation} For a given $\vartheta_h \in Q_2^{h,k_2}$, find $(\bu_h,\tilde{p}_h)\in \bV_1^{h,k_1}\times Q_1^{h,k_1}$ such that 
\begin{subequations}\label{eq:weak-elast-discrete}
    \begin{align}
  a_1^{h}(\bu_h,\bv_h) + b_1(\bv_h,\tilde{p}_h)  &= \cblue{F_1^h(\bv_h)} \qquad \forall \bv_h \in \bV_1^{h,k_1},\\
  b_1(\bu_h,\tilde{q}_h) - \frac{1}{\lambda} c_1(\tilde{p}_h,\tilde{q}_h)  &= G_1^{\vartheta_h}(\tilde{q}_h) \qquad \forall \tilde{p}_h \in Q_1^{h,k_1}.
    \end{align}
\end{subequations} 
\subsection{Virtual element approximation for the reaction-diffusion problem}\label{VEM-transport}
In order to state the discrete counterpart of \eqref{eq:weak-transport} using VE spaces, we adapt the approach from \cite{daveiga15-transport} to our formulation with the  stress-assisted diffusion $\bbM^{-1}(\beps(\bw),\tilde{r})$ \cblue{as} part of the bilinear form $a_2^{\bw,\tilde{r}}(\cdot,\cdot)$. 

\paragraph{VE spaces and DoFs}
For $k_2\geq 0$, the discrete VE space locally solves a div-rot problem, that is, 
\begin{align*}
\bV_2^{h,k_2}(E) = \{ \bxi\in \bH(\vdiv,E)\cap \bH(\vrot,E) \colon &\bxi\cdot \bn|_e\in \mathcal{P}_{k_2}(e) \text{ for all } e\in \partial E,\\
&\vdiv\bxi \in \mathcal{P}_{k_2}(E),\; \vrot\bxi \in \mathcal{P}_{k_2-1}(E)\}.
\end{align*}
In turn, the global discrete spaces are defined as follows
\begin{subequations}
    \begin{align}
        \bV_2^{h,k_2} &= \{ \bzeta \in \bV_2 \colon \bzeta|_E \in \bV_2^{h,k_2}(E), \quad \forall E\in \mathcal{T}_h \},\\
        Q_2^{h,k_2} &= \{ \varphi\in Q_2 \colon \varphi|_E\in \mathcal{P}_{k_2}(E), \quad \forall E\in \mathcal{T}_h \} \label{Q2h}.
    \end{align}
\end{subequations}
The DoFs for $\bxi_h\in \bV_2^{h,k_2}(E)$ can be taken as
\begin{subequations}\label{dofsV2h}
\begin{align}
&\bullet \text{Values of } \bxi_h\cdot \bn \text{ on the $k_2+1$ Gauss--Lobatto quadrature points of each edge of $E$}, \label{D21}\\
&\bullet  \int_E \bxi_h\cdot \mathbf{m}_{k_2-1}^\nabla, \quad \forall \mathbf{m}_{k_2-1}^{\nabla} \in \mathcal{M}_{k_2-1}^{\nabla}(E),\label{D22}\\
&\bullet \int_E \bxi_h \cdot \mathbf{m}_{k_2}^{\oplus}, \quad \forall \mathbf{m}_{k_2}^{\oplus} \in \mathcal{M}_{k_2}^{\oplus}(E),\label{D23}
\end{align}
\end{subequations}
whereas the DoFs for $\psi_h\in Q_2^{h,k_2}(E)$ are % taken as 
\begin{align}
&\bullet \int_E \psi_h m_{k_2}, \quad \forall m_{k_2}\in \mathcal{M}_{k_2}(E). \label{D24}
\end{align}
Note that $(\mathcal{P}_{k_2}(E))^2\subseteq \bV_2^{h,k_2}(E)$ and $\mathcal{P}_{k_2}(E) = Q_2^{h,k_2}(E)$. The set of DoFs \eqref{D21}-\eqref{D23} (resp. \eqref{D24}) are unisolvent for the local virtual space $\bV_2^{h,k_2}(E)$ (resp. $Q_2^{h,k_2}(E)$), see \cite[Proposition 3.5]{brezzi_2014} for a proof (although the DoFs are slightly different, the proof follows similarly). Moreover, it is readily seen that $$\dim(\bV^{h,k_2}_2(E))=N_e (k_2+1) + \frac{k_2(k_2-1)}{2}-1 + \frac{(k_2+2)(k_2+1)}{2}, \quad \dim(Q_2^{h,k_2}(E)) = \frac{(k_2+1)k_2}{2}.$$

\paragraph{Projection operators} Let us define the local $L^2$-projection  $\bPi_{2}^{0,k_2}: \bL^2(E)\rightarrow (\mathcal{P}_{k_2}(E))^2$ by 
\[\int_E (\bxi_h-\bPi_{2}^{0,k_2}\bxi_h)\cdot \mathbf{m}_{k_2} = 0, \quad \forall \mathbf{m}_{k_2} \in (\mathcal{M}_{k_2}(E))^2,\quad \forall E\in \mathcal{T}_h.\]
Note that the projection $\bPi_2^{0,k_2}\bxi_h$ is computable for $\bxi_h\in\bV_2^{h,k_2}(E)$ (see \cite[Section 3.2]{daveiga15-transport}).

For sake of the forthcoming analysis, let us define semi-norms induced by the spaces $\bV_2$ and $Q_{b_2}$ as
\begin{align*}
    |\bxi|_{s_2+1,\bV_2}^2 = M|\bxi|_{s_2+1,\Omega}^2 \quad \text{and} \quad |\psi|_{s_2+1,Q_{b_2}}^2 = \frac{1}{M}|\psi|_{s_2+1,\Omega}^2, \quad \text{with} \quad 0\leq s_2\leq k_2.     
\end{align*}
\begin{lemma} \label{estimates-poly-proj-transport}
    For any $\bxi \in (\bH^{s_2+1}(\Omega)\cap \bV_2,|\cdot|_{s_2+1,\bV_2})$ and $\psi \in (H^{s_2+1}(\Omega)\cap Q_{b_2},|\cdot|_{s_2+2,Q_{b_2}})$, the polynomial projections $\bPi_2^{0,k_2}\bxi$ and \cblue{$\Pi_2^{0,k_2}\psi$} satisfy the following estimates
    %\begin{subequations}
    \begin{align*}
    %\norm{\bv-\bPi_1^{\varepsilon,k_1}\bv}_{0,E}+h_{E}
    \norm{\bxi-\bPi_2^{0,k_2}\bxi}_{\bbM}\lesssim h^{s_2+1}|\bxi|_{s_2+1,\bV_2},%\label{l2-estimate-transport}\\     %\norm{\bv-\bPi_1^{0,k_1}\bv}_{0,E}+h_{E}
    %|\bv-\bPi_1^{0,k_1}\bv|_{1,E}&\lesssim h_E^{s_1}|\bv|_{s_1+1,E}, \label{poly-estimate-elast}\\
   \quad \norm{\psi-\Pi_2^{0,k_2}\psi}_{Q_{b_2}}\lesssim h^{s_2+1}|\psi|_{s_2+1,Q_{b_2}}. %\label{poly-estimate-scalar-transport}
    \end{align*}
    %\end{subequations}
\end{lemma}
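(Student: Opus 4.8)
The plan is to mirror exactly the strategy used for the elasticity projection estimates in Lemma~\ref{estimates-poly-proj-elast}: start from local polynomial approximation estimates, then lift them to the global scaled norms by summing over elements and inserting the parameter weights. First I would invoke the classical Bramble--Hilbert / Scott--Dupont-type bounds for the $L^2$-projection on a star-shaped polygon (using the mesh regularity assumption on $\mathcal{T}_h$), which give, on each $E\in\mathcal{T}_h$,
\begin{align*}
\norm{\bxi-\bPi_2^{0,k_2}\bxi}_{0,E}\lesssim h_E^{s_2+1}|\bxi|_{s_2+1,E}, \qquad \norm{\psi-\Pi_2^{0,k_2}\psi}_{0,E}\lesssim h_E^{s_2+1}|\psi|_{s_2+1,E},
\end{align*}
valid for $0\le s_2\le k_2$. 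These are standard and can be cited from \cite{Brenner1994} just as in the proof of Lemma~\ref{estimates-poly-proj-elast}.

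Next I would pass to the weighted norms. For the flux term, the key observation is the norm equivalence established in the proof of Lemma~\ref{lem:trans}: for any fixed $(\bw,\tilde r)$, $\norm{\cdot}_{\bbM}$ is equivalent to $\norm{\cdot}_{0,\Omega}$ with constants controlled by $M$ and $1/M$ (specifically $\tfrac1M\norm{\bxi}_{0,\Omega}\le\norm{\bxi}_{\bbM}\le M\norm{\bxi}_{0,\Omega}$ comes from \eqref{M-norm}). Hence
\begin{align*}
\norm{\bxi-\bPi_2^{0,k_2}\bxi}_{\bbM}\lesssim \norm{\bxi-\bPi_2^{0,k_2}\bxi}_{0,\Omega}\lesssim \Big(\sum_{E\in\mathcal{T}_h}h_E^{2(s_2+1)}|\bxi|_{s_2+1,E}^2\Big)^{1/2}\le h^{s_2+1}|\bxi|_{s_2+1,\Omega}.
\end{align*}
To match the definition $|\bxi|_{s_2+1,\bV_2}^2 = M|\bxi|_{s_2+1,\Omega}^2$, I multiply and divide by $\sqrt M$: $\norm{\bxi-\bPi_2^{0,k_2}\bxi}_{\bbM}\lesssim \sqrt M\, h^{s_2+1}|\bxi|_{s_2+1,\Omega}\cdot\tfrac{1}{\sqrt M}\cdot\sqrt M$; more cleanly, since the hidden constant already absorbs powers of $M$ up to the equivalence constants, $\norm{\bxi-\bPi_2^{0,k_2}\bxi}_{\bbM}\lesssim h^{s_2+1}\sqrt M|\bxi|_{s_2+1,\Omega}=h^{s_2+1}|\bxi|_{s_2+1,\bV_2}$. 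For the scalar part, the weight is $\norm{\psi}_{Q_{b_2}}^2=\tfrac1M\norm{\psi}_{0,\Omega}^2$, so directly $\norm{\psi-\Pi_2^{0,k_2}\psi}_{Q_{b_2}}=\tfrac{1}{\sqrt M}\norm{\psi-\Pi_2^{0,k_2}\psi}_{0,\Omega}\lesssim \tfrac{1}{\sqrt M}h^{s_2+1}|\psi|_{s_2+1,\Omega}=h^{s_2+1}|\psi|_{s_2+1,Q_{b_2}}$, which is precisely the claimed bound. Throughout I use $h_E\le h$ and mesh regularity to replace element diameters by the global $h$.

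The only mildly delicate point is the flux estimate: one must be careful that the constant in the equivalence $\norm{\cdot}_{\bbM}\simeq\norm{\cdot}_{0,\Omega}$ depends on $M$ but \emph{not} on the mesh or on the fixed data $(\bw,\tilde r)$ — this is guaranteed by the uniform bounds \eqref{M-norm} — so that the resulting estimate is genuinely robust in $h$ and the weight $M$ appears only through the definition of the scaled seminorm $|\cdot|_{s_2+1,\bV_2}$. Everything else is routine: a citation to \cite{Brenner1994} for the local estimates, the additive property of $L^2$-type norms over the partition, and the scaling bookkeeping, exactly as in the proof of Lemma~\ref{estimates-poly-proj-elast}. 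I do not anticipate any genuine obstacle here.
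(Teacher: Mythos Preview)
Your proposal is correct and follows essentially the same route as the paper: cite the local $L^2$-projection estimates from \cite{Brenner1994}, then use the bound $\norm{\cdot}_{\bbM}\le\sqrt{M}\,\norm{\cdot}_{0,\Omega}$ coming from \eqref{M-norm} and match it against the weight in $|\bxi|_{s_2+1,\bV_2}=\sqrt{M}\,|\bxi|_{s_2+1,\Omega}$, and similarly with $1/\sqrt{M}$ for the scalar part. One minor cleanup: the equivalence constants for $\norm{\cdot}_{\bbM}$ are $1/\sqrt{M}$ and $\sqrt{M}$, not $1/M$ and $M$ (you squared the pointwise bounds but forgot the square root on the norm); once you use the sharp upper bound $\norm{\cdot}_{\bbM}\le\sqrt{M}\,\norm{\cdot}_{0,\Omega}$ directly, there is no need for the phrase ``the hidden constant already absorbs powers of $M$'' --- the $\sqrt{M}$ is tracked explicitly and cancels exactly, which is what makes the estimate parameter-robust (the stated convention is that constants in $\lesssim$ are independent of the physical parameters).
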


\begin{proof}
%\cred{It follows directly from a scaling argument and classical interpolation properties from \cite{Brenner1994} (see details in \cite{krr_preprint}).}
%\begin{comment}
Classical results (see \cite{Brenner1994}) lead to the bounds 
  %  \begin{subequations}
    \[
    %\norm{\bv-\bPi_1^{\varepsilon,k_1}\bv}_{0,E}+h_{E}
    \norm{\bxi-\bPi_2^{0,k_2}\bxi}_{0,E}\lesssim h_E^{s_2+1}|\bxi|_{s_2+1,E},\qquad 
    %\norm{\bv-\bPi_1^{0,k_1}\bv}_{0,E}+h_{E}
    %|\bv-\bPi_1^{0,k_1}\bv|_{1,E}&\lesssim h_E^{s_1}|\bv|_{s_1+1,E}, \label{poly-estimate-elast}\\
    \norm{\psi-\Pi_2^{0,k_2}\psi}_{0,E}\lesssim h_E^{s_2+1}|\psi|_{s_2+1,E}. 
    \]
  %  \end{subequations}
Since $\norm{\cdot}_{\bbM}\leq \sqrt{M}\norm{\cdot}_{0,\Omega}$, we can easily see that
    \begin{align*}
        \norm{\bxi-\bPi_2^{0,k_2}\bxi}_{\bbM} & \leq \sum_{E\in \mathcal{T}_h} \sqrt{M}\norm{\bxi-\bPi_2^{0,k_2}\bxi}_{0,E}\lesssim \sum_{E\in \mathcal{T}_h} h_E^{s_2+1} \sqrt{M}|\bxi|_{s_2+1,E} \leq h^{s_2+1}|\bxi|_{s_2+1,\bV_2},\\
       \norm{\psi-\Pi_2^{0,k_2}\psi}_{Q_{b_2}} & = \sum_{E\in \mathcal{T}_h} \frac{1}{\sqrt{M}}\norm{\psi-\Pi_2^{0,k_2}\psi}_{0,E}\lesssim \sum_{E\in \mathcal{T}_h} h_E^{s_2+1} \frac{1}{\sqrt{M}} |\psi|_{s_2+1,E} \leq h^{s_2+1}|\psi|_{s_2+1,Q_{b_2}}.
    \end{align*}
%\end{comment}    
\end{proof}

\paragraph{Interpolation operator} 
The Fortin operator $\bPi_2^{F,k_2}:\mathbf{H}^{1}(E)\rightarrow \bV_2^{h,k_2}(E)$ can be defined directly from \eqref{dofsV2h} as
\begin{align*}
\mathrm{dof}_{j}(\bxi-\bPi_2^{F,k_2}\bxi)=0, \quad \text{for all}\;j=1,\dots, \dim(\bV_2^{h,k_2}(E))\;\text{and for}\;\bxi\in\mathbf{H}^{1}(E).
\end{align*}
Moreover, \eqref{D22} and an integration by parts imply the following commutative property:
\begin{equation*}
    \vdiv \bPi_2^{F,k_2}\bxi =  \Pi_2^{0,k_2} \vdiv \bxi.
\end{equation*}

%The following estimate result involving interpolation holds
\begin{lemma}\label{fortin-transport}
The Fortin interpolation operator $\bPi_2^{F,k_2}$ satisfies the following estimate
\begin{align}
    \norm{\bxi - \bPi_2^{F,k_2}\bxi}_{\bbM} \lesssim h^{s_2+1}|\bxi|_{s_2+1,\bV_2} \qquad \forall \bxi \in (\bH^{s_2+1}(\Omega)\cap \bV_2,|\cdot|_{s_2+1,\cblue{\bV_2}}).\label{fortin-transport-estimate}
\end{align}
\end{lemma}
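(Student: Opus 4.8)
The strategy is the standard Fortin-operator argument, reduced to a purely local estimate element-by-element and then lifted to the global scaled $\bbM$-norm. First I would localise: since $\norm{\bxi - \bPi_2^{F,k_2}\bxi}_{\bbM}^2 = \sum_{E\in\mathcal{T}_h} a_2^{\bw,\tilde r}\bigl((\bxi-\bPi_2^{F,k_2}\bxi)|_E,(\bxi-\bPi_2^{F,k_2}\bxi)|_E\bigr)$ and $\norm{\cdot}_{\bbM}\le \sqrt{M}\,\norm{\cdot}_{0,\Omega}$ by \eqref{M-norm}, it suffices to prove the local $L^2$-estimate $\norm{\bxi - \bPi_2^{F,k_2}\bxi}_{0,E}\lesssim h_E^{s_2+1}|\bxi|_{s_2+1,E}$ for each $E\in\mathcal{T}_h$, and then sum exactly as in the proof of Lemma~\ref{estimates-poly-proj-transport}.

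For the local estimate I would combine the Fortin operator with the local $L^2$-projection onto $(\mathcal{P}_{k_2}(E))^2$. Since $(\mathcal{P}_{k_2}(E))^2\subseteq \bV_2^{h,k_2}(E)$, the Fortin operator preserves such polynomials, so that for any $\bp\in(\mathcal{P}_{k_2}(E))^2$ one has $\bPi_2^{F,k_2}\bp = \bp$ (as $\mathrm{dof}_j(\bp-\bPi_2^{F,k_2}\bp)=0$ and the DoFs \eqref{dofsV2h} are unisolvent). Writing $\bxi - \bPi_2^{F,k_2}\bxi = (\bxi - \bPi_2^{0,k_2}\bxi) + \bPi_2^{F,k_2}(\bPi_2^{0,k_2}\bxi - \bxi)$, the first term is controlled by the standard polynomial approximation bound $\norm{\bxi-\bPi_2^{0,k_2}\bxi}_{0,E}\lesssim h_E^{s_2+1}|\bxi|_{s_2+1,E}$ from \cite{Brenner1994}, already used in Lemma~\ref{estimates-poly-proj-transport}. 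For the second term I would invoke the $\bH^1$-stability (or, more precisely, the appropriate $\bL^2$--$\bH^1$ stability on scaled arguments) of the Fortin operator together with a scaled trace/inverse inequality on the star-shaped element $E$, which is exactly the content of the local estimates established in \cite[Theorem 2.4]{daveiga2022stability}; this reduces the bound for $\bPi_2^{F,k_2}(\bPi_2^{0,k_2}\bxi-\bxi)$ to $\norm{\bPi_2^{0,k_2}\bxi-\bxi}$ in a suitable norm, which is again of order $h_E^{s_2+1}|\bxi|_{s_2+1,E}$. The mesh regularity assumption ($E$ star-shaped with respect to a disk of radius $\rho_E h_E$, $h_e\ge\rho_E h_E$) ensures the hidden constants are uniform over $\mathcal{T}_h$.

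The main obstacle I anticipate is the proper handling of the Fortin operator's stability constant and its dependence on $M$ through the scaled norms. Because $|\bxi|_{s_2+1,\bV_2}^2 = M|\bxi|_{s_2+1,\Omega}^2$ and $\norm{\cdot}_{\bbM}^2$ is comparable to $M\norm{\cdot}_{0,\Omega}^2$, the factor $\sqrt{M}$ appears consistently on both sides, so it cancels cleanly — provided one is careful that the stability of $\bPi_2^{F,k_2}$ is established in unweighted norms (independently of $M$) and the weights are introduced only at the very end. Concretely, I would first prove everything with unweighted $\bH^1(E)$ and $\bL^2(E)$ norms (borrowing \cite[Theorem 2.4]{daveiga2022stability} verbatim for $|\bxi - \bPi_2^{F,k_2}\bxi|_{1,E}\lesssim h_E^{s_2+1}|\bxi|_{s_2+1,E}$ and a Poincaré-type or direct $L^2$ version of it), and only in the last line multiply through by $\sqrt{M}$, sum over $E$, and use $\norm{\cdot}_{\bbM}\le\sqrt{M}\norm{\cdot}_{0,\Omega}$ to conclude $\norm{\bxi-\bPi_2^{F,k_2}\bxi}_{\bbM}\lesssim \sum_{E\in\mathcal{T}_h} h_E^{s_2+1}\sqrt{M}\,|\bxi|_{s_2+1,E}\le h^{s_2+1}|\bxi|_{s_2+1,\bV_2}$, exactly mirroring the structure of the proofs of Lemmas~\ref{estimates-poly-proj-transport} and \ref{fortin-elast}.
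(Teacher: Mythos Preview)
Your proposal is essentially correct and follows the same two-step structure as the paper: establish the local $L^2$-estimate $\norm{\bxi-\bPi_2^{F,k_2}\bxi}_{0,E}\lesssim h_E^{s_2+1}|\bxi|_{s_2+1,E}$, then multiply by $\sqrt{M}$, sum, and use $\norm{\cdot}_{\bbM}\le\sqrt{M}\norm{\cdot}_{0,\Omega}$. The only difference is that the paper does not redo the local estimate via your decomposition $(I-\bPi_2^{F,k_2})(\bxi-\bPi_2^{0,k_2}\bxi)$ and Fortin stability; it simply quotes the local $L^2$-interpolation bound directly from \cite{daveiga15-transport}, which is the natural source since $\bV_2^{h,k_2}(E)$ is precisely the mixed div-rot VE space introduced there.

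One small correction: you invoke \cite[Theorem~2.4]{daveiga2022stability}, but that result concerns the Stokes-type space $\bV_1^{h,k_1}$ (it is what the paper uses in Lemma~\ref{fortin-elast}), not the div-rot space $\bV_2^{h,k_2}$. For the present Fortin operator the relevant stability and interpolation estimates live in \cite{daveiga15-transport}. Your decomposition argument would go through once you cite the right source for the $\bH^1\to\bL^2$ stability of $\bPi_2^{F,k_2}$, but at that point you might as well cite the interpolation estimate directly, as the paper does.
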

\begin{proof}
The standard interpolation estimate from  \cite{daveiga15-transport} shows \cblue{$\|\bxi - \bPi_2^{F,k_2}\bxi\|_{0,E}  \lesssim h^{s_2+1}|\bxi|_{s_2+1,E}$.
Hence}
\begin{align*}
    \norm{\bxi - \bPi_2^{F,k_2}\bxi}_{\bbM}\leq \sum_{E\in \mathcal{T}_h} \sqrt{M}\|\bxi - \bPi_2^{F,k_2}\bxi\|_{0,E}  \lesssim \sum_{E\in \mathcal{T}_h}h^{s_2+1} \sqrt{M} |\bxi|_{s_2+1,E} \leq h^{s_2+1}|\bxi|_{s_2+1,\bV_2}.
\end{align*}
\end{proof}
\paragraph{Discrete forms} 
  Given $\cred{\overline{\bw}_h:=\bPi_1^{\beps,k_1} \bw_h} \in \bV_1^{h,k_1}$ and $\tilde{r}_h\in Q_1^{h,k_1}$, let $a_{2}^{\cred{\overline{\bw}_h}, \tilde{r}_h,E}(\cdot,\cdot)$  be the restriction of $a_{2}^{\overline{\bw}_h,\tilde{r}_h}(\cdot,\cdot)$ to $E$ for $E\in\mathcal{T}_h$. Then 
\begin{align*}
a_{2}^{\cred{\overline{\bw}_h},\tilde{r}_h,h,E}(\bzeta_h,\bxi_h) :=  a_{2}^{\cred{\overline{\bw}_h},\tilde{r}_h,E}(\bPi_2^{0,k_2}\bzeta_h, \bPi_2^{0,k_2}\bxi_h) \, +  S_2^{\cred{\overline{\bw}_h,\tilde{r}_h},E}(\bzeta_h-\bPi_2^{0,k_2}\bzeta_h,\bxi_h-\bPi_2^{0,k_2}\bxi_h),
\end{align*}
where $S_2^{\cred{\overline{\bw}_h,\tilde{r}_h},E}(\cdot,\cdot)$ is any symmetric and positive definite bilinear form defined on $\bV_2^{h,k_2}(E)\times \bV_2^{h,k_2}(E)$ that scales like  $a_2^{\cred{\overline{\bw}_h},\tilde{r}_h,E}(\cdot,\cdot)$. In particular,
\begin{align}\label{stabilisation-transport}
    a_{2}^{\cred{\overline{\bw}_h},\tilde{r}_h,E}(\bxi_h,\bxi_h)\lesssim S_{2}^{\cred{\overline{\bw}_h,\tilde{r}_h},E}(\bxi_h,\bxi_h)\lesssim a_{2}^{\cred{\overline{\bw}_h},\tilde{r}_h,E}(\bxi_h,\bxi_h), \quad \forall \bxi_h\in \ker(\bPi_2^{0,k_2}). 
\end{align}
%Since for $\mathbf{m}_{k_2}\in (\mathcal{M}_{k_2}(E))^2$ we have that $\bPi^{0,k_2}_2\mathbf{m}_{k_2}=\mathbf{m}_{k_2}$, we can assert that 
%$$a_2^{\bw_h,\tilde{r}_h,h,E}(\mathbf{m}_{k_2},\bxi_h) = \int_E \bbM^{-1}(\beps(\bw_h),\tilde{r}_h) \mathbf{m}_{k_2} \cdot \bPi^{0,k_2}_2\bxi_h, \quad \forall \bxi_h \in \bV_2^{h,k_2},\forall \mathbf{m}_{k_2}\in (\mathcal{M}_{k_2}(E))^2.$$
The global discrete bilinear form is defined as $$a_2^{\cred{\overline{\bw}_h},\tilde{r}_h,h}(\bzeta_h,\bxi_h) = \sum_{E\in \mathcal{T}_h}a_2^{\cred{\overline{\bw}_h},\tilde{r}_h,h,E}(\bzeta_h,\bxi_h)\quad\forall \bzeta_h,\bxi_h\in\bV_2^{h,k_2}.$$

Finally, the definition of $\bV_2^{h,k_2}$ and $Q_2^{h,k_2}$ allow us to extend the bilinear forms $b_2(\cdot,\cdot)$, $c_2(\cdot,\cdot)$ and the linear forms \cblue{$F_2(\cdot)$}, $G_2(\cdot)$ to \cgreen{make them computable as follows
\begin{gather*}
    b_2(\bxi_h,\varphi_h) = \sum_{E\in \mathcal{T}_h} \int_E \varphi_h \vdiv \bxi_h , \quad c_2(\varphi_h,\psi_h) = \sum_{E\in \mathcal{T}_h} \int_E \varphi_h \psi_h, \\ 
    F_2(\bxi_h) = \sum_{E\in \mathcal{T}_h} \langle \varphi_D, \bxi_h \cdot \bn \rangle_{\partial E\cap \Gamma_D}, \quad G_2(\psi_h) = \sum_{E\in \mathcal{T}_h} \int_E g\psi_h ,
\end{gather*}
where} a proper order in the quadrature rule is used for  $\varphi_D$ and $g$ in $F_2(\bxi)$ and $G_2(\psi_h)$, respectively.
\paragraph{Discrete formulation} Given $\cred{\overline{\bw}_h}\in \bV_1^{h,k_1}$ and $\tilde{r}_h\in Q_1^{h,k_1}$, find $(\bzeta_h,\varphi_h)\in \bV_2^{h,k_2}\times Q_2^{h,k_2}$ such that
\begin{subequations}\label{eq:weak-transport-discrete}
    \begin{align}
  a_2^{\cred{\overline{\bw}_h},\tilde{r}_h,h}(\bzeta_h,\bxi_h) + b_2(\bxi_h,\varphi_h)  &= F_2(\bxi_h) \qquad \forall \bxi_h \in \bV_2^{h,k_2},\\
  b_2(\bzeta_h,\psi_h) - \theta c_2(\varphi_h,\psi_h)  &= G_2(\psi_h) \qquad \forall \psi_h \in Q_2^{h,k_2}.
    \end{align}
\end{subequations}

\cred{\begin{remark}
We point out that this variant of the mixed VEM for the diffusion problem does not satisfy a discrete maximum principle. A possibility to remediate this is to use the recent edge-averaged VEM from \cite{cao2024edgeaveragedvirtualelementmethods} or finite volume - like VEM from \cite{sheng2023virtual}, however their adaptation to the nonlinear diffusion case might not be straightforward and we therefore use a similar formulation as in \cite{daveiga15-transport}.
\end{remark}}

\subsection{Well-posedness analysis of the discrete problem}\label{sec:wellp-h}
%In this section, following a similar approach as for the continuous problem, we will state the unique solvability of the decoupled elasticity and reaction-diffusion equations in their discrete version. For that, we 
Let us recall an abstract result from \cite[Lemma 5.1]{boon21} which adapts  Theorem~\ref{th:unique-solvability} to the discrete setting.
\begin{theorem} \label{th:unique-solvability-discrete}
Let $V^h\subset V,Q_b^h\subset Q$ be Hilbert spaces endowed with the %parameter-dependent 
norms $\norm{\cdot}_{V^h}$ and $\norm{\cdot}_{Q_b^h}$, let $Q^h$ be a dense (with respect to the norm $\norm{\cdot}_{Q_b^h}$) linear subspace of $Q_b^h$ and three bilinear forms $a^h(\cdot,\cdot)$ on $V^h\times V^h$ (continuous, symmetric and positive semi-definite), $b^h(\cdot,\cdot)$ on $V^h\times Q_b^h$ (continuous), and $c^h(\cdot,\cdot)$ on $Q^h\times Q^h$ (symmetric and positive semi-definite); which define three linear operators $A^h: V^h \rightarrow (V^h)'$, $B^h : V^h \rightarrow (Q^h_b)'$ and $C^h:Q^h \rightarrow (Q^h)'$, respectively. Suppose further that 
%the following conditions hold 
\begin{subequations}
\begin{align}\label{brezzi-condition-1-discrete}
\norm{\hat{v_h}}_{V^h}^2 \lesssim a^h(\hat{v}_h,\hat{v}_h) \qquad \forall \hat{v}_h \in \mathrm{Ker}(B^h),\\
\label{brezzi-condition-2-discrete}
\norm{q_h}_{Q_b^h} \lesssim \sup_{v_h\in V^h} \frac{b^h(v_h,q_h)}{{\norm{v_h}}_{V^h}} \qquad \forall q_h \in Q_b^h.
\end{align}
\end{subequations}
Assume that $Q^h$ is complete with respect to the norm $\norm{\cdot}_{Q^h}^2:=\norm{\cdot}_{Q_b^h}^2+t^2|\cdot|_{c^h}^2$, where $|\cdot|_{c^h}^2 := c^h(\cdot,\cdot)$ is a semi-norm in $Q^h$. Let $t\in[0,1]$ and set the parameter-dependent energy norm as
$$\norm{(v_h,q_h)}_{V^h\times Q^h}^2:=\norm{v_h}_{V^h}^2+\norm{q_h}_{Q^h}^2=\norm{v_h}_{V^h}^2+\norm{q_h}_{Q_b^h}^2+t^2|q|_{c^h}^2.$$
Assume also that the following inf-sup condition holds
\begin{align}\label{braess-condition-discrete}
\norm{u_h}_{V^h} \lesssim \sup_{(v_h,q_h)\in V^h\times Q^h} \cblue{\frac{a^h(u_h,v_h)+b^h(u_h,q_h)}{\norm{(v_h,q_h)}_{V^h\times Q^h}}} \qquad \forall u_h \in V^h.
\end{align}
Then, for every $F^h\in (V^h)'$ and $G^h\in (Q^h)'$, there exists a unique $(u_h,p_h)\in V^h\times Q^h$ satisfying
\begin{subequations}
\begin{align*}
a^h(u_h,v_h) + b^h(v_h,p_h) = F^h(v_h) &\qquad \forall v_h\in V^h,\\
b^h(u_h,q_h) - t^2c^h(p_h,q_h) = G^h(q_h) &\qquad \forall q_h\in Q^h.
\end{align*}
\end{subequations}
Furthermore, the following continuous dependence on data holds 
\begin{align} \label{dependence_data-discrete}
    \norm{(u_h,p_h)}_{V^h\times Q^h} &\lesssim \norm{F^h}_{(V^h)'}+\norm{G^h
    }_{(Q^h)'}.
\end{align}
\end{theorem}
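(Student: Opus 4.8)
The plan is to obtain Theorem~\ref{th:unique-solvability-discrete} as a direct consequence of the abstract continuous result, Theorem~\ref{th:unique-solvability}, applied with the triple $(V,Q_b,Q)$ replaced by $(V^h,Q_b^h,Q^h)$. Indeed, $V^h$, $Q_b^h$ and $Q^h$ are Hilbert spaces; $Q^h$ is a dense linear subspace of $Q_b^h$ (in all the cases we use $Q^h=Q_b^h$ as sets, only the norms differ, so density is immediate); $a^h(\cdot,\cdot)$ is continuous, symmetric and positive semi-definite, $b^h(\cdot,\cdot)$ is continuous, $c^h(\cdot,\cdot)$ is symmetric and positive semi-definite, $Q^h$ is complete under $\norm{\cdot}_{Q^h}^2=\norm{\cdot}_{Q_b^h}^2+t^2|\cdot|_{c^h}^2$, and $t\in[0,1]$. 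The discrete Brezzi--Braess hypotheses \eqref{brezzi-condition-1-discrete}, \eqref{brezzi-condition-2-discrete} and \eqref{braess-condition-discrete} are verbatim the discrete counterparts of \eqref{brezzi-condition-1}, \eqref{brezzi-condition-2} and \eqref{braess-condition}. Hence Theorem~\ref{th:unique-solvability} yields a unique $(u_h,p_h)\in V^h\times Q^h$ and the stability bound \eqref{dependence_data-discrete}. Because every estimate in the proof of the abstract result is of $\lesssim$-type, the hidden constant in \eqref{dependence_data-discrete} depends only on those in \eqref{brezzi-condition-1-discrete}--\eqref{braess-condition-discrete} and on the continuity constants of $a^h$ and $b^h$; in particular it is independent of $t$, and --- once these discrete conditions are verified $h$-uniformly in Subsection~\ref{sec:wellp-h} --- independent of $h$.

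For completeness I would also reproduce the core argument, following \cite[Lemma 5.1]{boon21} (which adapts \cite[Lemma 3]{braess96penalty}). First I would recast the two equations of the theorem as a single variational problem for the symmetric, energy-norm-continuous bilinear form
\[
\mathcal{B}_h\big((u_h,p_h),(v_h,q_h)\big):=a^h(u_h,v_h)+b^h(v_h,p_h)+b^h(u_h,q_h)-t^2c^h(p_h,q_h)
\]
on $(V^h\times Q^h)\times(V^h\times Q^h)$, with right-hand side $(v_h,q_h)\mapsto F^h(v_h)+G^h(q_h)$; symmetry follows from that of $a^h$ and $c^h$, and continuity uses $t\le1$ to absorb the penalty term into $\norm{\cdot}_{Q^h}^2$. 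The crux is then a global inf-sup estimate for $\mathcal{B}_h$ in the energy norm. Given $(u_h,p_h)$, I would test with a linear combination $\delta_1(u_h,-p_h)+\delta_2(v_h^{(2)},0)+\delta_3(v_h^{(3)},q_h^{(3)})$, where $(u_h,-p_h)$ contributes $a^h(u_h,u_h)+t^2|p_h|_{c^h}^2$; $v_h^{(2)}$ is drawn from the inf-sup \eqref{brezzi-condition-2-discrete} to recover $\norm{p_h}_{Q_b^h}^2$ up to a term $C\norm{u_h}_{V^h}^2$ (handled by continuity of $a^h$ and Young's inequality); and $(v_h^{(3)},q_h^{(3)})$ is drawn from the Braess condition \eqref{braess-condition-discrete} to recover $\norm{u_h}_{V^h}^2$ up to cross terms controlled by $\norm{u_h}_{V^h}\big(\norm{p_h}_{Q_b^h}+t|p_h|_{c^h}\big)$, the bound $t|q_h^{(3)}|_{c^h}\lesssim\norm{u_h}_{V^h}$ (again via $t\le1$) keeping the penalty cross term in check; the coercivity \eqref{brezzi-condition-1-discrete} of $a^h$ on $\mathrm{Ker}(B^h)$ enters when splitting $u_h$ along $\mathrm{Ker}(B^h)$ and a complement. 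Choosing $\delta_2,\delta_3$ small relative to $\delta_1$ and using Young's inequality gives $\mathcal{B}_h((u_h,p_h),(v_h,q_h))\gtrsim\norm{(u_h,p_h)}_{V^h\times Q^h}^2$ together with $\norm{(v_h,q_h)}_{V^h\times Q^h}\lesssim\norm{(u_h,p_h)}_{V^h\times Q^h}$. By symmetry of $\mathcal{B}_h$ the same holds with the arguments swapped, so the Banach--Ne\v{c}as--Babu\v{s}ka theorem delivers the unique solution and the bound \eqref{dependence_data-discrete}.

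The main obstacle is the bookkeeping in this global inf-sup: one must choose the combination coefficients $\delta_i$ so that all cross terms are absorbed while keeping the resulting constant independent of $t$ and of the parameter-dependent weights hidden in the norms --- precisely the robustness feature the weighted-norm setting is designed to deliver. All the remaining verifications (density of $Q^h$, the continuity statements, and the elementary scalings using $t\le1$) are routine, and in the discrete finite-dimensional setting the existence part could alternatively be reduced to uniqueness.
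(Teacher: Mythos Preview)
The paper does not give its own proof of this statement: it simply recalls it as an abstract result from \cite[Lemma~5.1]{boon21}, noting that it ``adapts Theorem~\ref{th:unique-solvability} to the discrete setting''. Your proposal is therefore more than the paper provides, and your first paragraph --- observing that Theorem~\ref{th:unique-solvability-discrete} is precisely Theorem~\ref{th:unique-solvability} instantiated with $(V^h,Q_b^h,Q^h)$ in place of $(V,Q_b,Q)$ --- is exactly the intended reading and is correct. The additional sketch of the global inf-sup argument for $\mathcal{B}_h$ is a faithful summary of the Braess--Boon--Kuchta--Mardal--Ruiz-Baier machinery and is not required here, but is fine as supplementary detail.
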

The well-posedness of the uncoupled discrete problems \eqref{eq:weak-elast-discrete} and \eqref{eq:weak-transport-discrete} is given next.
\begin{lemma}\label{lem:elast-disc}
Given $\vartheta_h \in Q_2^{h,k_2}$, assume that $1\leq \lambda$ and $0<\mu $. Then, there exists a unique pair $(\bu_h,\tilde{p}_h)\in \bV_1^{h,k_1}\times Q_1^{h,k_1}$ solution to \eqref{eq:weak-elast-discrete}. \cgreen{Furthermore, there holds 
\begin{align}\label{dependence-data-elast-discrete}
\norm{(\bu_h,\tilde{p}_h)}_{\bV_1\times Q_1} \leq \overline{C}_1  \left(\norm{F_1^h}_{\bV_1'} + \norm{G_1^{\vartheta_h}}_{Q_1'}\right),
\end{align}
where the constant $\overline{C}_1>0$ does not depend on $h$ and the physical parameters.} 
\end{lemma}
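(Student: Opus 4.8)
The plan is to verify the hypotheses of Theorem~\ref{th:unique-solvability-discrete} for the discrete elasticity problem \eqref{eq:weak-elast-discrete}, mirroring the argument of Lemma~\ref{lem:elast}. We take $V^h=\bV_1^{h,k_1}\subset\bV_1$ and $Q^h=Q_b^h=Q_1^{h,k_1}\subset Q_1$, equipped with the restrictions of the scaled norms $\norm{\cdot}_{\bV_1}$, $\norm{\cdot}_{Q_1}$, $\norm{\cdot}_{Q_{b_1}}$ (so the density requirement is vacuous), and identify $t=\tfrac{1}{\sqrt{\lambda}}\in(0,1]$ since $1\le\lambda$. The forms $b_1(\cdot,\cdot)$ and $c_1(\cdot,\cdot)$ are evaluated exactly on the discrete spaces, hence inherit continuity, symmetry and positive semi-definiteness from the continuous case, and $c_1=(\cdot,\cdot)_{L^2(\Omega)}$.

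The workhorse is the spectral equivalence $a_1^h(\bv_h,\bv_h)\;\gtrsim\; a_1(\bv_h,\bv_h)=\norm{\bv_h}_{\bV_1}^2$ for all $\bv_h\in\bV_1^{h,k_1}$ (and the reverse bound, giving boundedness of $a_1^h$). This follows by writing $\bv_h=\bPi_1^{\beps,k_1}\bv_h+(\bv_h-\bPi_1^{\beps,k_1}\bv_h)$: since $\bPi_1^{\beps,k_1}$ is a projection onto $(\mathcal{P}_{k_1}(E))^2$ and the defining orthogonality is in the $\beps$-inner product, one gets the Pythagorean identity $a_1^E(\bv_h,\bv_h)=a_1^E(\bPi_1^{\beps,k_1}\bv_h,\bPi_1^{\beps,k_1}\bv_h)+a_1^E(\bv_h-\bPi_1^{\beps,k_1}\bv_h,\bv_h-\bPi_1^{\beps,k_1}\bv_h)$, while $\bv_h-\bPi_1^{\beps,k_1}\bv_h\in\ker(\bPi_1^{\beps,k_1})$, so the stabilisation bound \eqref{stabilisation-elast} controls the remainder term from below. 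As a consequence, the kernel coercivity \eqref{brezzi-condition-1-discrete} holds (in fact on all of $\bV_1^{h,k_1}$, reflecting the divergence-conforming character of the space), and the Braess condition \eqref{braess-condition-discrete} follows exactly as in Lemma~\ref{lem:elast} by testing with $(\bv_h,\tilde q_h)=(\bu_h,0)$, using $a_1^h(\bu_h,\bu_h)\gtrsim\norm{\bu_h}_{\bV_1}^2$ and $\norm{(\bu_h,0)}_{\bV_1\times Q_1}=\norm{\bu_h}_{\bV_1}$. Continuity of $F_1^h$ comes from the $L^2$-stability of $\bPi_1^{0,k_1-2}$ and Korn's inequality, $|F_1^h(\bv_h)|\le\norm{\fb}_{0,\Omega}\norm{\bv_h}_{0,\Omega}\lesssim\norm{\fb}_{Q_{b_1}}\norm{\bv_h}_{\bV_1}$, and continuity of $G_1^{\vartheta_h}$ in $\norm{\cdot}_{Q_1'}$ from Cauchy--Schwarz together with \eqref{eq:ell-bound}.

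The one genuinely VEM-specific ingredient is the discrete inf-sup condition \eqref{brezzi-condition-2-discrete}. Given $\tilde q_h\in Q_1^{h,k_1}$, surjectivity of $\vdiv\colon\bH^1_D(\Omega)\to L^2(\Omega)$ provides $\bv\in\bH^1_D(\Omega)$ with $\vdiv\bv=-\tilde q_h$ and $\norm{\nabla\bv}_{0,\Omega}\lesssim\norm{\tilde q_h}_{0,\Omega}$; setting $\bv_h:=\bPi_1^{F,k_1}\bv\in\bV_1^{h,k_1}$, the commuting property $\vdiv\bPi_1^{F,k_1}\bv=\Pi_1^{0,k_1-1}\vdiv\bv$ and the fact that $\tilde q_h$ is elementwise in $\mathcal{P}_{k_1-1}$ give $b_1(\bv_h,\tilde q_h)=b_1(\bv,\tilde q_h)=\norm{\tilde q_h}_{0,\Omega}^2$, while the $\bV_1$-stability $\norm{\bv_h}_{\bV_1}\lesssim\norm{\bv}_{\bV_1}\lesssim\sqrt{2\mu}\,\norm{\tilde q_h}_{0,\Omega}$ follows from Lemma~\ref{fortin-elast} (estimate \eqref{interpolation-estimate-elast} with $s_1=0$) and the triangle inequality; dividing yields $\norm{\tilde q_h}_{Q_{b_1}}\lesssim\sup_{\bv_h\in\bV_1^{h,k_1}}b_1(\bv_h,\tilde q_h)/\norm{\bv_h}_{\bV_1}$. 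I expect the main obstacle to be the limited regularity of the domain of $\bPi_1^{F,k_1}$ (namely $\bH^{1+\delta}$), which requires either a density/smoothing argument or the selection of a slightly smoother inf-sup preimage; once this is granted, all hypotheses of Theorem~\ref{th:unique-solvability-discrete} are met with constants depending only on the Brezzi--Braess and continuity constants, so \eqref{dependence_data-discrete} yields the claimed stability estimate \eqref{dependence-data-elast-discrete} with $\overline C_1$ independent of $h$ and of the physical parameters.
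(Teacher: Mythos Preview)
Your proposal is correct and follows essentially the same route as the paper: verify the hypotheses of Theorem~\ref{th:unique-solvability-discrete} by (i) carrying over the properties of $b_1$, $c_1$, $G_1^{\vartheta_h}$ unchanged, (ii) using the Pythagorean splitting and \eqref{stabilisation-elast} to get boundedness and coercivity of $a_1^h$, (iii) establishing the discrete inf-sup for $b_1$, (iv) checking continuity of $F_1^h$ via the $L^2$-stability of $\bPi_1^{0,k_1-2}$, and (v) proving the Braess condition with the test pair $(\bu_h,0)$.

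The only point of departure is step (iii): the paper simply invokes \cite[Proposition~4.2]{daveiga15-stokes} for the non-weighted discrete inf-sup and then scales, whereas you spell out the Fortin argument explicitly and, in doing so, correctly identify the regularity mismatch between the domain $\bH^{1+\delta}$ of $\bPi_1^{F,k_1}$ and the $\bH^1$-preimage furnished by surjectivity of the divergence. This is a genuine technical point, not a gap in your reasoning; it is resolved in \cite{daveiga15-stokes,daveiga2022stability} by constructing the Fortin operator through a Cl\'ement-type quasi-interpolant plus a local bubble correction (so that only $\bH^1$-regularity is needed), which is precisely why the paper chooses to cite rather than reprove. Your instinct to flag it is sound.
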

\begin{proof}
The same arguments in Lemma~\ref{lem:elast} can be used to carry over the properties of $c_1(\cdot,\cdot)$ and $G_1^{\vartheta_h}(\cdot)$ to the discrete formulation. On the other hand, the positive semi-definiteness of the stabilisation term $S^E_1(\cdot,\cdot)$ allows us to extend the properties of $a_1(\cdot,\cdot)$ to the discrete operator $a_1^{h}(\cdot,\cdot)$. 
%\begin{comment}
Indeed, concerning boundedness, the second inequality in \eqref{stabilisation-elast} leads for all $\bu_h \in \bV_1^{h,k_1}(E)$ to 
\begin{align*}
    a_1^{h,E}(\bu_h,\bu_h) \lesssim a_1^{E}(\bPi_1^{\beps,k_1}\bu_h,\bPi_1^{\beps,k_1}\bu_h) +  a_1^{E}(\bu_h - \bPi_1^{\beps,k_1}\bu_h,\bu_h - \bPi_1^{\beps,k_1}\bu_h)= \norm{\bu_h}_{\bV_1(E)}^2.
\end{align*}
This and the Cauchy--Schwarz inequality for the inner product show that 
\begin{align*}
    a_1^{h,E}(\bu_h,\bv_h) \leq \sqrt{a_1^{h,E}(\bu_h,\bu_h)}\sqrt{a_1^{h,E}(\bv_h,\bv_h)} \lesssim  \norm{\bu_h}_{\bV_1(E)} \norm{\bv_h}_{\bV_1(E)}\qquad \forall \bu_h,\bv_h\in \bV_1^{h,k_1}.
\end{align*}
%\end{comment}
\cred{Regarding} the first Brezzi condition \eqref{brezzi-condition-1-discrete}, the first inequality in \eqref{stabilisation-elast} gives 
\begin{align*}
    \norm{\bv_h}_{\bV_1(E)}^2  =  a_1^{E}(\bPi_1^{\beps,k_1}\bv_h,\bPi_1^{\beps,k_1}\bv_h) + a_1^{E}(\bv_h - \bPi_1^{\beps,k_1}\bv_h,\bv_h - \bPi_1^{\beps,k_1}\bv_h)  \lesssim a_1^{h,E}(\bv_h,\bv_h) \quad \forall \bv_h\in\bV_1^{h,k_1}(E).
\end{align*}
Next, thanks to \cite[Proposition 4.2]{daveiga15-stokes} we can state \cred{a non-weighted  discrete inf-sup condition},
%\begin{comment}
\[\norm{\tilde{q_h}}_{0,\Omega}\lesssim \sup_{\bv_h\in \bV_1^{h,k_1}} \frac{b_1(\bv_h,\tilde{q}_h)}{\norm{\nabla \bv_h}_{0,\Omega}}\qquad \forall \tilde{q}_h \in Q_1^{h,k_1},\]
%\end{comment}
\cred{and with} an analogous argument as in the second part of the proof of Lemma~\ref{lem:elast}, we can show that the second discrete Brezzi condition \eqref{brezzi-condition-2-discrete} holds:
%\begin{comment}
$$\norm{\tilde{q}_h}_{Q_{b_1}} \lesssim \sup_{\bv_h\in \bV_1^{h,k_1}} \frac{b_1(\bv_h,\tilde{q}_h)}{\norm{\bv_h}_{\bV_1}} \qquad \forall \tilde{q}_h \in Q_{1}^{h,k_1}.$$
%\end{comment}
%
%\begin{comment}
The continuity of $F^h(\cdot)$ can be obtained directly from the Cauchy--Schwarz inequality and the boundedness of \cblue{$\bPi^{0,k_1-2}(\cdot)$}. 
In fact,
\begin{align*}
|F^{h,E}_1(\bv_h)| &\leq \norm{\mathbf{f}}_{0,E}\norm{\bPi^{0,k_1-2}\bv_h}_{0,E} \leq \norm{\mathbf{f}}_{0,E}\norm{\bv_h}_{0,E} \lesssim \norm{\mathbf{f}}_{Q_{b_1}(E)}\norm{\bv_h}_{\bV_1(E)}, \quad \forall \bv_h\in \bV_1^{h,k_1}.
\end{align*}
%\end{comment}
Finally, 
the discrete Braess condition \eqref{braess-condition-discrete} is obtained as follows: 
for $\bu_h \in \bV_1^{h,k_1}$, we set $\bv_h = \bu_h$ and $\tilde{q}_h=0$. Then,  \eqref{brezzi-condition-1-discrete} for $a_1^h(\cdot,\cdot)$ 
leads to
%\begin{comment}
\begin{align*}
    \norm{\bu_h}_{\bV_1}^2  \lesssim a_1^h(\bu_h,\bu_h) = a_1^{h}(\bu_h,\bu_h)+b_1(\bu_h,0) \text{ and }  \norm{(\bu_h,0)}^2_{\bV_1\times Q_1} = \norm{\bu_h}_{\bV_1}^2.
\end{align*}
This proves that
%\end{comment}
$$\norm{\bu_h}_{\bV_1} \lesssim \sup_{(\bv_h,\tilde{q}_h)\in \bV_1^{h,k_1}\times Q_1^{h,k_1}} \cblue{\frac{a_1^h(\bu_h,\bv_h)+b_1
(\bu_h,\tilde{q}_h)}{\norm{(\bv_h,\tilde{q}_h)}_{\bV_1\times Q_1}}} \quad \forall \bu_h \in \bV_1^{h,k_1}.$$
Therefore, the conditions of Theorem~\ref{th:unique-solvability-discrete} are verified and we also obtain 
\cgreen{the a priori bound \eqref{dependence-data-elast-discrete}.} \cred{Further details are provided in \cite{krr_preprint}.}
\end{proof}
\begin{lemma}\label{lem:trans-disc}
For a fixed pair $(\cred{\overline{\bw}_h},\tilde{r}_h)\in \bV_1^{h,k_1}\times Q_1^{h,k_1}$, assume that \cgreen{$\theta \leq \frac{1}{M}$}. Then, there exists a unique pair $(\bxi_h,\varphi_h)\in \bV_2^{h,k_2}\times Q_2^{h,k_2}$ solution to \eqref{eq:weak-transport-discrete}. \cgreen{Furthermore, we have 
\begin{align}\label{dependence-data-transport-discrete}
\norm{(\bzeta_h,\varphi_h)}_{\bV_2\times Q_1} \leq \overline{C}_2  \left(\norm{F_2}_{\bV_2} + \norm{G_2}_{Q_1}\right),
\end{align}
where the constant $\overline{C}_2>0$ does not depend on $h$ and the physical parameters.}
\end{lemma}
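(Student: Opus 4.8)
The plan is to verify the hypotheses of the discrete abstract theory in Theorem~\ref{th:unique-solvability-discrete}, mirroring the proof of the continuous counterpart Lemma~\ref{lem:trans} but now accounting for the stabilisation term in $a_2^{\overline{\bw}_h,\tilde{r}_h,h}(\cdot,\cdot)$. First I would establish density of $Q_{b_2}^h := Q_2^{h,k_2}$ in $Q_2^h := Q_2^{h,k_2}$ (trivial since they coincide as sets, only the norms differ, exactly as in the continuous case), and identify $t = \sqrt{\theta} \in [0,1]$ from the assumption $\theta \leq \tfrac1M \leq 1$. Then, using the properties \eqref{M-norm} of $\bbM^{-1}$ evaluated at $(\beps(\overline{\bw}_h), \tilde{r}_h)$ — noting that $\overline{\bw}_h = \bPi_1^{\beps,k_1}\bw_h$ is a polynomial on each element and hence these properties apply pointwise — I would show that $\norm{\cdot}_{\bV_2}$ remains an equivalent norm on $\bH_N(\vdiv,\Omega)$, so that $a_2^{\overline{\bw}_h,\tilde{r}_h,E}(\cdot,\cdot)$ is continuous and coercive in the relevant sense. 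The stabilisation bound \eqref{stabilisation-transport} then transfers continuity and $\ker(B_2^h)$-coercivity to the fully discrete form $a_2^{\overline{\bw}_h,\tilde{r}_h,h}(\cdot,\cdot)$ by the same Cauchy--Schwarz-on-the-inner-product argument used in Lemma~\ref{lem:elast-disc} for $a_1^h$: boundedness from the upper inequality in \eqref{stabilisation-transport} plus $a_2^{\overline{\bw}_h,\tilde{r}_h,E}(\bPi_2^{0,k_2}\bxi_h,\bPi_2^{0,k_2}\bxi_h) \leq \norm{\bxi_h}_{\bV_2(E)}^2$, and condition \eqref{brezzi-condition-1-discrete} from the lower inequality.

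Next I would address the discrete inf-sup condition \eqref{brezzi-condition-2-discrete}. The key ingredient is the commuting Fortin operator $\bPi_2^{F,k_2}$ with $\vdiv\,\bPi_2^{F,k_2}\bxi = \Pi_2^{0,k_2}\vdiv\bxi$ and the interpolation estimate \eqref{fortin-transport-estimate}: given $\varphi_h \in Q_2^{h,k_2} = \Pi_2^{0,k_2}(L^2)$ locally, I lift it via the continuous surjectivity of $\vdiv:\bH_N(\vdiv,\Omega)\to L^2(\Omega)$ (Lemma~51.2 of \cite{ern22}) to some $\bzeta \in \bH_N(\vdiv,\Omega)$ with $\vdiv\bzeta = \varphi_h$ and $\norm{\bzeta}_{\vdiv,\Omega} \lesssim \norm{\varphi_h}_{0,\Omega}$, then set $\bzeta_h = \bPi_2^{F,k_2}\bzeta$; the commuting property gives $\vdiv\bzeta_h = \Pi_2^{0,k_2}\varphi_h = \varphi_h$, and stability of the Fortin operator in $\bH(\vdiv)$ controls $\norm{\bzeta_h}_{\bV_2}$. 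Scaling by $\tfrac{1}{\sqrt M}$ as in the continuous proof yields the weighted discrete inf-sup bound $\norm{\varphi_h}_{Q_{b_2}} \lesssim \sup_{\bzeta_h} b_2(\bzeta_h,\varphi_h)/\norm{\bzeta_h}_{\bV_2}$. Continuity of $b_2$, $c_2$, $F_2$, $G_2$ on the discrete spaces follows from Cauchy--Schwarz, the trace inequality, and the norm definitions exactly as in Lemma~\ref{lem:trans}; the only subtlety is that $F_2$ and $G_2$ now involve quadrature approximations of $\varphi_D$ and $g$, which I would note are handled with a sufficiently accurate rule so the bounds are preserved.

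Finally, for the discrete Braess condition \eqref{braess-condition-discrete}, I would replicate the continuous choice: given $\bzeta_h \in \bV_2^{h,k_2}$, take $\bxi_h = \bzeta_h$ and $\varphi_h = M\,\vdiv\bzeta_h$, which is admissible since $\vdiv\bzeta_h \in \mathcal{P}_{k_2}(E) = Q_2^{h,k_2}(E)$ on each element. Using $a_2^{\overline{\bw}_h,\tilde{r}_h,h}(\bzeta_h,\bzeta_h) \gtrsim \norm{\bzeta_h}_{\bbM}^2$ (from the $\ker$-coercivity argument, but here valid for all $\bzeta_h$ after adding the divergence contribution) together with $b_2(\bzeta_h, M\vdiv\bzeta_h) = M\norm{\vdiv\bzeta_h}_{0,\Omega}^2$, and bounding $\norm{(\bzeta_h, M\vdiv\bzeta_h)}_{\bV_2\times Q_2}^2 \lesssim \norm{\bzeta_h}_{\bV_2}^2$ via $\theta \leq \tfrac1M$, gives $\norm{\bzeta_h}_{\bV_2} \lesssim \sup_{(\bxi_h,\varphi_h)} (a_2^{\overline{\bw}_h,\tilde{r}_h,h}(\bzeta_h,\bxi_h) + b_2(\bzeta_h,\varphi_h))/\norm{(\bxi_h,\varphi_h)}_{\bV_2\times Q_2}$. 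All hypotheses of Theorem~\ref{th:unique-solvability-discrete} are then met, yielding the unique pair $(\bzeta_h,\varphi_h)$ and the a priori bound \eqref{dependence-data-transport-discrete} with $\overline{C}_2$ independent of $h$ and the physical parameters. I expect the main obstacle to be the careful justification that the discrete inf-sup constant coming from the Fortin operator is genuinely $h$- and parameter-independent — this requires that the $\bH(\vdiv)$-stability of $\bPi_2^{F,k_2}$ on star-shaped polygons holds with a constant depending only on the shape-regularity parameter $\rho$, which one invokes from \cite{daveiga15-transport, beirao13}; everything else is a routine parameter-weighted adaptation of Lemma~\ref{lem:trans} and of the stabilisation bookkeeping already carried out in Lemma~\ref{lem:elast-disc}.
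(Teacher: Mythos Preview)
Your proposal is correct and follows essentially the same route as the paper: verify the hypotheses of Theorem~\ref{th:unique-solvability-discrete} by transferring continuity and $\ker(B_2^h)$-coercivity to $a_2^{\overline{\bw}_h,\tilde{r}_h,h}$ via the stabilisation bounds \eqref{stabilisation-transport}, establish the discrete inf-sup \eqref{brezzi-condition-2-discrete} through the Fortin operator (the paper simply cites this, while you spell out the lifting argument), and reuse the continuous test pair $(\bzeta_h, M\vdiv\bzeta_h)$ for the Braess condition \eqref{braess-condition-discrete}, noting its admissibility in $Q_2^{h,k_2}$. The only superfluous remark is your concern about quadrature in $F_2$ and $G_2$ --- in the paper these forms are used unchanged from the continuous setting, so no extra justification is needed there.
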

\begin{proof}
Since $c_2(\cdot,\cdot)$, $F_2(\cdot)$ and $G_2(\cdot)$ remain same in the discrete formulation, the analogous arguments as in Lemma~\ref{lem:trans} will hold. The second stability inequality in \eqref{stabilisation-transport} \cred{implies that}%, for all $\bzeta_h,\bxi_h\in \bV_2^{h,k_2}$, } 
%\begin{comment}
for all $\bzeta_h \in \bV_2^{h,k_2}(E)$ to 
\begin{align*}
    a_2^{\cred{\bPi_1^{\beps,k_1} \bw_h},\tilde{r}_h,h,E}(\bzeta_h,\bzeta_h) &\lesssim a_2^{\cred{\bPi_1^{\beps,k_1} \bw_h},\tilde{r}_h,E}(\bPi_2^{0,k_2}\bzeta_h,\bPi_2^{0,k_2}\bzeta_h) + a_2^{\cred{\bPi_1^{\beps,k_1} \bw_h},\tilde{r}_h,E}(\bzeta_h - \bPi_2^{0,k_2}\bzeta_h,\bzeta_h - \bPi_2^{0,k_2}\bzeta_h).
\end{align*}
For all $\bzeta_h,\bxi_h\in \bV_2^{h,k_2}$, this results in
%\end{comment}
\begin{align*}
    a_2^{\cred{\overline{\bw}_h},\tilde{r}_h,h,E}(\bzeta_h,\bxi_h) \leq \sqrt{a_2^{\cred{\overline{\bw}_h},\tilde{r}_h,h,E}(\bzeta_h,\bzeta_h)}\sqrt{a_2^{\cred{\overline{\bw}_h},\tilde{r}_h,h,E}(\bxi_h,\bxi_h)}
    \lesssim \norm{\bzeta_h}_{\bV_2(E)} \norm{\bxi_h}_{\bV_2(E)}.
\end{align*}
Hence $a_2^{\cred{\overline{\bw}_h},\tilde{r}_h,h}(\cdot,\cdot)$ is a bounded operator. For condition \eqref{brezzi-condition-1-discrete}, note that $\mathrm{Ker}(B^h_2(E)) =\{\bxi_h \in \bV_2^{h,k_2}(E): \int_E \varphi_h \vdiv \bxi_h = 0\}\subseteq \mathrm{Ker}(B_2)$ which together with the first inequality in \eqref{stabilisation-transport} imply that
\begin{align*}
    \norm{\hat{\bxi}_h}_{\bV_2(E)}^2 
    %& = a_2^{\cred{\bPi_1^{\beps,k_1} \bw_h},\tilde{r}_h,E}(\hat{\bxi}_h,\hat{\bxi}_h) + a_2^{\cred{\bPi_1^{\beps,k_1} \bw_h},\tilde{r}_h,E}(\hat{\bxi}_h - \bPi_2^{0,k_2}\hat{\bxi}_h,\hat{\bxi}_h - \bPi_2^{0,k_2}\hat{\bxi}_h) \\
    & \cred{\lesssim a_2^{\cred{\overline{\bw}_h},\tilde{r}_h,h,E}(\hat{\bxi}_h,\hat{\bxi}_h) \qquad \forall \hat{\bxi}_h\in \mathrm{Ker}({B_2^h(E)})}.
\end{align*}
The Fortin operator defined in Section~\ref{VEM-transport} leads to a %following 
discrete inf-sup condition in the non-weighted norms.  
%\[\norm{\varphi_h}_{0,\Omega}\lesssim \sup_{\bzeta_h\in \bV_2^{h,k_2}} \frac{b_2(\bzeta_h,\varphi_h)}{\norm{\bzeta_h}_{\vdiv,\Omega}}\qquad \forall \varphi_h \in Q_2^{h,k_2}.\]
Proceeding as in Lemma~\ref{lem:trans}, \cred{we can readily show 
%the second discrete Brezzi condition 
\eqref{brezzi-condition-2-discrete}.
%reads
%$$\norm{\varphi_h}_{Q_{b_2}} \lesssim \sup_{\bzeta \in \bV_2^{h,k_2}} \frac{b_2(\bzeta_h,\varphi_h)}{\norm{\bzeta_h}_{\bV_2}} \qquad \forall \varphi_h \in Q_{2}^{h,k_2}.$$
Finally,}  \eqref{braess-condition-discrete} is deduced as follows: for $\bzeta_h \in \bV_2^{h,k_2}$, take $\bxi_h = \bzeta_h$ and $\varphi_h=M\vdiv\bzeta_h$ to obtain
\begin{align*}
    \norm{\bzeta_h}_{\bV_2}^2 = a_2^{\cred{\overline{\bw}_h},\tilde{r}_h}(\bzeta_h,\bzeta_h) + M b_2(\bzeta_h,\vdiv\bzeta_h) &\lesssim a_2^{\bw_h,\tilde{r}_h,h}(\bzeta_h,\bzeta_h)+b_2(\bzeta_h,M\vdiv\bzeta_h),
\end{align*}
and
\begin{align*}
   \norm{(\bzeta_h,M\vdiv\bzeta_h)}^2_{\bV_2\times Q_2} 
   %&= \norm{\bzeta_h}_{\bV_2}^2+\norm{M\vdiv\zeta_h}_{Q_{b_2}}^2+\theta|M\vdiv\bzeta_h|_{c_2}^2\\
    &\cred{\lesssim \norm{\bzeta_h}_{\bV_2}^2 + \frac{1}{M}\norm{M\vdiv\bzeta_h}_{0,\Omega}^2 \lesssim \norm{\bzeta_h}_{\bV_2}^2.}
\end{align*}
Therefore,
$$\norm{\bzeta_h}_{\bV_2} \lesssim \sup_{(\bxi_h,\varphi_h)\in \bV_2^{h,k_2}\times Q_2^{h,k_2}} \cblue{\frac{a_2^{\cred{\overline{\bw}_h},\tilde{r}_h,h}(\bzeta_h,\bxi_h)+b_2(\bzeta_h,\varphi_h)}{\norm{(\bxi_h,\varphi_h)}_{\bV_2\times Q_2}}} \quad \forall \bzeta_h \in \bV_2^{h,k_2}.$$
This proves that the conditions of Theorem~\ref{th:unique-solvability-discrete} hold. \cgreen{Finally, we also get that \eqref{dependence-data-transport-discrete} is satisfied.}
\end{proof}

We finish this subsection by introducing the full coupled discrete formulation. For given $\fb\in \bL^2(\Omega)$, $g\in L^2(\Omega)$, and $\varphi_D\in H^{\frac{1}{2}}(\Gamma_D)$, find 
$(\bu_h,\tilde{p}_h,\bzeta_h,\varphi_h) \in \bV_1^{h,k_1}\times Q_1^{h,k_1} \times \bV_2^{h,k_2} \times Q_2^{h,k_2}$ such that 
\begin{subequations}\label{eq:weak-discrete}
\begin{align}
  a_1^{h}(\bu_h,\bv_h) + b_1(\bv_h,\tilde{p}_h)  &= F_2^h(\bv_h) \qquad \forall \bv_h \in \bV_1^{h,k_1},\\
  b_1(\bu_h,\tilde{q}_h) - \frac{1}{\lambda} c_1(\tilde{p}_h,\tilde{q}_h) \cgreen{- G_1^{\varphi_h}(\tilde{q}_h) }  &= \cgreen{0} \qquad \forall \tilde{p}_h \in Q_1^{h,k_1},\\
a_2^{\cred{\overline{\bu}_h},\tilde{p}_h,h}(\bzeta_h,\bxi_h) + b_2(\bxi_h,\varphi_h)  &= F_2(\bxi_h) \qquad \forall \bxi_h \in \bV_2^{h,k_2},\\
  b_2(\bzeta_h,\psi_h) - \theta c_2(\varphi_h,\psi_h)  &= G_2(\psi_h) \qquad \forall \psi_h \in Q_2^{h,k_2}.
%\int_\Omega \frac{\alpha}{M(\bsigma)}\bu \cdot \bzeta\, \psi   
\end{align}
\end{subequations}
\subsection{Discrete fixed-point strategy}\label{sec:discrete-fix-point} 
Following the approach in the continuous case, we define the discrete solution operators as follows 
\[\cS_1^h: Q_2^{h,k_2} \to \bV_1^{h,k_1}\times Q_1^{h,k_1}, \quad 
\vartheta_h \mapsto \cS_1^{h}(\vartheta_h)= (\cS_{11}^{h}(\vartheta_h),\cS_{12}^{h}(\vartheta_h)) := (\bu_h,\tilde{p}_h),
\]
where $(\bu_h,\tilde{p}_h)$ is the unique solution to \eqref{eq:weak-elast-discrete}; and 
\[\cS_2^{h}: \bV_1^{h,k_1}\times Q_1^{h,k_1} \to \bV_2^{h,k_2}\times Q_2^{h,k_2}, \quad 
(\bw_h,\tilde{r}_h) \mapsto \cS_2^h(\bw_h,\tilde{r}_h) = (\cS_{21}^h(\bw_h,\tilde{r}_h),\cS_{22}^h(\bw_h,\tilde{r}_h)) := (\bzeta_h,\varphi_h),
\]
where $(\bzeta_h,\varphi_h)$ is the unique solution to \eqref{eq:weak-transport-discrete}. The discrete version of  \eqref{eq:weak} is  thus equivalent to the following discrete fixed-point equation:
\[\text{Find $\varphi_h\in Q_2^{h,k_2}$ such that $\cA^h(\varphi_h) = \varphi_h$},\]
where $\cA^h: Q_2^{h,k_2}\to Q_2^{h,k_2}$ is defined as 
$\varphi_h \mapsto \cA^h(\varphi_h) := (\cS_{22}^h\circ \cS_{1}^h)(\varphi_h)$. %We extend the treatment on Section~\ref{sec:fix-point} to the discrete version with the following results.
\begin{lemma}\label{lipschitz-elast-disc}
    The operator $\cS_1^h$ is Lipschitz continuous with Lipschitz constant $L_{\cS_1^h} = \sqrt{M} \overline{C}_1 L_\ell$.
\end{lemma}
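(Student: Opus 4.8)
The plan is to mimic exactly the proof of Lemma~\ref{lipschitz-elast}, replacing the continuous solution operator $\cS_1$ by its discrete counterpart $\cS_1^h$ and invoking the discrete well-posedness result Lemma~\ref{lem:elast-disc} in place of Lemma~\ref{lem:elast}. The starting point is to fix $\vartheta_{h,1},\vartheta_{h,2}\in Q_2^{h,k_2}$ and let $(\bu_{h,1},\tilde{p}_{h,1})=\cS_1^h(\vartheta_{h,1})$ and $(\bu_{h,2},\tilde{p}_{h,2})=\cS_1^h(\vartheta_{h,2})$ be the corresponding unique solutions of the discrete elasticity problem \eqref{eq:weak-elast-discrete}.

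The key observation, just as in the continuous case, is linearity of \eqref{eq:weak-elast-discrete} in its data: subtracting the two discrete systems shows that the difference $(\bu_{h,1}-\bu_{h,2},\tilde{p}_{h,1}-\tilde{p}_{h,2})\in \bV_1^{h,k_1}\times Q_1^{h,k_1}$ solves the discrete auxiliary problem
\begin{subequations}
\begin{align*}
a_1^h(\bu_h,\bv_h)+b_1(\bv_h,\tilde{p}_h) &= 0 \qquad \forall \bv_h\in\bV_1^{h,k_1},\\
b_1(\bu_h,\tilde{q}_h)-\tfrac{1}{\lambda}c_1(\tilde{p}_h,\tilde{q}_h) &= (G_1^{\vartheta_{h,1}}-G_1^{\vartheta_{h,2}})(\tilde{q}_h) \qquad \forall \tilde{q}_h\in Q_1^{h,k_1},
\end{align*}
\end{subequations}
whose right-hand side $G_1^{\vartheta_{h,1}}-G_1^{\vartheta_{h,2}}$ lies in $(Q_1^{h,k_1})'$. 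Applying the discrete continuous-dependence bound \eqref{dependence-data-elast-discrete} with $F_1^h=0$, then estimating the dual norm via the definition of $G_1^{\vartheta}$ and the Lipschitz bound \eqref{eq:lipschitz-l-bound}, and finally using the norm equivalence $\norm{\vartheta_{h,1}-\vartheta_{h,2}}_{0,\Omega}\leq \sqrt{M}\,\norm{\vartheta_{h,1}-\vartheta_{h,2}}_{Q_2}$, gives
\[
\norm{\cS_1^h(\vartheta_{h,1})-\cS_1^h(\vartheta_{h,2})}_{\bV_1\times Q_1}
\leq \overline{C}_1 L_\ell \norm{\vartheta_{h,1}-\vartheta_{h,2}}_{0,\Omega}
\leq \sqrt{M}\,\overline{C}_1 L_\ell \norm{\vartheta_{h,1}-\vartheta_{h,2}}_{Q_2},
\]
which is the claimed Lipschitz constant $L_{\cS_1^h}=\sqrt{M}\,\overline{C}_1 L_\ell$.

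I expect no genuine obstacle here; the one point requiring a line of care is that the dual-norm estimate of $(G_1^{\vartheta_{h,1}}-G_1^{\vartheta_{h,2}})(\tilde{q}_h)$ over $\tilde{q}_h\in Q_1^{h,k_1}$ uses $\tfrac{1}{\lambda}\le 1$ together with the definition of $\norm{\cdot}_{Q_1}$ to bound $\tfrac{1}{\lambda}\norm{\ell(\vartheta_{h,1})-\ell(\vartheta_{h,2})}_{0,\Omega}\norm{\tilde q_h}_{0,\Omega}$ by $\norm{\ell(\vartheta_{h,1})-\ell(\vartheta_{h,2})}_{0,\Omega}\norm{\tilde q_h}_{Q_1}$, exactly as in Lemma~\ref{lipschitz-elast}; this is legitimate since $Q_1^{h,k_1}\subset Q_1$ inherits the same norm, and the discrete bound \eqref{dependence-data-elast-discrete} is stated in precisely these parameter-weighted norms with an $h$-independent constant.
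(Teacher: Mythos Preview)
Your proposal is correct and follows exactly the approach the paper takes: the paper's own proof of this lemma consists of a single sentence stating that the result follows by applying to Lemma~\ref{lem:elast-disc} the analogous arguments used to prove Lemma~\ref{lipschitz-elast}, which is precisely what you have written out in detail.
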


\begin{proof}
The result follows 
applying to Lemma~\ref{lem:elast-disc} analogous arguments used to prove Lemma~\ref{lipschitz-elast}.
%applied  conclude the result.
\end{proof}

\begin{lemma}\label{lipschitz-transport-discrete}
    The operator $\cS_2^h$ is Lipschitz continuous with Lipschitz constant
    \begin{equation*}
        L_{\cS_2^h} = \max\left\{\frac{1}{\sqrt{2\mu}},\sqrt{2\mu} \right\}\sqrt{M^{3}}\overline{C}_2^2L_{\bbM}\left(\norm{\varphi_D}_{\frac{1}{2},\Gamma_D} + \norm{g}_{0,\Omega}\right).
    \end{equation*} 
\end{lemma}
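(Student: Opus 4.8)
The plan is to transcribe the proof of Lemma~\ref{lipschitz-transport} into the discrete setting, carefully tracking the extra contributions that come from the polynomial projections and the stabilisation forms. Fix two pairs $(\bw_{1,h},\tilde{r}_{1,h}),(\bw_{2,h},\tilde{r}_{2,h})\in\bV_1^{h,k_1}\times Q_1^{h,k_1}$, set $\overline{\bw}_{i,h}:=\bPi_1^{\beps,k_1}\bw_{i,h}$, and let $(\bzeta_{i,h},\varphi_{i,h}):=\cS_2^h(\bw_{i,h},\tilde{r}_{i,h})$ be the corresponding solutions of \eqref{eq:weak-transport-discrete}. Subtracting the first equation of \eqref{eq:weak-transport-discrete} written for the two pairs and then adding and subtracting $a_2^{\overline{\bw}_{1,h},\tilde{r}_{1,h},h}(\bzeta_{2,h},\bxi_h)$ shows that $(\bzeta_{1,h}-\bzeta_{2,h},\varphi_{1,h}-\varphi_{2,h})$ solves the auxiliary discrete saddle-point problem governed by $a_2^{\overline{\bw}_{1,h},\tilde{r}_{1,h},h}(\cdot,\cdot)$, with zero second right-hand side and first right-hand side
\[
F_h^{*}(\bxi_h):=a_2^{\overline{\bw}_{2,h},\tilde{r}_{2,h},h}(\bzeta_{2,h},\bxi_h)-a_2^{\overline{\bw}_{1,h},\tilde{r}_{1,h},h}(\bzeta_{2,h},\bxi_h)\in(\bV_2^{h,k_2})'.
\]
Since this problem has the form required by Theorem~\ref{th:unique-solvability-discrete}, the a priori bound \eqref{dependence-data-transport-discrete} from Lemma~\ref{lem:trans-disc} gives $\norm{(\bzeta_{1,h}-\bzeta_{2,h},\varphi_{1,h}-\varphi_{2,h})}_{\bV_2\times Q_2}\le\overline{C}_2\,\norm{F_h^{*}}_{(\bV_2^{h,k_2})'}$, so the task reduces to estimating $F_h^{*}$.

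Unfolding the definition of $a_2^{\cdot,\cdot,h,E}$, the functional $F_h^{*}$ splits element by element into a polynomial-consistency part, equal on each $E$ to $\int_E\big(\bbM^{-1}(\beps(\overline{\bw}_{2,h}),\tilde{r}_{2,h})-\bbM^{-1}(\beps(\overline{\bw}_{1,h}),\tilde{r}_{1,h})\big)\bPi_2^{0,k_2}\bzeta_{2,h}\cdot\bPi_2^{0,k_2}\bxi_h$, plus a stabilisation part $S_2^{\overline{\bw}_{2,h},\tilde{r}_{2,h},E}(\cdot,\cdot)-S_2^{\overline{\bw}_{1,h},\tilde{r}_{1,h},E}(\cdot,\cdot)$ evaluated at $(\bzeta_{2,h}-\bPi_2^{0,k_2}\bzeta_{2,h},\bxi_h-\bPi_2^{0,k_2}\bxi_h)$. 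For the consistency part I would argue exactly as in Lemma~\ref{lipschitz-transport}: Hölder's inequality, the $L^2$-stability $\norm{\bPi_2^{0,k_2}\bxi_h}_{0,\Omega}\le\norm{\bxi_h}_{0,\Omega}$, the norm equivalence $\norm{\cdot}_{0,\Omega}\lesssim\sqrt{M}\norm{\cdot}_{\bbM}\le\sqrt{M}\norm{\cdot}_{\bV_2}$ from \eqref{M-norm}, and the Lipschitz bound \eqref{eq:lipschitz-M-bound}. The only genuinely new point is that the argument of $\bbM^{-1}$ now carries the projected strain $\beps(\overline{\bw}_{i,h})=\beps(\bPi_1^{\beps,k_1}\bw_{i,h})$; but $\beps(\bPi_1^{\beps,k_1}\bv_h)$ is the element-wise $L^2$-orthogonal projection of $\beps(\bv_h)$ onto polynomial strains, hence $\norm{\beps(\overline{\bw}_{1,h}-\overline{\bw}_{2,h})}_{0,\Omega}\le\norm{\beps(\bw_{1,h}-\bw_{2,h})}_{0,\Omega}$, and combined with the scaled norms of $\bV_1\times Q_1$ this produces the factor $\max\{1/\sqrt{2\mu},\sqrt{2\mu}\}$ exactly as in the continuous case. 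The stabilisation part is controlled in the same way provided $S_2^{\overline{\bw}_h,\tilde{r}_h,E}$ depends on $(\overline{\bw}_h,\tilde{r}_h)$ in a Lipschitz fashion compatible with \eqref{eq:lipschitz-M-bound} and \eqref{stabilisation-transport} (for the usual dof-based stabilisation this reduces to the scalar scaling factor inheriting the Lipschitz continuity of $\bbM^{-1}$; alternatively, since $\bbM^{-1}$ is uniformly equivalent to $\bbI$ through $M$, one may take $S_2$ independent of the parameters, in which case this part vanishes identically and $\norm{\bzeta_{2,h}-\bPi_2^{0,k_2}\bzeta_{2,h}}$ is simply absorbed into $\norm{\bzeta_{2,h}}_{\bV_2}$). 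This is the only step that departs from the proof of Lemma~\ref{lipschitz-transport}, and I expect it to be the main obstacle.

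It remains to control the data factor $\norm{\bzeta_{2,h}}_{\bV_2}+\norm{\varphi_{2,h}}_{Q_2}$ arising from the consistency estimate. Since $(\bzeta_{2,h},\varphi_{2,h})=\cS_2^h(\bw_{2,h},\tilde{r}_{2,h})$, one further invocation of \eqref{dependence-data-transport-discrete} together with the continuity bounds $\norm{F_2}_{\bV_2'}\lesssim\norm{\varphi_D}_{\frac12,\Gamma_D}$ and $\norm{G_2}_{Q_2'}\lesssim\norm{g}_{0,\Omega}$ (already established in the proof of Lemma~\ref{lem:trans}) gives $\norm{\bzeta_{2,h}}_{\bV_2}+\norm{\varphi_{2,h}}_{Q_2}\lesssim\overline{C}_2(\norm{\varphi_D}_{\frac12,\Gamma_D}+\norm{g}_{0,\Omega})$, which supplies the second factor $\overline{C}_2$ and the data norms appearing in the claimed Lipschitz constant. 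Collecting all the estimates — with the very same bookkeeping of the powers of $M$, of $\overline{C}_2$, of $L_{\bbM}$ and of $\max\{1/\sqrt{2\mu},\sqrt{2\mu}\}$ as in Lemma~\ref{lipschitz-transport}, replacing $C_2$ by $\overline{C}_2$ — yields $\norm{\cS_2^h(\bw_{1,h},\tilde{r}_{1,h})-\cS_2^h(\bw_{2,h},\tilde{r}_{2,h})}_{\bV_2\times Q_2}\le L_{\cS_2^h}\,\norm{(\bw_{1,h},\tilde{r}_{1,h})-(\bw_{2,h},\tilde{r}_{2,h})}_{\bV_1\times Q_1}$ with $L_{\cS_2^h}$ as stated, which completes the argument.
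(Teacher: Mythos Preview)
Your proposal is correct and follows essentially the same approach as the paper: subtract the two discrete problems, recast the difference as an auxiliary perturbed saddle-point problem driven by $a_2^{\overline{\bw}_{1,h},\tilde{r}_{1,h},h}(\cdot,\cdot)$, and then invoke the discrete stability bound \eqref{dependence-data-transport-discrete} twice (once for the difference, once for $(\bzeta_{2,h},\varphi_{2,h})$), exactly mirroring Lemma~\ref{lipschitz-transport} with $C_2$ replaced by $\overline{C}_2$. The paper's own proof is a single sentence (``Similar techniques from Lemma~\ref{lipschitz-transport} adapted to Lemma~\ref{lem:trans-disc} conclude the proof''), so your write-up is in fact more explicit than the original; in particular, you correctly single out the stabilisation contribution $S_2^{\overline{\bw}_{2,h},\tilde{r}_{2,h},E}-S_2^{\overline{\bw}_{1,h},\tilde{r}_{1,h},E}$ and the $L^2$-stability of $\beps\circ\bPi_1^{\beps,k_1}$ as the only genuinely new ingredients, both of which the paper leaves implicit.
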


\begin{proof}
    Similar techniques from Lemma~\ref{lipschitz-transport} adapted to Lemma~\ref{lem:trans-disc} conclude the proof.
\end{proof}

\begin{lemma}\label{lipschitz:cA-discrete}
    The operator $\cA^h$ is Lipschitz continuous with Lipschitz constant $L_{\cA^h}=L_{\cS_2^h}L_{\cS_1^h}$.
\end{lemma}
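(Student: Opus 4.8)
The plan is to mimic exactly the composition argument used at the continuous level in Lemma~\ref{lipschitz:cA}. The operator $\cA^h$ is by definition the composition $\cS_{22}^h \circ \cS_1^h$, where $\cS_1^h$ maps $Q_2^{h,k_2}$ into $\bV_1^{h,k_1}\times Q_1^{h,k_1}$ and $\cS_{22}^h$ is the second component of $\cS_2^h$, which maps $\bV_1^{h,k_1}\times Q_1^{h,k_1}$ into $\bV_2^{h,k_2}\times Q_2^{h,k_2}$. So the estimate is a routine chaining of the two Lipschitz bounds already established in Lemmas~\ref{lipschitz-elast-disc} and \ref{lipschitz-transport-discrete}.

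Concretely, I would fix $\varphi_{1,h},\varphi_{2,h}\in Q_2^{h,k_2}$ and write
\begin{align*}
\norm{\cA^h(\varphi_{1,h})-\cA^h(\varphi_{2,h})}_{Q_2}
&= \norm{\cS_{22}^h(\cS_1^h(\varphi_{1,h}))-\cS_{22}^h(\cS_1^h(\varphi_{2,h}))}_{Q_2}\\
&\leq \norm{\cS_2^h(\cS_1^h(\varphi_{1,h}))-\cS_2^h(\cS_1^h(\varphi_{2,h}))}_{\bV_2\times Q_2}\\
&\leq L_{\cS_2^h}\,\norm{\cS_1^h(\varphi_{1,h})-\cS_1^h(\varphi_{2,h})}_{\bV_1\times Q_1}
\leq L_{\cS_2^h}L_{\cS_1^h}\,\norm{\varphi_{1,h}-\varphi_{2,h}}_{Q_2}.
\end{align*}
The first inequality uses that $\norm{\cS_{22}^h(\cdot)}_{Q_2}\le\norm{\cS_2^h(\cdot)}_{\bV_2\times Q_2}$ since $Q_2$ is a norm-component of the product space, the second is Lemma~\ref{lipschitz-transport-discrete}, and the third is Lemma~\ref{lipschitz-elast-disc}. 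This yields $L_{\cA^h}=L_{\cS_2^h}L_{\cS_1^h}$ as claimed.

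There is essentially no obstacle here — the only point worth a sentence of care is that the composition is well-defined, i.e. that $\cS_1^h(\varphi_h)\in\bV_1^{h,k_1}\times Q_1^{h,k_1}$ (the domain of $\cS_2^h$), which holds by Lemma~\ref{lem:elast-disc}, and that $\cS_2^h$ in turn lands in a product space whose second factor is $Q_2^{h,k_2}$, the domain of $\cA^h$, so that iteration makes sense. Everything is discrete and the Lipschitz constants $L_{\cS_1^h}$, $L_{\cS_2^h}$ are $h$-independent because $\overline{C}_1,\overline{C}_2$ from Lemmas~\ref{lem:elast-disc}--\ref{lem:trans-disc} are. The proof is therefore a two-line verification identical in structure to Lemma~\ref{lipschitz:cA}, and I would simply state it as such, referring back to the definitions of the discrete solution operators.
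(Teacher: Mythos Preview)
Your proposal is correct and follows exactly the same composition argument as the paper, which in the discrete case simply states that the result follows directly from Lemmas~\ref{lipschitz-elast-disc} and \ref{lipschitz-transport-discrete}. The chain of inequalities you write out is precisely the discrete analogue of the proof of Lemma~\ref{lipschitz:cA}, so there is nothing to add or change.
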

\begin{proof}
    It follows directly from Lemmas~\ref{lipschitz-elast-disc}-\ref{lipschitz-transport-discrete}.
\end{proof}
Next, we define the set
\begin{equation}\label{w-discrete}
\cblue{W^h} =\left\{ \cblue{w_h} \in Q_2^{h,k_2} \colon \norm{w_h}_{Q_2} \leq \sqrt{M} \overline{C}_2 \left(\norm{\varphi_D}_{\frac{1}{2},\Gamma_D} + \norm{g}_{0,\Omega}\right) \right\},    
\end{equation}
which satisfies that \cgreen{$\cA^h(Q_2^{h,k_2})\subseteq W^h$}. Indeed, from  \eqref{dependence-data-transport-discrete}, we readily see that 
$$\norm{\cA^h(\varphi_h)}_{Q_2} = \norm{S_{22}^h(S_1^h(\varphi_h))}_{Q_2} \leq \norm{S_2^h(S_1^h(\varphi_h))}_{\bV_2\times Q_2} \leq  \sqrt{M} \overline{C}_2 \left(\norm{\varphi_D}_{\frac{1}{2},\Gamma_D} + \norm{g}_{0,\Omega}\right),$$
\cgreen{it is clear that $\cA(W^h)\subseteq W^h$}. The following theorem provides the well-posedness of the discrete coupled problem.
\begin{theorem}\label{discrete-coupled-well-poss} \cblue{Under the assumptions of Lemmas~\ref{lem:elast-disc}-\ref{lem:trans-disc}, let $W^h$} be as in \eqref{w-discrete} and assume that \cgreen{small data is given such that $$ \overline{C}_1 L_\ell \max\left\{\frac{1}{\sqrt{2\mu}},\sqrt{2\mu} \right\}M^{2}\overline{C}_2^2 L_{\bbM}\left(\norm{\varphi_D}_{\frac{1}{2},\Gamma_D} + \norm{g}_{0,\Omega}\right) < 1.$$}
    Then the operator $\cA^h$ has a unique fixed point \cblue{$\varphi_h\in W^h$}. Equivalently, the discrete coupled problem \eqref{eq:weak-discrete} has a unique solution $(\bu_h,\tilde{p}_h,\bzeta_h,\varphi_h) \in \bV_1^{h,k_1}\times Q_1^{h,k_1} \times \bV_2^{h,k_2} \times Q_2^{h,k_2}$ and the following continuous dependence on data holds
    %\begin{subequations}
    \begin{align*}    \norm{(\bu_h,\tilde{p}_h)}_{\bV_1\times Q_1} &\leq \overline{C}_1 \left( \norm{F_1^h}_{\bV'_1} + \norm{G^{\varphi_h}_1}_{Q'_1} \right),\qquad 
\norm{(\bzeta_h,\varphi_h)}_{\bV_2\times Q_2} \leq \overline{C}_2 \left( \norm{F_2}_{\bV'_2} +\norm{G_2}_{Q'_2} \right),
    \end{align*}
    %\end{subequations}
    where the corresponding constants $\overline{C}_1$ and $\overline{C}_2$ do not depend on the physical parameters. 
\end{theorem}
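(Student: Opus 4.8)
The plan is to replicate the argument of Theorem~\ref{coupled-well-poss} in the discrete setting, invoking the Banach fixed-point theorem on the ball $W^h$ from \eqref{w-discrete}. First I would note that $Q_2^{h,k_2}$ is finite dimensional, hence complete for $\norm{\cdot}_{Q_2}$, and that $W^h$ is a nonempty (it contains $0$), closed subset, so $(W^h,\norm{\cdot}_{Q_2})$ is itself a complete metric space. The discussion immediately preceding the theorem shows $\cA^h(W^h)\subseteq W^h$, so $\cA^h$ restricts to a self-map of $W^h$.

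Second, I would verify the contraction property. By Lemma~\ref{lipschitz:cA-discrete}, $\cA^h$ is Lipschitz with constant $L_{\cA^h}=L_{\cS_2^h}L_{\cS_1^h}$, and inserting the values from Lemmas~\ref{lipschitz-elast-disc}--\ref{lipschitz-transport-discrete}, together with the identity $\sqrt{M}\,\sqrt{M^3}=M^2$, gives
\[
L_{\cA^h}=\overline{C}_1 L_\ell \max\left\{\frac{1}{\sqrt{2\mu}},\sqrt{2\mu}\right\}M^{2}\overline{C}_2^2 L_{\bbM}\left(\norm{\varphi_D}_{\frac{1}{2},\Gamma_D}+\norm{g}_{0,\Omega}\right),
\]
which under the smallness hypothesis is strictly less than $1$. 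Hence $\cA^h|_{W^h}$ is a contraction, and the Banach fixed-point theorem produces a unique $\varphi_h\in W^h$ with $\cA^h(\varphi_h)=\varphi_h$; this fixed point is moreover unique in all of $Q_2^{h,k_2}$, since $\cA^h(Q_2^{h,k_2})\subseteq W^h$ forces any fixed point to lie in $W^h$.

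Third, I would translate the fixed point into a solution of \eqref{eq:weak-discrete}. Given the fixed point $\varphi_h$, set $(\bu_h,\tilde p_h):=\cS_1^h(\varphi_h)$, which by definition of $\cS_1^h$ satisfies the first two equations of \eqref{eq:weak-discrete}; then $(\bzeta_h,\varphi_h):=\cS_2^h(\bu_h,\tilde p_h)$ — whose scalar component equals $\cS_{22}^h(\cS_1^h(\varphi_h))=\cA^h(\varphi_h)=\varphi_h$ — solves \eqref{eq:weak-transport-discrete} with data $\overline{\bu}_h=\bPi_1^{\beps,k_1}\bu_h$ and $\tilde p_h$, i.e. the last two equations. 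Conversely, any solution of \eqref{eq:weak-discrete} yields a fixed point of $\cA^h$, so the discrete coupled solution is unique. The stated stability bounds then follow at once from \eqref{dependence-data-elast-discrete} and \eqref{dependence-data-transport-discrete} of Lemmas~\ref{lem:elast-disc}--\ref{lem:trans-disc}, applied with the corresponding data.

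The only substantive point is the contraction estimate: one must check that the product $L_{\cS_2^h}L_{\cS_1^h}$ collapses exactly to the stated small-data quantity, and that the constants $\overline{C}_1,\overline{C}_2$ coming from the discrete inf-sup analysis of Lemmas~\ref{lem:elast-disc}--\ref{lem:trans-disc} are genuinely independent of $h$ and of the physical parameters. Everything else is bookkeeping; in particular, because the Lipschitz constants already deliver a contraction, no Brouwer-type or compactness argument is needed, even though the discrete space is finite dimensional.
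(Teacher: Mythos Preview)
Your proposal is correct and follows exactly the same approach as the paper: invoke Lemma~\ref{lipschitz:cA-discrete} together with the Banach fixed-point theorem for unique solvability, and read off the stability bounds from Lemmas~\ref{lem:elast-disc}--\ref{lem:trans-disc}. You have simply spelled out more of the bookkeeping (self-mapping of $W^h$, the explicit product $L_{\cS_2^h}L_{\cS_1^h}$, and the equivalence between the fixed point and the coupled system) than the paper's terse proof does.
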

\begin{proof}
\cgreen{The unique solvability} follows readily from Lemma~\ref{lipschitz:cA-discrete} together with the Banach fixed-point theorem. \cgreen{The continuous dependence on data is a consequence of the corresponding bounds derived in Lemmas~\ref{lem:elast-disc}-\ref{lem:trans-disc}.}
\end{proof}

\section{A priori error analysis}\label{sec:error-analysis}
In this section, we aim to provide the convergence of the VE discretisation developed in Section~\ref{sec:vem} and derive the corresponding convergence  result which preserves the robustness proved in Theorem~\ref{th:unique-solvability} and Theorem~\ref{th:unique-solvability-discrete}. %We begin by deriving a convergence result involving interpolation and polynomial approximations.
\begin{lemma} \label{lem:approximation-inequality}
    In addition to the assumptions of Theorems~\ref{coupled-well-poss} and \ref{discrete-coupled-well-poss}, let $(\bu,\tilde{p},\bzeta,\varphi)\in \bV_1 \times Q_1 \times \bV_2 \times Q_2$ and $(\bu_h,\tilde{p}_h,\bzeta_h,\varphi_h)\in \bV_1^{h,k_1}\times Q_1^{h,k_1}\times \bV_2^{h,k_2}\times Q_2^{h,k_2}$ be the unique solutions to \eqref{eq:weak} and \eqref{eq:weak-discrete}, respectively. 
    Then the following estimates hold
   \begin{subequations} \begin{align}
        \norm{(\bu-\bu_h,\tilde{p}-\tilde{p}_h)}_{\bV_1\times Q_1} \cblue{\leq} \,&\cblue{\overline{C}_1} \left( \cblue{\norm{\bu-\bPi_1^{\beps,k_1}\bu}_{\bV_1}} + \norm{F_1^h-F_1}_{\bV_1'} + \norm{\tilde{p}-\Pi_1^{0,k_1-1}\tilde{p}}_{Q_{b_1}}  \right .  \notag\\ &\left. \quad \cblue{+\norm{\bu-\bPi_1^{F,k_1}\bu}_{\bV_1}}\cblue{+ \sqrt{M}L_{\ell}\norm{\varphi-\varphi_h}_{Q_2}} \right)\label{approximation-inequality-elast} ,\\
        \norm{(\bzeta-\bzeta_h,\varphi-\varphi_h)}_{\bV_2\times Q_2} \cblue{\leq} \,& \cblue{\overline{C}_2} \left( \norm{\bzeta-\bPi_2^{0,k_2}\bzeta}_{\bbM} + \cblue{\norm{\bzeta-\bPi_2^{F,k_2}\bzeta}_{\bbM^h}} + \norm{\varphi-\Pi_2^{0,k_2}\varphi}_{Q_{b_2}} \right. \notag\\
        &\hspace{-2cm} \left. \cblue{+\max\left\{\frac{1}{\sqrt{2\mu}},\sqrt{2\mu} \right\}\sqrt{M^{3}} \overline{C}_2 L_{\bbM}(\norm{\varphi_D}_{\frac{1}{2},\Gamma_D} + \norm{g}_{0,\Omega})\norm{(\bu-\overline{\bu}_h,\tilde{p}-\tilde{p}_h)}_{\bV_1\times Q_1}}\right), \label{approximation-inequality-transport}
    \end{align}\end{subequations}
    where the \cblue{following discrete norm has been used $\displaystyle{\norm{\cdot}_{\bbM^{h}}^2} := a_2^{\cred{\overline{\bu}_h},\tilde{p}_h}(\cdot,\cdot)$}.
\end{lemma}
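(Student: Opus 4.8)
The plan is to prove \eqref{approximation-inequality-elast} and \eqref{approximation-inequality-transport} by two Strang-type arguments (second Strang lemma), one for each sub-problem, each resting on the discrete Babu\v{s}ka--Brezzi--Braess stability already established in Lemmas~\ref{lem:elast-disc} and \ref{lem:trans-disc} (equivalently, on the parameter-robust continuous dependence on data \eqref{dependence_data-discrete} of Theorem~\ref{th:unique-solvability-discrete}), and treating the nonlinear coupling perturbatively via the Lipschitz bounds \eqref{eq:lipschitz-M-bound}--\eqref{eq:lipschitz-l-bound}. In both cases the two estimates will remain \emph{mutually coupled} — the right-hand side of \eqref{approximation-inequality-elast} contains $\norm{\varphi-\varphi_h}_{Q_2}$ and that of \eqref{approximation-inequality-transport} contains $\norm{(\bu-\overline{\bu}_h,\tilde p-\tilde p_h)}_{\bV_1\times Q_1}$ — and disentangling them into genuine convergence rates is postponed to the subsequent theorem.

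For the elasticity estimate, first I would split the error against the canonical interpolants, $\bu-\bu_h=(\bu-\bPi_1^{F,k_1}\bu)+(\bPi_1^{F,k_1}\bu-\bu_h)$ and $\tilde p-\tilde p_h=(\tilde p-\Pi_1^{0,k_1-1}\tilde p)+(\Pi_1^{0,k_1-1}\tilde p-\tilde p_h)$; the first summands immediately give two of the terms in \eqref{approximation-inequality-elast}. For the discrete pair $(\be_h,\epsilon_h):=(\bPi_1^{F,k_1}\bu-\bu_h,\Pi_1^{0,k_1-1}\tilde p-\tilde p_h)\in\bV_1^{h,k_1}\times Q_1^{h,k_1}$, I would note that it solves \eqref{eq:weak-elast-discrete} with right-hand side equal to the residual of the interpolants and invoke the stability bound \eqref{dependence-data-elast-discrete}. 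To control that residual I would use: (i) the polynomial consistency $a_1^{h,E}(\mathbf m,\cdot)=a_1^E(\mathbf m,\cdot)$ for $\mathbf m\in(\mathcal{P}_{k_1}(E))^2$, applied with $\mathbf m=\bPi_1^{\beps,k_1}\bu$, together with boundedness of $a_1$ and $a_1^h$, which produces $\norm{\bu-\bPi_1^{\beps,k_1}\bu}_{\bV_1}$ and $\norm{\bu-\bPi_1^{F,k_1}\bu}_{\bV_1}$; (ii) the $L^2$-orthogonality of $\Pi_1^{0,k_1-1}$, the inclusion $\vdiv\bv_h|_E\in\mathcal{P}_{k_1-1}(E)$, and the commuting property $\vdiv\bPi_1^{F,k_1}\bu=\Pi_1^{0,k_1-1}\vdiv\bu$, which make the $b_1$- and $c_1$-residuals vanish; (iii) the definition of $F_1^h$, giving $\norm{F_1^h-F_1}_{\bV_1'}$; and (iv) the Lipschitz bound \eqref{eq:lipschitz-l-bound} for $G_1^{\varphi_h}-G_1^{\varphi}=G_1^{\varphi_h-\varphi}$ which, together with the norm scalings and $\lambda\geq1$, gives the coupling term $\sqrt M\,L_\ell\norm{\varphi-\varphi_h}_{Q_2}$. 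Collecting these and using the triangle inequality yields \eqref{approximation-inequality-elast} with $\overline C_1$ the discrete stability constant.

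The reaction--diffusion estimate follows the same blueprint — triangle inequality against $\bPi_2^{F,k_2}\bzeta$ and $\Pi_2^{0,k_2}\varphi$, then the discrete stability \eqref{dependence-data-transport-discrete} applied to the residual, now in the discrete energy norm, which is why $\norm{\cdot}_{\bbM^h}$ appears. As in the elasticity case the $b_2$- and $c_2$-residuals vanish by the commuting property $\vdiv\bPi_2^{F,k_2}\bzeta=\Pi_2^{0,k_2}\vdiv\bzeta$ and $L^2$-orthogonality (here $Q_2^{h,k_2}|_E=\mathcal{P}_{k_2}(E)$), and $F_2,G_2$ contribute nothing since they are unperturbed, so only the $a_2$-term survives. \textbf{The main obstacle is precisely this $a_2$-term} $a_2^{\bu,\tilde p}(\bzeta,\bxi_h)-a_2^{\overline{\bu}_h,\tilde p_h,h}(\bPi_2^{F,k_2}\bzeta,\bxi_h)$, where one must simultaneously handle the coefficient mismatch (the discrete form uses $\bbM^{-1}(\beps(\overline{\bu}_h),\tilde p_h)$ rather than $\bbM^{-1}(\beps(\bu),\tilde p)$) and the VEM variational crime, all while keeping every constant independent of $\mu,\lambda,\theta,M$. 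I would add and subtract $a_2^{\overline{\bu}_h,\tilde p_h}(\bzeta,\bxi_h)$: the coefficient-mismatch part is bounded by \eqref{eq:lipschitz-M-bound}, the uniform a priori bound \eqref{dependence-data-diff} for $\norm{\bzeta}_{0,\Omega}$, and the scalings relating $\norm{(\beps(\bu-\overline{\bu}_h),\tilde p-\tilde p_h)}_{\bbL^2(\Omega)\times L^2(\Omega)}$ to $\norm{(\bu-\overline{\bu}_h,\tilde p-\tilde p_h)}_{\bV_1\times Q_1}$, producing the last term of \eqref{approximation-inequality-transport} with its factors $\max\{1/\sqrt{2\mu},\sqrt{2\mu}\}$, $\sqrt{M^3}$, $L_{\bbM}$ and $\norm{\varphi_D}_{\frac12,\Gamma_D}+\norm{g}_{0,\Omega}$; the remaining part $a_2^{\overline{\bu}_h,\tilde p_h}(\bzeta,\bxi_h)-a_2^{\overline{\bu}_h,\tilde p_h,h}(\bPi_2^{F,k_2}\bzeta,\bxi_h)$ is split once more against the polynomial $\bPi_2^{0,k_2}\bzeta$ (the stabilisation vanishes on polynomial first arguments) and estimated by Cauchy--Schwarz in the $\bbM^h$ inner product, boundedness of $a_2^{\overline{\bu}_h,\tilde p_h,h}$, and the uniform equivalence $\norm{\cdot}_{\bbM}\simeq\norm{\cdot}_{\bbM^h}\simeq\norm{\cdot}_{0,\Omega}$ (constants depending only on $M$), which yields $\norm{\bzeta-\bPi_2^{0,k_2}\bzeta}_{\bbM}$, $\norm{\bzeta-\bPi_2^{F,k_2}\bzeta}_{\bbM^h}$ and $\norm{\varphi-\Pi_2^{0,k_2}\varphi}_{Q_{b_2}}$. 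Assembling these contributions gives \eqref{approximation-inequality-transport}.
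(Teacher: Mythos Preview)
Your proposal is correct and follows essentially the same Strang-type strategy as the paper: split against the Fortin interpolant and the $L^2$-projection, invoke the discrete stability bounds \eqref{dependence-data-elast-discrete} and \eqref{dependence-data-transport-discrete} on the resulting residual, exploit the polynomial consistency $a_1^{h}(\bPi_1^{\beps,k_1}\bu,\cdot)=a_1(\bPi_1^{\beps,k_1}\bu,\cdot)$ and $a_2^{\overline{\bu}_h,\tilde p_h,h}(\bPi_2^{0,k_2}\bzeta,\cdot)=a_2^{\overline{\bu}_h,\tilde p_h}(\bPi_2^{0,k_2}\bzeta,\cdot)$, and treat the coupling via \eqref{eq:lipschitz-M-bound}--\eqref{eq:lipschitz-l-bound} exactly as in the proof of Lemmas~\ref{lipschitz-elast}--\ref{lipschitz-transport}.

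The one place where you go slightly beyond the paper is your observation (ii): using $\vdiv\bv_h|_E\in\mathcal P_{k_1-1}(E)$, $\vdiv\bxi_h|_E\in\mathcal P_{k_2}(E)$, the commuting properties of the Fortin operators, and the $L^2$-orthogonality of $\Pi_1^{0,k_1-1}$ and $\Pi_2^{0,k_2}$, you note that the $b_1$-, $c_1$-, $b_2$- and $c_2$-residual contributions vanish \emph{exactly}, so in particular $\check G_2\equiv 0$. The paper instead bounds all of these terms by continuity, which is valid but leaves redundant copies of $\norm{\tilde p-\Pi_1^{0,k_1-1}\tilde p}_{Q_{b_1}}$, $\norm{\bu-\bPi_1^{F,k_1}\bu}_{\bV_1}$, $\norm{\varphi-\Pi_2^{0,k_2}\varphi}_{Q_{b_2}}$ and $\norm{\bzeta-\bPi_2^{F,k_2}\bzeta}_{\bV_2}$ in the intermediate estimates. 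Your route is marginally cleaner and makes the structural role of the commuting diagrams more transparent; the final inequalities \eqref{approximation-inequality-elast}--\eqref{approximation-inequality-transport} are of course identical, since those terms reappear anyway from the opening triangle-inequality split.
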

\begin{proof}
    From \eqref{eq:weak-elast} and \eqref{eq:weak-elast-discrete} we can readily see that $(\bu_h-\cblue{\bPi_1^{F,k_1}\bu},\tilde{p}_h-\cblue{\Pi_1^{0,k_1-1}\tilde{p}}) \in \bV_1^{h,k_1}\times Q_1^{h,k_1}$ is the unique solution to
    \begin{align*}
      a_1^h(\bu_h-\cblue{\bPi_1^{F,k_1}\bu},\bv_h) + b_1(\bv_h,\tilde{p}_h-\cblue{\Pi_1^{0,k_1-1}\tilde{p}}) &= \check{F}_1(\bv_h) \qquad \forall \bv_h \in \bV_1^{h,k_1},\\
      b_1(\bu_h-\cblue{\bPi_1^{F,k_1}\bu},\tilde{q}_h) - \frac{1}{\lambda}c_1(\tilde{p}_h-\cblue{\Pi_1^{0,k_1-1}\tilde{p}},\tilde{q}_h) &= \check{G}_1(\tilde{q}_h) \qquad \forall \tilde{q}_h \in Q_1^{h,k_1},
    \end{align*}
    where \begin{align*}
   \check{F}_1(\bv_h) &= a_1(\bu,\bv_h)-a_1^{h}(\cblue{\bPi_1^{F,k_1}\bu},\bv_h) + (F_1^{h}-F_1)(\bv_h) - b_1(\bv_h,\cblue{\Pi_1^{0,k_1-1}\tilde{p}}-\tilde{p}),\\
    \check{G}_1(\tilde{q}_h)&=-b_1(\cblue{\bPi_1^{F,k_1}\bu}-\bu,\tilde{q}_h)+\frac{1}{\lambda}c_1(\cblue{\Pi_1^{0,k_1-1}\tilde{p}}-\tilde{p},\tilde{q}_h) + \cblue{(G_1^{\varphi_h}-G_1^{\varphi})(\tilde{q}_h)}. 
    \end{align*}
    The continuous dependence on data \eqref{dependence-data-elast-discrete} shows
    \begin{align*}
        \norm{(\bu_h-\cblue{\bPi_1^{F,k_1}\bu},\tilde{p}_h-\cblue{\Pi_1^{0,k_1-1}\tilde{p}})}_{\bV_1\times Q_1} \cblue{\leq \overline{C}_1} \left(\norm{\check{F}_1}_{\bV'_1} + \norm{\check{G}_1}_{Q'_1}\right).
    \end{align*}
    The continuity from Lemma~\ref{lem:elast} (resp. Lemma~\ref{lem:elast-disc}) for $a_1(\cdot,\cdot)$ and $b_1(\cdot,\cdot)$ (resp. $a_1^{h}(\cdot,\cdot)$) together with $a_1^h(\cblue{\bPi_1^{\beps,k_1}\bu},\bu_h)=a_1(\cblue{\bPi_1^{\beps,k_1}\bu},\bu_h)$ and the Lipschitz continuity of $\ell(\cdot)$ given in \ref{eq:lipschitz-l-bound} provide 
    \begin{align*}
        \norm{\check{F}_1}_{\bV'_1} 
       & \cblue{\leq \overline{C}_1} \left( \cblue{\norm{\bu-\bPi_1^{\beps,k_1}\bu}_{\bV_1}} + \cblue{\norm{\bPi_1^{\beps,k_1}\bu-\bPi_1^{F,k_1}\bu}_{\bV_1}} + \norm{F_1^h-F_1}_{\bV'_1} + \norm{\tilde{p}-\Pi_1^{0,k_1-1}\tilde{p}}_{Q_{b_1}} \right),
  \\
        \norm{\check{G}_1}_{Q'_1} %&= \sup_{\tilde{q}_h\in Q_1^{h,k_1}} \frac{|\check{G}_1(\tilde{q}_h)|}{\norm{\tilde{q}_h}_{Q_1}}\\ &
      &  \cblue{\leq \overline{C}_1} \left( \norm{\bu-\bPi_1^{F,k_1}\bu}_{\bV_1} + \norm{\tilde{p}-\Pi_1^{0,k_1-1}\tilde{p}}_{Q_{b_1}} + \cblue{\sqrt{M}L_{\ell}\norm{\varphi-\varphi_h}_{Q_2}} \right).
    \end{align*}
    On the other hand, the triangle inequality leads to
    \begin{gather*}
        \norm{\bu-\bu_h}_{\bV_1} - \norm{\bu-\cblue{\bPi_1^{F,k_1}\bu}}_{\bV_1} \leq \norm{\bu_h-\cblue{\bPi_1^{F,k_1}\bu}}_{\bV_1},\\ 
        \norm{\tilde{p}-\tilde{p}_h}_{Q_{b_1}} - \norm{\tilde{p}-\cblue{\Pi_1^{0,k_1-1}\tilde{p}}}_{Q_{b_1}}\leq \norm{\tilde{p}_h-\cblue{\Pi_1^{0,k_1-1}\tilde{p}}}_{Q_{b_1}},
    \\ 
        \cblue{\norm{\bPi_1^{\beps,k_1}\bu-\bPi_1^{F,k_1}\bu}_{\bV_1} \leq \norm{\bu-\bPi_1^{F,k_1}\bu}_{\bV_1} + \norm{\bu - \bPi_1^{\beps,k_1}\bu}_{\bV_1}}.
    \end{gather*}
    The combination of the estimates above proves \eqref{approximation-inequality-elast}.
    For the second inequality, it is easy to check from \eqref{eq:weak-transport} and \eqref{eq:weak-transport-discrete}  that $(\bzeta_h-\cblue{\bPi_2^{F,k_2}\bzeta},\varphi_h-\cblue{\Pi_2^{0,k_2}\varphi}) \in \bV_2^{h,k_2}\times Q_2^{h,k_2}$ is the unique solution to
    \begin{align*}
      \cblue{a_2^{\cred{\overline{\bu}_h},\tilde{p}_h,h}(\bzeta_h-\cblue{\bPi_2^{F,k_2}\bzeta},\bxi_h)} + b_2(\bxi_h,\varphi_h-\cblue{\Pi_2^{0,k_2}\varphi}) &= \check{F}_2(\bxi_h) \qquad \forall \bxi_h \in \bV_2^{h,k_2},\\
      b_2(\bzeta_h-\cblue{\bPi_2^{F,k_2}\bzeta},\psi_h) - \theta c_2(\varphi_h-\cblue{\Pi_2^{0,k_2}\varphi},\psi_h) &= \check{G}_2(\psi_h) \qquad \forall \psi_h \in Q_2^{h,k_2},
    \end{align*}
    where \begin{align*}\check{F}_2(\bxi_h) &= \cblue{a_2^{\bu,\tilde{p}}(\bzeta,\bxi_h)-a_2^{\cred{\overline{\bu}_h},\tilde{p}_h,h}(\bPi_2^{F,k_2}\bzeta,\bxi_h)} - b_2(\bxi_h,\cblue{\Pi_2^{0,k_2}\varphi}-\varphi),\\
    \check{G}_2(\psi_h)&=-b_2(\cblue{\bPi_2^{F,k_2}\bzeta}-\bzeta,\psi_h)+\theta c_2(\cblue{\Pi_2^{0,k_2}\varphi}-\varphi,\psi_h).\end{align*}
   Owing to the continuous dependence on data \eqref{dependence-data-transport-discrete} we arrive at
    \begin{align*}
        \norm{(\bzeta_h-\cblue{\bPi_2^{F,k_2}\bzeta},\varphi_h-\cblue{\Pi_2^{0,k_2}\varphi})}_{\bV_2\times Q_2} \cblue{\leq \overline{C}_2} \left( \norm{\check{F}_2(\bxi_h)}_{\bV'_2} + \norm{\check{G}_2(\psi_h)}_{Q'_2}\right).
    \end{align*}
    \cblue{The continuity proved} in Lemma~\ref{lem:trans} (resp. Lemma~\ref{lem:trans-disc}) for $b_2(\cdot,\cdot)$ (resp. \cblue{$a_2^{\cred{\overline{\bu}_h},\tilde{p}_h,h}(\cdot,\cdot)$)}, in addition with \cblue{$a_2^{\cred{\overline{\bu}_h},\tilde{p}_h,h}(\bPi_2^{0,k_2}\bzeta,\bzeta_h)=a_2^{\cred{\overline{\bu}_h},\tilde{p}_h}(\bPi_2^{0,k_2}\bzeta,\bzeta_h)$}, \cblue{imply} that 
    \begin{align*}
        \norm{\check{F}_2}_{\bV'_2} %&= \sup_{\bv_h\in \bV_1^{h,k_1}} \frac{|\check{F}_1(\bv_h)|}{\norm{\bv_h}_{\bV_1}}\\ 
        &\cblue{\leq \overline{C}_2} \biggl( \norm{\bzeta-\cblue{\bPi_2^{0,k_2}\bzeta}}_{\bbM} + \cblue{\norm{\bPi_2^{0,k_2}\bzeta-\bPi_2^{F,k_2}\bzeta}_{\bbM^h}} + \norm{\varphi-\cblue{\Pi_2^{0,k_2}\varphi}}_{Q_{b_2}} +\\
        & \cblue{\max\left\{\frac{1}{\sqrt{2\mu}},\sqrt{2\mu} \right\}\sqrt{M^{3}} \overline{C}_2 L_{\bbM}\left(\norm{\varphi_D}_{\frac{1}{2},\Gamma_D} + \norm{g}_{0,\Omega}\right)\norm{(\bu-\overline{\bu}_h,\tilde{p}-\tilde{p}_h)}_{\bV_1\times Q_1}}\biggr),\\
        \norm{\check{G}_2}_{Q'_2} %&= \sup_{\tilde{q}_h\in Q_1^{h,k_1}} \frac{|\check{G}_1(\tilde{q}_h)|}{\norm{\tilde{q}_h}_{Q_1}}\\ 
        &\cblue{\leq \overline{C}_2} \left( \norm{\bzeta-\cblue{\bPi_2^{F,k_2}\bzeta}}_{\bV_2} + \norm{\varphi-\cblue{\Pi_2^{0,k_2}\varphi}}_{Q_{b_2}}\right).
    \end{align*}
Then, as before, it suffices to apply triangle inequality properly,  
%
%, a triangle inequality leads to the bounds 
%    \[
%        \norm{\bzeta-\bzeta_h}_{\bV_2} - \norm{\bzeta-\bPi_2^{F,k_2}\bzeta}_{\bV_2} \leq \norm{\bzeta_h-\bPi_2^{F,k_2}\bzeta}_{\bV_2},\qquad         \norm{\varphi-\varphi_h}_{Q_{b_2}} - \norm{\varphi-\Pi_2^{0,k_2}\varphi}_{Q_{b_2}}\leq \norm{\varphi_h-\Pi_2^{0,k_2}\varphi}_{Q_{b_2}},
%\]
%    and
%    \begin{align*}
%        \norm{\bPi_2^{0,k_2}\bzeta-\bPi_2^{F,k_2}\bzeta}_{\bbM} &\leq \norm{\bzeta-\bPi_2^{F,k_2}\bzeta}_{\bbM} + \norm{\bzeta - \bPi_2^{0,k_2}\bzeta}_{\bbM}.
%    \end{align*}
 %   Combining the previous inequalities together
 allows us to \cblue{assert} \eqref{approximation-inequality-transport}.
\end{proof}
\begin{theorem}\label{convergence-rates}
    Adopt the assumptions of Lemma~\ref{lem:approximation-inequality} \cblue{and let} $(\bu,\tilde{p},\bzeta,\varphi)\in (\bH^{s_1+1}(\Omega)\cap \bV_1,|\cdot|_{s_1+1,\bV_1})\times (H^{s_1}(\Omega)\cap Q_{b_1},|\cdot|_{s_1,Q_{b_1}}) \times (\bH^{s_2+1}\cap \bV_2,|\cdot|_{s_2+1,\bV_2}) \times (H^{s_2+1}\cap Q_{b_2},|\cdot|_{s_2+1,Q_{b_2}})$, $\fb\in (\bH^{s_1-1}\cap \bQ_{b_1},|\cdot|_{s_1-1,\bQ_{b_1}})$ and $g\in (H^{s_2+1}(\Omega)\cap Q_{b_2},|\cdot|_{s_2+1,Q_{b_2}})$ with $0< s_1\leq k_1$ and $0\leq s_2\leq k_2$. \cblue{Assume further that 
    \begin{align*}
        \overline{C}_1 \sqrt{M} L_\ell + \overline{C}_2^2 \sqrt{M^3} L_\bbM \max\left\{\frac{1}{\sqrt{2\mu}},\sqrt{2\mu} \right\}  \left(\norm{\varphi_D}_{\frac{1}{2},\Gamma_D} + \norm{g}_{0,\Omega}\right) < \frac{1}{2}.
    \end{align*}
    Then the total error $\overline{\textnormal{e}}:=\norm{(\bu-\bu_h,\tilde{p}-\tilde{p}_h, \bzeta-\bzeta_h,\varphi-\varphi_h)}_{\bV_1\times Q_{1} \times \bV_2\times Q_2}$, satisfies the following rate}
    \begin{align}\label{convergence-rate}
         \cblue{\overline{\textnormal{e}} \lesssim h^{ \min \left\{s_1,s_2+1\right\}} (|\fb|_{s_1-1,\bQ_{b_1}} + |\bu|_{s_1+1,\bV_1} + |\tilde{p}|_{s_1,Q_{b_1}} + |g|_{s_2+1,Q_{b_2}} + |\bzeta|_{s_2+1,\bV_2}+|\varphi|_{s_2+1,Q_{b_2}})}.
    \end{align}
\end{theorem}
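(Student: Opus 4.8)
The estimate will be obtained by inserting the approximation, interpolation and data--perturbation bounds into the quasi-optimality inequalities \eqref{approximation-inequality-elast}--\eqref{approximation-inequality-transport} of Lemma~\ref{lem:approximation-inequality}, and then eliminating the two coupling contributions on their right-hand sides by means of the small-data hypothesis. Write $e_1:=\norm{(\bu-\bu_h,\tilde p-\tilde p_h)}_{\bV_1\times Q_1}$ and $e_2:=\norm{(\bzeta-\bzeta_h,\varphi-\varphi_h)}_{\bV_2\times Q_2}$, so that $\overline{\textnormal{e}}\leq e_1+e_2$, and set $\kappa:=\max\{\tfrac1{\sqrt{2\mu}},\sqrt{2\mu}\}\sqrt{M^3}\,\overline{C}_2\,L_{\bbM}(\norm{\varphi_D}_{\frac12,\Gamma_D}+\norm{g}_{0,\Omega})$, which is the constant standing in front of the elasticity error in \eqref{approximation-inequality-transport}.

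First I would estimate all the projection and interpolation terms in the parameter-weighted norms. Lemmas~\ref{estimates-poly-proj-elast} and \ref{fortin-elast} give $\norm{\bu-\bPi_1^{\beps,k_1}\bu}_{\bV_1}+\norm{\bu-\bPi_1^{F,k_1}\bu}_{\bV_1}\lesssim h^{s_1}|\bu|_{s_1+1,\bV_1}$, and the elementwise polynomial-approximation argument of Lemma~\ref{estimates-poly-proj-elast}, now with projection degree $k_1-1$ and $0<s_1\le k_1$, gives $\norm{\tilde p-\Pi_1^{0,k_1-1}\tilde p}_{Q_{b_1}}\lesssim h^{s_1}|\tilde p|_{s_1,Q_{b_1}}$. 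For the transport variables, Lemmas~\ref{estimates-poly-proj-transport} and \ref{fortin-transport} yield $\norm{\bzeta-\bPi_2^{0,k_2}\bzeta}_{\bbM}+\norm{\bzeta-\bPi_2^{F,k_2}\bzeta}_{\bbM^h}\lesssim h^{s_2+1}|\bzeta|_{s_2+1,\bV_2}$ (the bound in $\norm{\cdot}_{\bbM^h}$ following verbatim as in the proof of Lemma~\ref{fortin-transport}, since $\norm{\cdot}_{\bbM^h}\le\sqrt{M}\norm{\cdot}_{0,\Omega}$) and $\norm{\varphi-\Pi_2^{0,k_2}\varphi}_{Q_{b_2}}\lesssim h^{s_2+1}|\varphi|_{s_2+1,Q_{b_2}}$. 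The divergence of the flux interpolation error, which enters through $b_2(\cdot,\cdot)$ in the proof of Lemma~\ref{lem:approximation-inequality}, is handled with the commuting relation $\vdiv\bPi_2^{F,k_2}\bzeta=\Pi_2^{0,k_2}\vdiv\bzeta$ and the identity $\vdiv\bzeta=\theta\varphi-g$ from \eqref{eq:transport}: then $\norm{\vdiv(\bzeta-\bPi_2^{F,k_2}\bzeta)}_{0,\Omega}=\norm{\vdiv\bzeta-\Pi_2^{0,k_2}\vdiv\bzeta}_{0,\Omega}\lesssim h^{s_2+1}(\theta|\varphi|_{s_2+1,\Omega}+|g|_{s_2+1,\Omega})$, which (using $\theta\le\tfrac1M$ and the definitions of the scaled seminorms) is $\lesssim h^{s_2+1}(|\varphi|_{s_2+1,Q_{b_2}}+|g|_{s_2+1,Q_{b_2}})$; this is the sole source of the term $|g|_{s_2+1,Q_{b_2}}$ in \eqref{convergence-rate}.

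Next I would dispose of the data defect and of the shifted displacement. Using $F_1^h(\bv_h)-F_1(\bv_h)=\int_\Omega(\bPi_1^{0,k_1-2}\fb-\fb)\cdot(\bv_h-\bPi_1^{0,k_1-2}\bv_h)$, which follows from the elementwise $L^2$-orthogonality of $\bPi_1^{0,k_1-2}$, the Cauchy--Schwarz inequality together with $\norm{\fb-\bPi_1^{0,k_1-2}\fb}_{0,E}\lesssim h_E^{s_1-1}|\fb|_{s_1-1,E}$, $\norm{\bv_h-\bPi_1^{0,k_1-2}\bv_h}_{0,E}\lesssim h_E|\bv_h|_{1,E}$, Korn's inequality and the definition of $\norm{\cdot}_{\bV_1}$ give $\norm{F_1^h-F_1}_{\bV_1'}\lesssim h^{s_1}|\fb|_{s_1-1,\bQ_{b_1}}$. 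For the term $\norm{(\bu-\overline{\bu}_h,\tilde p-\tilde p_h)}_{\bV_1\times Q_1}$ with $\overline{\bu}_h=\bPi_1^{\beps,k_1}\bu_h$, the triangle inequality and the $\bV_1$-stability of the energy projection ($\norm{\bPi_1^{\beps,k_1}(\bu-\bu_h)}_{\bV_1}\le\norm{\bu-\bu_h}_{\bV_1}$, a direct consequence of its defining orthogonality together with $\norm{\cdot}_{\bV_1}=\sqrt{2\mu}\norm{\beps(\cdot)}_{0,\Omega}$) give $\norm{(\bu-\overline{\bu}_h,\tilde p-\tilde p_h)}_{\bV_1\times Q_1}\lesssim h^{s_1}|\bu|_{s_1+1,\bV_1}+e_1$.

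Finally, collect the data and approximation bounds into $a_1\lesssim h^{s_1}(|\fb|_{s_1-1,\bQ_{b_1}}+|\bu|_{s_1+1,\bV_1}+|\tilde p|_{s_1,Q_{b_1}})$ and $a_2\lesssim h^{s_2+1}(|\bzeta|_{s_2+1,\bV_2}+|\varphi|_{s_2+1,Q_{b_2}}+|g|_{s_2+1,Q_{b_2}})$. Substituting everything into \eqref{approximation-inequality-elast}--\eqref{approximation-inequality-transport} and using $\norm{\varphi-\varphi_h}_{Q_2}\le e_2$ yields
\begin{align*}
e_1 &\;\le\; \overline{C}_1\, a_1 \;+\; \overline{C}_1\sqrt{M}\,L_\ell\, e_2, \\
e_2 &\;\le\; \overline{C}_2\, a_2 \;+\; \overline{C}_2\,\kappa\,(e_1 + a_1).
\end{align*}
Adding these two inequalities, the factor multiplying $e_1+e_2$ on the right is at most $\overline{C}_1\sqrt{M}\,L_\ell+\overline{C}_2\,\kappa$, which coincides with the quantity assumed strictly less than $\tfrac12$ in the hypothesis; hence the $e_1,e_2$ contributions can be moved to the left, giving $\overline{\textnormal{e}}\le e_1+e_2\le 2(\overline{C}_1 a_1+\overline{C}_2 a_2+\overline{C}_2\kappa a_1)\lesssim a_1+a_2$. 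Bounding $h^{s_1},h^{s_2+1}\le h^{\min\{s_1,s_2+1\}}$ for $h$ sufficiently small then delivers \eqref{convergence-rate}. The main obstacle is the parameter bookkeeping: one must carry the $\mu$- and $M$-weights through every term so that the combined coupling coefficient reduces exactly to $\overline{C}_1\sqrt{M}\,L_\ell+\overline{C}_2\,\kappa$ (matching the smallness assumption that powers the absorption step), and verify that the divergence of the flux interpolation error introduces no power of $h$ worse than $h^{s_2+1}$ and no seminorm beyond $|\varphi|_{s_2+1,Q_{b_2}}$ and $|g|_{s_2+1,Q_{b_2}}$.
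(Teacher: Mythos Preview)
Your proposal is correct and follows essentially the same route as the paper: bound the data defect $\norm{F_1^h-F_1}_{\bV_1'}$ by $h^{s_1}|\fb|_{s_1-1,\bQ_{b_1}}$, control the divergence of the flux interpolation error via the commuting identity $\vdiv\bPi_2^{F,k_2}\bzeta=\Pi_2^{0,k_2}\vdiv\bzeta$ together with $\vdiv\bzeta=\theta\varphi-g$, and then add \eqref{approximation-inequality-elast}--\eqref{approximation-inequality-transport} and invoke Lemmas~\ref{estimates-poly-proj-elast}, \ref{fortin-elast}, \ref{estimates-poly-proj-transport}, \ref{fortin-transport}. In fact you are more explicit than the paper at two points that it leaves implicit: the reduction of $\norm{(\bu-\overline{\bu}_h,\tilde p-\tilde p_h)}_{\bV_1\times Q_1}$ to $e_1$ plus an $h^{s_1}$ term via the $\bV_1$-stability of $\bPi_1^{\beps,k_1}$, and the absorption step showing that the coupling coefficient on the right equals $\overline{C}_1\sqrt{M}L_\ell+\overline{C}_2\kappa<\tfrac12$, which is precisely the smallness hypothesis.
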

\begin{proof}
    \cblue{The} following estimate (see \cite[Lemma 3.2]{daveiga15-stokes})
    \begin{align*}
            |(F_1^{h,E}-F_1^E)(\bv_h)|\lesssim h_E^{s_1} |\fb|_{s_1-1,E}|\bv_h|_{1,E},
    \end{align*}
    leads to
    \begin{align*}
        |(F_1^h-F_1)(\bv_h)| \leq \sum_{E\in \mathcal{T}_h} |(F_1^{h,E}-F_1^E)(\bv_h)| \lesssim \sum_{E\in \mathcal{T}_h} h_E^{s_1} |\fb|_{s_1-1,E}|\bv_h|_{1,E} \lesssim h^{s_1} |\fb|_{s_1-1,\bQ_{b_1}}\norm{\bv_h}_{\bV_1}.
    \end{align*} 
    On the other hand, note that \cblue{$\norm{\cdot}_{\bbM^h}\leq \sqrt{M}\norm{\cdot}_{0,\Omega}$ and}
    \begin{align*}
         \sqrt{M}\norm{\vdiv \bzeta - \vdiv \bPi_2^{F,k_2}\bzeta}_{0,\Omega} &= \sqrt{M}\norm{\vdiv \bzeta - \Pi_2^{0,k_2}\vdiv \bzeta}_{0,\Omega} \\&= \sqrt{M}\norm{(g - \theta \varphi) - \Pi_2^{0,k_2}(g - \theta \varphi)}_{0,\Omega}\lesssim h^{s_2+1}(|g|_{s_2+1,Q_{b_2}}+|\varphi|_{s_2+1,Q_{b_2}}).
    \end{align*}
    Finally, \cblue{\eqref{convergence-rate} follows directly from adding \eqref{approximation-inequality-elast} and \eqref{approximation-inequality-transport} and a proper use of Lemmas~\ref{estimates-poly-proj-elast},~\ref{fortin-elast},~\ref{estimates-poly-proj-transport}, and \ref{fortin-transport}, together with the previous estimates}.
\end{proof}
%%%%%%%%%%%%%%%%%%%%%%%%%%%%%%%%%%%%%%%%%%%%%%%%%
\section{Numerical tests}\label{sec:results}
%This section presents an implementation of the discrete problem \eqref{eq:weak-discrete} analysed in Section~\ref{sec:vem}. 
First, we verify the theoretical convergence rates proved in Section~\ref{sec:error-analysis} with different polygonal meshes and polynomial orders (to be specified in Section~\ref{sec:convergence-numerical}). Next, we evaluate the robustness provided in Theorem~\ref{discrete-coupled-well-poss} for different physical parameters (to be specified in Section~\ref{sec:robustness}). The numerical implementation is done with in-house MATLAB routines based on \cite{sutton2017virtual,yu2022mvem}. We implemented \eqref{eq:weak-elast-discrete} and \eqref{eq:weak-transport-discrete} separately for $k_1=2$ and $k_2=0,1$ respectively. 
%which means that perturbed saddle-point problems coming from a mixed (Herrmann) formulation of the elasticity equations or a mixed formulation of the reaction--diffusion equation can be solved with the necessary modifications. 
The nonlinear coupled problem \eqref{eq:weak-discrete} is solved with a Picard iteration, following the same structure as in the fixed-point analysis from  Section~\ref{sec:discrete-fix-point}. 
The fixed-point tolerance is set to $10^{-8}$ and the errors are calculated 
%using the norms defined in Section~\ref{sec:wellp-h} 
as follows
\[\cblue{
    \overline{\textnormal{e}}_*^2 = \norm{(\bu-\bPi_1^{\beps,k_1}\bu_h,\tilde{p}-\Pi_1^{0,k_1-1}\tilde{p}_h)}_{\bV_1^{h,k_1}\times Q_1^{h,k_1}}^2 + \norm{(\bzeta-\bPi_2^{0,k_2}\bzeta_h,\varphi-\Pi_2^{0,k_2}\varphi_h)}_{\bV_2^{h,k_2}\times Q_2^{h,k_2}}^2.}
\]
\vspace{-0.435cm}
\begin{figure}[h!]
    \centering
    \subfigure[Non--convex.]{\includegraphics[width=0.155\textwidth]{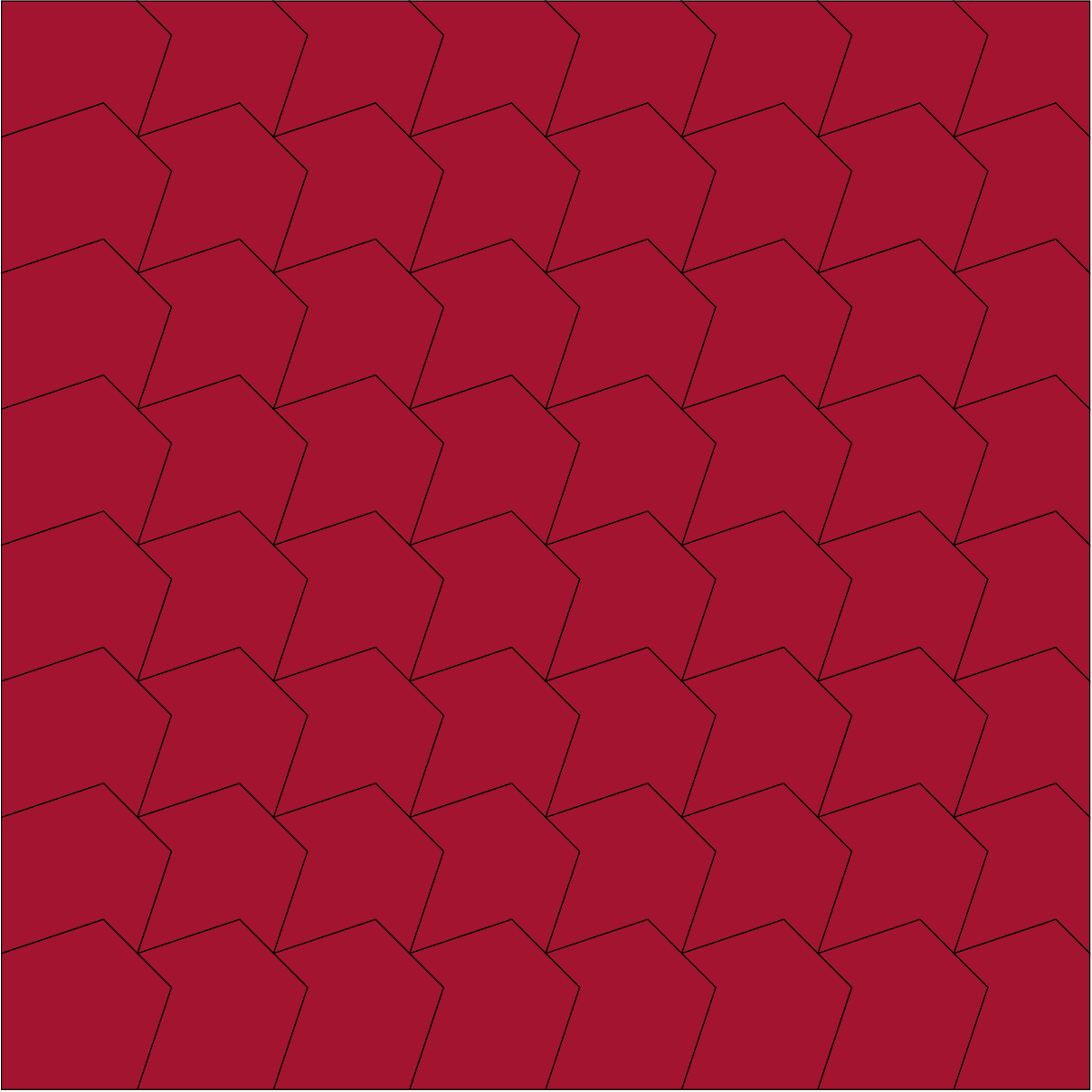}} \label{fig:non-convex}
    \subfigure[Voronoi.]{\includegraphics[width=0.155\textwidth]{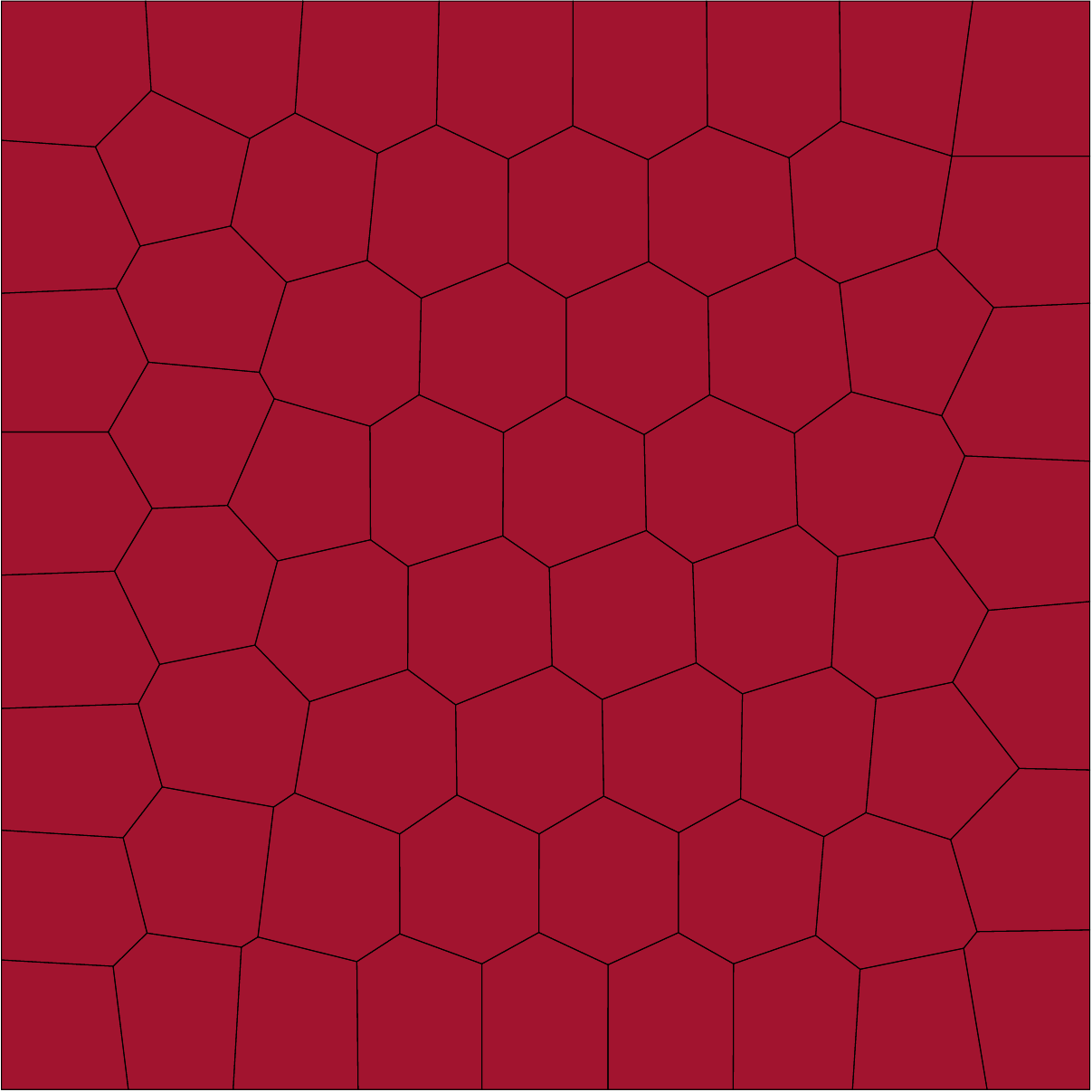}}  \label{fig:polygonal}
    \subfigure[Square.]{\includegraphics[width=0.155\textwidth]{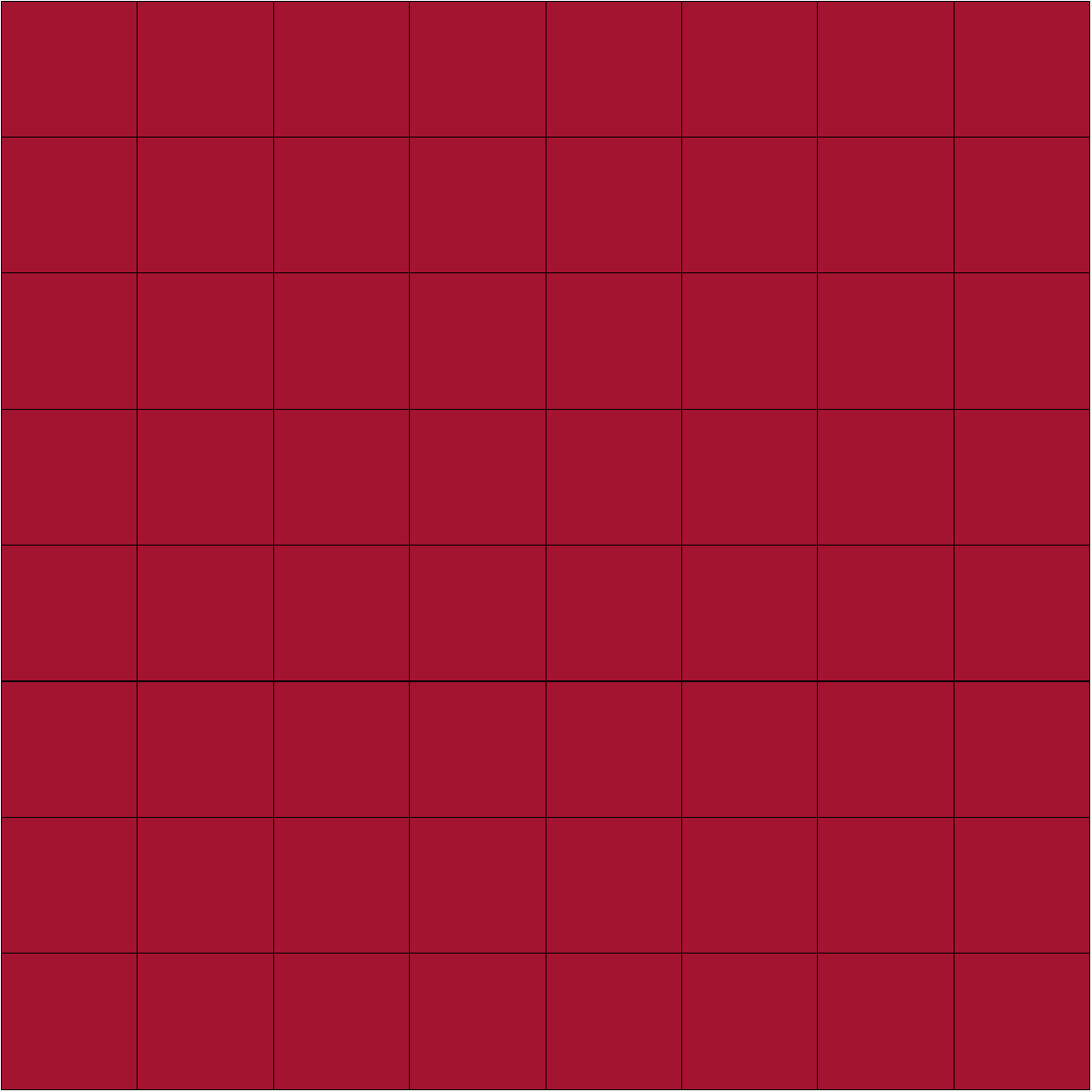}} \label{square}
    \subfigure[\cblue{Crossed}.]{\includegraphics[width=0.155\textwidth]{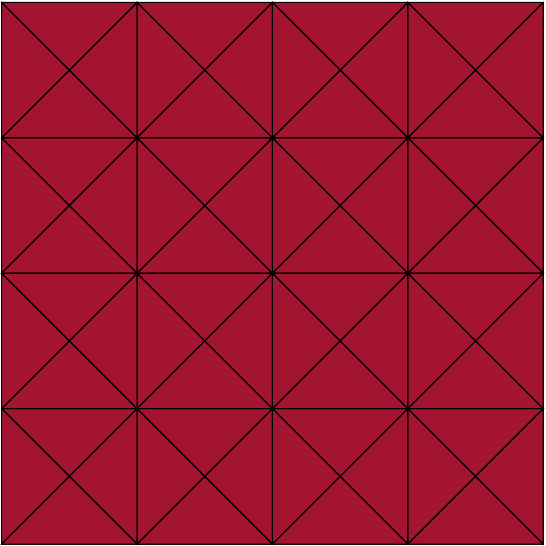}} \label{fig:crossed}
    \subfigure[\cblue{Per. Voronoi}.]{\includegraphics[width=0.155\textwidth]{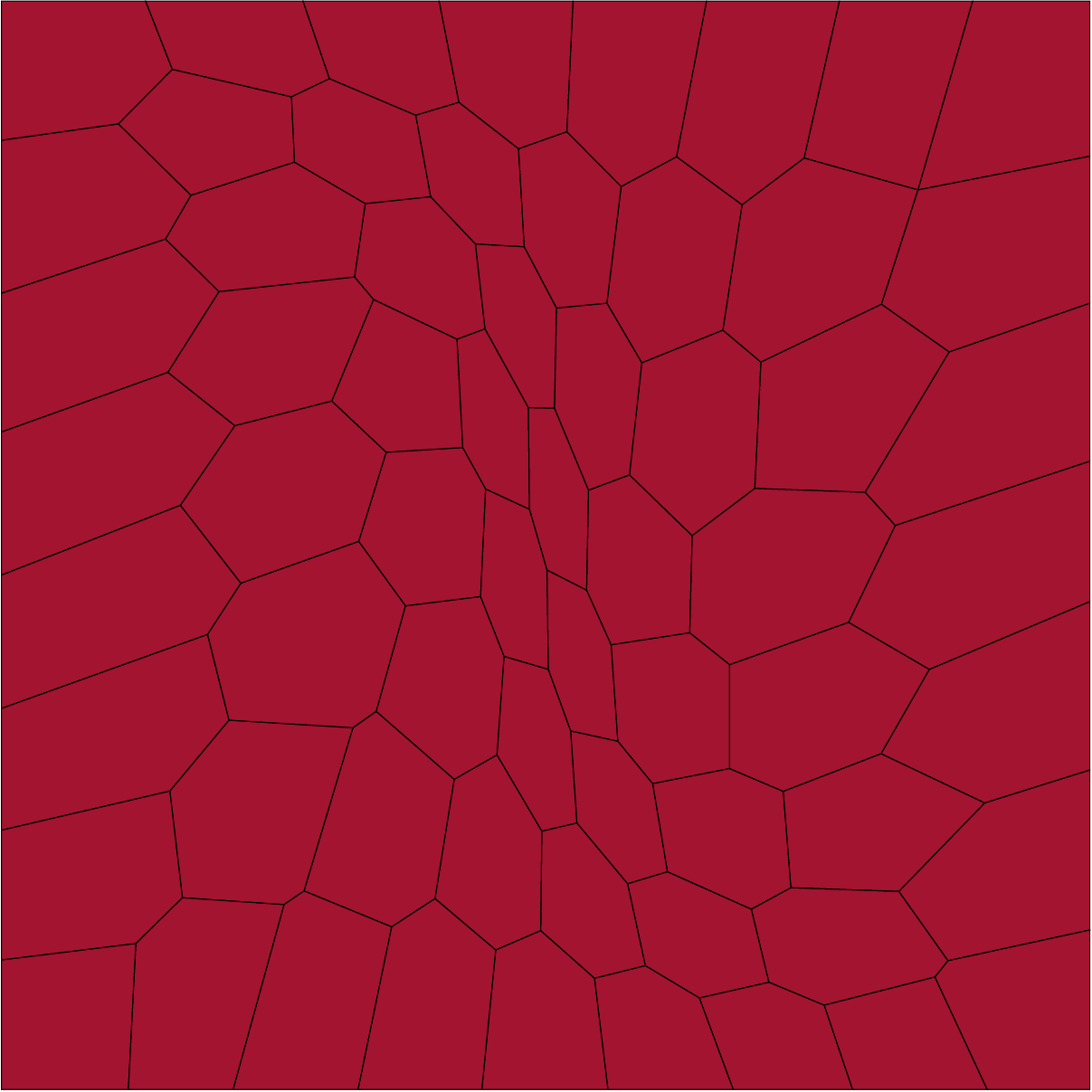}} \label{fig:distortionpolygonal}
    \subfigure[\cblue{Kangaroo.}]{\includegraphics[width=0.155\textwidth]{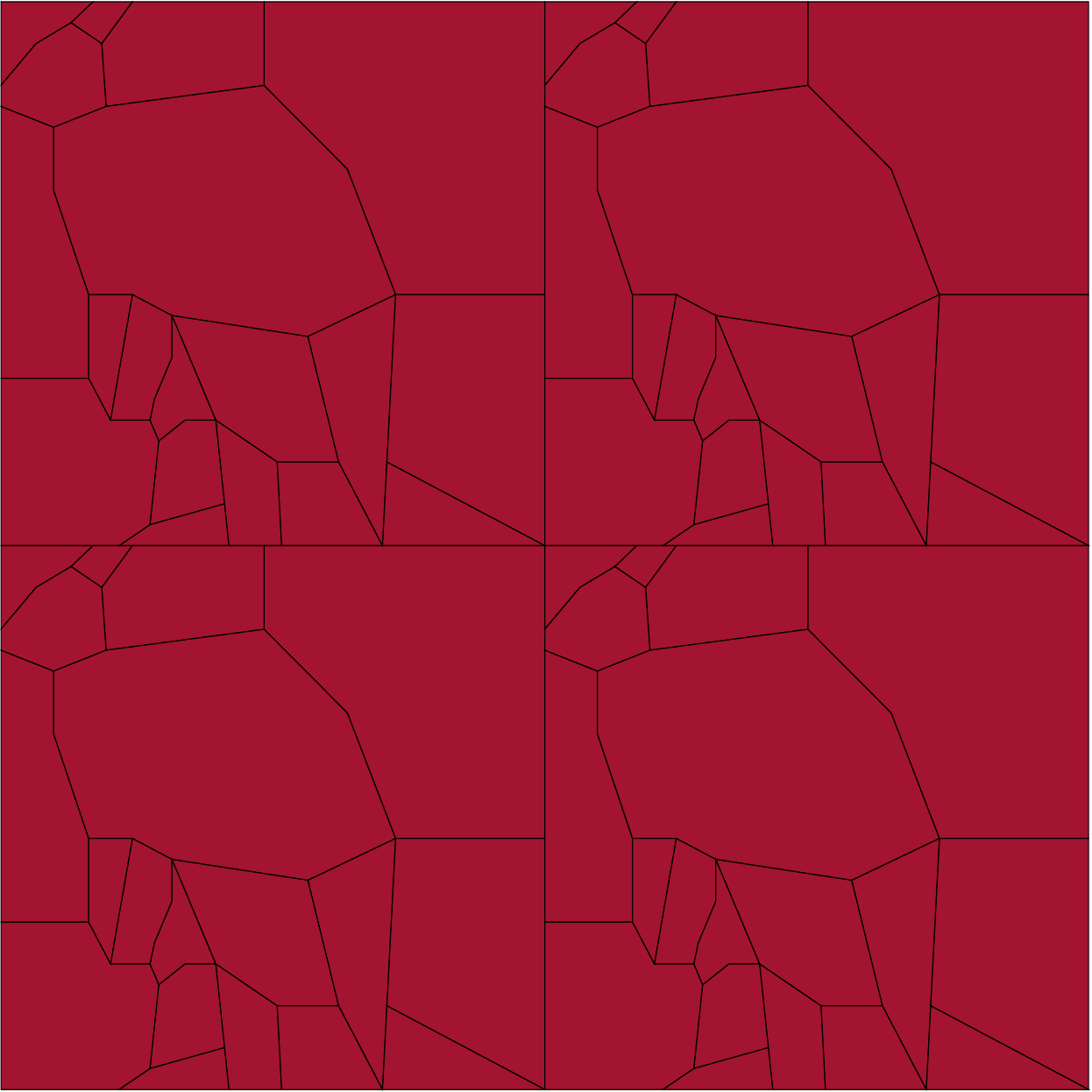}} \label{fig:kangaroo}
    \caption{\cblue{An illustration of six distinct coarse meshes used in the numerical tests}.}
    \label{fig:meshes}
\end{figure}

\begin{figure}[h!]
    \centering
    \subfigure[\cred{DoFs $\bV_1^{h,2}$}]{\includegraphics[width=0.12\textwidth]{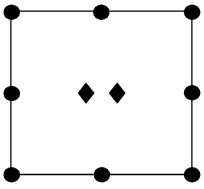}} 
    \subfigure[DoFs $Q_1^{h,2}$]{\includegraphics[width=0.12\textwidth]{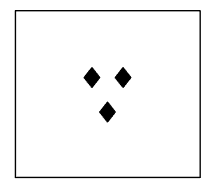}} 
    \subfigure[DoFs $\bV_2^{h,1}$]{\includegraphics[width=0.12\textwidth]{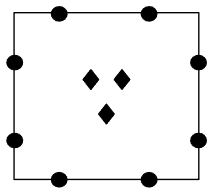}}
    \subfigure[DoFs $Q_2^{h,1}$]{\includegraphics[width=0.12\textwidth]{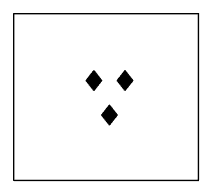}}
    \caption{Illustration of the DoFs on a square element with $k_1=2$ and $k_2=1$.}
    \label{fig:dofs}
\end{figure}

The linear systems arising from the VE discretisation are solved using a direct solver. \cred{In addition, we refer to \cite{veiga14} and \cite{daVeiga16} for the explicit construction of the linear systems involved.} For the initial guess $\varphi_0 \in Q_2^{h,k_2}$, the scaled monomial basis introduced in Section~\ref{sec:vem} leads to $\varphi_0 = \mathbf{c}\cdot (1)$ and $\varphi_0 = \mathbf{c}\cdot (1,\frac{x-x_E}{h_E},\frac{y-y_E}{h_E})^{\tt t}$ for $k_2=0,1$ respectively and $\mathbf{c}$ a constant vector of the appropriate size. With a slight abuse of notation, we define $\overline{\bzeta}_h = \bPi_2^{0,k_2}\bzeta_h$ in Algorithm \ref{alg:fp}, where the implementation of the fixed point iteration is illustrated.

\begin{algorithm}
\caption{Fixed point iteration}\label{alg:fp}
\cred{\begin{algorithmic}
\State $i \gets 0$, $\text{tol} \gets 1.\text{e}^{-8}$, $\text{error} \gets 1$, $\varphi_{i} \gets \mathbf{c}$
\While{$\text{tol} < \text{error}$}
    \State \textbf{solve} for $(\bu_h,\tilde{p}_h)$ from  $\begin{bmatrix}
        A_1 & B_1 \\
        B^{\tt t}_1 & - C_1 
    \end{bmatrix}
    \begin{bmatrix}
        \bu_h \\
        \tilde{p}_h 
    \end{bmatrix} =
    \begin{bmatrix}
        F_1 \\
        G^{\varphi_{i}}_1 
    \end{bmatrix}$
    \State $(\bu_{i+1},\tilde{p}_{i+1}) \gets (\bu_h, \tilde{p}_h)$
    \State \textbf{solve} for $(\bzeta_h,\varphi_h)$ from $\begin{bmatrix}
        A^{\overline{\bu}_{i+1},\tilde{p}_{i+1}}_2 & B_2 \\
        B^{\tt t}_2 & - C_2 
    \end{bmatrix}
    \begin{bmatrix}
        \bzeta_h \\
        \varphi_h 
    \end{bmatrix} =
    \begin{bmatrix}
        F_2 \\
        G_2 
    \end{bmatrix}$
    \State $(\bzeta_{i+1},\varphi_{i+1}) \gets (\bzeta_h, \varphi_h)$
    
\If{$0<i$}
    \State error $\gets \left(\norm{(\overline{\bu}_{i+1}-\overline{\bu}_{i},\tilde{p}_{i+1}-\tilde{p}_{i})}_{\bV_1^{h,k_1}\times Q_1^{h,k_1}}^2 + \norm{(\overline{\bzeta}_{i+1}-\overline{\bzeta}_{i},\varphi_{i+1}-\varphi_{i})}_{\bV_2^{h,k_2}\times Q_2^{h,k_2}}^2\right)^\frac{1}{2}$
    \If{$\text{error}<\text{tol}$}
        \State \textbf{break}: fixed point converged
    \EndIf
\EndIf
\State $(\bu_{i},\tilde{p}_{i}) \gets (\bu_{i+1}, \tilde{p}_{i+1})$
\State $(\bzeta_{i},\varphi_{i}) \gets (\bzeta_{i+1}, \varphi_{i+1})$
\State $i \gets i+1$
\EndWhile
\end{algorithmic}}
\end{algorithm}

\subsection{Example 1: Convergence test with smooth solution} \label{sec:convergence-numerical}
This test is designed with the following manufactured solutions 
\[
    \bu(x,y) = \frac15\left( \cos(x)\sin(y)x+x^2, \sin(x)\cos(y)x+y^2 \right)^{\tt t},\quad 
    \varphi(x,y) = x^2+y^2 + \sin(\pi x) + \cos(\pi y),
\]
on $\Omega = (0,1)^2$ with Dirichlet part $\Gamma_D = \{(x,y) \in \partial \Omega \colon x=0\; \text{or}\; y = 0\}$ and Neumann part $\Gamma_N = \partial \Omega\setminus \Gamma_D$. The exact displacement \cblue{$\bu$ and concentration $\varphi$} are used to compute exact Herrmann pressure $\tilde{p}$ and total flux $\bzeta$, as well as appropriate forcing term, source, and non-homogeneous traction, displacement, and flux boundary data. 
The nonlinear terms are taken in the form
\[
    \bbM(\bsigma) = m_0\exp(-m_1 \tr\bsigma)\bbI,\quad 
    \ell(\vartheta) = K_0+\frac{\vartheta^n}{K_1+\vartheta^n},
\]
with the arbitrary parameters (dimensionless in this case) $\lambda = 10^3, \quad
\mu = 10^2,\quad
\theta = 10^{-3}, \quad 
m_0 = 10^{-1}, \quad
m_1 = 10^{-4}, \quad
K_0 = 1, \quad
K_1 = 1, \quad n=2$.
For the remaining parameter $M$, we use the \texttt{fmincom} MATLAB optimisation subroutine \cblue{and the Frobenius matrix norm $\norm{\cdot}_{F}$ to compute  
$M = \max_{(x,y)\in \Omega} \quad \norm{\bbM(\sigma)}_{F} = 1.157701\times 10^1$.} %Clearly, the conditions $1 \leq \lambda$, $0 < \mu$, and $0 < \theta \leq \frac{1}{M} \leq 1$ are satisfied for this parameter configuration.

The performance of the VEM is verified on \cblue{six} different types of meshes: \cblue{non-convex, Voronoi, square, crossed, perturbed Voronoi and kangaroo-shaped} (see Figure~\ref{fig:meshes}). As an illustrative example, the DoFs for a single square element are shown in Figure~\ref{fig:dofs} for $k_1=2$ and $k_2=0,1$. It is easy to check that the total number of DoFs for the spaces  in Section~\ref{sec:vem} are given by
$\text{DOF}_1^j(k_1=2) = 3N_E^j + 2N_e^j + 2N_v^j + 3N_E^j, \text{ DOF}_2^j(k_2=0) = N_e^j + N_E^j \text{ and }
    \text{DOF}_2^j(k_2=1) = 3N_E^j+2N_e^j + 3N_E^j$,
with $N_E^j$ (resp. $N_e^j$, $N_v^j$) corresponding to the number of elements in the mesh $j$ (resp. edges, vertices), \cblue{the total number of DoFs is defined as $\text{DOF}_*^j = \text{DOF}_1^j + \text{DOF}_2^j$}. Besides, the experimental order of convergence \cblue{$r$} for the error \cblue{$\overline{\text{e}}_*$} of the refinement $j+1$ is computed from the formula \cblue{$r^{j+1} = \log(\overline{\text{e}}_{*}^{j+1}/\overline{\text{e}}_{*}^{j})/\log(h^{j+1}/h^{j})$, with $j=1,2,3$}. \cblue{Finally, the stabilisation terms are selected as follows $$S_1^E(\bu_h,\bv_h) = 2\mu \sum_{i=1}^{\text{DOF}_1}\text{dof}_{i}(\bu_h)\text{dof}_{i}(\bv_h), \, S_2^{\cred{\overline{\bu}_h,\tilde{p}_h},E}(\bzeta_h,\bxi_h) = \norm{\int_E \bbM^{-1}(\beps(\cred{\overline{\bu}_h}),\tilde{p}_h)}_{F} \sum_{i=1}^{\text{DOF}_2}\text{dof}_{i}(\bzeta_h)\text{dof}_{i}(\bxi_h).$$} 

\cblue{\cblue{Tables}~\ref{tab:convergence-0}-\ref{tab:convergence-1}} summarise the convergence history of the mixed VEM \eqref{eq:weak-discrete}, and we observe the convergence rate \cblue{given by the minimum between $O(h^{k_1})$ and $O(h^{k_2+1})$} as predicted in Theorem~\ref{convergence-rates}. \cblue{In addition}, we present \cblue{snapshots} of the approximated solutions \cblue{for the magnitude of displacement $|\bu_h|$ for the magnitude of the total flux $|\bzeta_h|$}, the Herrmann pressure $\tilde{p}_h$ and the concentration $\varphi_h$ (see Figure~\ref{fig:sol_snap}).

\begin{table}[h!]
\caption{\cgreen{Example 1. Convergence history for smooth manufactured solutions, using the polynomial degrees $k_1=2$ and $k_2=0$.}}
\label{tab:convergence-0}
\centering

\cblue{\begin{tabular}{| c | c | c | c | c | c | c | c | c | c |}

\hline &  &  &  & \\[-2.5ex] 
 
Mesh & $\text{DOF}$ & $\overline{\text{e}}_*$ & $r$ & it & Mesh & $\text{DOF}$ & $\overline{\text{e}}_*$ & $r$ & it \\ %[0.2ex] 
 \hline
             & 1538  & 3.83e-01 &  *   & 3  &            & 3026  & 2.13e-01 &  *   & 3 \\ 

 Non--convex & 3458  & 2.52e-01 & 1.04 & 3  & Crossed    & 6746  & 1.41e-01 & 1.01 & 3 \\

             & 6146  & 1.88e-01 & 1.02 & 3  &            & 11938 & 1.06e-01 & 1.00 & 3 \\

             & 9602  & 1.50e-01 & 1.01 & 3  &            & 18602 & 8.46e-02 & 1.00 & 3 \\ %[0.2ex] 
 
\hline &  &  &  & \\[-2.5ex] 
 
             & 1218  & 3.69e-01 &  *   & 3  &            & 1218  & 4.39e-01 &  *   & 3 \\ 

 Voronoi     & 2728  & 2.44e-01 & 1.03 & 3  & Perturbed  & 2728  & 2.88e-01 & 1.03 & 3 \\

             & 4826  & 1.80e-01 & 1.04 & 3  & Voronoi    & 4826  & 2.14e-01 & 1.04 & 3 \\

             & 7592  & 1.44e-01 & 0.99 & 3  &            & 7592  & 1.70e-01 & 1.04 & 3 \\ %[0.2ex] 

\hline &  &  &  & \\[-2.5ex] 

             & 978   & 3.70e-01 &  *   & 3  &            & 1502  & 5.52e-01 &  *   & 3 \\ 

 Square      & 2138  & 2.44e-01 & 1.03 & 3  & Kangaroo   & 3371  & 3.69e-01 & 0.99 & 3 \\

             & 3746  & 1.82e-01 & 1.01 & 3  &            & 5986  & 2.80e-01 & 0.96 & 3 \\

             & 5802  & 1.45e-01 & 1.01 & 3  &            & 9347  & 2.26e-01 & 0.95 & 3 \\ %[0.2ex] 
 
\hline 

\end{tabular}}
\end{table}

\begin{table}[h!]
\caption{\cgreen{Example 1. Convergence history for smooth manufactured solutions, using the polynomial degrees $k_1=2$ and $k_2=1$.}}
\label{tab:convergence-1}
\centering
\cblue{\begin{tabular}{| c | c | c | c | c | c | c | c | c | c |}

\hline &  &  &  & \\[-2.5ex] 
 
Mesh & $\text{DOF}$ & $\overline{\text{e}}_*$ & $r$ & it & Mesh & $\text{DOF}$ & $\overline{\text{e}}_*$ & $r$ & it \\ %[0.2ex] 
 \hline
             & 2114  & 9.67e-02 &  *   & 3  &            & 4706  & 2.70e-02 &  *   & 3 \\ 

 Non--convex & 4754  & 4.27e-02 & 2.02 & 3  & Crossed    & 10514 & 1.18e-02 & 2.03 & 3 \\

             & 8450  & 2.39e-02 & 2.01 & 3  &            & 18626 & 6.62e-03 & 2.02 & 3 \\

             & 13202 & 1.53e-02 & 2.01 & 3  &            & 29042 & 4.22e-03 & 2.02 & 3 \\ %[0.2ex] 
 
\hline &  &  &  & \\[-2.5ex] 
 
             & 1730  & 8.10e-02 &  *   & 3  &            & 1730  & 1.17e-01 &  *   & 3 \\ 

 Voronoi     & 3878  & 3.59e-02 & 2.01 & 3  & Perturbed  & 3878  & 5.19e-02 & 2.01 & 3 \\

             & 6866  & 2.04e-02 & 1.96 & 3  & Voronoi    & 6866  & 3.00e-02 & 1.91 & 3 \\

             & 10790 & 1.25e-02 & 2.20 & 3  &            & 10790 & 1.88e-02 & 2.09 & 3 \\ %[0.2ex] 

\hline &  &  &  & \\[-2.5ex] 

             & 1442  & 8.14e-02 &  *   & 3  &            & 2126  & 1.43e-01 &  *   & 4 \\ 

 Square      & 3170  & 3.62e-02 & 2.00 & 3  & Kangaroo   & 4775  & 6.13e-02 & 2.10 & 4 \\

             & 5570  & 2.03e-02 & 2.00 & 3  &            & 8482  & 3.38e-02 & 2.07 & 3 \\

             & 8642  & 1.30e-02 & 2.00 & 3  &            & 13247 & 2.14e-02 & 2.05 & 3 \\ %[0.2ex] 
 
\hline
\end{tabular}}
\end{table}

\subsection{Example 2: Robustness test for a variation in the physical parameters} \label{sec:robustness}
In this example, we consider \cblue{successive refinement for the meshes shown in Figure~\ref{fig:meshes}}, and expand upon the example introduced in Subsection~\ref{sec:convergence-numerical} by introducing variations in the physical parameters $\lambda$, $\mu$ and $\theta$. We recall that the parameters that remain constant throughout the analysis are held fixed with the values outlined in Subsection~\ref{sec:convergence-numerical}. As predicted in Theorem~\ref{convergence-rates}, Figure~\ref{fig:robust_graph_complete} shows that the variation of the physical parameters $\lambda$, $\mu$, \cblue{$M$}, and $\theta$ does not affect the expected convergence rates \cblue{for all the meshes implied in this test. In addition, Figure~\ref{fig:robust_graph_separated} illustrates how the lowest accuracy (in this case $k_2$) dominates the convergence rate (see Figures~\ref{fig:robust_graph_complete}, \ref{fig:robust_graph_separated})}.

\subsection{Example 3: Lithiation of an anode}

In this example, we investigate the 2D version of the model presented in \cite{Taralov2015}, and simulate the microscopic lithiation of a perforated circular anode particle.  The outer and inner radius of the particle are $\rho_o = \qty{5}{\um}$ and $\rho_i = \qty{1}{\um}$, respectively. We discretise the domain with a Voronoi mesh of 10000 elements and the VE spaces are selected as in Section~\ref{sec:vem} with $k_1=2$ and $k_2=1$. In this test, the particle is clamped and zero lithium fluxes are prescribed on \cblue{the inner circumference} $\Gamma_i$. The maximum lithium concentration \cblue{$\hat{\Omega}=\qty{2.29d-14}{\frac{\mole}{\mu \m^3}}$} is fixed on the \cblue{outer circumference} $\Gamma_o$. We check two cases: \cblue{no} traction and a traction of \cblue{$\qty{-2d-4}{\frac{\N}{\mu \m^2}}$} on $\Gamma_o$. The Young's modulus is given by $E = \qty{1d-2}{\frac{\N}{\mu \m^2}}$ and the Poisson ratio is $\nu=0.3$, and from these values, we compute $\lambda = \frac{E\nu}{(1+\nu)(1-2\nu)}$ and $\mu =\frac{E}{2(1+\nu)}$. The diffusive source is zero and there is no body load force. The stress-assisted diffusion coefficient is given by \cblue{$\bbM(\bsigma)= m_0(\bbI+m_0m_1 \sigma^2)$ with $m_0=\qty{1d2}{\frac{\mu \m^2}{\s}}$, $m_1=0$  for the decoupled case and $m_1= \qty{1d3}{\frac{\mu \m^2}{\N}}$ for the coupled case}. The active stress is measured as follows $\ell(\varphi)=K_0 \varphi$, where $K_0=\tilde{\Omega}\frac{2\mu+3\lambda}{3}$ and the partial molar volume is \cblue{$\tilde{\Omega}=\qty{3.497d12}{\frac{\mu \m^3}{\mole}}$}. Finally, the remaining parameters are set as $\theta = M = 1$.

In Figure~\ref{fig:radial-concentration} we can observe \cblue{that the non--linear coupling term} $\bbM(\bsigma)$ in each case (no traction and traction applied) has an impact on the behaviour of the concentration along the radial direction of the particle. \cblue{In addition, the pressure profiles in Figure~\ref{fig:pcp} show a change given by the traction boundary conditions imposed. The results presented coincide} qualitatively with the simulations reported in \cite{Taralov2015}.

\begin{figure}[h!]
    \centering
    \subfigure[Displacement, pressure for $k_1=2$ and $k_2=0$.]{\includegraphics[width=0.49\textwidth]{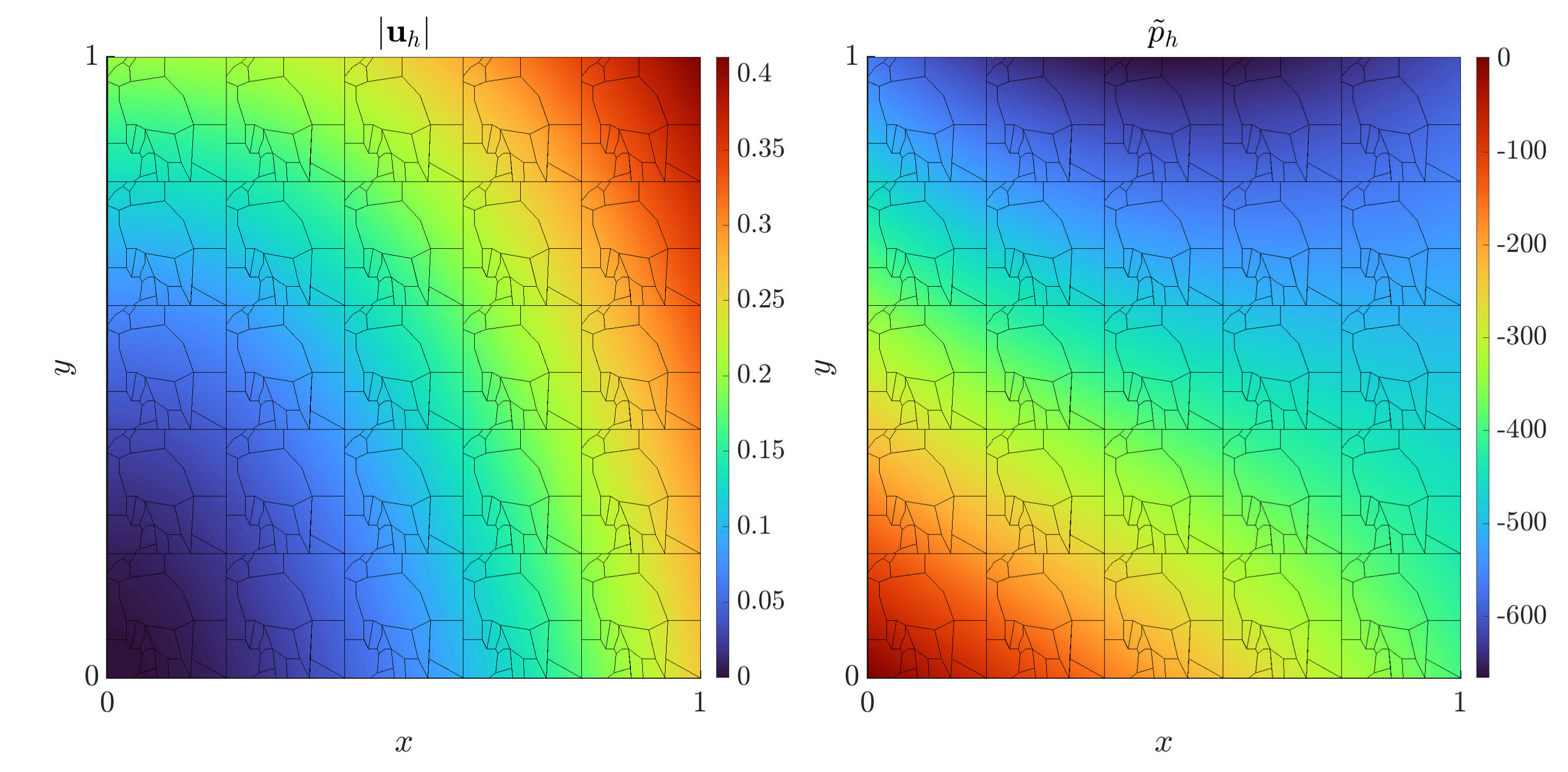}} 
    \subfigure[Displacement, pressure for $k_1=2$ and $k_2=1$.]{\includegraphics[width=0.49\textwidth]{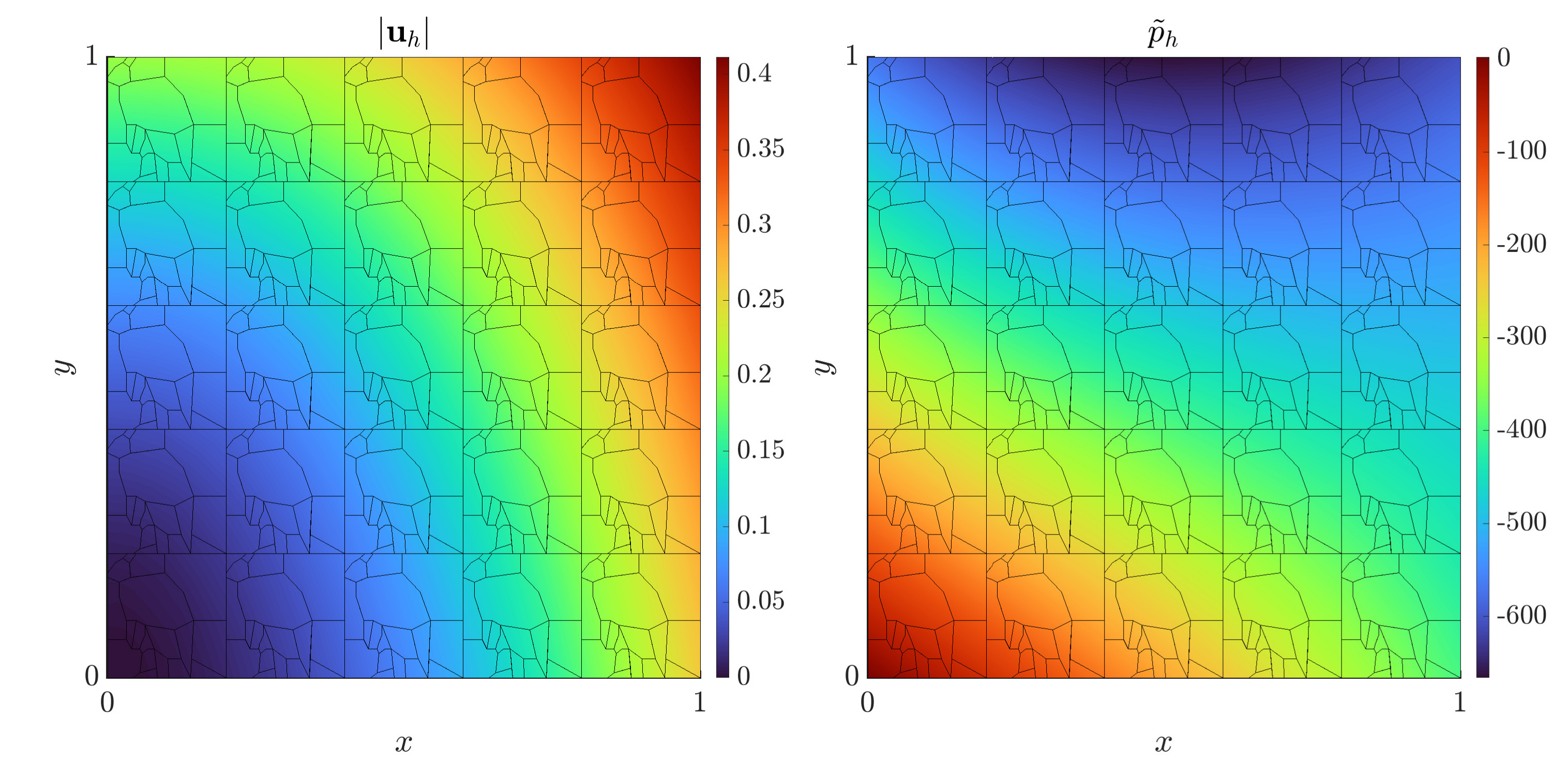}} 
    \subfigure[Total flux, concentration for $k_1=2$ and $k_2=0$.]{\includegraphics[width=0.49\textwidth]{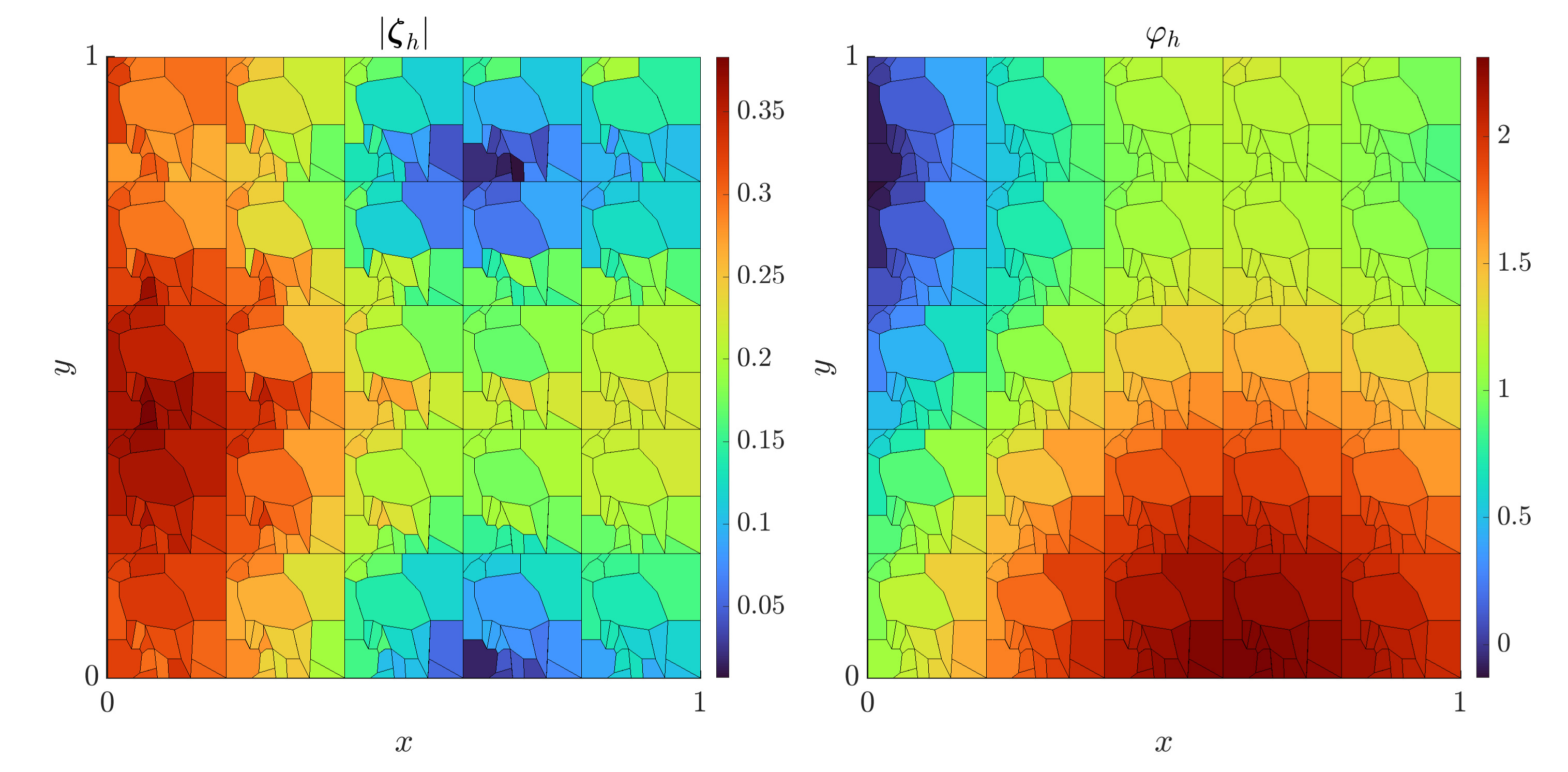}} 
    \subfigure[Total flux, concentration for $k_1=2$ and $k_2=1$.]{\includegraphics[width=0.49\textwidth]{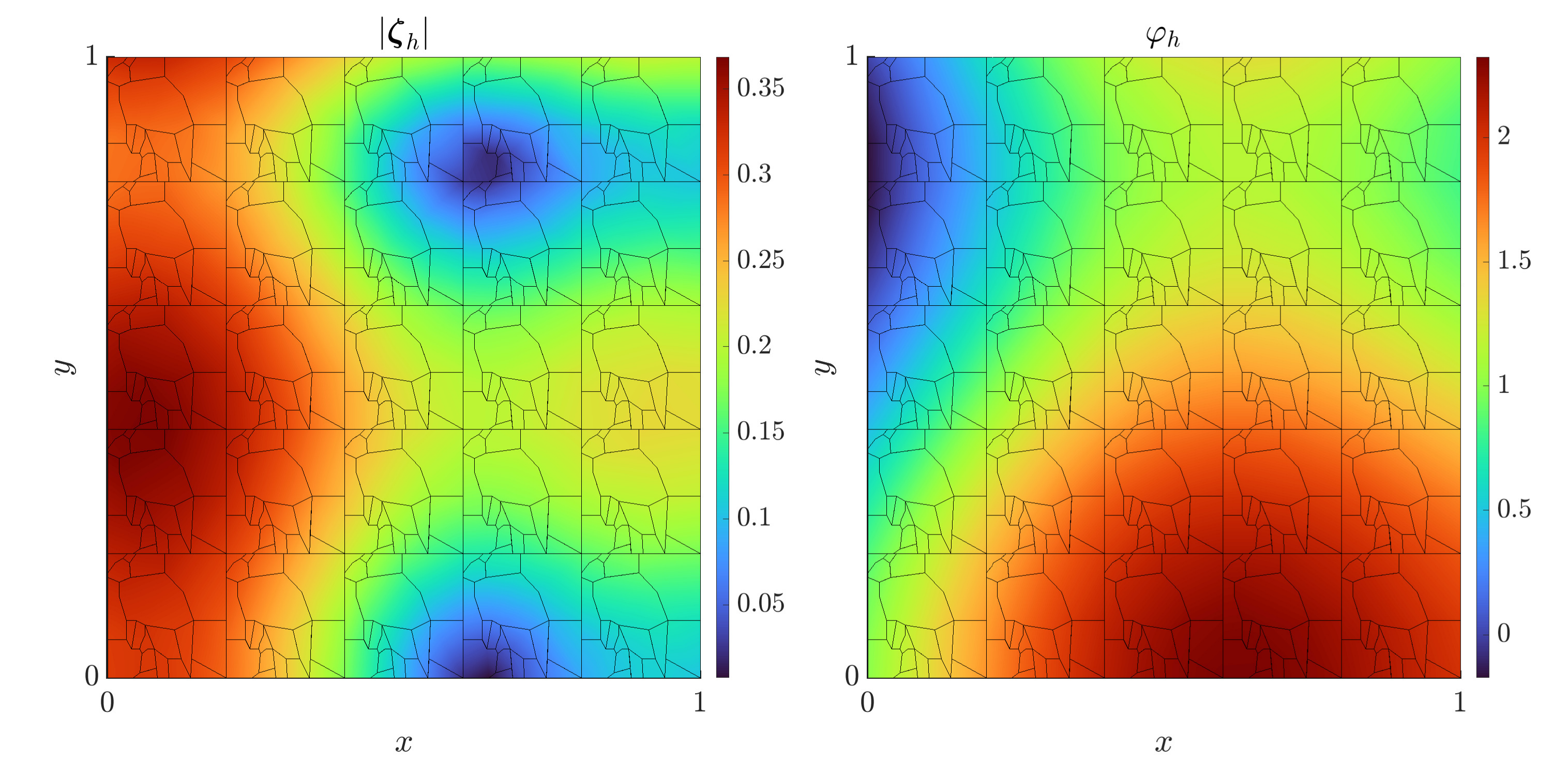}} 
    \caption{\cblue{Example 1. Snapshot of the approximated solution with the displacement magnitude $\bu_h$, flux magnitude $|\bzeta_h|$, Herrmann pressure $\tilde{p}_h$, and concentration $\varphi_h$, shown over an in-house kangaroo mesh for $k_1=2$ and $k_2=0,1$}.}
    \label{fig:sol_snap}
\end{figure}

 \begin{figure}[h!]
    \centering
    \subfigure[Non-convex: $k_2=0$.]{\includegraphics[width=0.31\textwidth,trim={0 0.5cm 2.2cm 0},clip]{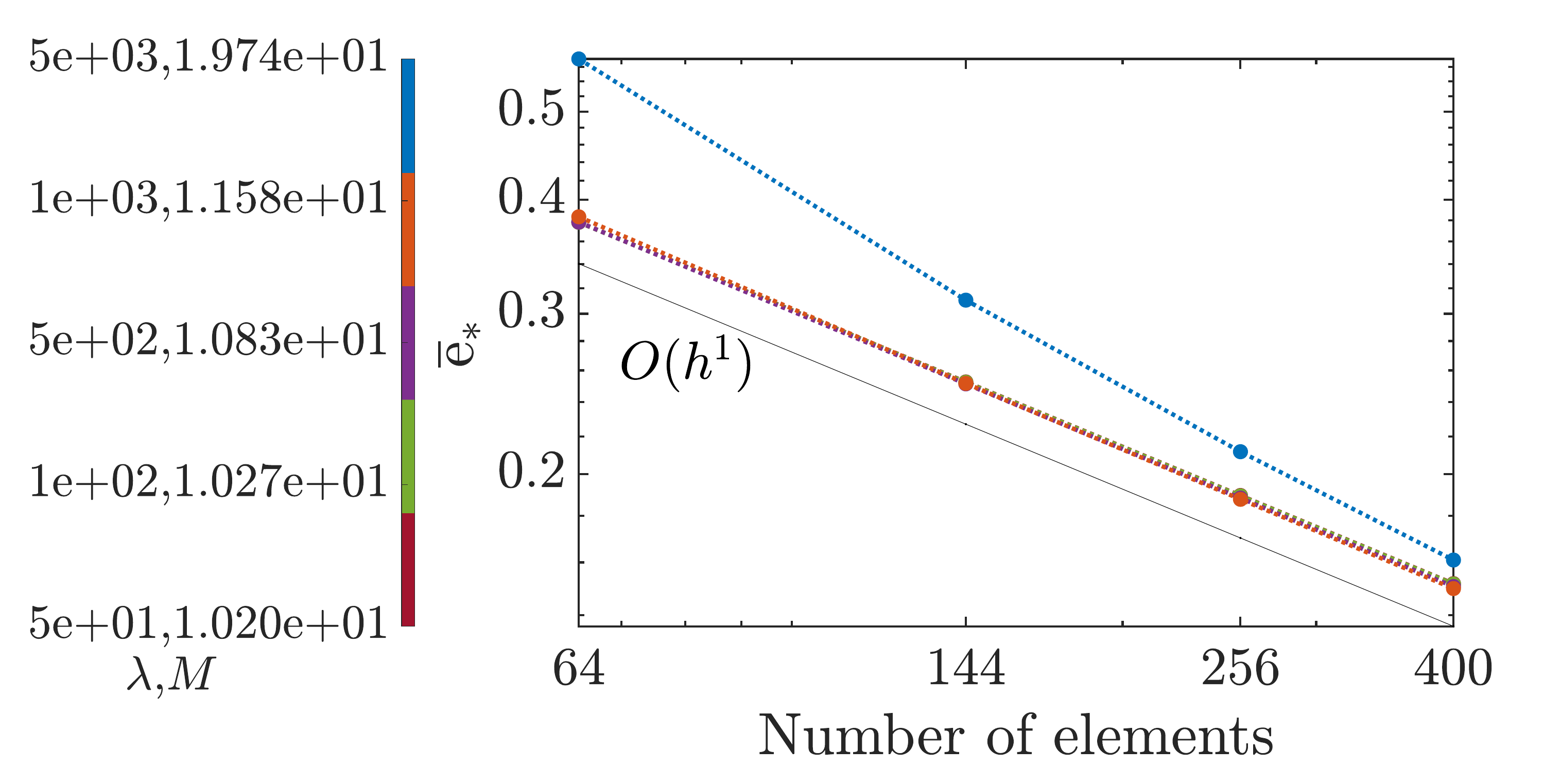}}
    \subfigure[Non-convex: $k_2=1$.]{\includegraphics[width=0.223\textwidth,trim={14.3cm 0.5cm 2.2cm 0},clip]{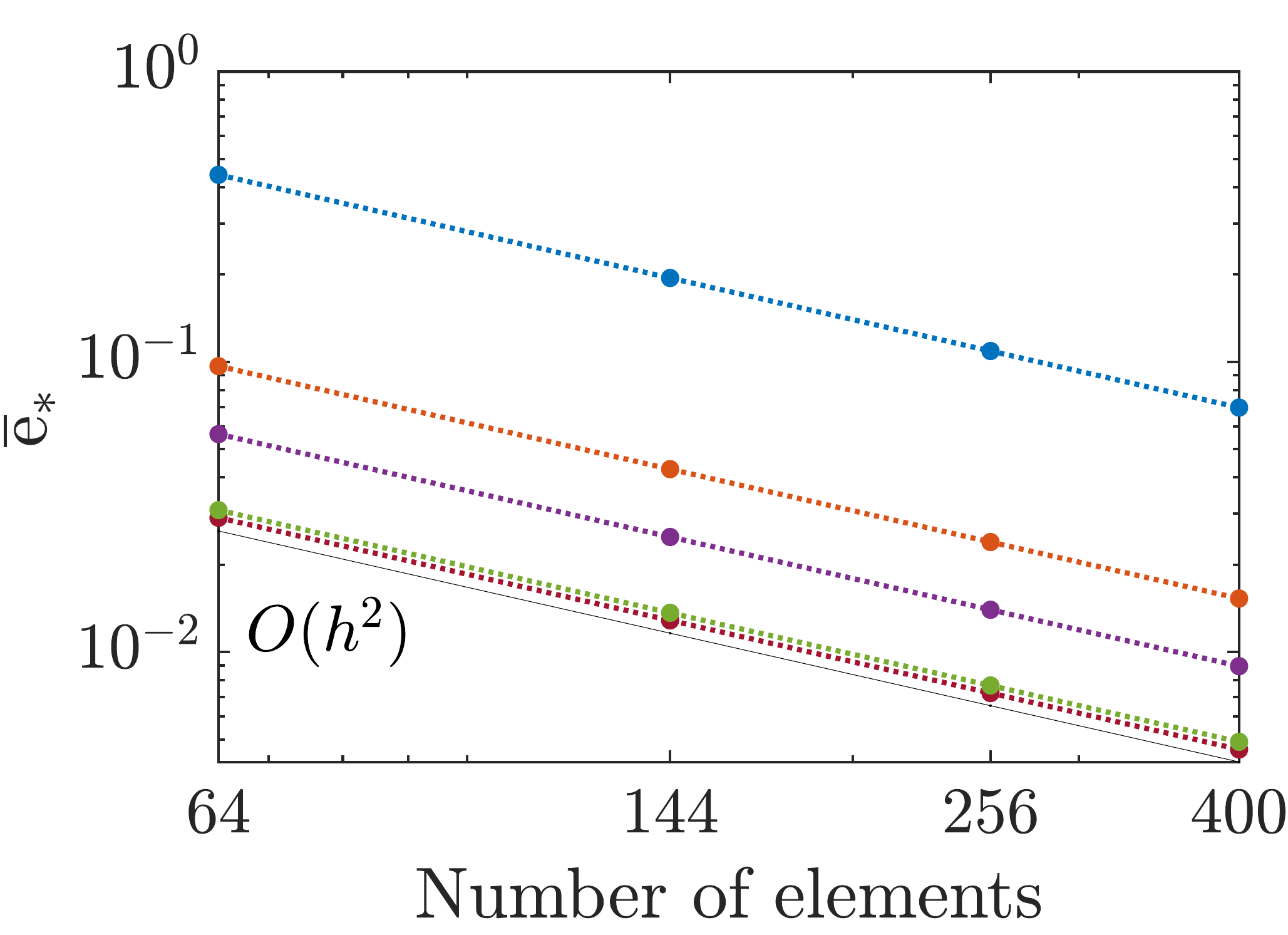}}
    \subfigure[Crossed: $k_2=0$.]{\includegraphics[width=0.223\textwidth,trim={14.3cm 0.5cm 2.2cm 0},clip]{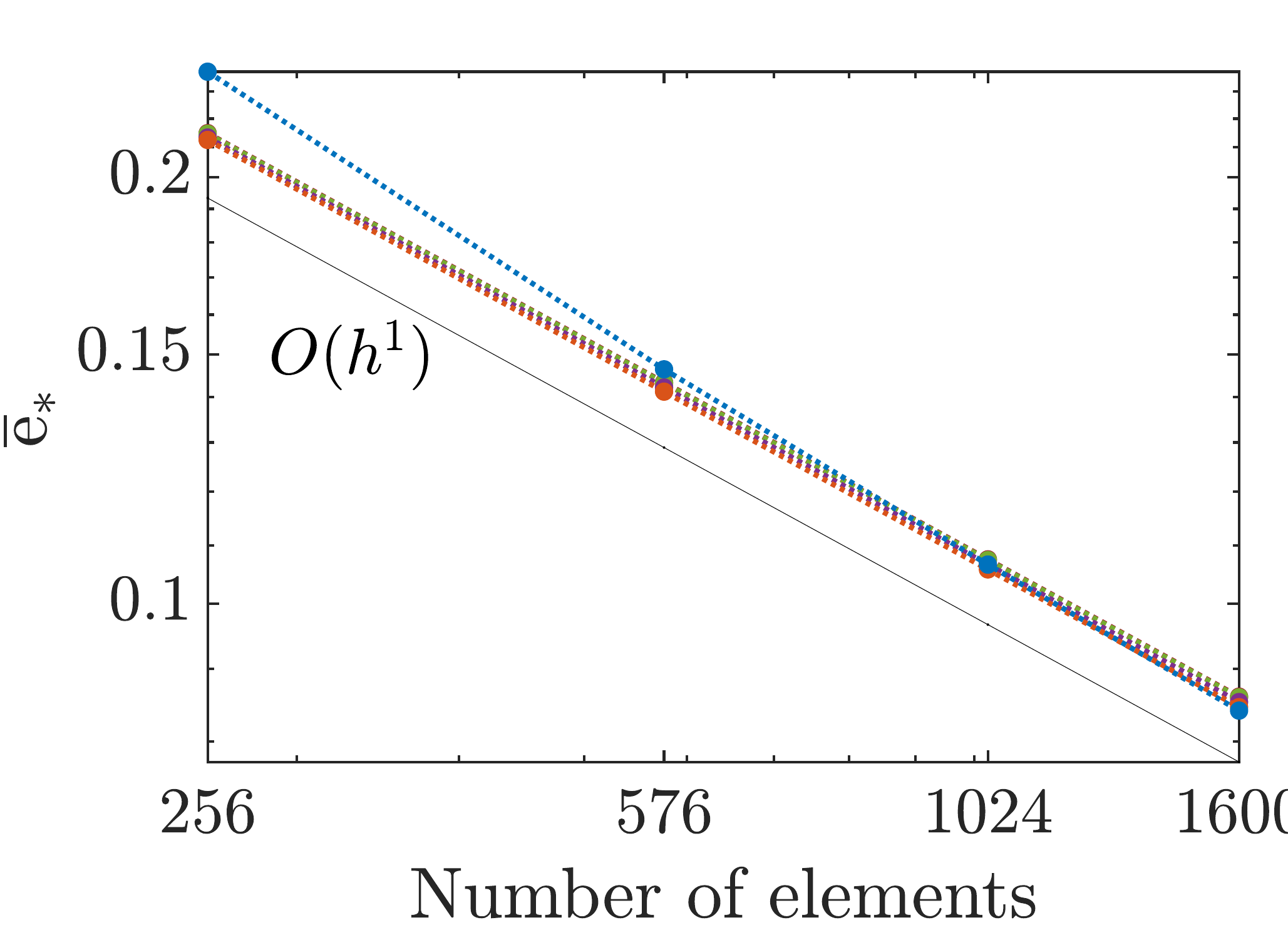}} 
    \subfigure[Crossed: $k_2=1$.]{\includegraphics[width=0.223\textwidth,trim={14.3cm 0.5cm 2.2cm 0},clip]{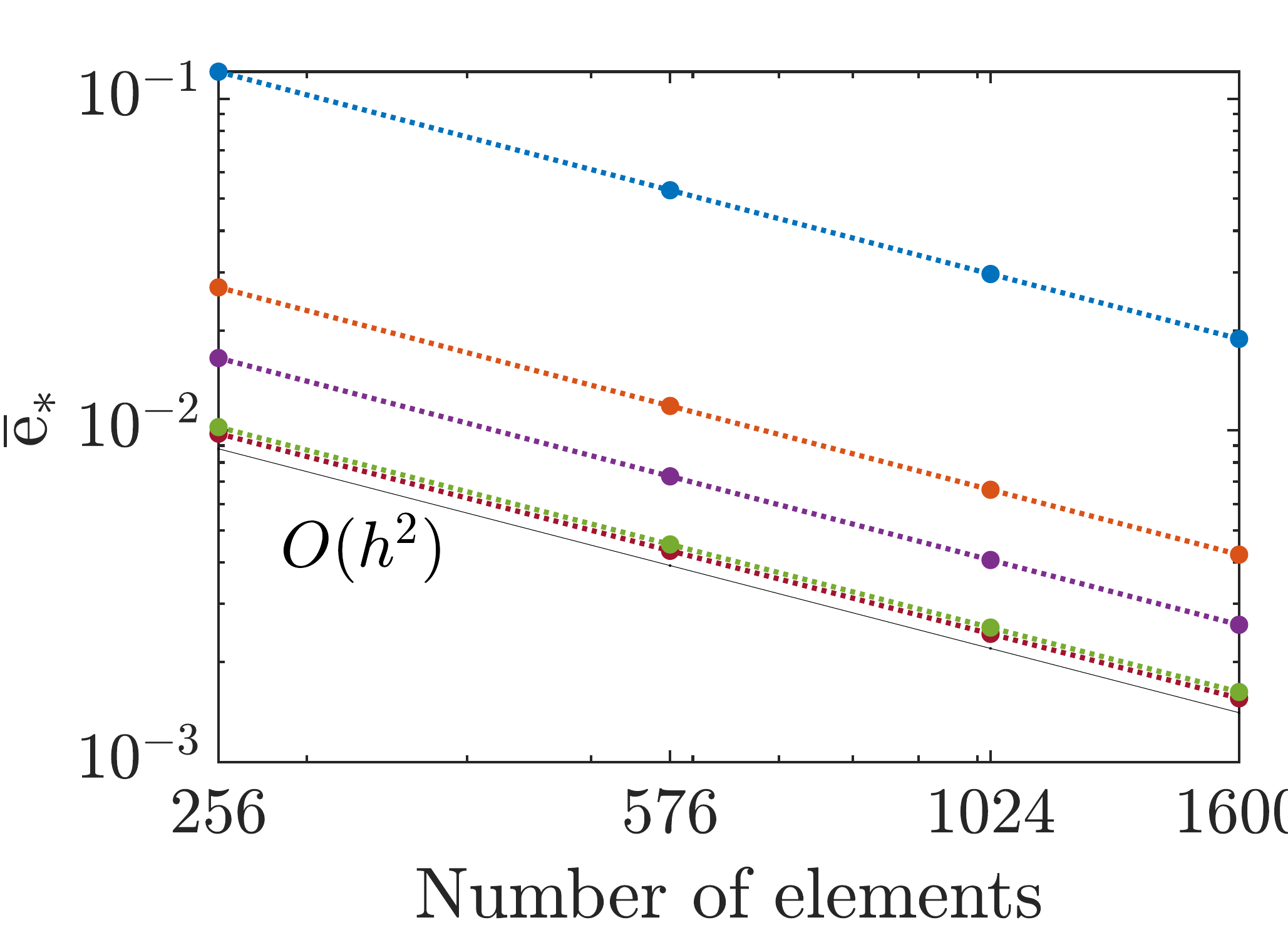}}\\ 
    \subfigure[Voronoi: $k_2=0$.]{\includegraphics[width=0.31\textwidth,trim={0 0.5cm 2.2cm 0},clip]{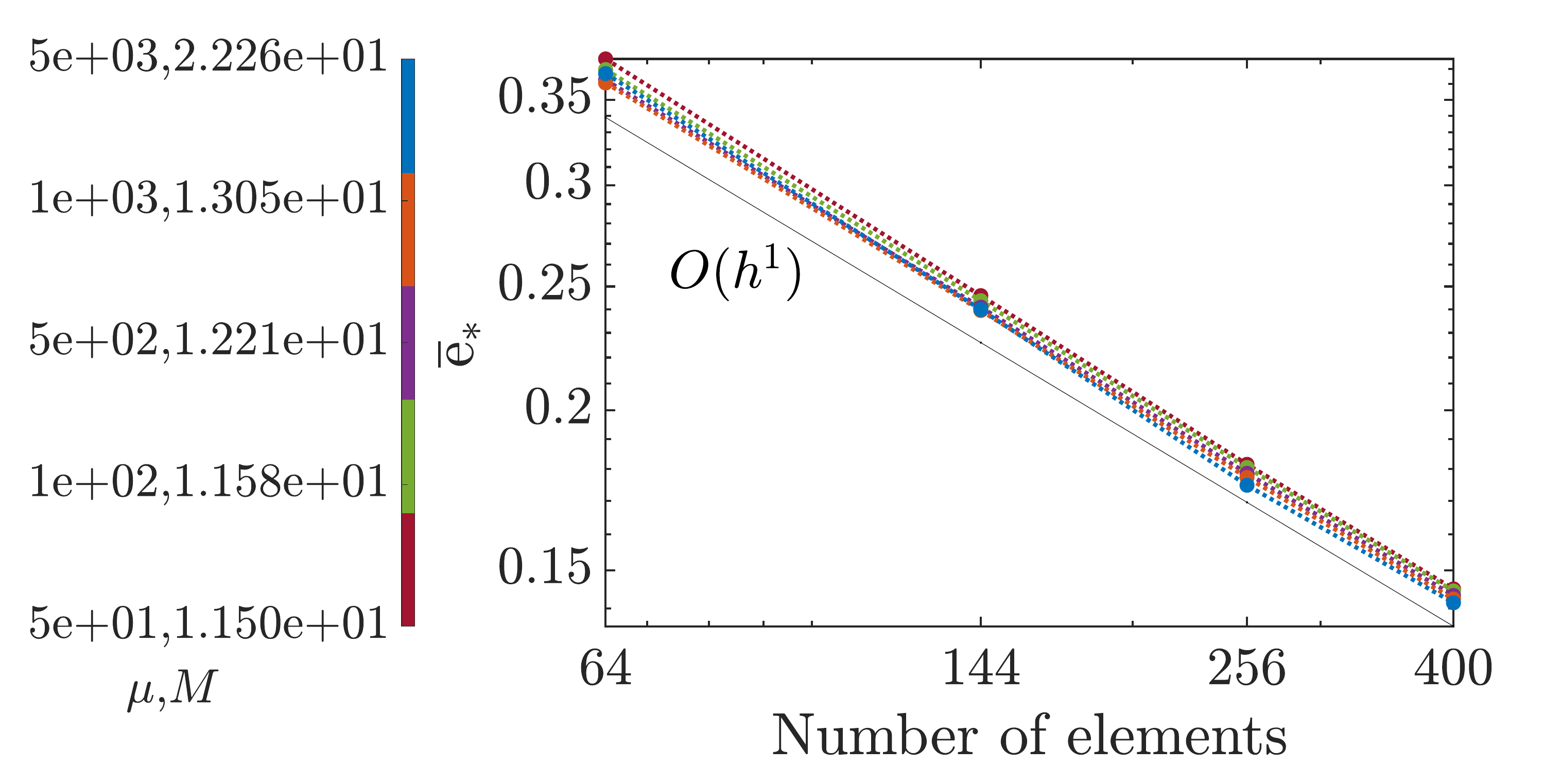}}
    \subfigure[Voronoi: $k_2=1$.]{\includegraphics[width=0.223\textwidth,trim={14.3cm 0.5cm 2.2cm 0},clip]{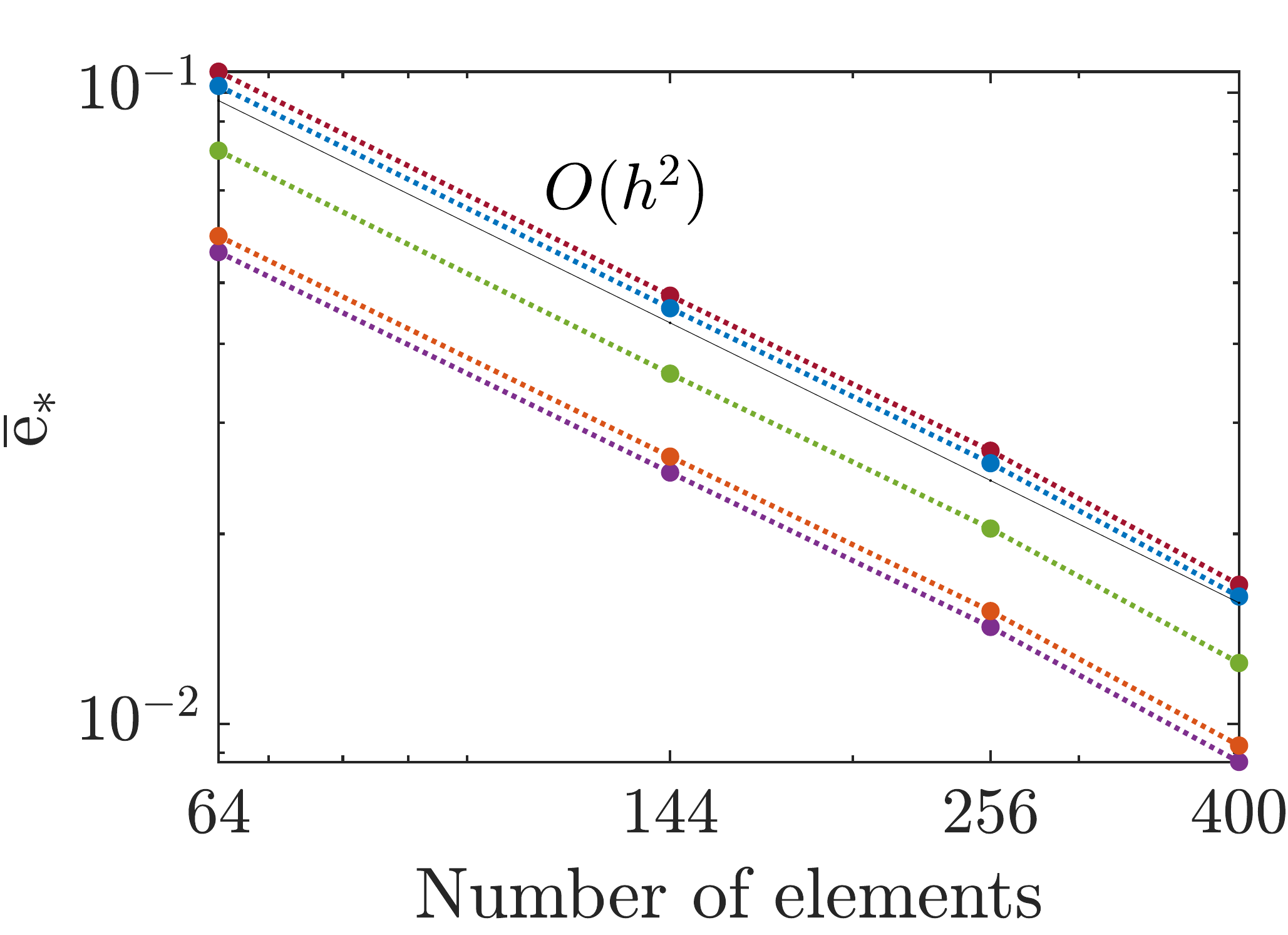}}
    \subfigure[P. Voronoi: $k_2=0$.]{\includegraphics[width=0.223\textwidth,trim={14.3cm 0.5cm 2.2cm 0},clip]{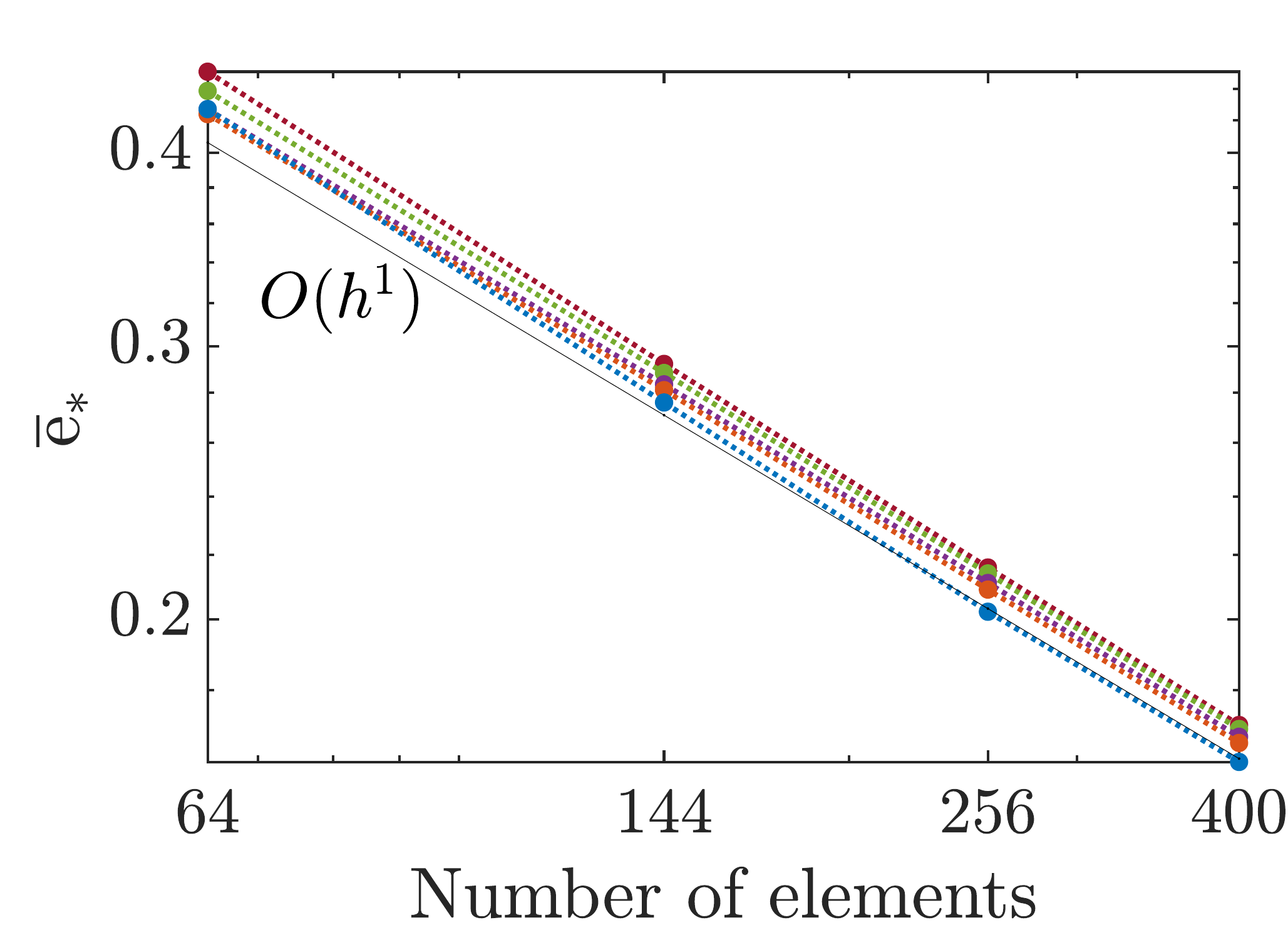}}
    \subfigure[P. Voronoi: $k_2=1$.]{\includegraphics[width=0.223\textwidth,trim={14.3cm 0.5cm 2.2cm 0},clip]{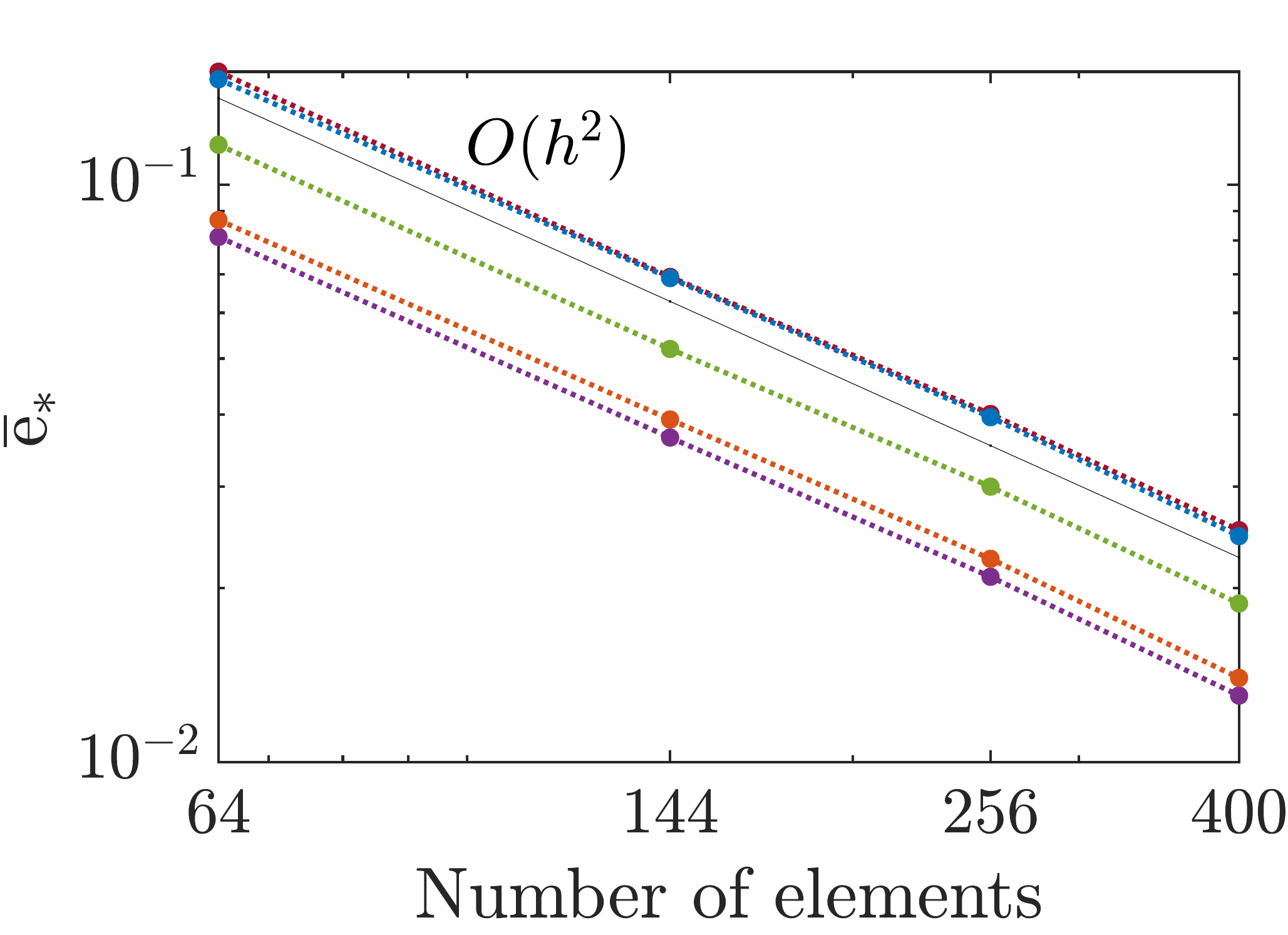}}\\
    \subfigure[Square: $k_2=0$.]{\includegraphics[width=0.31\textwidth,trim={0 0.5cm 2.2cm 0},clip]{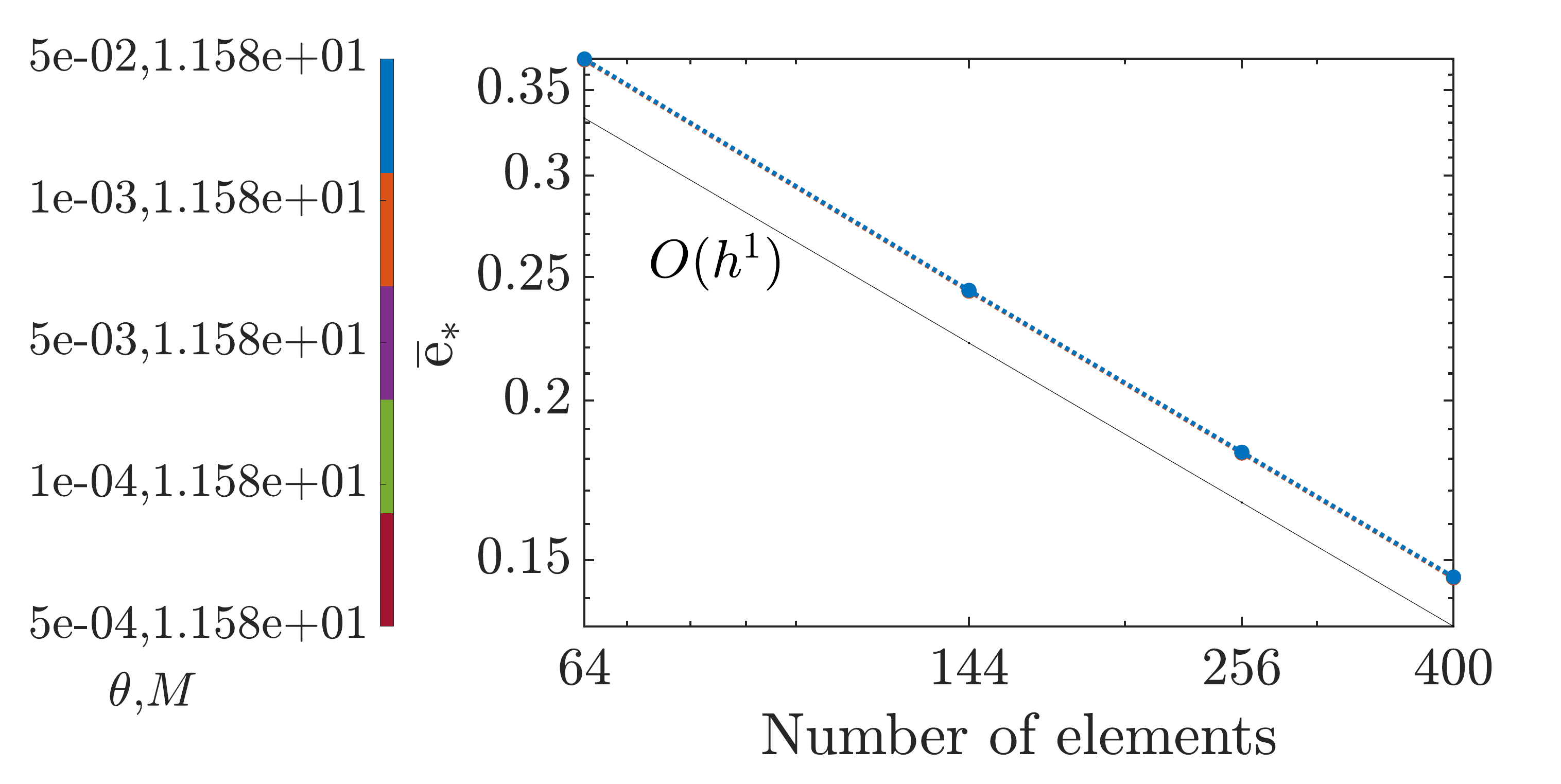}}
    \subfigure[Square: $k_2=1$.]{\includegraphics[width=0.223\textwidth,trim={13.7cm 0.5cm 2.2cm 0},clip]{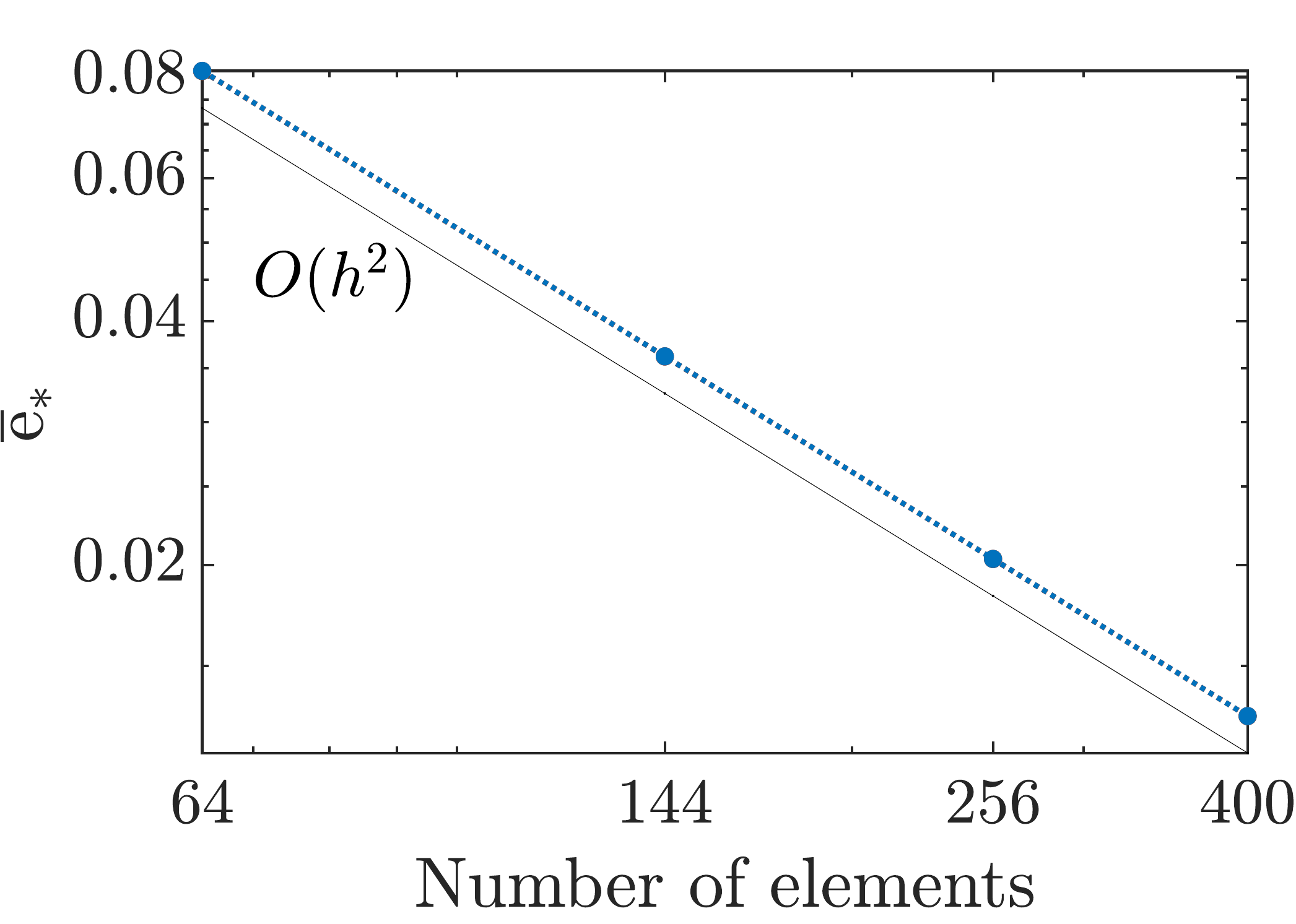}}
    \subfigure[Kangaroo: $k_2=0$.]{\includegraphics[width=0.223\textwidth,trim={13.7cm 0.5cm 2.2cm 0},clip]{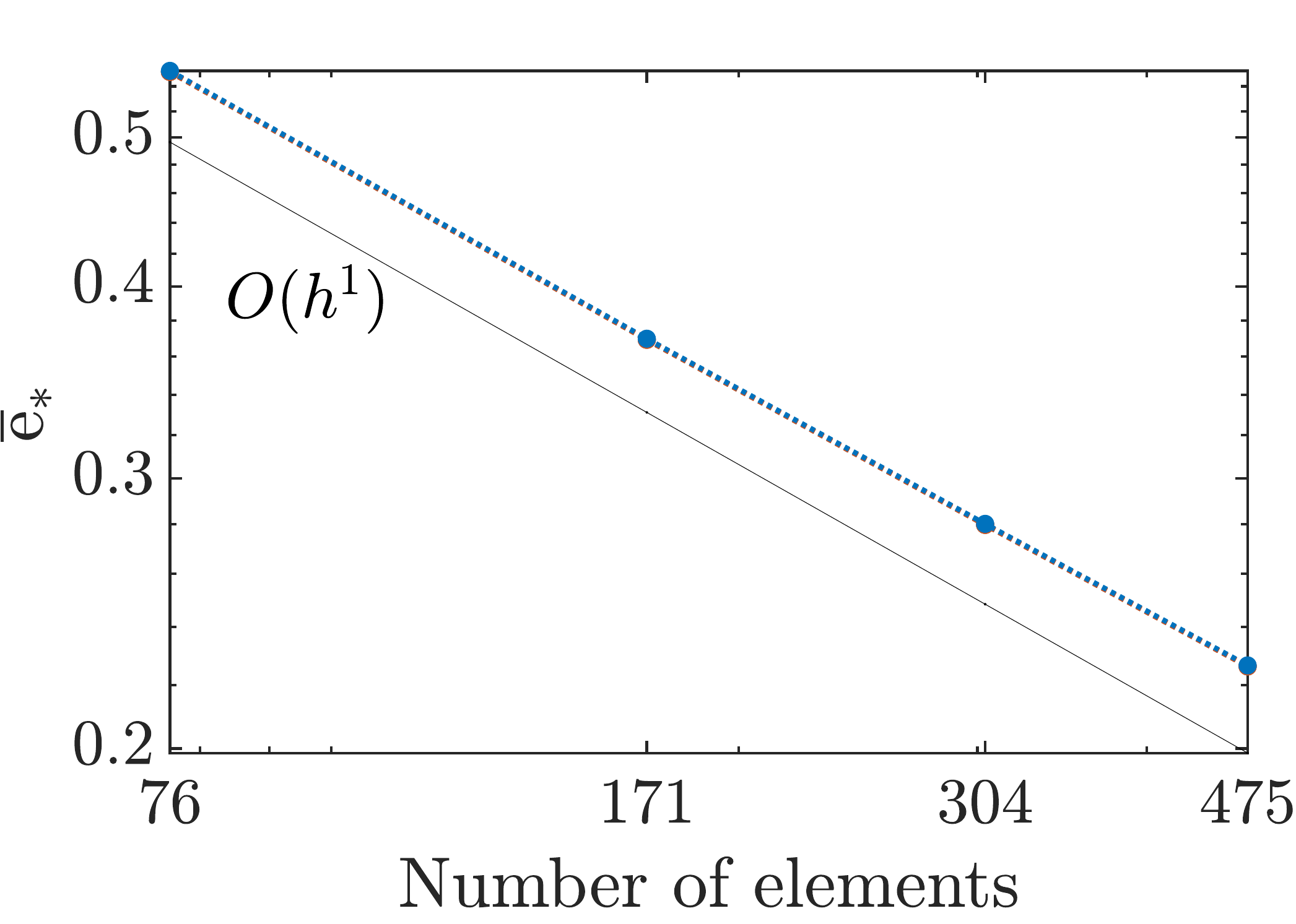}}
    \subfigure[Kangaroo: $k_2=1$.]{\includegraphics[width=0.223\textwidth,trim={13.7cm 0.5cm 2.2cm 0},clip]{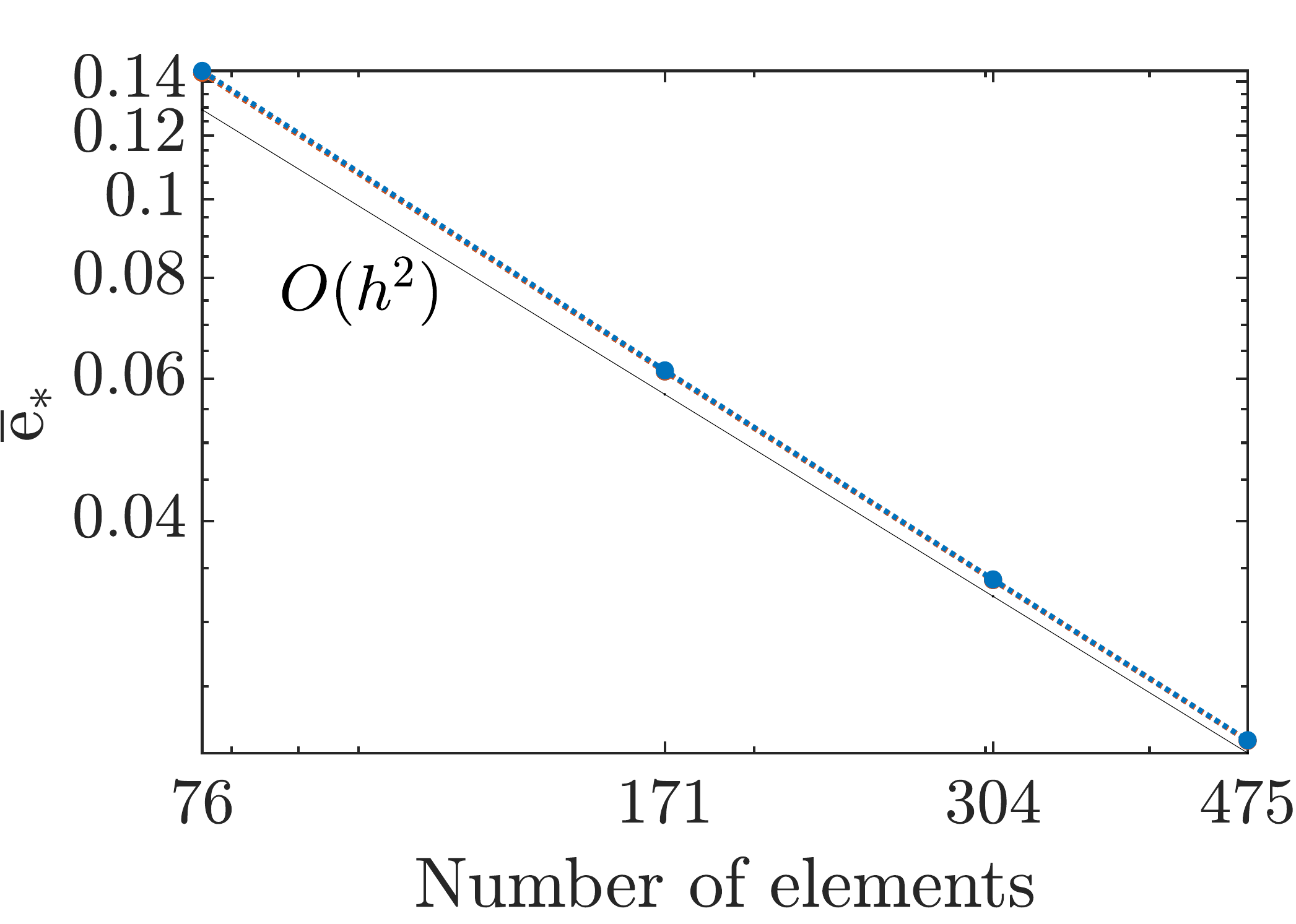}}
    \vspace{-0.25cm}
    \caption{\cblue{Example 2. Error history of the total error $\overline{\text{e}}_*$ for $k_1=2$ and $k_2=0,1$ is illustrated by examining the impact of varying the physical parameters $\lambda$, $\mu$, and $\theta$ (first, second, and third row) for a variety of meshes}.}
    \label{fig:robust_graph_complete}
\end{figure}

\begin{figure}[h!]
    \centering
    \begin{tikzpicture}[scale=1]
    \node{\subfigure[Non-convex: $k_2=0$.]{\includegraphics[width=0.485\textwidth]{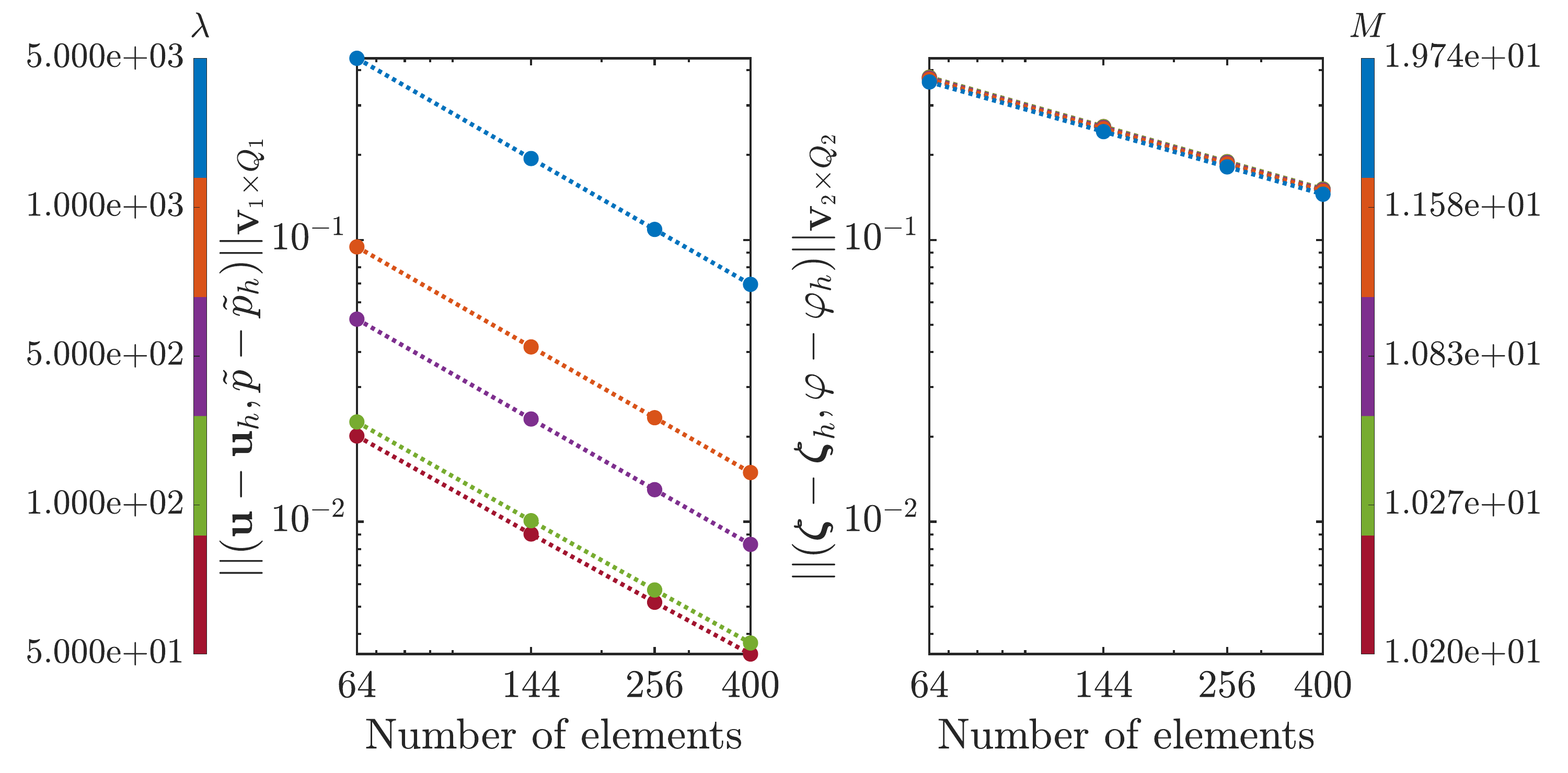}}
          \subfigure[Non-convex: $k_2=1$.]{\includegraphics[width=0.485\textwidth]{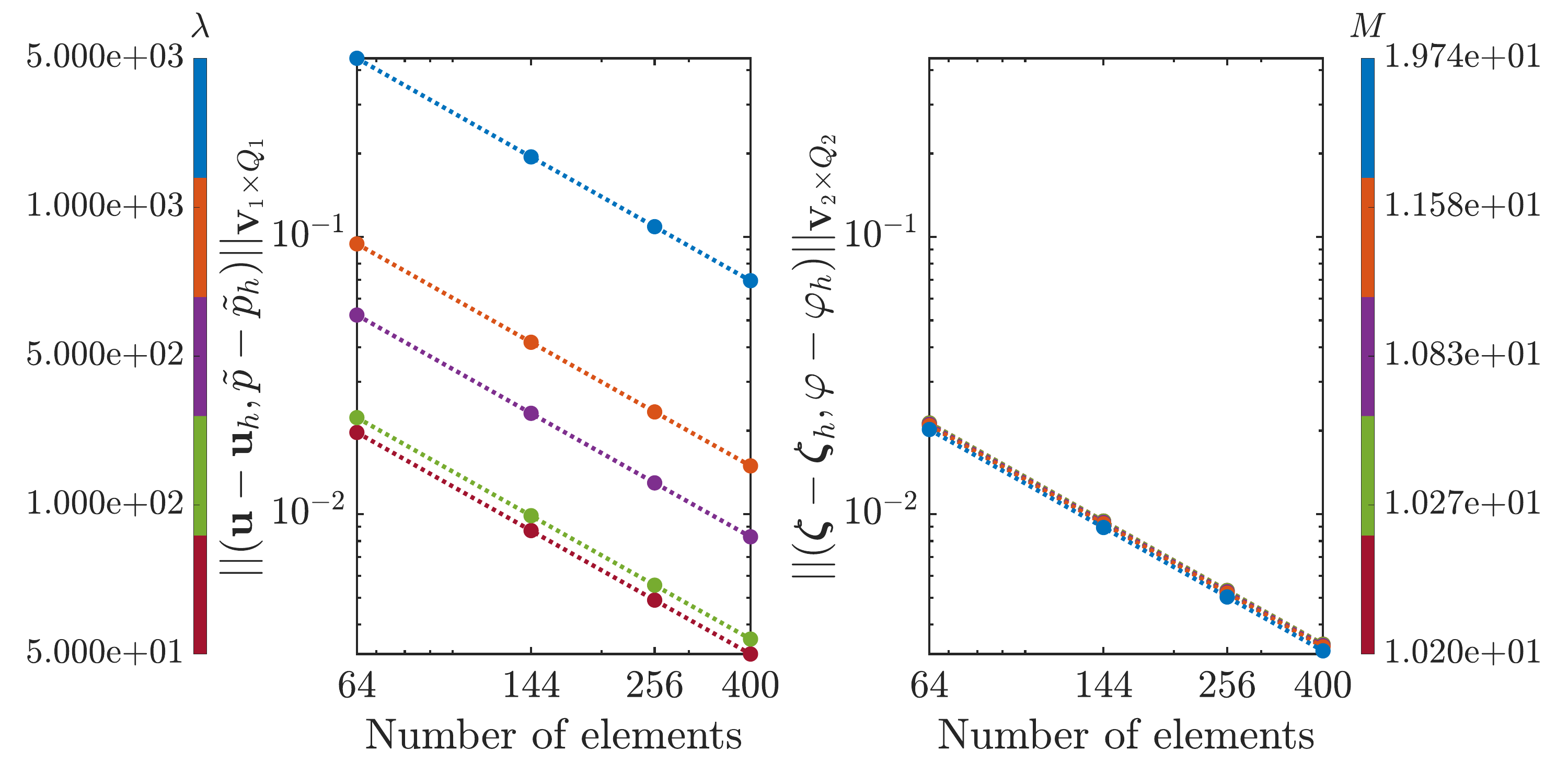}}};
    \begin{scope}[draw=black,thick]
        % Order 2
        \draw [line width=0.15mm] (-6.05,-1) node[anchor=north]{}
        -- (-6.05,-0.5) node[anchor=north]{}
        -- (-5.15,-1) node[anchor=south]{} node[above,xshift=0.1cm,scale=0.55] {$O(h^2)$}
        -- cycle;
        % Order 1
        \draw [line width=0.15mm] [line width=0.15mm] (-3.2,-1) node[anchor=north]{}
        -- (-3.2,-0.75) node[anchor=north]{}
        -- (-2.3,-1) node[anchor=south]{} node[above,xshift=0.1cm,scale=0.55] {$O(h^1)$}
        -- cycle;
        % Order 2
        \draw [line width=0.15mm] (1.9,-1) node[anchor=north]{}
        -- (1.9,-0.5) node[anchor=north]{} 
        -- (2.8,-1) node[anchor=south]{} node[above,xshift=0.1cm,scale=0.55] {$O(h^2)$}
        -- cycle;
        % Order 1
        \draw [line width=0.15mm] (4.75,-1) node[anchor=north]{}
        -- (4.75,-0.5) node[anchor=north]{}
        -- (5.65,-1) node[anchor=south]{} node[above,xshift=0.1cm,scale=0.55] {$O(h^2)$}
        -- cycle;
  \end{scope}
\end{tikzpicture}
  \begin{tikzpicture}[scale=1]
  \node{\subfigure[Non-convex: $k_2=0$.]{\includegraphics[width=0.485\textwidth]{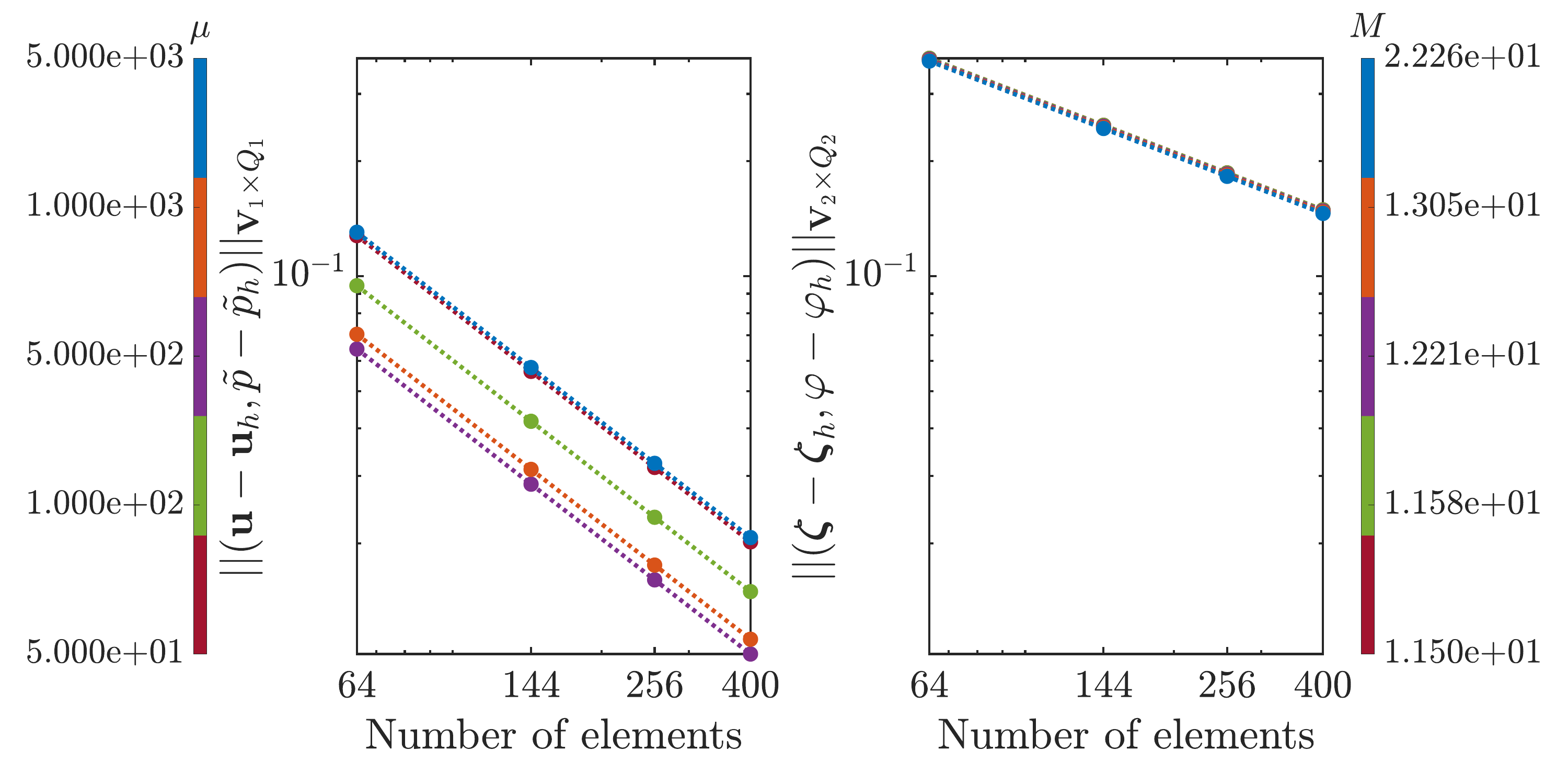}} 
        \subfigure[Non-convex: $k_2=1$.]{\includegraphics[width=0.485\textwidth]{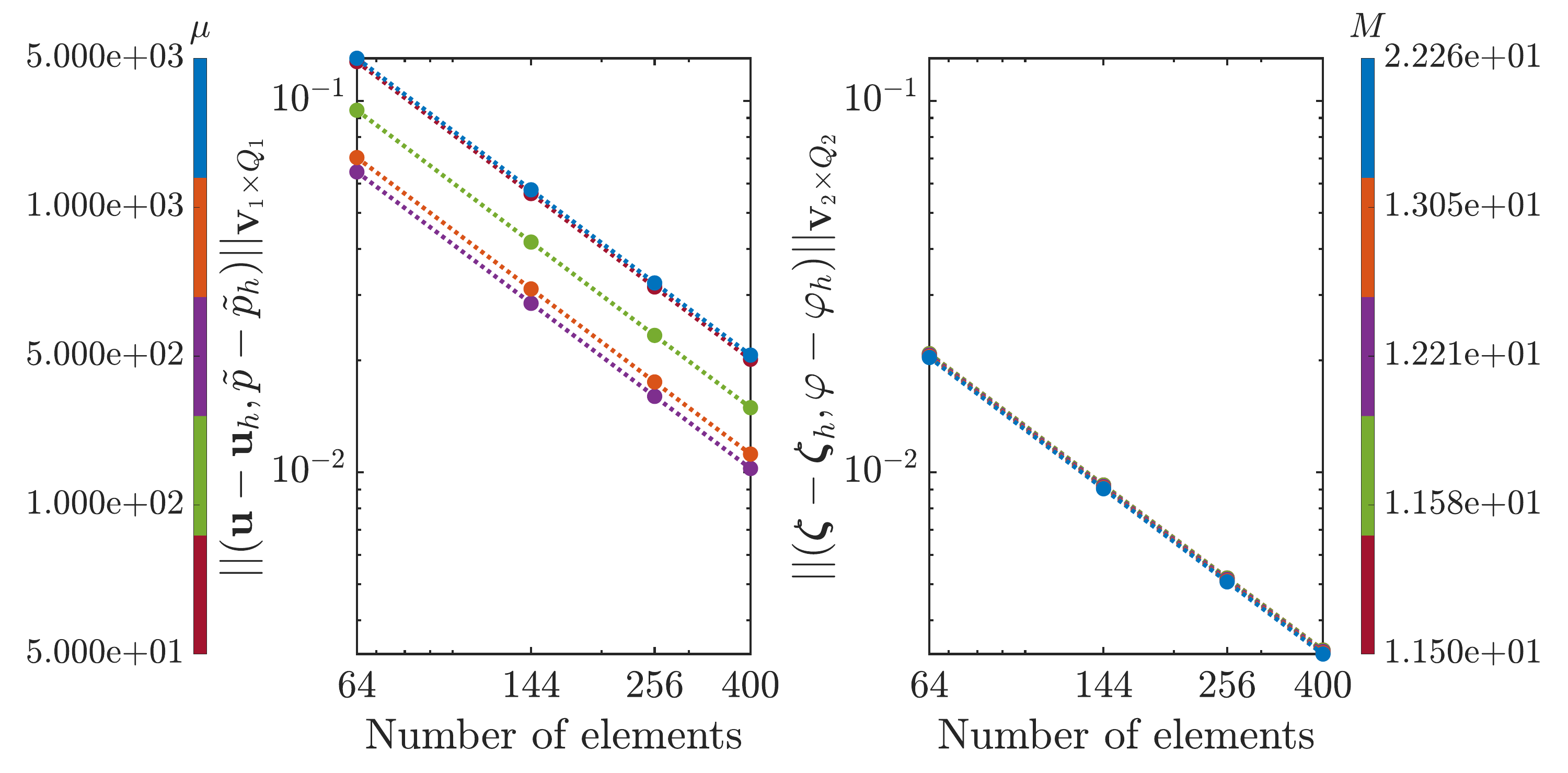}}};
        \begin{scope}[draw=black,thick]
        % Order 2
        \draw [line width=0.15mm] (-6.05,-1) node[anchor=north]{}
        -- (-6.05,-0.5) node[anchor=north]{}
        -- (-5.15,-1) node[anchor=south]{} node[above,xshift=0.1cm,scale=0.55] {$O(h^2)$}
        -- cycle;
        % Order 1
        \draw [line width=0.15mm] [line width=0.15mm] (-3.2,-1) node[anchor=north]{}
        -- (-3.2,-0.75) node[anchor=north]{}
        -- (-2.3,-1) node[anchor=south]{} node[above,xshift=0.1cm,scale=0.55] {$O(h^1)$}
        -- cycle;
        % Order 2
        \draw [line width=0.15mm] (1.9,-1) node[anchor=north]{}
        -- (1.9,-0.5) node[anchor=north]{} 
        -- (2.8,-1) node[anchor=south]{} node[above,xshift=0.1cm,scale=0.55] {$O(h^2)$}
        -- cycle;
        % Order 1
        \draw [line width=0.15mm] (4.75,-1) node[anchor=north]{}
        -- (4.75,-0.5) node[anchor=north]{}
        -- (5.65,-1) node[anchor=south]{} node[above,xshift=0.1cm,scale=0.55] {$O(h^2)$}
        -- cycle;
  \end{scope}
\end{tikzpicture}
\begin{tikzpicture}[scale=1]
  \node{\subfigure[Non-convex: $k_2=0$.]{\includegraphics[width=0.485\textwidth]{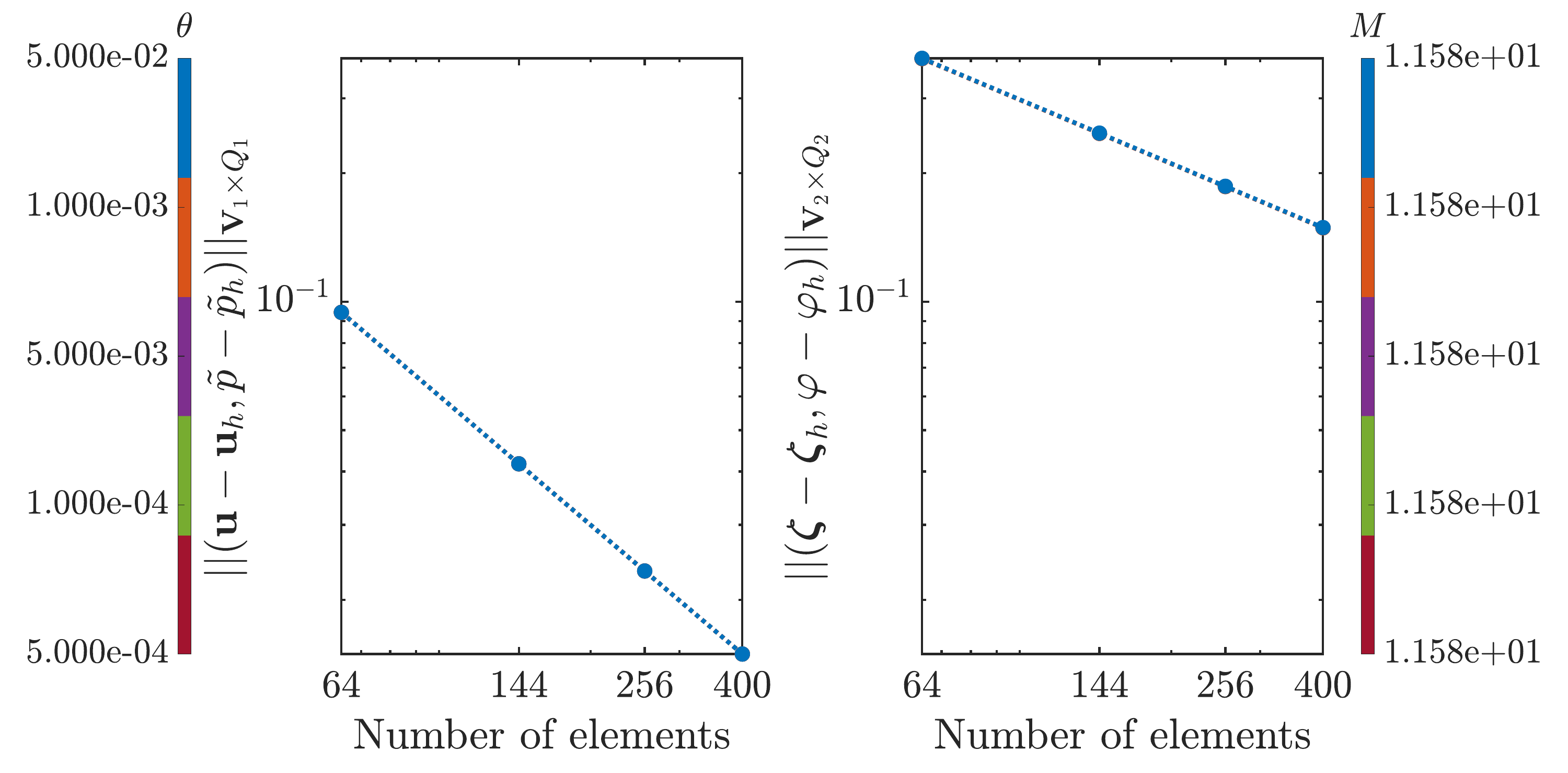}} 
        \subfigure[Non-convex: $k_2=1$.]{\includegraphics[width=0.485\textwidth]{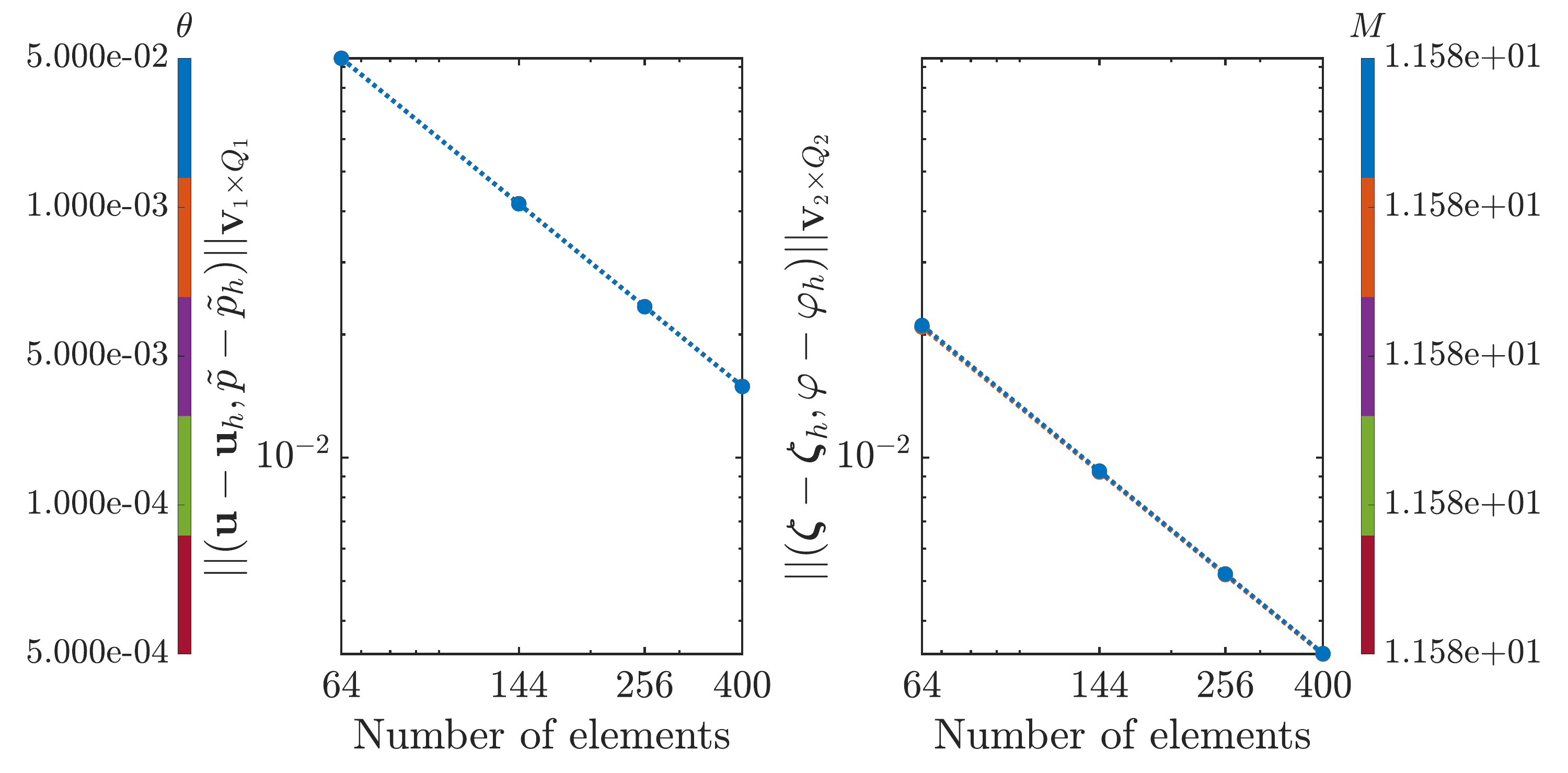}}};
        \begin{scope}[draw=black,thick]
        % Order 2
        \draw [line width=0.15mm] (-6.125,-1) node[anchor=north]{}
        -- (-6.125,-0.4) node[anchor=north]{}
        -- (-5.325,-1) node[anchor=south]{} node[above,xshift=0.15cm,scale=0.55] {$O(h^2)$}
        -- cycle;
        % Order 1
        \draw [line width=0.15mm] [line width=0.15mm] (-3.225,-1) node[anchor=north]{}
        -- (-3.225,-0.7) node[anchor=north]{}
        -- (-2.425,-1) node[anchor=south]{} node[above,xshift=0.1cm,scale=0.55] {$O(h^1)$}
        -- cycle;
        % Order 2
        \draw [line width=0.15mm] (1.825,-1) node[anchor=north]{}
        -- (1.825,-0.4) node[anchor=north]{} 
        -- (2.625,-1) node[anchor=south]{} node[above,xshift=0.15cm,scale=0.55] {$O(h^2)$}
        -- cycle;
        % Order 2
        \draw [line width=0.15mm] (4.75,-1) node[anchor=north]{}
        -- (4.75,-0.4) node[anchor=north]{}
        -- (5.55,-1) node[anchor=south]{} node[above,xshift=0.15cm,scale=0.55] {$O(h^2)$}
        -- cycle;
  \end{scope}
\end{tikzpicture}
    \caption{\cblue{Example 2. Error history for the partial error with $k_1=2$ and $k_2=0,1$ for the non-convex mesh is illustrated by examining the impact of varying the physical parameters $\lambda$, $\mu$, and $\theta$ (first, second, and third row)}.}
    \label{fig:robust_graph_separated}
\end{figure}

\begin{figure}[ht]\label{fig:pcp}

\centering

%\tikzset{every picture/.style={line width=0.75pt}} %set default line width to 0.75pt        
\subfigure[Sketch of a silicon anode.]{\includegraphics[width=0.33\textwidth]{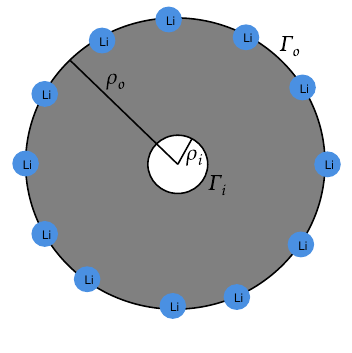}}
\subfigure[\cblue{Zero traction on $\Gamma_o$ (centre) and $\qty{-2d-4}{\frac{\N}{\mu \m^2}}$ traction on $\Gamma_o$ (right).}]{\includegraphics[width=0.66\textwidth]{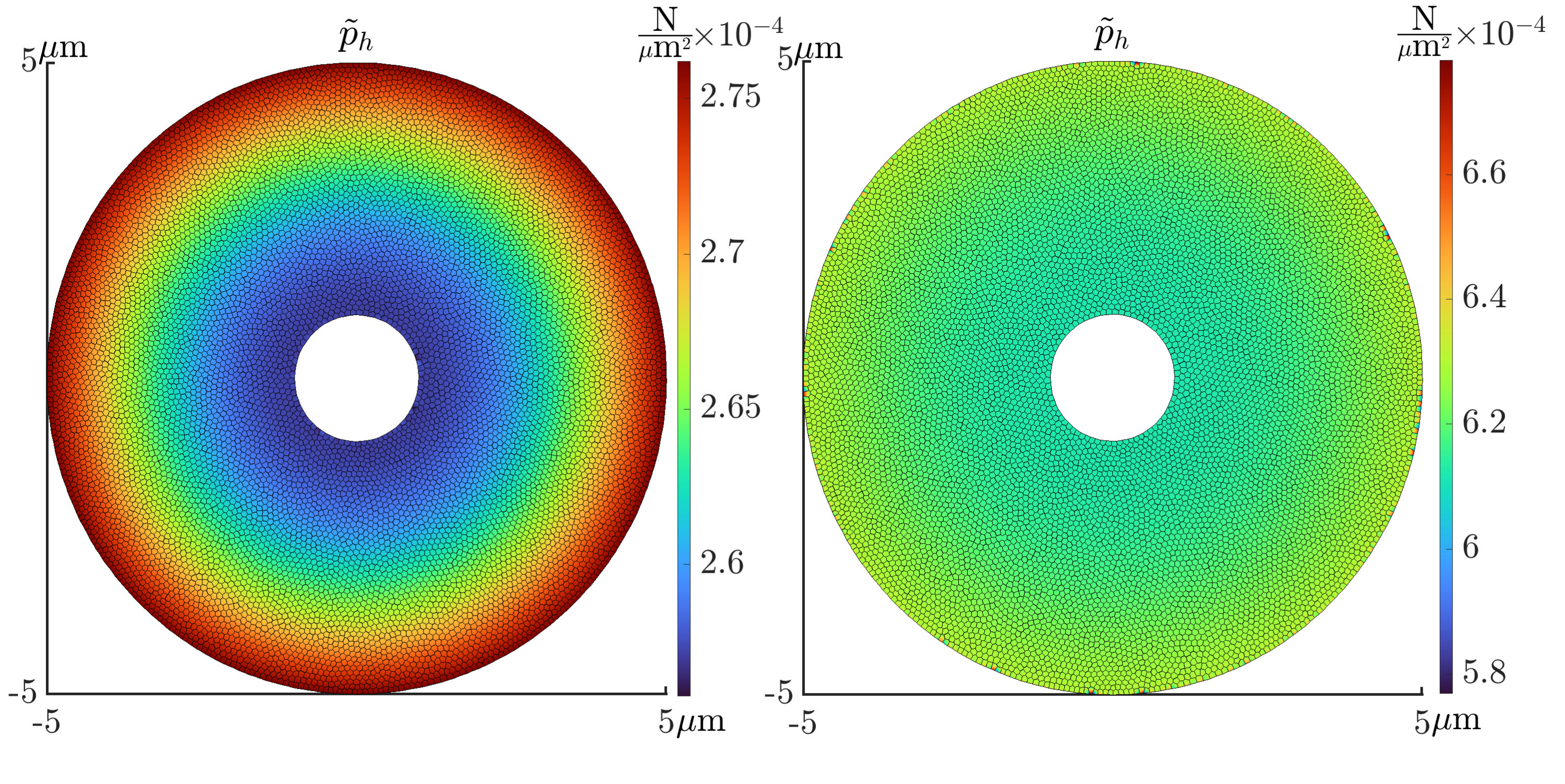}}  
\caption{\cblue{Example 3. (a) Perforated circle anode particle with $\rho_i = 1\mu\text{m}$ and $\rho_o=5\mu\text{m}$, the maximal Lithium concentration in $\Gamma_o$ is shown and the particle is clamped on $\Gamma_i$, and (b) solution of $\tilde{p}_h$ for the coupled case}.}
\end{figure}

\begin{figure}[ht]
    \centering
    \subfigure[Complete radial plot.]{\includegraphics[width=0.49\textwidth]{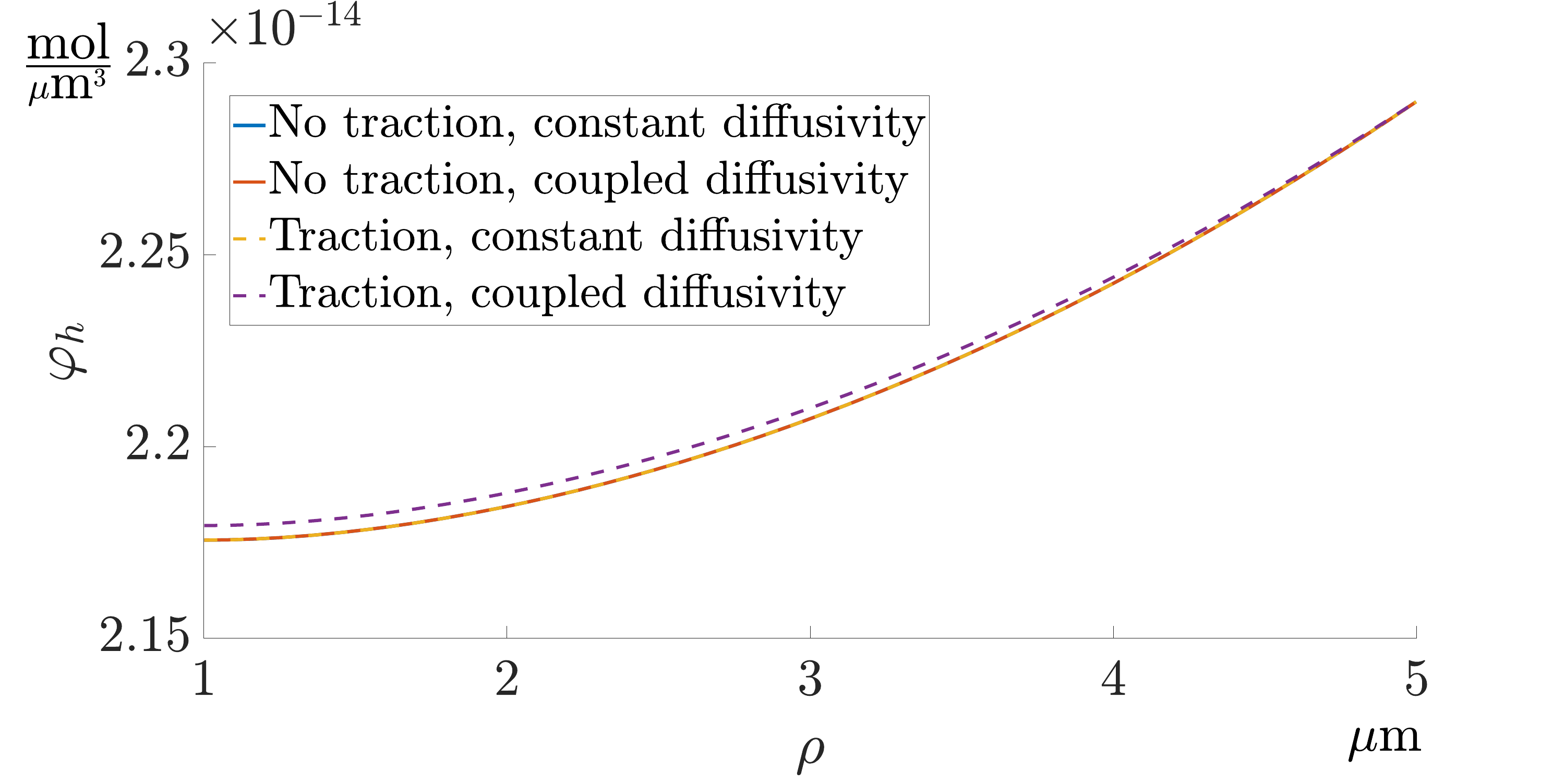}} 
    \subfigure[Zoom around $\rho = 3.05 \, \mu \text{m}$.]{\includegraphics[width=0.49\textwidth]{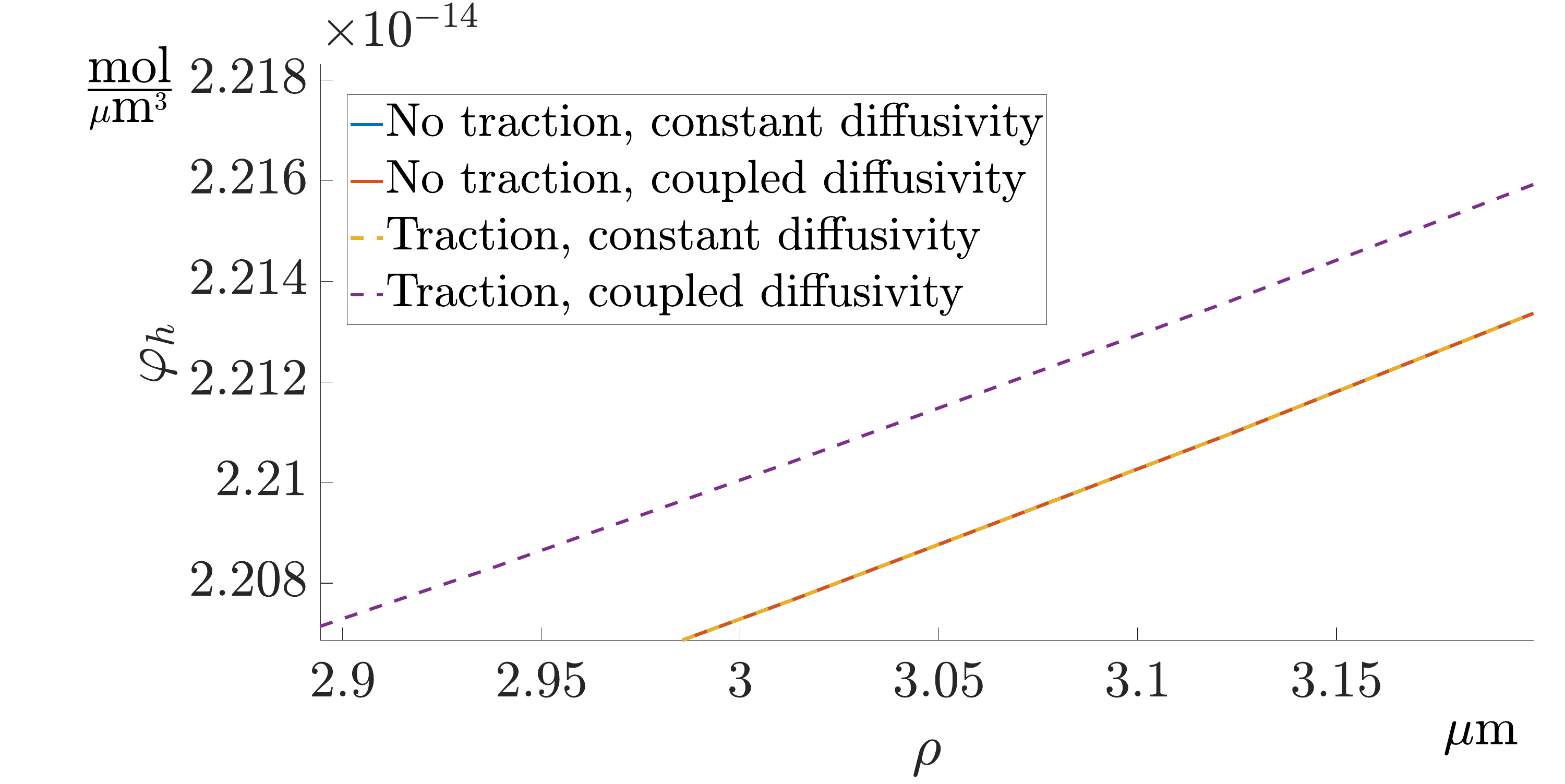}}
    \subfigure[Zoom around $\rho = 4.714267 \, \mu \text{m}$.]{\includegraphics[width=0.49\textwidth]{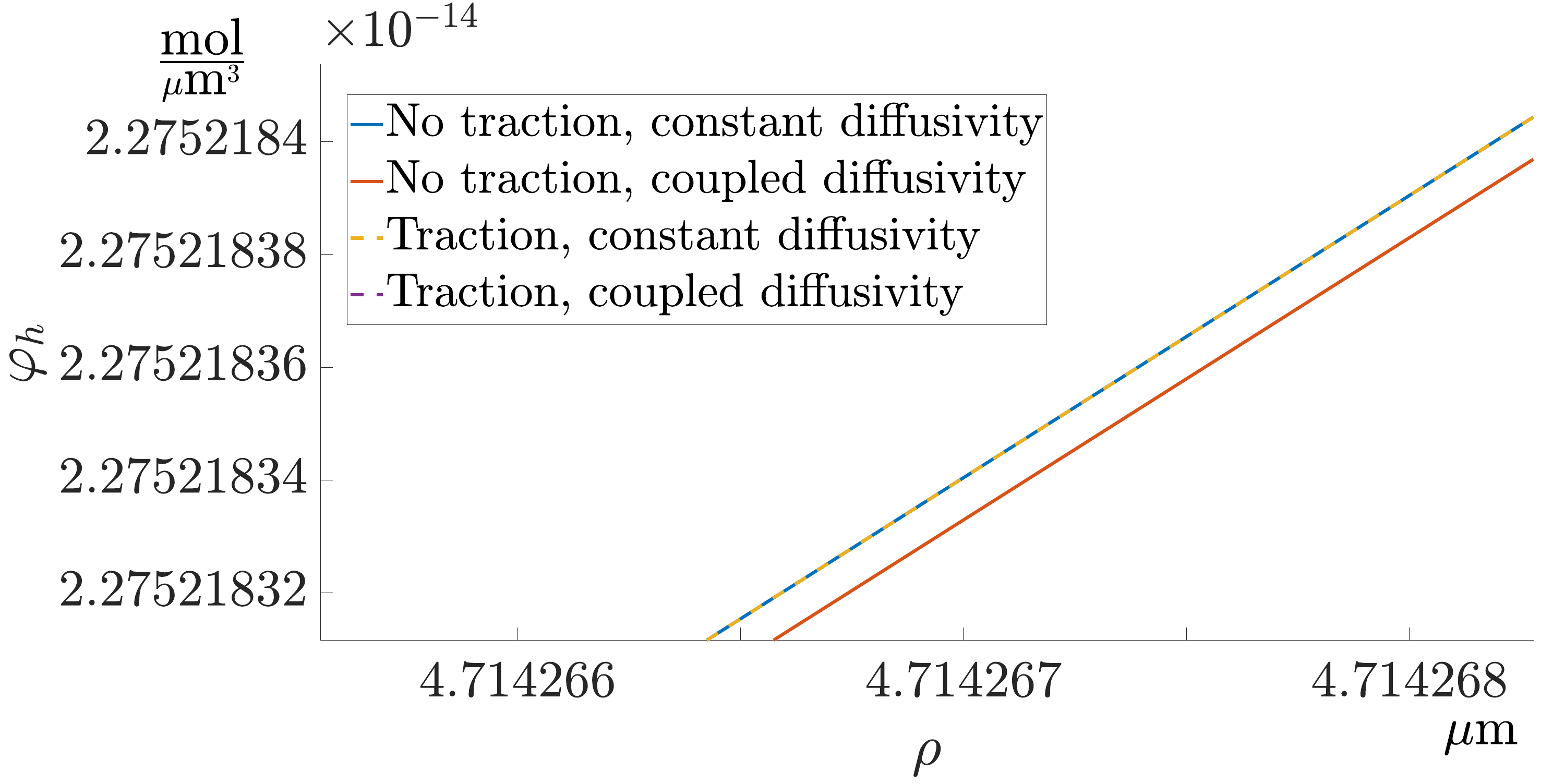}}
    \subfigure[Zoom around $\rho = 1.7314 \, \mu \text{m}$.]{\includegraphics[width=0.49\textwidth]{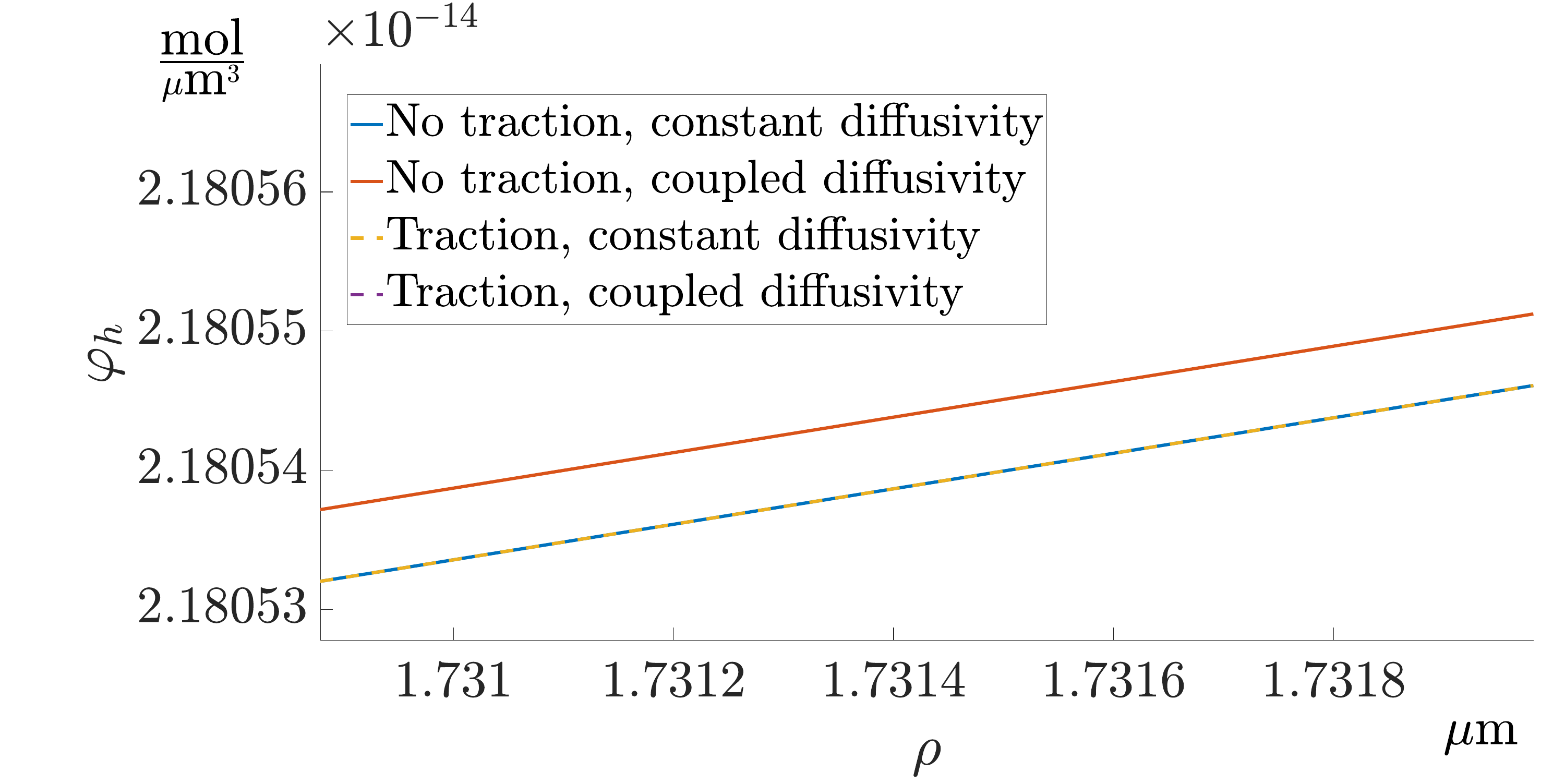}}
    \caption{\cblue{Example 3. Concentration $\varphi_h$ in the radial direction $\rho$, the blue and yellow lines coincide}.}
    \label{fig:radial-concentration}
\end{figure}

\section*{Acknowledgement} \cblue{We kindly thank Prof. Barbara Wohlmuth for pointing out an issue in the initial version of the error estimates for the nonlinear coupled problem.}

\bibliographystyle{siam}
\bibliography{bibliography}

\begin{thebibliography}{10}

\bibitem{antonietti2023virtual}
{\sc P.~F. Antonietti, G.~Vacca, and M.~Verani}, {\em Virtual element method
  for the {N}avier--{S}tokes equation coupled with the heat equation}, IMA
  Journal of Numerical Analysis, 43 (2023), pp.~3396--3429.

\bibitem{beirao13}
{\sc L.~Beir\~{a}o~da Veiga, F.~Brezzi, A.~Cangiani, G.~Manzini, L.~D. Marini,
  and A.~Russo}, {\em Basic principles of virtual element methods},
  Mathematical Models and Methods in Applied Sciences, 23 (2013), pp.~199--214.

\bibitem{beirao13elast}
{\sc L.~Beir{\~a}o Da~Veiga, F.~Brezzi, and L.~D. Marini}, {\em Virtual
  elements for linear elasticity problems}, SIAM Journal on Numerical Analysis,
  51 (2013), pp.~794--812.

\bibitem{veiga14}
{\sc L.~Beir{\~a}o~da Veiga, F.~Brezzi, L.~D. Marini, and A.~Russo}, {\em The
  hitchhiker's guide to the virtual element method}, Mathematical Models and
  Methods in Applied Sciences, 24 (2014), pp.~1541--1573.

\bibitem{daveiga15-transport}
\leavevmode\vrule height 2pt depth -1.6pt width 23pt, {\em Mixed virtual
  element methods for general second order elliptic problems on polygonal
  meshes}, ESAIM Mathematical Modelling and Numerical Analysis, 50 (2016),
  pp.~727--747.

\bibitem{daVeiga16}
\leavevmode\vrule height 2pt depth -1.6pt width 23pt, {\em Virtual Element
  Implementation for General Elliptic Equations}, Springer International
  Publishing, Cham, 2016, pp.~39--71.

\bibitem{beirao2020stokes}
{\sc L.~Beir{\~a}o~da Veiga, F.~Dassi, and G.~Vacca}, {\em The {S}tokes complex
  for virtual elements in three dimensions}, Mathematical Models and Methods in
  Applied Sciences, 30 (2020), pp.~477--512.

\bibitem{daveiga15-stokes}
{\sc L.~Beir{\~a}o~da Veiga, C.~Lovadina, and G.~Vacca}, {\em Divergence free
  virtual elements for the {S}tokes problem on polygonal meshes}, ESAIM
  Mathematical Modelling and Numerical Analysis, 51 (2017), pp.~509--535.

\bibitem{veiga19}
{\sc L.~Beir{\~a}o~da Veiga, D.~Mora, and G.~Vacca}, {\em The {S}tokes complex
  for virtual elements with application to {Navier–Stokes} flows}, Journal of
  Scientific Computing, 81 (2019).

\bibitem{boffi13}
{\sc D.~Boffi, F.~Brezzi, and M.~Fortin}, {\em {Mixed Finite Element Methods
  and Applications}}, vol.~44, Springer-Verlag, Berlin, 2013.

\bibitem{boon21}
{\sc W.~Boon, M.~Kuchta, K.-A. Mardal, and R.~Ruiz-Baier}, {\em Robust
  preconditioners and stability analysis for perturbed saddle-point problems --
  application to conservative discretizations of {B}iot's equations utilizing
  total pressure}, SIAM Journal on Scientific Computing, 43 (2021),
  pp.~B961--B983.

\bibitem{braess96penalty}
{\sc D.~Braess}, {\em Stability of saddle point problems with penalty}, M2AN -
  Mod\'elisation Math\'ematique et Analyse Num\'erique, 30 (1996),
  pp.~731--742.

\bibitem{Brenner1994}
{\sc S.~C. Brenner and L.~R. Scott}, {\em Polynomial Approximation Theory in
  {S}obolev Spaces}, Springer New York, New York, NY, 1994, pp.~91--122.

\bibitem{brezzi_2014}
{\sc F.~Brezzi, R.~S. Falk, and L.~Donatella~Marini}, {\em Basic principles of
  mixed virtual element methods}, ESAIM: Mathematical Modelling and Numerical
  Analysis, 48 (2014), p.~1227–1240.

\bibitem{cao2024edgeaveragedvirtualelementmethods}
{\sc S.~Cao, L.~Chen, and S.~Lee}, {\em Edge-averaged virtual element methods
  for convection-diffusion and convection-dominated problems}, arXiv preprint,
  2402.13347 (2024).

\bibitem{cherubini17}
{\sc C.~Cherubini, S.~Filippi, A.~Gizzi, and R.~Ruiz-Baier}, {\em A note on
  stress-driven anisotropic diffusion and its role in active deformable media},
  Journal of Theoretical Biology, 430 (2017), pp.~221--228.

\bibitem{dassi2022bend}
{\sc F.~Dassi, A.~Fumagalli, A.~Scotti, and G.~Vacca}, {\em Bend 3d mixed
  virtual element method for {D}arcy problems}, Computers \& Mathematics with
  Applications, 119 (2022), pp.~1--12.

\bibitem{ern22}
{\sc A.~Ern and J.-L. Guermond}, {\em {Finite Elements II: Galerkin
  Approximation, Elliptic and Mixed PDEs}}, {Springer}, Apr. 2021.

\bibitem{gatica14}
{\sc G.~N. Gatica}, {\em A Simple Introduction to the {M}ixed {F}inite
  {E}lement {M}ethod. {T}heory and Applications}, Springer-Verlag, Berlin,
  2014.

\bibitem{gatica18}
{\sc G.~N. Gatica, B.~G{\'o}mez-Vargas, and R.~Ruiz-Baier}, {\em Analysis and
  mixed-primal finite element discretisations for stress-assisted diffusion
  problems}, Computer Methods in Applied Mechanics and Engineering, 337 (2018),
  pp.~411--438.

\bibitem{gatica19}
\leavevmode\vrule height 2pt depth -1.6pt width 23pt, {\em Formulation and
  analysis of fully-mixed finite element methods for stress-assisted diffusion
  problems}, Computers \& Mathematics with Applications, 77 (2019),
  pp.~1312--1330.

\bibitem{gatica22}
{\sc G.~N. Gatica, C.~Inzunza, and F.~A. Sequeira}, {\em A pseudostress-based
  mixed-primal finite element method for stress-assisted diffusion problems in
  {B}anach spaces}, Journal of Scientific Computing, 92 (2022), pp.~Paper No.
  103, 43.

\bibitem{gatica2021mixed}
{\sc G.~N. Gatica, M.~Munar, and F.~Sequeira}, {\em A mixed virtual element
  method for the {B}oussinesq problem on polygonal meshes}, Journal of
  Computational Mathematics, 39 (2021), pp.~392--427.

\bibitem{gomez23}
{\sc B.~G{\'o}mez-Vargas, K.-A. Mardal, R.~Ruiz-Baier, and V.~Vinje}, {\em
  Twofold saddle-point formulation of {B}iot poroelasticity with
  stress-dependent diffusion}, {SIAM} {J}ournal on {N}umerical {A}nalysis, 63
  (2023), pp.~1449--1481.

\bibitem{grigoreva19}
{\sc P.~Grigoreva, E.~N. Vilchevskaya, and W.~H. M{\"u}ller}, {\em Stress and
  Diffusion Assisted Chemical Reaction Front Kinetics in Cylindrical
  Structures}, Springer International Publishing, Cham, 2019, pp.~53--72.

\bibitem{herrmann65}
{\sc L.~Herrmann}, {\em Elasticity equations for incompressible and nearly
  incompressible materials by a variational theorem}, AIAA Journal, 3 (1965),
  pp.~1896--1900.

\bibitem{khan2019robust}
{\sc A.~Khan, C.~E. Powell, and D.~J. Silvester}, {\em Robust a posteriori
  error estimators for mixed approximation of nearly incompressible
  elasticity}, International Journal for Numerical Methods in Engineering, 119
  (2019), pp.~18--37.

\bibitem{krr_preprint}
{\sc R.~Khot, A.~E. Rubiano, and R.~Ruiz-Baier}, {\em Robust virtual element
  methods for coupled stress-assisted diffusion problems}, arXiv preprint,
  2401.09714 (2024).

\bibitem{kumar20}
{\sc S.~Kumar, R.~Oyarz\'ua, R.~Ruiz-Baier, and R.~Sandilya}, {\em Conservative
  discontinuous finite volume and mixed schemes for a new four-field
  formulation in poroelasticity}, ESAIM: Mathematical Modelling and Numerical
  Analysis, 54 (2020), pp.~273--299.

\bibitem{kuroki1982boundary}
{\sc T.~Kuroki, T.~Ito, and K.~Onishi}, {\em Boundary element method in
  {B}iot's linear consolidation}, Applied Mathematical Modelling, 6 (1982),
  pp.~105--110.

\bibitem{lewicka2016local}
{\sc M.~Lewicka and P.~B. Mucha}, {\em A local and global well-posedness
  results for the general stress-assisted diffusion systems}, Journal of
  Elasticity, 123 (2016), pp.~19--41.

\bibitem{lott2012accelerated}
{\sc P.~Lott, H.~Walker, C.~Woodward, and U.~Yang}, {\em An accelerated
  {P}icard method for nonlinear systems related to variably saturated flow},
  Advances in Water Resources, 38 (2012), pp.~92--101.

\bibitem{lovadina2022some}
{\sc C.~Lovadina and M.~Visinoni}, {\em Some remarks on the {Virtual Element
  Method} for the linear elasticity problem in mixed form}, in Portugal-Italy
  Conference on Nonlinear Differential Equations and Applications, Springer,
  2022, pp.~153--174.

\bibitem{malaeke2023mathematical}
{\sc H.~Malaeke and M.~Asghari}, {\em A mathematical formulation for analysis
  of diffusion-induced stresses in micropolar elastic solids}, Archive of
  Applied Mechanics, 93 (2023), pp.~3093--3111.

\bibitem{martins1998numerical}
{\sc J.~Martins, E.~Pires, R.~Salvado, and P.~Dinis}, {\em A numerical model of
  passive and active behavior of skeletal muscles}, Computer Methods in Applied
  Mechanics and Engineering, 151 (1998), pp.~419--433.

\bibitem{daveiga2022stability}
{\sc J.~Meng, L.~Beirão~da Veiga, and L.~Mascotto}, {\em Stability and
  interpolation properties for {S}tokes-like virtual element spaces}, Journal
  of Scientific Computing, 94 (2023), p.~e56.

\bibitem{murray2003mathematical}
{\sc J.~D. Murray and J.~D. Murray}, {\em Mathematical Biology: {II}: Spatial
  Models and Biomedical Applications}, vol.~3, Springer, 2003.

\bibitem{murray1984generation}
{\sc J.~D. Murray and G.~F. Oster}, {\em Generation of biological pattern and
  form}, Mathematical Medicine and Biology: A Journal of the IMA, 1 (1984),
  pp.~51--75.

\bibitem{olshanskii06}
{\sc M.~A. Olshanskii, J.~Peters, and A.~Reusken}, {\em Uniform preconditioners
  for a parameter dependent saddle point problem with application to
  generalized {S}tokes interface equations}, Numerische Mathematik, 105 (2006),
  pp.~159--191.

\bibitem{ramiere2015iterative}
{\sc I.~Rami{\`e}re and T.~Helfer}, {\em Iterative residual-based vector
  methods to accelerate fixed point iterations}, Computers \& Mathematics with
  Applications, 70 (2015), pp.~2210--2226.

\bibitem{sheng2023virtual}
{\sc M.~Sheng, D.~Yang, and Z.~Gao}, {\em A virtual element method-based
  positivity-preserving conservative scheme for convection--diffusion problems
  on polygonal meshes}, Numerical Methods for Partial Differential Equations,
  39 (2023), pp.~1398--1424.

\bibitem{sutton2017virtual}
{\sc O.~J. Sutton}, {\em The virtual element method in 50 lines of {MATLAB}},
  Numerical Algorithms, 75 (2017), pp.~1141--1159.

\bibitem{Taralov2015}
{\sc M.~Taralov}, {\em Simulation of Degradation Processes in Lithium-Ion
  Batteries}, phd thesis, Technische Universit{\"a}t Kaiserslautern, 2015.

\bibitem{unger1983theory}
{\sc D.~Unger and E.~Aifantis}, {\em On the theory of stress-assisted
  diffusion, {II}}, Acta Mechanica, 47 (1983), pp.~117--151.

\bibitem{verma2022virtual}
{\sc N.~Verma and S.~Kumar}, {\em Virtual element approximations for two
  species model of the advection-diffusion-reaction in poroelastic media},
  Mathematical Modelling and Analysis, 27 (2022), pp.~668--690.

\bibitem{wilson1982theory}
{\sc R.~Wilson and E.~Aifantis}, {\em On the theory of stress-assisted
  diffusion, {I}}, Acta Mechanica, 45 (1982), pp.~273--296.

\bibitem{yu2022mvem}
{\sc Y.~Yu}, {\em {mVEM}: {A MATLAB} software package for the virtual element
  methods}, arXiv preprint, 2204.01339 (2022).

\bibitem{zohdi99}
{\sc T.~Zohdi and P.~Wriggers}, {\em On the effects of microstress on
  macroscopic diffusion processes}, Acta Mechanica, 136 (1999), pp.~91--107.

\end{thebibliography}

\end{document}